\theoremstyle{plain}
\newtheorem{theorem}{Theorem}
\newtheorem{proposition}[theorem]{Proposition}
\newtheorem{corollary}[theorem]{Corollary}
\newtheorem{lemma}[theorem]{Lemma}
\newtheorem{conjecture}[theorem]{Conjecture}
\newtheorem{assumptions}[theorem]{Assumptions}
\newtheorem{assnot}[theorem]{Assumption-Notation}
\theoremstyle{definition}
\newtheorem{defn}[theorem]{Definition}
\newtheorem{notation}[theorem]{Notation}
\newtheorem{example}[theorem]{Example}
\newtheorem{construction}[theorem]{Construction}
\theoremstyle{remark}
\newcommand{\beq}{\begin{equation}}
\newcommand{\eeq}{\end{equation}}
\newcommand{\ang}[1]{\langle #1 \rangle}
\newcommand{\blank}{\mbox{$\underline{\makebox[10pt]{}}$}}
\newcommand{\st}{\left\vert\right.}
\newcommand{\bbar}[1]{\overline{#1}}
\DeclareMathOperator{\id}{{Id}}
\DeclareMathOperator{\Hom}{{Hom}}
\DeclareMathOperator{\Ext}{{Ext}}
\DeclareMathOperator{\Tor}{Tor}
\DeclareMathOperator{\Aut}{{Aut}}
\DeclareMathOperator{\Proj}{Proj}
\DeclareMathOperator{\Spec}{Spec}
\DeclareMathOperator{\codim}{codim}
\DeclareMathOperator{\len}{len}
\DeclareMathOperator{\red}{red}
\DeclareMathOperator{\pd}{pd}
\DeclareMathOperator{\hd}{hd}
\DeclareMathOperator{\cohdim}{cd}
\DeclareMathOperator{\chrr}{char}
\DeclareMathOperator{\sing}{sing}
\DeclareMathOperator{\Supp}{Supp}
\newcommand{\ver}[1]{^{(#1)}}
\newcommand{\sh}{\mathcal}
\newcommand{\mf}{\mathfrak}
\newcommand{\PGL}{\PP GL}
\DeclareMathOperator{\shExt}{\mathcal{E}\!\mathit{xt}}
\DeclareMathOperator{\shTor}{\mathcal{T}\!\mathit{or}}
\renewcommand{\Lsh}{\mathcal{L}}
\newcommand{\Ish}{\mathcal{I}}
\newcommand{\struct}{\mathcal{O}}
 \newcommand{\kk}{{\Bbbk}}
\newcommand{\ZZ}{{\mathbb Z}}
\newcommand{\PP}{{\mathbb P}}
\newcommand{\NN}{{\mathbb N}}
\newcommand{\QQ}{{\mathbb Q}}
\DeclareMathOperator{\rgr}{gr-\!}
\DeclareMathOperator{\lgr}{\!-gr}
\DeclareMathOperator{\rGr}{Gr-\!}
\DeclareMathOperator{\lmod}{\!-mod}
\DeclareMathOperator{\lMod}{\!-Mod}
\DeclareMathOperator{\lQgr}{\!-Qgr}
\DeclareMathOperator{\lqgr}{\!-qgr}
\DeclareMathOperator{\lProj}{\!-Proj}
\DeclareMathOperator{\rQgr}{Qgr-\!}
\DeclareMathOperator{\rqgr}{qgr-\!}
\DeclareMathOperator{\rProj}{Proj-\!}
\DeclareMathOperator{\rTors}{Tors-\!}
\newcommand{\fracc}[2]{#1 / #2}
\newcommand{\I}{{\mathbb I}}
\newcommand{\segre}{\stackrel{s}{\otimes}}
\numberwithin{theorem}{section}        
\let\c@equation\c@theorem              
\title{Geometric idealizers}
\author{ S. J.  Sierra}
\date{\today}
\thanks{Mathematics Department, University of Washington, Seattle, WA 98195-4350.
 { \tt sjsierra@math.washington.edu}}
\begin{document}

\begin{abstract}
Let $X$ be a projective variety, $\sigma$ an automorphism of $X$, $\Lsh$ a $\sigma$-ample invertible sheaf on $X$, and $Z$ a closed subscheme of $X$.  Inside the twisted homogeneous coordinate ring $B = B(X, \Lsh, \sigma)$, let $I$ be the right ideal of sections vanishing at $Z$. We study the subring 
\[ R = k + I\]
of $B$.  Under mild conditions on $Z$ and $\sigma$, $R$ is the {\em idealizer} of $I$ in $B$: the maximal subring of $B$ in which $I$ is a two-sided ideal.

We give geometric conditions on  $Z$ and $\sigma$ that determine the algebraic properties of $R$, and show that if $Z$ and $\sigma$ are sufficiently general, in a sense we make precise, then $R$ is left and right noetherian, has finite left and right cohomological dimension, is strongly right noetherian but not strongly left noetherian, and satisfies right $\chi_d$ (where $d = \codim Z$) but fails left $\chi_1$.    We also give an example of a right noetherian ring with infinite right cohomological dimension, partially answering a question of Stafford and Van den Bergh.
This generalizes results of Rogalski in the case that $Z$ is a point in $\PP^d$.
\end{abstract}

\maketitle

\tableofcontents


\section{Introduction}\label{IDEALIZER-INTRO}

One of the contributions of noncommutative algebraic geometry to noncommutative algebra is a gallery of interesting counterexamples:  rings with previously  unobserved, 
 unusual, and counter-intuitive properties.   Many of  these examples are even  noetherian 
  $\NN$-graded domains, but their subtler properties, such as whether or not they are  strongly noetherian or satisfy the Artin-Zhang $\chi$ conditions,  can be surprisingly pathological.   Examples of such rings include the na\"ive blowup algebras first analyzed by Rogalski in \cite{R-generic} and 
later studied in \cite{KRS}, and the idealizers constructed by Rogalski in \cite{R-idealizer}.  

In this paper we use noncommutative algebraic geometry to  construct and analyze a large class of  examples of  rings with interesting algebraic behavior.  Our rings, like those mentioned above, will be constructed as 
 subrings of twisted homogeneous coordinate rings. Here we recall Artin and Van den Bergh's construction \cite{AV}.  Let $X$ be a projective scheme, let $\Lsh$ be an invertible sheaf on $X$, and let $\sigma$ be an automorphism of $X$.  
 We denote the pullback of $\Lsh$ along $\sigma$ by 
\[ \sigma^* \Lsh = \Lsh^{\sigma}.\]  
For any $n \geq 0$, we define 
\[ \Lsh_n = \Lsh \otimes \Lsh^{\sigma} \otimes \cdots \otimes \Lsh^{\sigma^{n-1}}.\]
Then the {\em twisted homogeneous coordinate ring} $B(X, \Lsh, \sigma)$ is defined to be
\[ \bigoplus_{n \geq 0} H^0(X, \Lsh_n).\]
If $\Lsh$ is appropriately ample (the technical term is {\em $\sigma$-ample}), then $B(X, \Lsh, \sigma)$ is noetherian \cite[Theorem~1.4]{AV}.

Recall that if $I$ is  a right ideal of a ring $B$, the {\em idealizer} $\I_B(I)$ of $I$ in $B$ is the maximal subring of $B$ in which $I$ becomes a two-sided ideal.  That is,
\[ \I_B(I) = \{ x \in B \st xI \subseteq I \}.
\]
In this paper, we  study idealizer subrings of  twisted homogeneous coordinate rings.  
    Our basic construction is the following:

\begin{construction}\label{const-idealizer}\index{geometric idealizer}
Let $X$ be a  projective variety, let  $\sigma$ be an automorphism of $X$, and let $\Lsh$ be a $\sigma$-ample invertible sheaf on $X$.  
Let $Z$ be a closed subscheme of $X$. Let  $B = B(X, \Lsh, \sigma)$, and let $I$ be the right ideal of $B$ generated by sections that vanish on $Z$.  

Our object of study is the idealizer ring
\[ R= R(X, \Lsh, \sigma, Z) = \I_B(I) = \{ x \in B \st xI \subseteq I\}.\]\index{R@$R(X, \Lsh, \sigma, Z)$}
We refer to $R$ as a {\em geometric idealizer}, or more specifically, as  the {\em (right) idealizer in $B$  at $Z$}. 
\end{construction}

Our main result gives geometric criteria that determine the algebraic properties of geometric idealizers.   In particular, we characterize when such rings are noetherian.    We also analyze when idealizers are strongly noetherian, satisfy various $\chi$ conditions, and have finite cohomological dimension.   Here we define the properties that we will investigate.  

Throughout, we fix an algebraically closed ground field $\kk$.  All rings $R$ we consider will be {\em connected $\NN$-graded} $\kk$-algebras; that is, $R$ is $\NN$-graded, with $R_0 = \kk$.  

\begin{defn}\label{def-chi}
Let $R$ be a finitely generated, connected $\NN$-graded $\kk$-algebra, and let  $j \in \NN$.  We say that {\em $R$ satisfies right $\chi_j$} if, for all $i \leq j$ and for all finitely generated graded right $R$-modules $M$, we have 
\[ \dim_{\kk} \underline{\Ext}^i_R(\kk, M) < \infty.\]
We say that {\em $R$ satisfies right $\chi$} if $R$ satisfies right $\chi_j$ for all $j \in \NN$.  
We similarly define {\em left $\chi_j$} and {\em left $\chi$}; we say $R$ {\em satisfies $\chi$} if it satisfies left and right $\chi$.
\end{defn}

\begin{defn}\label{def-SN}\index{strongly noetherian}
A $\kk$-algebra $R$ is {\em strongly right (left) noetherian} if, for any commutative noetherian $\kk$-algebra $C$, the ring $R \otimes_{\kk} C$  is right (left) noetherian.
\end{defn}

Finitely generated commutative graded rings are always strongly noetherian by the Hilbert basis theorem, and  satisfy $\chi$ by \cite[Corollary~8.12]{AZ1994}.  For noncommutative rings, these two conditions are important because  their presence allows one to use powerful techniques inspired by commutative algebraic geometry.    The strong noetherian property, in particular, 
 is needed in order for modules over $R$ to be parameterized by a (commutative) projective scheme  \cite{AZ2001}.  
 If $R$ satisfies $\chi_1$, then 
one may reconstruct $R$ from the noncommutative projective scheme associated to $R$ \cite{AZ1994}.  
The higher  conditions $\chi_j$ for $j > 1$ are less well understood. However, if a ring satisfies right or left $\chi$, then  it is well-behaved in some important ways.  For example, the $\chi$ conditions are needed in order to have a version of Serre duality for a noncommutative ring $R$  \cite[Theorem 6.3]{VdB1997}  \cite[Theorem 4.2]{YZ1997}.  This is known as the existence of a {\em balanced dualizing complex}.

We show that, roughly speaking, if $\sigma$ and $Z$ are in general position (in a sense we make precise), then $R = R(X, \Lsh, \sigma, Z)$ is noetherian, strongly right noetherian, and satisfies right $\chi_d$, where $d = \codim Z$.  On the other hand, $R$
is not strongly left noetherian and fails left $\chi_1$.  
Examples are known of noetherian rings that are not strongly noetherian on one or both sides, or which satisfy $\chi_d$ on one side and fail $\chi_1$ on the other side.  However, in general, the ways in which it is possible for these  properties to fail, and to fail asymmetrically, are still poorly understood.  Thus another  goal of the work in this paper is to gain more insight into situations where some of these properties do not hold.  

 We also study the {\em cohomological dimension} of geometric idealizers, since there are many open questions about this invariant.  Recall that in noncommutative algebraic geometry, one often works with the category $\rqgr R$, defined roughly as 
\[\{ \mbox{graded right  $R$-modules} \} / \{ \mbox{finite dimensional modules} \}.\]
The category $\rqgr R$ is the appropriate analogue of $\Proj R$ for a commutative graded ring $R$.  It has a global section functor, defined as
\[ \Gamma(\sh{M}) = \Hom_{\rqgr R} ([R], \sh{M}),\]
where $[R]$ denotes the image of $R$ in $\rqgr R$.  The cohomological dimension of the functor $\Gamma$ is referred to as the {\em right cohomological dimension of $R$}; of course, one may also define {\em left cohomological dimension}.  Stafford and Van den Bergh \cite[page~194]{SV} have asked if a noetherian graded ring must have finite left and right cohomological dimension.  (This is true for commutative graded rings by a well-known result of Grothendieck \cite{Tohoku}.)  We answer Stafford and Van den Bergh's question in the affirmative for geometric idealizers.  We also  give an example of a right noetherian graded ring with infinite right cohomological dimension.   

Our work generalizes work of Rogalski \cite{R-idealizer}, who used algebraic techniques to investigate idealizers of maximal non-irrelevant graded right ideals in Zhang twists of polynomial rings.  In our language, Rogalski studied  geometric idealizers in the 
special setting that $X = \PP^d$, $\Lsh = \struct(1)$, and $Z = \{p\}$ is a point.  (His work generalized earlier work of Stafford and Zhang \cite{SZ}, who studied idealizers on $\PP^1$.)  
Rogalski  discovered that  the algebraic behavior of $R(X, \struct(1), \sigma, \{p\})$ is controlled by the geometry of the orbit of $p$.  
 
 \begin{defn}\label{idef-CD}
 Let $X$ be a variety, let  $p \in X$ and let $\sigma \in \Aut_{\kk}(X)$.  The orbit  $\{ \sigma^n(p)\}_{n \in \ZZ}$ is {\em critically dense} if it is infinite and any infinite subset  is Zariski dense in $X$.   
 \end{defn}

\begin{theorem}\label{thm-Ro}
{\em (Rogalski)}
Let $\sigma \in \PGL_{d+1}$ and let $p \in \PP^d$.
  Assume that $p$ is of infinite order under $\sigma$, and let $R =  R(\PP^d, \struct(1), \sigma, \{p\})$.  
 Then

$(1)$ $R$ is strongly right noetherian.

$(2)$ $R$ fails left $\chi_1$.

\noindent Further, if the set $\{ \sigma^n(p)\}$ is {critically dense}, then:
 
$ (3)$  $R$ is left noetherian, but not strongly left noetherian if $d \geq 2$.
 
$ (4)$ $R$ satisfies right $\chi_{d-1}$ but fails right $\chi_d$.

$(5)$  $R \otimes_{\kk} R$ is not left noetherian.   \qed
 \end{theorem}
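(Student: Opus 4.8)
The plan is to reduce everything to the structure of $B$ as a Zhang twist of a polynomial ring, together with a close analysis of the right ideal $I$ and the bimodule $B/R$. Since $\sigma\in\PGL_{d+1}$ is induced by a linear automorphism of $\PP^d$, the ring $B=B(\PP^d,\struct(1),\sigma)$ is a Zhang twist of the commutative polynomial ring $\kk[x_0,\dots,x_d]$; consequently $B$ is a connected graded noetherian domain, generated in degree one, which is strongly left and right noetherian, satisfies left and right $\chi$, and has left and right cohomological dimension $d$, all of these properties being inherited from the polynomial ring under Zhang twisting. Because $p$ has infinite order, $\sigma^n(p)\neq p$ for $n\neq 0$; moreover $I_n=\{s\in B_n:s(p)=0\}$ has codimension one in $B_n$ for $n\geq 1$ while $I_0=0$, so $R=\kk\oplus I$ with $R_n=I_n$ for $n\geq 1$. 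Thus $I$ is a finitely generated right ideal of $B$, the quotient $B/I$ is the point module $P=P_p$ at $p$ (cyclic over $B$, with $\dim_\kk P_n=1$ for all $n\geq 0$), and there are short exact sequences $0\to I\to R\to\kk\to 0$ and $0\to I\to B\to P\to 0$ of graded $R$-bimodules. The crucial point, obtained by unwinding the twisted product $s\cdot t=s\otimes t^{\sigma^{\deg s}}$, is a left--right asymmetry: for $f\in I_m$ and a form $b_n\in B_n$ with $b_n(p)\neq 0$ one has $(b_n\cdot f)(p)=b_n(p)\,f(\sigma^n(p))$, which is nonzero for a suitable $f$ since $\sigma^n(p)$ is not in the base locus $\{p\}$ of $|\Ish_p(m)|$; this shows that $B/I$ and $B/R$ are \emph{finitely generated} (indeed generated by two elements) as \emph{right} $R$-modules. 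On the other hand $f\cdot b_n=f\otimes b_n^{\sigma^m}$ vanishes at $p$ whenever $f$ does, so $R_{\geq 1}$ annihilates $B/I$ and $B/R$ on the \emph{left}: both are therefore trivial, isomorphic to $\bigoplus_n\kk(-n)$, and not finitely generated as left $R$-modules. This asymmetry drives the entire theorem.

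For $(1)$: since $B/I$ is finitely generated as a right $R$-module and $B$ is right noetherian, a graded version of Robson's theorem on idealizers of right ideals shows that $R=\I_B(I)$ is right noetherian. The same argument applies to $B\otimes_\kk C$ and $I\otimes_\kk C$ for any commutative noetherian $\kk$-algebra $C$: because $B$ is strongly right noetherian, $B\otimes_\kk C$ is right noetherian; by flat base change $I\otimes_\kk C$ is still the ideal of $B\otimes_\kk C$ cut out by $p$ over $\Spec C$, one checks $\I_{B\otimes C}(I\otimes C)=R\otimes C$, and $(B/I)\otimes_\kk C$ is still generated over $R\otimes C$ by the images of $1$ and of $b_1$. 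Hence $R\otimes_\kk C$ is right noetherian, so $R$ is strongly right noetherian. The only real bookkeeping is the base-change compatibility of the idealizer and of finite generation, which is routine since all modules in sight are $C$-flat.

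For $(2)$: apply $\underline{\Hom}_R(\kk,-)$ to $0\to I\to B\to P\to 0$ viewed as left $R$-modules. Since $B$ is a domain and $I\neq 0$, the left socle of $B$ is zero, so $\underline{\Hom}_R(\kk,B)=0$; and since $P$ is a trivial left $R$-module isomorphic to $\bigoplus_{n\geq 0}\kk(-n)$, it equals its own socle, so $\underline{\Hom}_R(\kk,P)\cong\bigoplus_{n\geq 0}\kk(-n)$ is infinite-dimensional over $\kk$. The long exact sequence then yields an injection $\underline{\Hom}_R(\kk,P)\hookrightarrow\underline{\Ext}^1_R(\kk,I)$, so $\underline{\Ext}^1_R(\kk,I)$ is infinite-dimensional; applying $\underline{\Ext}^\bullet_R(\kk,-)$ to $0\to I\to R\to\kk\to 0$ and using that $\underline{\Hom}_R(\kk,\kk)$ and $\underline{\Ext}^1_R(\kk,\kk)$ are finite-dimensional forces $\underline{\Ext}^1_R(\kk,R)$ to be infinite-dimensional. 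As $R$ is a finitely generated left module over itself, left $\chi_1$ fails.

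For $(3)$--$(5)$ the critical density of $\{\sigma^n(p)\}$ is essential, precisely because on the left the module $B/R$ is controlled not by the closed point $p$ but by the infinite, Zariski-dense orbit. For left noetherianity in $(3)$, Robson's theorem is unavailable ($B/I$ is not finitely generated as a left $R$-module), so one argues directly: for a left ideal $J$ of $R$ the left ideal $BJ$ of $B$ is finitely generated, and the discrepancy between $J$ and $BJ\cap R$ must be shown finite; here critical density enters through the fact that a nonzero section vanishes on only finitely many points of the orbit (otherwise its zero locus, being closed, would contain an infinite subset of the orbit and hence all of $\PP^d$), which bounds the relevant defects. For the failure of strong left noetherianity when $d\geq 2$, and for the logically independent statement $(5)$ that $R\otimes_\kk R$ is not left noetherian, one base-changes by a suitable large ring $C$ (a rational function field, or $C=R$ itself in $(5)$) so that a ``generic point of the orbit'' becomes visible: when $d\geq 2$ the orbit of the resulting family spreads into a positive-dimensional locus, and one writes down a non-finitely-generated left ideal of $R\otimes C$ (of $R\otimes R$ in $(5)$) whose degree-$n$ part consists of sections vanishing on a longer and longer initial segment of this spread-out orbit. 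For $(4)$, the number $d=\codim\{p\}$ surfaces through the homological algebra of the point module: $B$ satisfies right $\chi$, and the failure for $R$ is governed by the projective dimension of $P=B/I$ over $R$, which (critical density being what prevents the relevant obstruction groups from vanishing or staying bounded) is finite through homological degree $d-1$, giving right $\chi_{d-1}$, but infinite from degree $d$ on, so right $\chi_d$ fails; concretely one computes $\underline{\Ext}^d_R(\kk,-)$ on a suitable finitely generated module built from $P$ and exhibits an infinite-dimensional value. The main obstacle throughout is exactly this left-hand analysis: because $I$ is only a right ideal of $B$, the left $R$-module $B$ is governed by the non-closed orbit rather than by a subscheme, and the delicate assertions --- left noetherianity under critical density, the precise homological degree at which right $\chi$ breaks, and the failure of noetherianity of $R\otimes C$ and of $R\otimes R$ --- all reduce to converting critical density (and its failure to spread nicely over a base when $d\geq 2$) into finiteness or non-finiteness statements about specific $\Ext$ and local cohomology modules. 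Parts $(1)$ and $(2)$, by contrast, are essentially formal once the bimodule structure of $B/R$ recorded above is in hand.
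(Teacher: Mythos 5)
The paper does not actually reprove this statement; it quotes it from Rogalski's \cite{R-idealizer} (the \verb|\qed| following the statement), and instead proves the generalization Theorem~\ref{thm-idealizersum}. So the fair comparison is against the proof of that generalization. Your setup and the central left--right asymmetry---that $B/I$ and $B/R$ are finitely generated as right $R$-modules but are trivial, torsion-free-infinite left $R$-modules---is exactly the right engine, and your arguments for $(1)$ and $(2)$ are essentially correct. For $(2)$ you run the long exact sequence through $0\to I\to B\to P\to 0$ and then $0\to I\to R\to\kk\to 0$ rather than directly through $0\to R\to B\to B/R\to 0$ as the paper does (Proposition~\ref{prop-left-chi}), but the content is the same. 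For $(1)$ note that Robson's criterion (Lemma~\ref{lem-Rob}) requires $B/I$ to be a \emph{noetherian}, not merely finitely generated, right $R$-module; over a base $\kk$ this is quick because $(B/I)_n$ is one-dimensional, but after tensoring with a general commutative noetherian $C$ the ACC for graded submodules of $(B/I)\otimes C$ is no longer a one-line observation. The paper sidesteps this by using Lemma~\ref{lem-Rog-I} (the $\Hom_B(B/I,B/J)$ criterion) together with the fact that $B\otimes C$ satisfies right $\chi_1$ and the equivalence $\rqgr B_C\simeq\struct_{X_C}\lmod$, which makes the finiteness geometric and base-change-stable. Your approach is fixable but currently has a gap at exactly the point where ``strongly'' right noetherian has to be earned.

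Parts $(3)$--$(5)$ are where the proposal falls short of a proof. For $(3)$ you gesture at the right mechanism---critical density forces $\sigma^n(p)\notin Y$ for $n\gg0$ for any proper closed $Y$---but the actual criterion is Rogalski's $\Tor$ condition: $R$ is left noetherian iff $\Tor_1^B(B/I,B/J)$ is finite-dimensional for every graded left ideal $J$ of $B$ (Proposition~\ref{prop-R-Tor}), which translates geometrically into the vanishing of $\shTor_1^X(\struct_{\sigma^np},\struct_Y)$ for $n\gg 0$ (Propositions~\ref{prop-ibma-lnoeth}, \ref{prop-Tor-noeth}). Your ``discrepancy between $J$ and $BJ\cap R$'' is the starting point of that reduction but you don't carry it through. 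For $(4)$ the statement about ``projective dimension of $P$ over $R$ being finite through degree $d-1$'' is not a meaningful homological quantity here ($\pd_R\kk$ is infinite); the paper's mechanism is Rogalski's $\Ext$ criterion (Proposition~\ref{prop-R-chi}), reducing right $\chi_i$ to finite-dimensionality of $\underline{\Ext}^j_B(B/I,M)$, computed via $\Ext^j_X(\struct_Z,\cdot)$, the self-dual Koszul resolution of the smooth point $Z$, and the local-to-global spectral sequence---this is where $d=\codim Z$ enters. For $(5)$ ``base-change by $C=R$'' is not a licit move ($R$ is noncommutative, and the generic-flatness machinery of Theorem~\ref{thm-ASZ} needs a commutative base); the paper instead passes to the degree-$0$ Segre piece $R\segre R$, recognizes it as the geometric idealizer on $X\times X$ at $(Z\times X)\cup(X\times Z)$, and shows this fails to be homologically transverse to the diagonal using Serre's intersection multiplicity (Proposition~\ref{prop-tensor}). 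That is a genuinely different and much more precise route than the heuristic you sketch.
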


Rogalski's work shows that  the algebraic conclusions of (3), (4), and (5) are controlled by rather subtle geometry. In particular, right idealizers at points of infinite order are automatically right noetherian, but in order for them to be left noetherian, $\sigma$ must move $p$  significantly and in some sense uniformly around $\PP^d$.  
One naturally asks if  there is a higher-dimensional  analogue of critical density that controls the behavior of more general idealizers than those studied in Theorem~\ref{thm-Ro}.   
One of the main results of this paper is that the answer is ``yes.''  We define:

\begin{defn}\label{iintro-def-HT}\index{homologically transverse}
Let $X$ be a variety and let  $Z, Y \subseteq X$ be closed subschemes.  We say that $Z$ and $Y$ are {\em homologically transverse} if
\[ \shTor^X_j (\struct_Z, \struct_Y) = 0\]
for all $j \geq 1$.
\end{defn}

\begin{defn}\label{iintro-def-CT}\index{critically transverse}
Let $X$ be a variety and let $\sigma \in \Aut_{\kk} X$.
Let $Z \subseteq X$ be a closed subscheme.  The set $\{ \sigma^n Z\}_{n \in \ZZ}$ is {\em critically transverse}  if for all closed subschemes $Y \subseteq X$, the subschemes $\sigma^n (Z)$ and $Y$ are homologically transverse  for all but finitely many $n$.
\end{defn}

In this paper, we generalize Theorem~\ref{thm-Ro} to arbitrary idealizers in twisted homogeneous coordinate rings.  We show that critical transversality  controls the behavior of these rings, and we prove:

\begin{theorem}\label{thm-idealizermain}
{\em (Theorem~\ref{thm-idealizersum})}
Let $X$ be a projective variety, let $\sigma$ be an automorphism of  $X$, and let $\Lsh$ be a $\sigma$-ample invertible sheaf on $X$.   Form the ring $R = R(X, \Lsh, \sigma, Z)$ as above.  For simplicity, assume that $Z$ is irreducible and of infinite order under $\sigma$.  (We treat the general case in the body of the paper.)   

If for all $p \in Z$, the set $\{ n \geq 0 \st \sigma^n(p) \in Z\}$ is finite, then:

$(1)$ $R$ is strongly right noetherian. 

$(2)$ $R$ fails left $\chi_1$.

\noindent If the set $\{ \sigma^n Z\}_{n \in \ZZ}$ is critically transverse, then

$ (3)$ $R$ is left noetherian, but $R$ is strongly left noetherian if and only if all components of $Z$ have codimension 1.

$(4)$  Let $d = \codim Z$.  Then $R$ fails right $\chi_d$.  If $X$ and $Z$ are smooth, then $R$ satisfies right $\chi_{d-1}$.

$(5)$ If $R$ is noetherian, then $R$ has finite left and right cohomological dimension.

$(6)$  If $Z$ is not of pure codimension 1, then $R \otimes_{\kk} R$ is not left noetherian.  
\end{theorem}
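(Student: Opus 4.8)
The plan is to exploit the asymmetry already present in the ring by producing, after a base change, a non-finitely-generated left ideal. The natural source of pathology on the left is the bimodule structure of $I$: although $I$ is a two-sided ideal of $R$, it is ``large'' as a right $R$-module but its left module structure is constrained, because left multiplication by elements of $R$ must preserve vanishing at $Z$ while right multiplication need not. Concretely, I would first recall from the body of the paper the description of $R$ as the idealizer $\I_B(I)$ together with the structure of $R$-bimodules in terms of sheaves on $X$: right $R$-modules correspond to certain sheaves, and the failure of left $\chi_1$ (part (2)) already records that $R \to B$ is not an isomorphism in high degrees on the left. The key observation is that $\operatorname{Tor}$ or $\operatorname{Ext}$ computations detecting non-transversality of $Z$ with itself under $\sigma$ (which is exactly what forces $\codim Z \geq 2$ to matter) obstruct finite generation of the relevant left ideal in $R \otimes_\kk R$.

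The main steps, in order, would be: (i) reduce to a concrete bimodule. Inside $R \otimes_\kk R$ consider the left ideal $J$ generated by $I \otimes R + R \otimes I$, or more precisely the ``diagonal-type'' submodule that records pairs of sections vanishing at $Z$ on each factor; since $Z$ has codimension $\geq 2$, the scheme-theoretic intersection $Z \times_X Z$ (pulled back appropriately along the two factors of the diagonal-twisted action) fails to be homologically transverse, and this is the input to the failure. (ii) Translate finite generation of a left ideal of $R \otimes_\kk R$ into a statement about global sections of a sheaf on $X \times X$ (or on $X$, via the graph of $\sigma$), using that $R \otimes_\kk C$ is governed by sheaves on $X \times \Spec C$ when $C$ is commutative — here $C = R$ is not commutative, so I would instead argue directly at the level of graded pieces. (iii) Show that the graded pieces of $J$ (or of the quotient $(R\otimes R)/J$) have Hilbert function growing too fast to be finitely generated as a left module: the codimension-$\geq 2$ hypothesis means the ``defect'' $B/I$ and the torsion sheaves measuring non-transversality accumulate, and one gets an infinite strictly ascending chain of left ideals. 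This mirrors Rogalski's argument for part (5) of Theorem~\ref{thm-Ro} (the case $Z$ a point in $\PP^d$, $d \geq 2$), which is exactly the $\codim Z = d \geq 2$ instance, and one expects the same mechanism: the left-module generators of the relevant ideal are spread out over infinitely many degrees because left multiplication cannot ``see'' past the subscheme $Z$.

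I would expect the main obstacle to be step (ii)/(iii): controlling $R \otimes_\kk R$ when the second factor is itself a (non-noetherian-on-one-side) idealizer rather than a commutative ring. The clean sheaf-theoretic dictionary used elsewhere in the paper applies to $R \otimes_\kk C$ with $C$ commutative noetherian; for $R \otimes_\kk R$ one must either (a) build an auxiliary commutative ring $C$ that maps to $R$ and pull back the pathology — but the pathology genuinely lives on the left of $R$, which is not strongly left noetherian, so one cannot simply reduce to a commutative base — or (b) work by hand with the bigraded Hilbert function and exhibit the ascending chain explicitly. The honest route is (b): pin down explicit elements $f_n \in I$ of degree $n$ (sections vanishing on $Z$ but not on $\sigma^{-n}Z, \dots$) such that the left $(R\otimes R)$-ideal they generate strictly increases, using critical transversality in reverse — precisely because $Z$ is \emph{not} of pure codimension $1$, the transversality of $\sigma^n Z$ with $Z$ fails to give the cancellation that would make the chain stabilize. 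Making this ascending chain genuinely non-stabilizing, as opposed to merely not obviously stabilizing, is where the real work lies, and I would lean on the codimension count: a codimension-$\geq 2$ component contributes a $\operatorname{Tor}$ or local-cohomology term that does not vanish, and it is this nonvanishing that is propagated, degree by degree, into an infinite family of left-module generators.
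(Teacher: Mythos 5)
Your intuition about the mechanism is right---codimension $\geq 2$ means the ideal $J$ of $Z$ at its generic point is not principal, which is exactly what forces a higher $\shTor$ to be nonzero---but the route you propose (working by hand with bigraded pieces and exhibiting an explicitly non-stabilizing chain of left ideals in $R \otimes_{\kk} R$) is not what the paper does, and the obstacle you correctly flag is never overcome in your sketch. The missing idea is a reduction that sidesteps the bigraded bookkeeping entirely: put the $\ZZ$-grading $T_n = \sum_i R_i \otimes R_{n+i}$ on $T = R \otimes_{\kk} R$ and observe that if $T$ is left noetherian then so is $T_0 = R \segre R$, the Segre product. The crucial point is then that $T_0$ is \emph{itself a geometric idealizer}, namely $R(X \times X, \Lsh \boxtimes \Lsh, \sigma \times \sigma, Y)$ with $Y = (Z\times X) \cup (X \times Z)$, so all the machinery of the paper applies to it directly. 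Once you see this, you never need to control $R \otimes_{\kk} R$ by hand.

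With that reduction in hand, the non-noetherianity follows from Proposition~\ref{prop-not-leftnoeth}(1): one exhibits a $(\sigma\times\sigma)$-invariant closed subscheme that fails to be homologically transverse to $Y$, and the natural candidate is the diagonal $\Delta \subset X \times X$. Your ``$Z \times_X Z$'' is morally the intersection $Y \cap \Delta$, and the non-transversality is made precise by the intersection-multiplicity computation: working at the generic point $\eta$ of a codimension-$d \geq 2$ component $Z'$, the Serre intersection number $i(Y,\Delta;\eta)$ equals $2\len_\eta(\struct/J)$ by linearity, while $\len_\eta(\struct_Y \otimes \struct_\Delta) = \len_\eta(\struct/J^2) > 2\len_\eta(\struct/J)$ precisely because $J$ is not principal (this is a Nakayama argument). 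The discrepancy forces $\sum_{i \geq 1}(-1)^i \len_\eta \shTor_i^{X\times X}(\struct_Y, \struct_\Delta) < 0$, so some $\shTor_{\geq 1}$ is nonzero. Since $\Delta$ is $(\sigma\times\sigma)$-invariant and $\Lsh \boxtimes \Lsh$ is $(\sigma \times \sigma)$-ample, Proposition~\ref{prop-not-leftnoeth} gives that $T_0$ is not left noetherian, hence neither is $T$. Your approach, as written, would have to reprove something like Proposition~\ref{prop-not-leftnoeth} from scratch in a bigraded setting, and you say yourself that making the chain genuinely ascending is ``where the real work lies''---the Segre-product trick is what converts that open-ended work into an application of a result already proved.
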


One motivation for undertaking the investigations in this paper was to make progress on the classification of noncommutative projective surfaces:  graded noetherian domains of GK-dimension 3.  Even for the nicest possible such surfaces, those that are birational to a commutative surface in the sense of \cite{Artin1995}, current classification results such as \cite{RS} depend on technical conditions such as being generated in degree 1.   
Since ideally  a classification effort in noncommutative geometry would be {\em sui generis}, without requiring such conditions,  understanding idealizers in twisted homogeneous coordinate rings has important applications to classifying noncommutative surfaces.  In a future paper, we will give a full classification of birationally commutative projective surfaces, using many of the results in the current work.

 In the remainder of the introduction, we discuss the  geometry underlying  the   technical-looking definition of critical transversality.  We first explain the use of the term ``transverse.''   Let $Y$ and $Z$ be closed subschemes of $X$.  Recall \cite[p.~427]{Ha} Serre's definition of the intersection multiplicity of $Y$ and $Z$ along the proper  component $P$ of their intersection:
\[ i( Y,Z; P) = \sum_{i \geq 0} (-1)^i \len_P (\shTor_i^X (\struct_Y, \struct_Z)),\]
where $\len_P(\sh{F})$ is the length of $\sh{F}_P$ over the local ring $\struct_{X, P}$.  Thus if  $Y$ and $Z$ are homologically transverse,  their intersection multiplicity is given by the na\"ive formula that 
\[  i(Y, Z;P) = \len_P(\struct_Y \otimes_X \struct_Z).\]
That is, $ i(Y, Z;P)$ is, as we might hope, the length of the structure sheaf of the scheme-theoretic intersection of $Y$ and $Z$ over the local ring at $P$. 

Another way of phrasing the critical transversality of $\{ \sigma^n(Z)\}$ is  that for any $Y$, the general translate of $Z$ by a power of $\sigma$ is homologically transverse to $Y$. This sort of statement is clearly reminiscent of the Kleiman-Bertini theorem, and in fact the investigations in this chapter  have led to  a new, purely algebro-geometric,  generalization of this classical result; see \cite{S-KB}.    In this paper, we apply the results in \cite{S-KB} to obtain a simple criterion for the critical transversality of $\{ \sigma^n(Z)\}$ in many cases.  

\begin{theorem}\label{ithm-whenCT}
{\em (Theorem~\ref{thm-whenCT})}
Let $\kk$ be an algebraically closed field of characteristic 0, let $X$ be a variety  over $\kk$, let $Z$ be a closed subscheme of $X$, and let $\sigma$ be an element of an algebraic group $G$ that acts on  $X$.    
  Then $\{ \sigma^n Z\}$ is critically transverse if and only if $Z$ is homologically transverse to all reduced $\sigma$-invariant subschemes of $X$.
\end{theorem}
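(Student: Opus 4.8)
The statement is an equivalence whose two halves are very different in character, and the plan is to handle them separately. The forward implication is formal, and I would dispose of it first. Suppose $\{\sigma^n Z\}$ is critically transverse and let $Y$ be a reduced $\sigma$-invariant subscheme. Pulling back along the automorphism $\sigma^{-n}$ of $X$ gives, for every $j$,
\[ \shTor^X_j(\struct_{\sigma^n Z},\struct_Y)\;\cong\;\shTor^X_j(\struct_Z,\struct_{\sigma^{-n}Y})\;=\;\shTor^X_j(\struct_Z,\struct_Y), \]
because $\sigma^{-n}Y=Y$. Thus ``$\sigma^n Z$ is homologically transverse to $Y$'' is independent of $n$; critical transversality makes it true for all but finitely many $n$, hence for $n=0$. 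This direction uses neither $\chrr\kk=0$ nor the algebraic group.

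For the converse, assume $Z$ is homologically transverse to every reduced $\sigma$-invariant subscheme, fix a closed subscheme $Y\subseteq X$, and aim to show $\sigma^n Z$ is homologically transverse to $Y$ for all but finitely many $n$. First I would replace the given $G$ by $H:=\overline{\langle\sigma\rangle}$, the Zariski closure of the cyclic group generated by $\sigma$: since the stabilizer of a reduced subscheme is Zariski closed, the reduced $\sigma$-invariant subschemes are exactly the reduced $H$-invariant ones, and $H$ is a commutative algebraic group with finite component group. The main engine is then the Kleiman--Bertini theorem of \cite{S-KB}: since $Z$ is homologically transverse to all reduced $H$-invariant subschemes --- in particular to the reductions of the $H$-orbit closures --- the ``bad set'' $B:=\{\,h\in H: hZ\text{ is not homologically transverse to }Y\,\}$ is a \emph{proper} closed subset of $H$ (closedness being the statement that homological transversality is an open condition in the flat family $\{hZ\}_{h\in H}$; properness being the content of \cite{S-KB}). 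It therefore suffices to show that $T:=\{\,n\in\ZZ:\sigma^n\in B\,\}$ is finite.

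Suppose $T$ is infinite. By a Mordell--Lang-type finiteness result for the cyclic subgroup $\langle\sigma\rangle$ of the commutative algebraic group $H$ (valid in characteristic $0$), $\langle\sigma\rangle\cap B$ is a finite union of cosets of subgroups of $\langle\sigma\rangle$, one of which must be infinite; translating exponents, this yields $a\in\ZZ$ and $d\geq1$ such that, writing $\tau=\sigma^d$ and $Y_0=\sigma^{-a}Y$, one has $\tau^m Z$ not homologically transverse to $Y_0$ for every $m\in\ZZ$. Now $\{\tau^m\}_{m\in\ZZ}$ is Zariski dense in $H':=\overline{\langle\tau\rangle}$, a finite-index subgroup of $H$; so the bad set for the $H'$-action with respect to $Y_0$, being closed and containing this dense set, equals all of $H'$. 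By the contrapositive of \cite{S-KB} applied to the $H'$-action, $Z$ fails to be homologically transverse to some reduced $H'$-invariant subscheme $W$. Finally, since a suitable power $\sigma^f$ lies in $H'$, the finite union $\widetilde W:=\bigcup_{j=0}^{f-1}\sigma^j W$ is a reduced, honestly $\sigma$-invariant subscheme, and I would show that $Z$ is still not homologically transverse to $\widetilde W$ --- contradicting the hypothesis, and finishing the proof.

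I expect the last step to be the crux: passing from the failure of homological transversality for $W$ to its failure for $\widetilde W$, given that $W$ is only a union of \emph{some} of the irreducible components of $\widetilde W$. Handling this seems to require understanding how $\shTor^X(\struct_Z,-)$ interacts with unions of components --- say, reducing via a Mayer--Vietoris argument over the components to the case of irreducible $W$ and then arranging that a point witnessing the failure lies outside the remaining components of $\widetilde W$ (where $\widetilde W$ and $W$ agree locally), probably organized as an induction on $\dim W$ --- or else extracting from \cite{S-KB} a form of the Kleiman--Bertini theorem that produces a genuinely $\sigma$-invariant witness directly. The Kleiman--Bertini theorem of \cite{S-KB}, the openness of homological transversality in flat families, and the Mordell--Lang input are all used as black boxes; the rest is bookkeeping with the component group of $\overline{\langle\sigma\rangle}$.
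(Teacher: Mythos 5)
Your forward implication is fine and matches the paper. Your converse starts the same way as the paper (replace $G$ by $H=\overline{\langle\sigma\rangle}$, invoke the Kleiman--Bertini theorem of \cite{S-KB} to get that the bad locus for a fixed $Y$ is contained in a proper closed subset of $H$), but then you diverge, and the divergence both complicates matters and opens a genuine gap.

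The paper at this point simply applies the Cutkosky--Srinivas theorem (Theorem~\ref{thm-CS}): writing $H^o$ for the identity component and $c$ for the index, $\langle\sigma^c\rangle$ is dense in $H^o$, so every infinite subset of $\langle\sigma^c\rangle$ is dense; hence a proper closed subset of $H^o$ can meet $\langle\sigma^c\rangle$ only in a finite set, and working component by component finishes the argument. No contradiction, no Mordell--Lang, no detour. Your proof instead supposes $T=\{n:\sigma^n\in B\}$ infinite, invokes an unsourced ``Mordell--Lang-type'' finiteness to extract an infinite coset (note that the classical Mordell--Lang theorem is stated for semiabelian varieties, whereas $H$ may have $\mathbb{G}_a$-factors; the right reference here is precisely Cutkosky--Srinivas, which you never use), passes to $H'=\overline{\langle\tau\rangle}$, re-applies \cite{S-KB} there to produce a reduced $H'$-invariant $W$ with $Z$ not homologically transverse to $W$, and then wants to deduce that $Z$ is not homologically transverse to $\widetilde W=\bigcup_{j<f}\sigma^j W$.

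That last step is a real gap, and not merely an unpolished detail. The implication ``$Z$ not homologically transverse to $W$, and $W$ a union of components of $\widetilde W$'' $\Rightarrow$ ``$Z$ not homologically transverse to $\widetilde W$'' is false in general: adding components to $W$ can repair the failure of transversality. For instance, if $W$ is a line in $\mathbb{A}^3$ contained in a hyperplane $\sigma W$, then $\widetilde W=\sigma W$ is a divisor, and a second line $Z$ meeting $W$ only at a point is not homologically transverse to $W$ (two lines through a point in a threefold) yet is homologically transverse to the containing hyperplane $\widetilde W$. Since $H'$-orbit closures need not be irreducible and the translates $\sigma^j W$ can nest or overlap, you cannot rule this out without substantial extra work. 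The Mayer--Vietoris or induction-on-dimension repair you sketch is not obviously available. By contrast, the paper's direct Cutkosky--Srinivas argument never leaves $H$, never produces an auxiliary $W$, and thus never encounters this problem. I would replace the entire contradiction argument after ``$B$ is contained in a proper closed subset of $H$'' with the paper's component-by-component density argument.
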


It is reasonable to say that $\sigma$ and $Z$ are in {\em general position} if $Z$ is homologically transverse to $\sigma$-invariant subschemes of $X$.  Thus, in characteristic 0 and if $\sigma$ is an element of an algebraic group, Theorems~\ref{ithm-whenCT} and \ref{thm-idealizermain} together imply that if $\sigma$ and $Z$ are in general position, then $R(X, \Lsh,\sigma, Z)$ has the properties enumerated in Theorem~\ref{thm-idealizermain}.

{\bf Acknowledgements.}  This paper was completed as part of the author's Ph.D. thesis at the University of Michigan, under the supervision of J. T. Stafford.  During its writing, the author was partially supported  by NSF grants  DMS-0802935, DMS-0555750, and  DMS-0502170, and by a Rackham Pre-Doctoral Fellowship from the University of Michigan.  The author would like to thank Hailong Dao, Mel Hochster, and Dan Rogalski  for many informative conversations.

 \section{Definitions}\label{BACKGROUND}
 One  goal of this paper is to place the results in \cite{R-idealizer} in a more geometric context.  In that sense, this paper may be viewed as analogous to  \cite{KRS}, which defines na\"ive blowup algebras and gives a geometric construction of the algebras investigated by Rogalski in \cite{R-generic}.  As in \cite{KRS}, one of our main techniques will be to work, not with the ring $R(X, \Lsh, \sigma, Z)$, but with an associated quasi-coherent sheaf on $X$.  This object is known as a {\em bimodule algebra}, and is, roughly speaking, a sheaf with multiplicative structure.

 In this section, we give the definitions and notation to allow us to work with  bimodule algebras.  Most of the material in this section was developed in \cite{VdB1996} and  \cite{AV}, and we refer the reader there for references.  We will not work in full generality, however, and our presentation will follow that in \cite[Section~2]{KRS}.
 
 Throughout this paper, let $\kk$ be a fixed algebraically closed field; all schemes are of finite type over $\kk$.

A bimodule algebra on a variety $X$ is, roughly speaking, a quasicoherent sheaf with a multiplicative structure.  

\begin{defn}\label{def-bimod}
Let $X$ be a variety; that is, a projective integral separated scheme (of finite type over $\kk$).  An {\em $\struct_X$-bimodule}  is  a quasicoherent $\struct_{X \times X}$-module $\sh{F}$, such that for every coherent $\sh{F}' \subseteq \sh{F}$,  the projection maps $p_1, p_2: \Supp \sh{F}' \to X$ are both finite morphisms.    The left and right $\struct_X$-module structures associated to an $\struct_X$-bimodule $\sh{F}$ are defined respectively as $(p_1)_* \sh{F}$ and $ (p_2)_* \sh{F}$.  
We make the notational convention that when we refer to an $\struct_X$-bimodule simply as an $\struct_X$-module, we are using the left-handed structure (for example, when we refer to the global sections or higher cohomology
 of an $\struct_X$-bimodule).

There is a tensor product operation on the category of bimodules that has the expected properties; see \cite[Section~2]{VdB1996}.
\end{defn}

All the bimodules that we consider will be constructed from bimodules of the following form:
\begin{defn}\label{def-LR-structure}
Let $X$ be a variety and let $\sigma, \tau \in \Aut_{\kk}(X)$. Let $(\sigma, \tau)$ denote the map
\begin{align*}
X & \to X \times X \\
x & \mapsto (\sigma(x), \tau(x)).
\end{align*}
If  $\sh{F}$ is a quasicoherent sheaf on $X$, we define the $\struct_X$-bimodule ${}_{\sigma} \sh{F}_{\tau}$ to be 
\[ {}_{\sigma} \sh{F}_{\tau} = (\sigma, \tau)_* \sh{F}.\]\index{F@${}_{\sigma}\sh{F}_{\tau}$}
If $\sigma = 1$ is the identity, we will often omit it; thus we write $\sh{F}_{\tau}$ for ${}_1 \sh{F}_{\tau}$ and $\sh{F}$ for  the  $\struct_X$-bimodule ${}_1 \sh{F}_1 = \Delta_* \sh{F}$, where $\Delta: X \to X\times X$ is the diagonal.
\end{defn}

We quote a  lemma that  shows how to work with bimodules of the form ${}_{\sigma} \sh{F}_{\tau}$, and, in particular, how to form their tensor product.   
If $\sigma$ is an automorphism of $X$ and $\sh{F}$ is a sheaf on $X$, recall the notation that $\sh{F}^{\sigma} = \sigma^*\sh{F}$.\index{F@$\sh{F}^{\sigma}$}  Thus $\sigma$ acts on functions by sending $f$ to $f^\sigma = f \circ \sigma$.

\begin{lemma}\label{lem-bimods}
{\em (\cite[Lemma~2.3]{KRS})}
Let $X$ be a variety, let $\sh{F}$, $\sh{G}$ be coherent $\struct_X$-modules, and let $\sigma, \tau \in \Aut_{\kk} X$.

$(1)$ ${}_{\tau} \sh{F}_{\sigma} \cong  (\sh{F}^{\tau^{-1}})_{\sigma \tau^{-1}}$.

$(2)$  $\sh{F}_{\sigma} \otimes \sh{G}_{\tau} \cong (\sh{F} \otimes \sh{G}^{\sigma})_{\tau \sigma}$.

$(3)$ In particular, if $\Lsh$ is an invertible sheaf on $X$, then  $\Lsh_{\sigma}^{\otimes n} = (\Lsh_n)_{\sigma^n}$.  \qed
\end{lemma}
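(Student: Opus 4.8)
The plan is to obtain (1) from a factorisation of morphisms, (2) from the definition of the bimodule tensor product via a support computation on $X\times X\times X$, and (3) from (2) by induction. For (1): given $\sigma,\tau\in\Aut_{\kk}X$, the graph morphism $(\tau,\sigma)\colon X\to X\times X$ factors as $(\tau,\sigma)=(\id,\sigma\tau^{-1})\circ\tau$, since $x\mapsto\tau(x)\mapsto(\tau(x),\sigma\tau^{-1}\tau(x))=(\tau(x),\sigma(x))$. Applying $(-)_{*}$, using functoriality of pushforward together with the identity $\tau_{*}\sh{G}=(\tau^{-1})^{*}\sh{G}=\sh{G}^{\tau^{-1}}$ for an automorphism $\tau$, gives
\[ {}_{\tau}\sh{F}_{\sigma}=(\tau,\sigma)_{*}\sh{F}=(\id,\sigma\tau^{-1})_{*}\,\tau_{*}\sh{F}=(\sh{F}^{\tau^{-1}})_{\sigma\tau^{-1}}, \]
which is (1).

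For (2) I would unwind the definition of the bimodule tensor product $\sh{F}_{\sigma}\otimes\sh{G}_{\tau}$ on $X\times X\times X$: pull $\sh{F}_{\sigma}=(\id,\sigma)_{*}\sh{F}$ back along $p_{12}$, pull $\sh{G}_{\tau}=(\id,\tau)_{*}\sh{G}$ back along $p_{23}$, tensor over $\struct_{X\times X\times X}$, and push forward along $p_{13}$. These pullbacks are supported on the closed subschemes $\{(x,\sigma x,z)\}$ and $\{(w,y,\tau y)\}$, whose intersection is the single graph $\Gamma=\{(x,\sigma x,\tau\sigma x):x\in X\}\cong X$; on $\Gamma$ the projections $p_{12},p_{23},p_{13}$ restrict to the closed immersions $(\id,\sigma)$, $(\id,\tau)\circ\sigma$ and $(\id,\tau\sigma)$. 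Restricting the two pullbacks to $\Gamma$ therefore identifies them with $\sh{F}$ and with $\sigma^{*}\sh{G}=\sh{G}^{\sigma}$ respectively; their tensor product over $\struct_{X}$ is $\sh{F}\otimes\sh{G}^{\sigma}$, and pushing this forward along $p_{13}|_{\Gamma}=(\id,\tau\sigma)$ gives $(\sh{F}\otimes\sh{G}^{\sigma})_{\tau\sigma}$, as claimed.

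Part (3) then follows by induction on $n$: the case $n=1$ is the tautology $\Lsh_{\sigma}=(\Lsh_{1})_{\sigma}$, and for the inductive step $\Lsh_{\sigma}^{\otimes(n+1)}=\Lsh_{\sigma}^{\otimes n}\otimes\Lsh_{\sigma}\cong(\Lsh_{n})_{\sigma^{n}}\otimes\Lsh_{\sigma}$, to which (2) applies with $\sh{F}=\Lsh_{n}$, $\sh{G}=\Lsh$ and the pair $(\sigma,\tau)$ replaced by $(\sigma^{n},\sigma)$, giving $(\Lsh_{n}\otimes\Lsh^{\sigma^{n}})_{\sigma^{n+1}}=(\Lsh_{n+1})_{\sigma^{n+1}}$. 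The step that needs care is the claim in (2) that the bimodule tensor product is computed by this naive restriction to $\Gamma$ --- that the relevant higher $\shTor$-sheaves on $X\times X\times X$ vanish and that $p_{13}$ restricts to an isomorphism onto a graph on the support of the tensor product. This is exactly where one uses that every support in sight is the graph of an automorphism, so that $p_{12},p_{23},p_{13}$ restrict to isomorphisms and the two subschemes of $X\times X\times X$ meet in the expected codimension; one either checks the $\shTor$-vanishing by a local computation or quotes the corresponding compatibility of the bimodule tensor product from \cite{VdB1996}.
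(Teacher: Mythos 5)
The paper does not supply its own proof of this lemma; it is quoted verbatim with a citation to \cite[Lemma~2.3]{KRS}, so there is no internal argument to compare against. Your proof is correct and complete modulo the one issue you yourself flag, which does in fact hold. Parts (1) and (3) are clean. In part (2), you correctly reduce to the definition of the bimodule tensor product on $X\times X\times X$ as $(p_{13})_*\bigl(p_{12}^*\sh{F}_{\sigma}\otimes p_{23}^*\sh{G}_{\tau}\bigr)$, identify the two supports as the graphs $\{(x,\sigma x,z)\}$ and $\{(w,y,\tau y)\}$ meeting in $\Gamma=\{(x,\sigma x,\tau\sigma x)\}\cong X$, and observe that what needs checking is that the tensor product really is computed by the naive restriction to $\Gamma$. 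Concretely, writing $p_{12}^*\sh{F}_{\sigma}\cong j_*(\sh{F}\boxtimes\struct_X)$ and $p_{23}^*\sh{G}_{\tau}\cong k_*(\struct_X\boxtimes\sh{G})$ by flat base change, one uses the projection formula and then the base-change identity $j^*k_*(-)\cong\iota_*\kappa^*(-)$ along the Cartesian square with vertex $\Gamma$, which holds precisely when the two supports are Tor-independent over $\struct_{X\times X\times X}$. This is true: the automorphism $(x,y,z)\mapsto(x,\sigma^{-1}y,\sigma^{-1}\tau^{-1}z)$ carries the two supports to $\Delta_{12}\times X$ and $X\times\Delta_{23}$, and locally (with $B=A\otimes A\otimes A$) the quotient $B/J\cong A\otimes A$ by the ideal of $X\times\Delta_{23}$ is free over the subring $A_1\otimes A_2$, so tensoring a free $A_1\otimes A_2$-resolution of the diagonal by $A_3$ and then by $B/J$ returns that same resolution and the higher $\shTor$ sheaves vanish. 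So your proof is sound; the only thing to add is this short verification of Tor-independence, or, as you suggest, a citation to the corresponding compatibility in \cite{VdB1996}.
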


\begin{defn}\label{def-BMA}
Let $X$ be a variety and let $\sigma \in \Aut_{\kk} X$.  
An {\em $\struct_X$-bimodule algebra}, or simply a {\em bimodule algebra}, $\sh{B}$ is an algebra object in the category of bimodules.  That is, there is a unit map $1: \struct_X \to \sh{B}$ and a product map $\mu: \sh{B} \otimes \sh{B} \to \sh{B}$ that have the usual properties.
\end{defn}

We follow \cite{KRS} and define
\begin{defn}\label{def-gradedBMA}\index{graded bimodule algebra}
Let $X$ be a variety and let $\sigma \in \Aut_{\kk} X$.  
A bimodule algebra $\sh{B}$ is a {\em graded $(\struct_X, \sigma)$-bimodule algebra} if:

(1) There are coherent sheaves $\sh{B}_n$ on $X$ such that
\[ \sh{B} = \bigoplus_{n \in \ZZ} {}_1(\sh{B}_n)_{\sigma^n};\]

(2)  $\sh{B}_0 = \struct_X $;

(3) the multiplication map $\mu$ is given by $\struct_X$-module maps
$\sh{B}_n \otimes \sh{B}_m^{\sigma^n} \to \sh{B}_{n+m}$, satisfying the obvious associativity conditions.  
\end{defn}

\begin{defn}\label{def-module}\index{right $\sh{B}$-module}
Let $X$ be a variety and let $\sigma \in \Aut_{\kk} X$.  
Let $\sh{R}$ be a graded $(\struct_X, \sigma)$-bimodule algebra.  A {\em right $\sh{R}$-module} $\sh{M}$ is a quasicoherent $\struct_X$-module $\sh{M}$ together with a right $\struct_X$-module map $\mu: \sh{M} \otimes \sh{R} \to \sh{M}$ satisfying the usual axioms.  We say that $\sh{M}$ is {\em graded} if there is a direct sum decomposition 
\[\sh{M} = \bigoplus_{n \in \ZZ}  (\sh{M}_n)_{\sigma^n}\]
 with multiplication giving a family of $\struct_X$-module maps $\sh{M}_n \otimes \sh{R}_m^{\sigma^n} \to \sh{M}_{n+m}$, obeying the appropriate axioms.
 
  We say that $\sh{M}$ is {\em coherent} if there are a coherent $\struct_X$-module $\sh{M}'$ and a surjective map $\sh{M}' \otimes \sh{R} \to \sh{M}$ of ungraded $\struct_X$-modules.  
We make similar definitions for left $\sh{R}$-modules.  The bimodule algebra $\sh{R}$ is {\em right (left) noetherian}
if every right (left) ideal of $\sh{R}$ is coherent.  By standard arguments, a graded $(\struct_X, \sigma)$-bimodule algebra is right (left) noetherian if and only if every graded right (left) ideal is coherent.
\end{defn}

We alert the reader to a technicality of notation.  Suppose that $\sh{R}$ is a bimodule algebra on $X$,  that $\sh{M}$ is a right $\sh{R}$-module, and that $\sh{F}$ is a subsheaf of $\sh{M}$.  We will denote the image of $\sh{F} \otimes \sh{R}$ under the multiplication map 
\[ \xymatrix{ 
\sh{F} \otimes \sh{R} \ar[r] &  \sh{M} \otimes \sh{R} \ar[r]^{\mu} & \sh{M}}\]
by $\sh{F} \cdot \sh{R}$.  In the case that $\sh{F} \subseteq \struct_X$ is also an ideal sheaf on $X$, to avoid ambiguity we will denote the image of $\sh{F} \otimes \sh{R}$ under the canonical map
\[ 
\xymatrix{ \sh{F} \otimes \sh{R} \ar[r] &  \struct_X \otimes \sh{R} = \sh{R}}\]
by $\sh{FR}$.

Coherence for $\sh{R}$-modules should be viewed as analogous to finite generation, but it is unknown whether, for a  general noetherian bimodule algebra, every submodule of a coherent module is coherent!   Fortunately, in our situation  the usual intuitions do hold.  We restate \cite[Proposition~2.10]{KRS} as:

\begin{lemma}\label{lem-noeth}
Let $X$ be a variety and let $\sigma \in \Aut_{\kk} X$.  
Let $\sh{R} = \bigoplus_{n \in \ZZ} (\sh{R}_n)_{\sigma^n}$ be a graded $(\struct_X, \sigma)$-bimodule algebra  such that each $\sh{R}_n$ is a subsheaf of an invertible sheaf.  Then
$\mathcal{R}$ is right (left) noetherian if and only if all submodules of coherent right (left) $\mathcal{R}$-modules are coherent. \qed
\end{lemma}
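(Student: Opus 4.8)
The ``if'' direction is trivial: $\sh{R}$ is a coherent right module over itself, so if every submodule of a coherent right $\sh{R}$-module is coherent, then in particular every right ideal of $\sh{R}$ is coherent, i.e.\ $\sh{R}$ is right noetherian. The left-handed statement is symmetric, so from now on I assume $\sh{R}$ is right noetherian and prove that every submodule of a coherent right $\sh{R}$-module is coherent.

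The plan is to reduce, by a sequence of formal moves, to a case where the classical commutative argument (``submodules of a finite free module over a noetherian ring are finitely generated'') applies verbatim. First, it suffices to prove the statement for modules of the form $\sh{F}\otimes\sh{R}$ with $\sh{F}$ a coherent $\struct_X$-module in degree $0$: given a coherent $\sh{M}$, choose a surjection $\pi\colon\sh{F}\otimes\sh{R}\twoheadrightarrow\sh{M}$ with $\sh{F}$ coherent; for $\sh{N}\subseteq\sh{M}$ the preimage $\pi^{-1}(\sh{N})$ is a submodule of $\sh{F}\otimes\sh{R}$, and if it is coherent so is $\sh{N}=\pi(\pi^{-1}(\sh{N}))$. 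Two formal facts will be used repeatedly: (i) an extension of coherent right $\sh{R}$-modules is coherent --- in $0\to\sh{A}\to\sh{B}\to\sh{C}\to 0$ with $\sh{A},\sh{C}$ coherent, a coherent generating subsheaf of $\sh{C}$ lifts to a coherent subsheaf of $\sh{B}$ since every quasicoherent sheaf on $X$ is the directed union of its coherent subsheaves, and this together with a generating subsheaf of $\sh{A}$ generates $\sh{B}$; and (ii) consequently, if all submodules of $\sh{A}$ are coherent then the same holds for $\sh{A}^{\oplus r}$, by induction on $r$ (project off a summand; kernel and image are submodules of $\sh{A}^{\oplus(r-1)}$ and of $\sh{A}$).

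Next I argue by Noetherian induction on $d=\dim\Supp\sh{F}$. Filtering $\sh{F}$, I may assume $\sh{F}$ is torsion-free on an integral closed subscheme $Y\subseteq X$ with $\dim Y=d$, since the successive quotients of strictly smaller-dimensional support are covered by the inductive hypothesis (via facts (i) and (ii)). Passing to the reflexive hull and dualizing, such an $\sh{F}$ of rank $r$ embeds into $\struct_X(m)|_Y^{\oplus r}$ for $m\gg 0$ with cokernel $\sh{Q}$ supported in dimension $<d$; and $\struct_X(m)|_Y$, being globally generated on $Y$ and $\struct_Y$ being a quotient of $\struct_X$, is a quotient of $\struct_X^{\oplus n}$, so $\struct_X(m)|_Y\otimes_{\struct_X}\sh{R}$ is a quotient of the \emph{finite free} module $\sh{R}^{\oplus n}$. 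Tensoring the embedding with $\sh{R}$ gives an exact sequence whose right-hand term is a finite direct sum of such quotients of finite free modules --- for which a pulled-back submodule is coherent by fact (ii) and the noetherianity of $\sh{R}$, exactly as in the commutative case --- and whose kernel is an image of the sheaves $\shTor^X_1(\sh{Q},\sh{R}_n)$ (computed using Lemma~\ref{lem-bimods}), a coherent $\sh{R}$-module supported in dimension $<d$, hence with coherent submodules by the inductive hypothesis. Assembling the pieces with facts (i) and (ii) closes the induction.

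The hard part, and the point at which the hypothesis that each $\sh{R}_n$ embeds in an invertible sheaf is essential, is the bookkeeping in the inductive step: one must verify that every auxiliary module appearing --- the torsion pieces split off from $\sh{F}$, and especially the kernels $\im\shTor^X_1(\sh{Q},\sh{R}_n)$ of the comparison maps --- is a coherent $\sh{R}$-module whose support really does drop in dimension. Because the $\sh{R}_n$ sit inside invertible (hence flat) sheaves, tensoring a sheaf supported in codimension $\ge 1$ with $\sh{R}$ produces only torsion supported on a \emph{strictly smaller} subscheme, which is what makes the induction go; without such control the argument fails, consistently with the fact recalled before the statement that over a general noetherian bimodule algebra a submodule of a coherent module need not be coherent.
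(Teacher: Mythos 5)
The paper restates this result from \cite{KRS} without giving its own proof, so there is no argument in the text to compare against; I am evaluating your proof on its own terms. The overall framework --- the trivial ``if'' direction, the reduction to $\sh F\otimes\sh R$, facts (i)--(ii), and the filtration down to a torsion-free sheaf on an integral $Y$ --- is sound. The problem is the final step of the induction. You need the kernel $\sh K$ of $\sh F\otimes\sh R\to(\struct_X(m)|_Y)^{\oplus r}\otimes\sh R$ to be a \emph{coherent} $\sh R$-module (once it is, one takes a coherent $\struct_X$-subsheaf $\sh M'\subseteq\sh K$ generating it, which automatically has $\dim\Supp\sh M'<d$, and then the inductive hypothesis does apply to $\sh K$ as a quotient of $\sh M'\otimes\sh R$). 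But you have no way to establish coherence of $\sh K$: it is the kernel of a map of coherent modules, and that kernels of such maps are coherent is precisely the content of the lemma. Realizing $\shTor^X_1(\sh Q,\sh R)$ as homology of $\sh P_\bullet\otimes\sh R$ for a locally free resolution $\sh P_\bullet\to\sh Q$ does not help: one would need subquotients of $\sh P_1\otimes\sh R$ to be coherent, which is again the statement under proof. So the phrase ``a coherent $\sh R$-module\ldots hence with coherent submodules by the inductive hypothesis'' is circular, and the induction does not close.

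The fix is to drop the dimension induction (and the reflexive hull and the $\shTor$ bookkeeping) and use the hypothesis $\sh R_n\subseteq\Lsh_n$ earlier and more directly; this also pinpoints exactly where it is essential. Every coherent $\sh F$ is a quotient of a finite direct sum of invertible sheaves (for $m'\gg 0$, $\sh F(m')$ is globally generated, so $\sh F$ is a quotient of $\struct_X(-m')^{\oplus s}$); by facts (i) and (ii) it therefore suffices to show submodules of $\sh H\otimes\sh R$ are coherent for a single invertible $\sh H$. Embed $\sh H\hookrightarrow\struct_X(m)$ for $m\gg 0$. Because $\sh H$ and $\struct_X(m)$ are invertible (hence flat), the composite $\sh H\otimes\sh R_n\hookrightarrow\sh H\otimes\Lsh_n\hookrightarrow\struct_X(m)\otimes\Lsh_n$ is injective for every $n$ and factors through $\struct_X(m)\otimes\sh R_n$; hence $\sh H\otimes\sh R_n\hookrightarrow\struct_X(m)\otimes\sh R_n$, and so $\sh H\otimes\sh R\hookrightarrow\struct_X(m)\otimes\sh R$ as right $\sh R$-modules. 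This is exactly where $\sh R_n\subseteq\Lsh_n$ earns its keep: it forces the would-be $\shTor_1$ kernel to vanish in the only case ($\sh F$ a line bundle) one actually needs to control. Since $\struct_X(m)$ is globally generated, $\struct_X(m)\otimes\sh R$ is a quotient of $\sh R^{\oplus n}$, whose submodules are coherent by right noetherianity of $\sh R$ and fact (ii); this passes to the submodule $\sh H\otimes\sh R$ and all its submodules. (Equivalently, one can invoke the twist autoequivalence $\sh H_1\otimes{-}$ as in the proof of Lemma~\ref{lem-catequiv}(2), which identifies submodules of $\sh H\otimes\sh R$ with right ideals of $\sh R$.)
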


We recall here some standard notation on module categories over rings and bimodule algebras.  Let $R$ be an $\NN$-graded ring.  We define $\rGr R$ to be the category of $\ZZ$-graded right $R$-modules; morphisms in $\rGr R$ preserve degree.   Let $\rTors R$ be the full subcategory of modules that are direct limits of finite-dimensional modules.  This is a Serre subcategory of $\rGr R$, so we may form the {\em quotient category}
\[ \rQgr R := \rGr R / \rTors R.\]
(We refer the reader to \cite{Gabriel1962} as a reference for the category theory used here.)  There is a canonical {\em quotient functor} $\pi: \rGr R \to \rQgr R$.  

We make similar definitions on the left.   Further, throughout this paper, we adopt the convention that if Xyz is a category, then xyz is the full subcategory of noetherian objects.  Thus we have $\rgr R$ and $\rqgr R = \pi (\rgr R)$, $R \lqgr$, etc.  If $X$ is  a variety, we will denote the category of quasicoherent (respectively coherent) sheaves on $X$ by $\struct_X \lMod$ (respectively $\struct_X \lmod$).  

Given a module $M \in \rgr R$, we define $M[n] = \bigoplus_{n \in \ZZ} M[n]_i$, where
\[ M[n]_i = M_{n+i}.\]
If $M, N \in \rgr R$, let 
\[ \underline{\Hom}_{\rgr R} (M, N) = \bigoplus_{n \in \ZZ} \Hom_{\rgr R}(M, N [n]).\]
Similarly, if $\sh{M}, \sh{N} \in \rqgr R$, we define
\[ \underline{\Hom}_{\rqgr R} (\sh{M}, \sh{N}) = \bigoplus_{n \in \ZZ} \Hom_{\rqgr R}(\sh{M}, \sh{N} [n]).\]
The $\underline{\Hom}$ functors have derived functors  $\underline{\Ext}_{\rgr R}$ and $\underline{\Ext}_{\rqgr R}$, defined in the obvious way.  

For a graded $(\struct_X, \sigma)$-bimodule algebra $\sh{R}$, we likewise define $\rGr \sh{R}$ and $\rgr \sh{R}$.  The full subcategory  $\rTors \sh{R}$ of $\rGr \sh{R}$ consists of direct limits of modules that are  coherent as $\struct_X$-modules, and we similarly define
\[ \rQgr \sh{R} := \rGr \sh{R}/\rTors \sh{R}.\]
We define $\rqgr \sh{R}$ in the obvious way.

If $\sh{R}$ is a graded $(\struct_X, \sigma)$-bimodule algebra, we may form its {\em section algebra}
\[ H^0(X, \sh{R})  = \bigoplus_{n\geq 0}H^0(X, \sh{R}_n).\]
Multiplication on $H^0 (X, \sh{R})$ is induced from the multiplication map $\mu$ on $\sh{R}$; that is, from the maps
\[ \xymatrix{
H^0(X, \sh{R}_n) \otimes H^0(X, \sh{R}_m) \ar[r]^{1\otimes \sigma^n} 
&  H^0(X, \sh{R}_n) \otimes H^0(X, \sh{R}_m^{\sigma^n}) \ar[r]^<<<<<{\mu} 
& H^0(X, \sh{R}_{n+m}).
}\]

  If $\sh{M}$ is a graded right $\sh{R}$-module, then 
\[ H^0(X, \sh{M}) = \bigoplus_{n \in \ZZ} H^0(X, \sh{M}_n)\]  
 is a right $H^0(X, \sh{R})$-module in the obvious way; thus $H^0(X, \blank)$ is a functor from $\rGr \sh{R}$ to $\rGr H^0(X, \sh{R})$.  

 If  $R = H^0(X, \sh{R})$ and $M$ is  a graded right $R$-module, define 
$M \otimes_R \sh{R}$ to be the sheaf associated to the presheaf $V \mapsto M \otimes_R \sh{R}(V)$.   This is  a graded right $\sh{R}$-module, and the functor $\blank \otimes_R \sh{R}: \rGr R \to \rGr \sh{R}$ is a right adjoint to $H^0(X, \blank)$.

The fundamental result on when one can more closely relate $\rGr \sh{R}$ and $\rGr R$ is due to Van den Bergh.  We first give a definition:

\begin{defn}\label{def-ample}\index{ample@(left, right) ample sequence}
Let $X$ be a projective variety, let $\sigma \in \Aut_{\kk} X$, and let  $\{\sh{R}_n\}_{n \in \NN}$ be a sequence of coherent sheaves  on $X$.  The sequence of bimodules $\{(\sh{R}_n)_{\sigma^n}\}_{n \in \NN}$ is {\em right ample} if for any coherent $\struct_X$-module $\sh{F}$, the following properties hold:
 
 (i)   $\sh{F} \otimes \sh{R}_n$ is globally generated for $n \gg 0$;
 
 (ii) $H^q(X, \sh{F} \otimes \sh{R}_n) = 0$  for $n \gg 0$ and all $q \geq 1$.  
 
\noindent The sequence $\{ (\sh{R}_n)_{\sigma^n}\}_{n \in \NN}$  is {\em left ample}  if for any coherent $\struct_X$-module $\sh{F}$, the following properties hold:
 
 (i)  $\sh{R}_n \otimes \sh{F}^{\sigma^n}$ is globally generated for $n \gg 0$;
 
 (ii) $H^q(X, \sh{R}_n \otimes \sh{F}^{\sigma^n}) = 0$  for $n \gg 0$ and all $q \geq 1$.

We say that an invertible  sheaf $\Lsh$ is {\em $\sigma$-ample}\index{sigma@$\sigma$-ample} if the $\struct_X$-bimodules 
\[\{(\Lsh_n)_{\sigma^n}\}_{n \in \NN} = \{ \Lsh_\sigma^{\otimes n}\}_{n \in \NN}\]
 form a right ample sequence.  By  \cite[Theorem~1.2]{Keeler2000}, this is true if and only if the $\struct_X$-bimodules $\{(\Lsh_n)_{\sigma^n}\}_{n \in \NN}$ form a left ample sequence.
\end{defn}

The following result is a special case of a result due to Van den Bergh \cite[Theorem~5.2]{VdB1996}, although we follow the presentation of  \cite[Theorem~2.12]{KRS}:

\begin{theorem}\label{thm-VdBSerre}
{\em (Van den Bergh)}
Let $X$ be a projective scheme and let $\sigma$ be an automorphism of $X$.  Let $\sh{R} = \bigoplus (\sh{R}_n)_{\sigma^n}$ be a right noetherian graded $(\struct_X, \sigma)$-bimodule algebra, such that the bimodules  $\{(\sh{R}_n)_{\sigma^n}\}$ form a right ample sequence.   Then $R = H^0(X, \sh{R})$ is also right noetherian, and the functors $H^0(X, \blank)$ and $\blank \otimes_R \sh{R}$ induce an equivalence of categories
\[ \rqgr \sh{R} \simeq \rqgr R.\] 
\qed
\end{theorem}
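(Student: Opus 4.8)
The plan is to adapt the proof of Serre's theorem relating coherent sheaves on a projective scheme to graded modules over its homogeneous coordinate ring --- in the graded noncommutative form due to Artin and Zhang --- to the bimodule-algebra setting. As recalled above, $H^0(X,\blank)\colon \rGr \sh{R} \to \rGr R$ and $\blank \otimes_R \sh{R}\colon \rGr R \to \rGr \sh{R}$ form an adjoint pair, with canonical comparison maps $\eta_M\colon M \to H^0(X, M \otimes_R \sh{R})$ for $M \in \rGr R$ and $\epsilon_{\sh{N}}\colon H^0(X,\sh{N}) \otimes_R \sh{R} \to \sh{N}$ for $\sh{N} \in \rGr \sh{R}$. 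The first step is to check that both functors carry torsion to torsion: $H^0(X,\blank)$ sends $\rTors \sh{R}$ into $\rTors R$ because a $\ZZ$-graded quasicoherent sheaf that is coherent as an $\struct_X$-module is supported in finitely many degrees and has finite-dimensional total cohomology, and $\blank \otimes_R \sh{R}$ sends $\rTors R$ into $\rTors \sh{R}$ because $\sh{R}_0 = \struct_X$, so a bounded $R$-module tensors up to an $\sh{R}$-module generated by coherent sheaves in boundedly many degrees, hence coherent over $\sh{R}$. Thus the two functors descend to an adjoint pair between $\rQgr \sh{R}$ and $\rQgr R$, and the theorem reduces to three assertions: (a) $R$ is right noetherian; (b) $\eta_M$ is an isomorphism in $\rQgr R$ for every $M \in \rgr R$; (c) $\epsilon_{\sh{M}}$ is an isomorphism in $\rQgr \sh{R}$ for every $\sh{M} \in \rgr \sh{R}$. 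Granting (a)--(c), the descended functors are mutually inverse equivalences $\rQgr \sh{R} \simeq \rQgr R$, and since an equivalence of categories preserves noetherian objects this restricts to $\rqgr \sh{R} \simeq \rqgr R$.

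The technical engine for (b) and (c) --- and ultimately for (a) --- is a single cohomological lemma squeezed out of the right ampleness hypothesis of Definition~\ref{def-ample} (using also Keeler's theorem that right and left ampleness are equivalent): \emph{for every coherent right $\sh{R}$-module $\sh{N}$, the sheaf $\sh{N}_n$ is globally generated and satisfies $H^q(X, \sh{N}_n) = 0$ for all $q \geq 1$, once $n \gg 0$.} I would prove this by writing $\sh{N} = \sh{G} \cdot \sh{R}$ for a coherent subsheaf $\sh{G} \subseteq \sh{N}$ concentrated in degrees $\leq N$, so that for $n > N$ the sheaf $\sh{N}_n$ is a quotient of $\bigoplus_{i \leq N} \sh{G}_i \otimes \sh{R}_{n-i}$, whose higher cohomology vanishes for $n \gg 0$ directly by ampleness; the kernel of this surjection is the degree-$n$ component of a coherent right $\sh{R}$-module --- here one uses that $\sh{R}$ is right noetherian, so that submodules of coherent $\sh{R}$-modules are coherent by Lemma~\ref{lem-noeth} --- and a descending induction on the cohomological degree $q$, starting from $q > \dim X$ where every sheaf on $X$ has vanishing cohomology, propagates the vanishing from the kernel down to $\sh{N}_n$ itself. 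The global-generation half is the same argument with surjectivity of $H^0$ onto stalks replacing the vanishing of $H^{>0}$. From this lemma one extracts a \emph{saturation corollary}: if $\sh{N}$ is a coherent right $\sh{R}$-module generated in degrees $\leq N$, then $H^0(X,\sh{N})$ is finitely generated over $R$ and $\epsilon_{\sh{N}}$ is an isomorphism in all sufficiently high degrees --- proved by applying $H^0(X,\blank)$ to a presentation $\bigoplus_{i \le N}(\sh{N}_i)_{\sigma^i} \otimes_X \sh{R} \longrightarrow \sh{N} \longrightarrow 0$ and invoking the vanishing of $H^1$ of the kernel, which is again coherent over $\sh{R}$. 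This yields (c) at once, and, applied to $\sh{N} = M \otimes_R \sh{R}$ for finitely generated $M$, also (b).

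\textbf{The main obstacle is (a): the transfer of noetherianity from $\sh{R}$ to $R$.} The naive hope --- that for a graded right ideal $J \subseteq R$ one has $J = H^0(X, J \cdot \sh{R})$ in all large degrees, so that coherence of $J \cdot \sh{R}$ over $\sh{R}$ would force $J$ to be finitely generated --- fails in general; it amounts to a $\chi_1$-type statement, exactly the sort of thing that can break for the rings studied in this paper. Instead I would argue with ascending chains: given $J^{(1)} \subseteq J^{(2)} \subseteq \cdots$ in $R$, the chain of coherent right $\sh{R}$-ideals $J^{(i)} \cdot \sh{R}$ stabilizes by noetherianity of $\sh{R}$, and one must then show the original chain stabilizes too. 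The delicate point is to bound, uniformly in $i$, the range of degrees in which $J^{(i)}$ can still grow once $J^{(i)} \cdot \sh{R}$ has been fixed; I would attack this using the saturation corollary to control the finite-dimensional quotients $H^0(X, J^{(i)} \cdot \sh{R})/J^{(i)}$, together with an induction on $\dim X$ to reduce to the case where $X$ is a finite set of points, where $\sh{R}$ is finite-dimensional in each degree and noetherianity of $R$ is elementary. This bookkeeping is precisely where Van den Bergh's bimodule formalism earns its keep, and it is the step on which I would expect to spend the most effort; once it is in place, together with the cohomological lemma and the saturation corollary, the remainder of the argument is a formal unwinding of the adjunction.
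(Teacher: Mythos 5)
The cohomological lemma and the reduction to verifying (a)--(c) are sound, and the descending induction you use to propagate vanishing through the kernels of free presentations is exactly the right mechanism. But your proof of (a), the noetherianity of $R$, is where the argument breaks down, and your own diagnosis of the difficulty is actually a misdiagnosis. You write that the statement ``$J_n = H^0(X, (J \cdot \sh{R})_n)$ in large degrees'' is ``a $\chi_1$-type statement, exactly the sort of thing that can break,'' and you therefore avoid it. In fact this statement is \emph{true} under the ampleness hypothesis, and it is precisely the key step; what can break (and does break for the idealizers in this paper, per Proposition~\ref{prop-left-chi}) is the $\chi_1$ condition for \emph{arbitrary} finitely generated modules $M$, which asserts finiteness of $\underline{\Ext}^1_R(\kk, M)$ for all such $M$ and is a strictly stronger assertion than the one you need here.

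Concretely: given an ascending chain $J^{(1)} \subseteq J^{(2)} \subseteq \cdots$ of graded right ideals of $R$, noetherianity of $\sh{R}$ lets you assume $J^{(i)} \cdot \sh{R} = \sh{J}$ for all $i$, and that $\sh{J}$ is generated by $J^{(i)}_{\leq N_0}$ for some fixed $N_0$; a further truncation (since $\bigoplus_{m \leq N_0} H^0(X, \sh{J}_m)$ is finite-dimensional) lets you assume $J^{(i)}_{\leq N_0}$ is the same for all $i$. Now form the surjection $\bigoplus_{m \leq N_0} J^{(i)}_m \otimes_{\kk} (\sh{R}[-m]) \to \sh{J}$; its kernel $\sh{K}$ is coherent over $\sh{R}$ by Lemma~\ref{lem-noeth} and does not depend on $i$. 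Your cohomological lemma then gives $H^1(X, \sh{K}_n) = 0$ for $n \gg 0$ (uniformly), so $\bigoplus_m J^{(i)}_m \otimes_{\kk} R_{n-m} \twoheadrightarrow H^0(X, \sh{J}_n)$, which sandwiches $J^{(i)}_n$ between $\sum_m J^{(i)}_m R_{n-m}$ and $H^0(X, \sh{J}_n)$ and forces equality. Thus every $J^{(i)}$ in the truncated chain agrees with $H^0(X, \sh{J})$ in all large degrees, and the chain stabilizes because the remaining finitely many graded pieces are finite-dimensional. No induction on $\dim X$ is needed, and it would not work: slicing $X$ by a hyperplane does not produce a bimodule algebra on the slice (the automorphism $\sigma$ does not preserve hyperplanes), so the inductive hypothesis would not apply, and the base case $\dim X = 0$ does not transparently encode the combinatorics you are trying to control. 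One smaller slip: to see that $\blank \otimes_R \sh{R}$ preserves torsion, the relevant fact is that for $M$ finite-dimensional one has $(M \otimes_R \sh{R})_n = 0$ for $n \gg 0$ (because $\sh{R}_n$ is globally generated and hence $R_+ \sh{R}$ agrees with $\sh{R}$ in large degrees), not that $M \otimes_R \sh{R}$ is coherent over $\sh{R}$ --- coherence over $\sh{R}$ is a much weaker property than being torsion in $\rGr \sh{R}$, whose objects by definition are colimits of modules coherent over $\struct_X$. For the record, the paper does not prove this theorem itself; it cites Van den Bergh and the restatement in \cite{KRS}.
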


\begin{example}[Twisted bimodule algebras]\index{twisted bimodule algebra}\index{B@$\sh{B}(X, \Lsh, \sigma)$}
\label{eg-twist}
Let $X$ be a projective scheme, let $\sigma \in \Aut_{\kk}(X)$, and let $\Lsh$ be an invertible sheaf on $X$.  We define the {\em twisted bimodule algebra of $\Lsh$} to be
\[ \sh{B} = \sh{B}(X, \Lsh, \sigma) = \bigoplus_{n \geq 0} \Lsh_n.\]
Then $\sh{B}$ is an $(\struct_X, \sigma)$-graded bimodule algebra.  
Taking global sections of $\sh{B}(X, \Lsh, \sigma)$ gives the twisted homogeneous coordinate ring $B(X, \Lsh, \sigma)$.
 
Assume now that $\Lsh$ is $\sigma$-ample.   We record here some important properties of $B$ and $\sh{B}$.
By Theorem~\ref{thm-VdBSerre}, the categories $\rqgr \sh{B}$ and $\rqgr B$ are equivalent; this was originally proved by Artin and Van den Bergh \cite{AV}, who also observed that both categories are equivalent to $\struct_X \lmod$.  The equivalence between 
between $\rqgr B$ and $\struct_X \lmod$ is given as follows.  Define a functor
\begin{align*}
\Gamma_*: \struct_X \lmod & \to \rgr B\\
	\sh{F}	& \mapsto \bigoplus_{n \geq 0} H^0(X, \sh{F} \otimes \Lsh_n).
\end{align*}\index{Gamma@$\Gamma_*$}
The composition $\pi \Gamma_*$ has a quasi-inverse, induced by the 
 functor
\[ \widetilde{}\,\,:  \rgr B \to \struct_X \lmod.\]
To define this functor, let $M \in \rgr B$.  There is a unique coherent sheaf $\sh{F}$ such that
$\sh{F} \otimes \Lsh_n = (M \otimes_B \sh{B})_n$ for all $n \gg 0$.  Define $\widetilde{M} = \sh{F}$.

Since $\sigma$-ampleness is left-right symmetric,  there is also an  equivalence $B \lqgr \simeq \struct_X \lmod$.  The quasi-inverses between these two categories are defined by letting
\[\Gamma_* (\sh{F}) = \bigoplus_{n \geq 0} H^0(\Lsh_n \otimes \sh{F}^{\sigma^n}) \in B \lgr \]
and letting  $\widetilde{M}$ be the unique $\sh{F}$ such that $\Lsh_n \otimes \sh{F}^{\sigma^n} = (\sh{B} \otimes _B M)_n$ for all $n \gg 0$.

We note that the shift functors $(\blank)[n]$ on $\rgr B$ induce autoequivalences of $\struct_X \lmod$.  By \cite[(3.1)]{SV}, if  $N \in \rgr B$, then 
\beq\label{CHARLIE}
 \widetilde{N[n]} \cong (\widetilde{N} \otimes \Lsh_n)^{\sigma^{-n}}
 \eeq
for all $n \geq 0$.
\end{example}

Throughout this paper, we will work with sub-bimodule algebras of 
the twisted bimodule algebra $\sh{B} =\sh{B}(X, \Lsh, \sigma)$.  We note here that  the invertible sheaf $\Lsh$ makes only a formal difference.  

\begin{lemma}\label{lem-catequiv}
Let $X$ be a projective scheme with automorphism $\sigma$, and let $\Lsh$ be an invertible sheaf on $X$.  Let 
\[ \sh{R} = \bigoplus_{n \geq 0} (\sh{R}_n)_{\sigma^n}\]
 be a graded $(\struct_X,\sigma)$-sub-bimodule algebra of the twisted bimodule algebra $\sh{B}(X, \Lsh, \sigma)$.  Let $\sh{S}_n = \sh{R}_n \otimes  \Lsh_n^{-1}$ for $n \geq 0$.

 $(1)$ Let $\sh{S}$ be the graded $(\struct_X, \sigma)$-bimodule algebra defined by 
 \[ \sh{S} = \bigoplus_{n \geq 0} (\sh{S}_n)_{\sigma^n} .\]
 Then the categories $\rgr \sh{R}$ and $\rgr \sh{S}$ are equivalent, and the categories $\sh{S} \lgr$ and $\sh{R} \lgr $ are equivalent.
 
$ (2)$ Let $\sh{H}$ be an invertible sheaf on $X$ and let $k \in \ZZ$.  Then the functor $\sh{H}_{\sigma^k} \otimes \blank$ that maps
 \[ \sh{M} = \bigoplus_{n \in \ZZ} (\sh{M}_n)_{\sigma^n}  \mapsto \bigoplus_{n \in \ZZ} ( \sh{H} \otimes \sh{M}_n^{\sigma^k})_{\sigma^{k+n} }= \sh{H}_{\sigma^k} \otimes \sh{M}\]
 is an autoequivalence of $\rgr \sh{R}$.
 \end{lemma}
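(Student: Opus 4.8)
The plan is to prove both parts by writing down explicit quasi-inverse functors; nothing homological is involved, only bookkeeping with the twist. For $(1)$, passing from $\sh{R}$ to $\sh{S}$ is just untwisting degree $n$ by the invertible sheaf $\Lsh_n^{-1}$, an operation that is reversible degree by degree. For $(2)$, the point is that $\sh{H}_{\sigma^k}\otimes\blank$ is left tensoring by an \emph{invertible} $\struct_X$-bimodule, hence an autoequivalence of the bimodule category, and left tensoring commutes with the right $\sh{R}$-action by associativity of the bimodule tensor product.

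For $(1)$ I would first check that $\sh{S}$ is indeed a graded $(\struct_X,\sigma)$-bimodule algebra. Tensoring the multiplication $\sh{R}_n\otimes\sh{R}_m^{\sigma^n}\to\sh{R}_{n+m}$ by the identity of $\Lsh_n^{-1}\otimes(\Lsh_m^{\sigma^n})^{-1}$ and using the canonical isomorphism $\Lsh_n\otimes\Lsh_m^{\sigma^n}\cong\Lsh_{n+m}$ (equivalently, Lemma~\ref{lem-bimods}(3)) produces $\struct_X$-module maps $\sh{S}_n\otimes\sh{S}_m^{\sigma^n}\to\sh{S}_{n+m}$; together with $\sh{S}_0=\struct_X$ these satisfy associativity and the unit axiom because the corresponding facts hold in $\sh{R}$ and because the isomorphisms $\Lsh_n\otimes\Lsh_m^{\sigma^n}\cong\Lsh_{n+m}$ are coherent with one another, which is already part of $\sh{B}(X,\Lsh,\sigma)$ being a bimodule algebra. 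Next I would define $\Phi\colon\rGr\sh{R}\to\rGr\sh{S}$ on objects by $\Phi\bigl(\bigoplus_n(\sh{M}_n)_{\sigma^n}\bigr)=\bigoplus_n(\sh{M}_n\otimes\Lsh_n^{-1})_{\sigma^n}$ and on morphisms by $f\mapsto\bigoplus_n(f_n\otimes\id)$, with the $\sh{S}$-action obtained from the $\sh{R}$-action of $\sh{M}$ by exactly the same twisting recipe just used for the multiplication of $\sh{S}$; replacing $\Lsh_n^{-1}$ by $\Lsh_n$ throughout defines $\Psi\colon\rGr\sh{S}\to\rGr\sh{R}$. The evident isomorphisms $\sh{M}_n\otimes\Lsh_n^{-1}\otimes\Lsh_n\cong\sh{M}_n$ assemble into natural isomorphisms $\Psi\Phi\cong\id$ and $\Phi\Psi\cong\id$, so $\Phi$ is an equivalence of abelian categories; as such it preserves noetherian objects, giving the asserted equivalence $\rgr\sh{R}\simeq\rgr\sh{S}$. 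The statement for left modules is proved by the identical construction, writing the structure maps on the other side: the action $\sh{R}_m\otimes\sh{M}_n^{\sigma^m}\to\sh{M}_{m+n}$ is twisted by $\Lsh_{m+n}^{-1}$ via $\Lsh_m\otimes\Lsh_n^{\sigma^m}\cong\Lsh_{m+n}$.

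For $(2)$, using Lemma~\ref{lem-bimods}(2) I would first compute $((\sh{H}^{\sigma^{-k}})^{-1})_{\sigma^{-k}}\otimes\sh{H}_{\sigma^k}\cong\struct_X\cong\sh{H}_{\sigma^k}\otimes((\sh{H}^{\sigma^{-k}})^{-1})_{\sigma^{-k}}$, so that $\sh{H}_{\sigma^k}\otimes\blank$ and $((\sh{H}^{\sigma^{-k}})^{-1})_{\sigma^{-k}}\otimes\blank$ are mutually inverse autoequivalences of the category of $\struct_X$-bimodules. Since the module structure map $\mu\colon\sh{M}\otimes\sh{R}\to\sh{M}$ is tensored on the right, applying $\sh{H}_{\sigma^k}\otimes\blank$ and invoking associativity of the bimodule tensor product turns it into a right $\sh{R}$-module structure $(\sh{H}_{\sigma^k}\otimes\sh{M})\otimes\sh{R}\cong\sh{H}_{\sigma^k}\otimes(\sh{M}\otimes\sh{R})\xrightarrow{\id\otimes\mu}\sh{H}_{\sigma^k}\otimes\sh{M}$; the associativity and unit axioms are inherited from $\sh{M}$, and likewise for the inverse functor. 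Finally Lemma~\ref{lem-bimods}(2) gives $\sh{H}_{\sigma^k}\otimes(\sh{M}_n)_{\sigma^n}\cong(\sh{H}\otimes\sh{M}_n^{\sigma^k})_{\sigma^{n+k}}$, which (summing over $n$) is precisely the formula in the statement; reindexing $m=n+k$ exhibits $\sh{H}_{\sigma^k}\otimes\sh{M}$ as a graded module with degree $m$ part $\sh{H}\otimes\sh{M}_{m-k}^{\sigma^k}$, and one checks that the original maps $\sh{M}_n\otimes\sh{R}_\ell^{\sigma^n}\to\sh{M}_{n+\ell}$ become $\sh{R}_\ell$-module maps for this shifted grading. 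Hence $\sh{H}_{\sigma^k}\otimes\blank$ is an autoequivalence of $\rgr\sh{R}$, with inverse $((\sh{H}^{\sigma^{-k}})^{-1})_{\sigma^{-k}}\otimes\blank$.

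There is no serious obstacle here: the lemma is essentially a change-of-variables bookkeeping statement. The one place that calls for care is the verification in $(1)$ that the twisted structure maps remain associative, which comes down to the mutual compatibility of the concatenation isomorphisms $\Lsh_n\otimes\Lsh_m^{\sigma^n}\cong\Lsh_{n+m}$; this is already encoded in the bimodule-algebra axioms for $\sh{B}(X,\Lsh,\sigma)$ and hence comes for free. The analogous caution in $(2)$ is simply to keep the $\sigma^{\pm k}$-pullbacks in the right places when applying Lemma~\ref{lem-bimods}(2).
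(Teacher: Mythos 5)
Your proof is correct and follows essentially the same route as the paper's: in both parts you build explicit quasi-inverse functors by tensoring degree by degree with (pullbacks of) the invertible sheaves $\Lsh_n^{\pm 1}$, respectively with $\sh{H}_{\sigma^k}$ and $((\sh{H}^{\sigma^{-k}})^{-1})_{\sigma^{-k}}$, and in $(2)$ you invoke Lemma~\ref{lem-bimods}(2) in exactly the same way. The paper is simply terser — it records the formulas for $F$, $G$ and the one bimodule identity and declares the rest trivial — whereas you spell out the verification that $\sh{S}$ is a bimodule algebra and that the twisted structure maps remain associative; note only that, as in the paper, you should define $\Lsh_n$ for $n<0$ before writing $\sh{M}_n\otimes\Lsh_n^{-1}$ for arbitrary $n\in\ZZ$.
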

 \begin{proof}
 (1)  By symmetry, it suffices to prove that $\rgr \sh{R} \simeq \rgr \sh{S}$.  
 For $n < 0$, define 
 \[\Lsh_n = (\Lsh^{-1})^{\sigma^n} \otimes (\Lsh^{-1})^{\sigma^{n+1}} \otimes \cdots \otimes (\Lsh^{-1})^{\sigma^{-1}}.\]
 Define a functor 
 $ F: \rgr \sh{R} \to \rgr \sh{S}$
 as follows:
 if 
 \[ \sh{M} = \bigoplus_{n \in \ZZ} (\sh{M}_n)_{\sigma^n}\]
 is a graded right $\sh{R}$-module, 
 define
  \[ F(\sh{M}) = \bigoplus_{n \in \ZZ} (\sh{M}_n \otimes (\Lsh_n)^{-1})_{\sigma^n}.\]
  The inverse functor $G: \rgr \sh{S} \to \rgr \sh{R}$ is defined as follows:  if 
  \[ \sh{N} = \bigoplus_{n \in \ZZ} (\sh{N}_n)_{\sigma^n}\]
  is a graded right $\sh{S}$-module, let
  \[ G(\sh{N}) = \bigoplus_{n \in \ZZ} (\sh{N}_n \otimes \Lsh_n)_{\sigma^n}.\]
  It is trivial that $GF \cong \id_{\rgr \sh{R}}$ and that $FG \cong \id_{\rgr \sh{S}}$.
  
  (2)  By Lemma~\ref{lem-bimods}(2), we have that
  \[ \bigl((\sh{H}^{\sigma^{-k}})^{-1} \bigr)_{\sigma^{-k}} \otimes \sh{H}_{\sigma^k} \cong {}_{1} \bigl( (\sh{H}^{\sigma^{-k}})^{-1} \otimes \sh{H}^{\sigma^{-k}})_1 \cong \struct_X.\]
Thus  the functor $((\sh{H}^{\sigma^{-k}})^{-1})_{\sigma^{-k}} \otimes \blank$ is a quasi-inverse to $\sh{H}_{\sigma^k} \otimes \blank$.  
  \end{proof}

  To end this section, we give the sheaf-theoretic versions of some standard results on primary decomposition of ideals in a commutative ring.  
Let $X$ be a locally noetherian scheme and let  $\sh{I}$ be a proper ideal sheaf  on $X$.  We will say that $\sh{I}$ is {\em prime}\index{prime ideal sheaf} if it defines a reduced and irreducible subscheme of $X$.    
We say that $\sh{I}$ is {\em $\sh{P}$-primary}\index{primary ideal sheaf} if there is a prime ideal sheaf $\sh{P}$ such that some $\sh{P}^n \subseteq \sh{I}$, and for all ideal sheaves 
$\sh{J}$ and  $\sh{K}$ on $X$, if  $\sh{J K} \subseteq \sh{I}$ but $\sh{J} \not\subseteq \sh{P}$, then  $\sh{K} \subseteq \sh{I}$.

Since primary decompositions localize, the  theory of primary decomposition of ideals in  a commutative ring translates straightforwardly to ideal sheaves on $X$.   In particular, any ideal sheaf $\sh{I}$ has a {\em minimal primary decomposition}
\[\sh{I} = \sh{I}_1 \cap \cdots \cap \sh{I}_c,\]
where each $\sh{I}_i$ is $\sh{P}_i$-primary for some prime ideal sheaf $\sh{P}_i$, the $\sh{P}_i$ are all distinct, and $\sh{I}$ may not be written as an intersection with fewer terms.   If $\sh{P}_i$ is a minimal prime over $\sh{I}$, then  we will refer to $\sh{I}_i$ as a {\em minimal primary component}\index{primary component!minimal} of $\sh{I}$.  If $\sh{P}_i$ is not minimal over $\sh{I}$, we will refer to $\sh{I}_i$ as an {\em embedded primary component}\index{primary component!embedded}.  As is well-known, the primes $\sh{P}_i$ and the minimal primary components of $\sh{I}$ are uniquely determined by $\sh{I}$, while the embedded primary components are not necessarily unique.  

Now let $Z$ be  a closed subscheme of $X$ and let $\sh{I}$ be the ideal sheaf of $Z$.  
Let $\sh{I} = \sh{I}_1 \cap \cdots \cap \sh{I}_c$ be a minimal primary decomposition of $\sh{I}$.  We will refer to the closed subschemes $Z_i$ defined by the minimal primary components $\sh{I}_i$ of $\sh{I}$ as the {\em irreducible components}\index{component!irreducible} of $Z$.  We will refer to the subschemes defined by embedded primary components as {\em embedded components}\index{component!embedded} of $Z$.  Together, the irreducible and embedded components make up the {\em primary components}\index{component!primary} of $Z$.   

 For any two ideal sheaves $\sh{K}$ and $\sh{J}$ on $X$, we define  the {\em ideal quotient}
 \[ (\sh{J}: \sh{K})\]
to be the maximal coherent subsheaf $\sh{F}$ of $\struct_X$ such that $\sh{KF} \subseteq \sh{J}$. 
We record the following elementary lemmas for future use.

\begin{lemma}\label{lem-primary}
 Let $\sh{I} = \sh{I}_1 \cap \cdots \cap \sh{I}_c$ be a primary decomposition of the ideal sheaf $\sh{I}$ on the locally noetherian scheme $X$, where each $\sh{I}_i$ is $\sh{Q}_i$-primary for some prime ideal sheaf $\sh{Q}_i$.

$(1)$  If $\sh{K}$ and $\sh{J}$ are ideal sheaves so that $\sh{K} \not\subseteq \sh{Q}_i$ for some $i$, then
\[ \bigl( \sh{I}: (\sh{K} \cap \sh{J}) \bigr) \subseteq (\sh{I}_i : \sh{J}).\]

$(2)$  If $\sh{K}$ is not contained in any $\sh{Q}_i$, then $(\sh{I}: \sh{K}) = \sh{I}$.
\end{lemma}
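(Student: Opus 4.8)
The plan is to reduce both statements to the commutative-algebra fact (standard from, e.g., Atiyah--Macdonald) that if $I = \bigcap I_i$ is a primary decomposition of an ideal $I$ in a noetherian ring $A$ with $I_i$ being $Q_i$-primary, then $(I : K) = \bigcap_i (I_i : K)$, and $(I_i : K) = I_i$ whenever $K \not\subseteq Q_i$. Since ideal quotients and primary decompositions both commute with localization, and since the ideal quotient $(\sh{J} : \sh{K})$ of Definition is defined as a coherent subsheaf of $\struct_X$ characterized by a universal property, it suffices to check containments of ideal sheaves stalk-by-stalk (equivalently, on each affine open $\Spec A \subseteq X$). So throughout I would fix such an affine open and work with the corresponding ideals $I, I_i, J, K \subseteq A$.

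For part (1): on each affine open, $\bigl(I : (K \cap J)\bigr) = \bigl((I : K) : J\bigr) \cap \cdots$, but more directly I would argue that $(I : (K\cap J)) \subseteq (I : KJ)$ since $KJ \subseteq K \cap J$, and then use $(I : KJ) = \bigcap_i (I_i : KJ) = \bigcap_i \bigl((I_i : J) : K\bigr)$. For the index $i$ with $K \not\subseteq Q_i$: the ideal $(I_i : J)$ is either all of $A$ or is again $Q_i$-primary (a standard fact: a primary ideal stays primary under ideal quotient by an ideal not contained in its radical-closure, and if it becomes the unit ideal the claim is trivial), so $K \not\subseteq Q_i = \sqrt{(I_i:J)}$ forces $\bigl((I_i : J) : K\bigr) = (I_i : J)$. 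Hence $(I : (K \cap J)) \subseteq (I_i : J)$, as a containment of ideals in $A$; gluing over the affine cover gives the containment of ideal sheaves. Part (2) is the special case $J = \struct_X$ combined with running the argument over every $i$: if $K \not\subseteq Q_i$ for all $i$, then $(I : K) = \bigcap_i (I_i : K) = \bigcap_i I_i = I$ on each affine open, hence globally.

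The only genuinely substantive point — and the step I would be most careful about — is the behavior of primary ideals under the ideal-quotient and localization operations in the \emph{sheaf} setting: namely that a $\sh{Q}_i$-primary ideal sheaf, after quotienting by $\sh{J}$, is either $\struct_X$ or again $\sh{Q}_i$-primary, and that the formation of $(\sh{I} : \sh{K})$ genuinely agrees, stalk-by-stalk, with the affine ideal quotients $(I : K)$. Both follow from the preamble to this section (primary decomposition localizes, so the whole affine/commutative theory transfers verbatim), but I would state them as a one-line observation rather than leave them implicit. Everything else is the routine translation of Atiyah--Macdonald's lemmas on ideal quotients and primary decomposition into sheaf language via a Noetherian affine cover of $X$.
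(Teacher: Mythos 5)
Your proposal is correct, but it takes a noticeably more roundabout route than the paper's, which never leaves the sheaf level. The paper proves (1) in one line: since $\sh{JK}\subseteq\sh{K}\cap\sh{J}$ one has
\[
\bigl(\sh{I}:(\sh{K}\cap\sh{J})\bigr)\,\sh{J}\,\sh{K}
\;\subseteq\;
\bigl(\sh{I}:(\sh{K}\cap\sh{J})\bigr)(\sh{K}\cap\sh{J})
\;\subseteq\;
\sh{I}\;\subseteq\;\sh{I}_i,
\]
and then one application of the defining property of a $\sh{Q}_i$-primary ideal sheaf, with $\sh{K}$ as the factor not contained in $\sh{Q}_i$, yields $\bigl(\sh{I}:(\sh{K}\cap\sh{J})\bigr)\sh{J}\subseteq\sh{I}_i$, i.e.\ the desired containment. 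Part (2) is the specialization $\sh{J}=\struct_X$ that you describe. Your version instead passes to an affine cover, rewrites $(I:KJ)=\bigcap_i((I_i:J):K)$, and then argues that $(I_i:J)$ is either $A$ or again $Q_i$-primary before concluding $\bigl((I_i:J):K\bigr)=(I_i:J)$. Two remarks on what this buys and costs. First, the ``$(I_i:J)$ stays primary'' step is exactly the content of the paper's Lemma~\ref{lem-prim2}, which in the paper's logical order comes \emph{after} this lemma; it is a standard commutative-algebra fact and there is no circularity, but you are using heavier input than needed. The paper avoids it by applying the primary condition directly to the triple product $\bigl(\sh{I}:(\sh{K}\cap\sh{J})\bigr)\cdot\sh{J}\cdot\sh{K}$ rather than first forming $(\sh{I}_i:\sh{J})$ and then reasoning about its primariness. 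Second, the localization step --- the point you flag as the one you would be ``most careful about'' --- is genuinely dispensable here: because the paper's Definition of primary ideal sheaf and of ideal quotient are stated in sheaf terms, the whole argument can be carried out verbatim with $\sh{I},\sh{J},\sh{K}$ in place of $I,J,K$, and the paper does exactly that. So while your proof is correct, the paper's is shorter and sidesteps both of the technical issues you went out of your way to address.
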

\begin{proof}
(1)  We have
\[ 
\bigl( \sh{I}: (\sh{K} \cap \sh{J}) \bigr) \sh{JK} \subseteq  \bigl( \sh{I}: (\sh{K} \cap \sh{J}) \bigr) (\sh{K} \cap \sh{J}) \subseteq \sh{I} \subseteq \sh{I}_i.
\]
As $\sh{K} \not \subseteq \sh{Q}_i$ and $\sh{I}_i$ is $\sh{Q}_i$-primary, 
$\bigl( \sh{I}: (\sh{K} \cap \sh{J}) \bigr)  \sh{J} \subseteq \sh{I}_i$
and so by definition
\[ 
\bigl( \sh{I}: (\sh{K} \cap \sh{J}) \bigr)  \subseteq (\sh{I}_i: \sh{J}).\]

(2)  Applying (1) with $\sh{J}  =\struct_X$, we see that
\[ 
(\sh{I}: \sh{K}) \subseteq \bigcap_{i=1}^c (\sh{I}_i: \struct_X)  = \bigcap_{i=1}^c \sh{I}_i = \sh{I}.
\]
The other containment is automatic.
\end{proof}

\begin{lemma}\label{lem-prim2}
Let $\sh{P}$ and $\sh{I}$ be ideal sheaves on the locally noetherian scheme $X$, where $\sh{P}$ is prime and $\sh{I}$ is $\sh{P}$-primary.  If $\sh{J}$ is an ideal sheaf on $X$ that is not contained in $\sh{I}$, then $(\sh{I}:\sh{J})$ is also $\sh{P}$-primary. 
\end{lemma}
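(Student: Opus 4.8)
The plan is to reduce everything to the standard commutative statement by localizing. Since the formation of ideal quotients, primary ideals, and primality are all local on $X$, it suffices to check the two defining conditions for $(\sh{I}:\sh{J})$ to be $\sh{P}$-primary stalk-by-stalk (or on an affine open cover), where they become the well-known commutative facts: for a proper ideal $I$ that is $P$-primary in a noetherian ring $A$ and an ideal $J \not\subseteq I$, the colon ideal $(I:J)$ is again $P$-primary. So the real content is just to transcribe that argument sheaf-theoretically, being a little careful because $\sh{J} \not\subseteq \sh{I}$ globally does not immediately say $\sh{J}_x \not\subseteq \sh{I}_x$ at every point $x$.

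First I would handle the containment $\sh{P}^n \subseteq (\sh{I}:\sh{J})$ for suitable $n$: since $\sh{I}$ is $\sh{P}$-primary there is $n$ with $\sh{P}^n \subseteq \sh{I}$, hence $\sh{P}^n \sh{J} \subseteq \sh{I}$, so by the definition of the ideal quotient $\sh{P}^n \subseteq (\sh{I}:\sh{J})$. In particular $(\sh{I}:\sh{J})$ is proper (it is contained in $\sh{P}$, which is proper) as long as $(\sh{I}:\sh{J}) \subseteq \sh{P}$; the latter will fall out of the primary condition once established, or one can note directly that $(\sh{I}:\sh{J}) \cdot \sh{J} \subseteq \sh{I} \subseteq \sh{P}$ together with $\sh{J} \not\subseteq \sh{P}$ (which holds because $\sh{J} \not\subseteq \sh{I}$ would be needed — actually one must be slightly careful here, so I would instead argue: if $(\sh{I}:\sh{J}) \not\subseteq \sh{P}$, apply the primary condition of $\sh{I}$ to $(\sh{I}:\sh{J})\cdot\sh{J} \subseteq \sh{I}$ to get $\sh{J} \subseteq \sh{I}$, contradiction; so $(\sh{I}:\sh{J}) \subseteq \sh{P}$, hence it is proper).

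Next I would verify the primary condition: suppose $\sh{K}$, $\sh{K}'$ are ideal sheaves with $\sh{K}\sh{K}' \subseteq (\sh{I}:\sh{J})$ and $\sh{K} \not\subseteq \sh{P}$. Then $\sh{K}\sh{K}'\sh{J} \subseteq \sh{I}$. Since $\sh{I}$ is $\sh{P}$-primary and $\sh{K} \not\subseteq \sh{P}$, we get $\sh{K}'\sh{J} \subseteq \sh{I}$, and hence $\sh{K}' \subseteq (\sh{I}:\sh{J})$ by definition of the ideal quotient. This is exactly the primary condition for $(\sh{I}:\sh{J})$ with associated prime $\sh{P}$. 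Combined with the inclusion $\sh{P}^n \subseteq (\sh{I}:\sh{J}) \subseteq \sh{P}$ (so that $\sh{P}$ is indeed the radical), this shows $(\sh{I}:\sh{J})$ is $\sh{P}$-primary.

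The main obstacle — really the only subtle point — is making sure the hypothesis "$\sh{J}$ is an ideal sheaf on $X$ that is not contained in $\sh{I}$" is used correctly: it is needed precisely to guarantee $(\sh{I}:\sh{J})$ is a proper subsheaf (equivalently that $\sh{J} \not\subseteq \sh{P}$ is not forced, and that we do not accidentally get $(\sh{I}:\sh{J}) = \struct_X$). Once one observes that $(\sh{I}:\sh{J}) = \struct_X$ would mean $\sh{J} \subseteq \sh{I}$, which is excluded, the rest is the formal manipulation above, and no localization is strictly necessary — the arguments go through directly with the defining (global) properties of primary ideal sheaves and ideal quotients as set up in the paper.
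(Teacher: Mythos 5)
Your proposal is correct and, apart from some preliminary musing about localization that you then abandon, follows essentially the same line as the paper: show $(\sh{I}:\sh{J})$ is proper (because $\sh{J}\not\subseteq\sh{I}$), verify the primary condition by multiplying through by $\sh{J}$ and invoking the $\sh{P}$-primariness of $\sh{I}$, and note $\sh{P}^m\subseteq\sh{I}\subseteq(\sh{I}:\sh{J})$. The only cosmetic difference is that you separately argue $(\sh{I}:\sh{J})\subseteq\sh{P}$, which the paper (rightly) leaves implicit since it follows from the other two conditions.
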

\begin{proof} 
Since $\sh{J} \not\subseteq \sh{I}$, we have that $(\sh{I}:\sh{J}) \neq \struct_X$.  Suppose that $\sh{F}$ and $\sh{G}$ are ideal sheaves with $\sh{F} \not\subseteq \sh{P}$ and $\sh{FG} \subseteq (\sh{I}:\sh{J})$.  Thus $\sh{FGJ} \subseteq \sh{I}$, and since $\sh{I}$ is $\sh{P}$-primary,  $\sh{GJ} \subseteq \sh{I}$.  This precisely says that $\sh{G} \subseteq (\sh{I}:\sh{J})$.  
Since for some $m$, we have $\sh{P}^m \subseteq \sh{I} \subseteq (\sh{I}:\sh{J})$, we see that $(\sh{I}:\sh{J})$ is $\sh{P}$-primary.
\end{proof}


\section{Right  noetherian bimodule algebras}\label{RIGHT-NOETH-BMA}


Let $X$, $\Lsh$, $\sigma$, and $Z$ be as in Construction~\ref{const-idealizer}, and let  $R$ be the geometric idealizer ring 
\[ R= R(X, \Lsh, \sigma, Z).\]
In this section, we begin  the  study of $R$ by working with the corresponding  bimodule algebra.  Here we introduce the notation we will use.    
\begin{notation}\label{notnot}
Let $X$ be a projective variety, let $\sigma \in \Aut_{\kk} X$, and let $\Lsh$ be an invertible sheaf on $X$.  Let $Z$ be a closed subscheme of $X$ and let $\sh{I} = \sh{I}_Z$ be its defining ideal.  Let
\[\sh{B} = \sh{B}(X, \Lsh, \sigma) = \bigoplus_{n \geq 0} (\Lsh_n)_{\sigma^n},\]
and let $\sh{R} = \sh{R}(X, \Lsh, \sigma, Z)$ be the graded $(\struct_X, \sigma)$-sub-bimodule algebra of $\sh{B}$ defined by 
\[ \sh{R} = \sh{R}(X, \Lsh, \sigma, Z) = \bigoplus_{n \geq 0} \bigl( ( \sh{I}: \sh{I}^{\sigma^n}) \Lsh_n \bigr)_{\sigma^n}.\]\index{R@$\sh{R}(X, \Lsh, \sigma, Z)$}

It is easy to see that $\sh{R}$ is the maximal sub-bimodule algebra of $\sh{B}$ such that $\sh{IB}_+$ is a two-sided ideal of $\sh{R}$, and we  will write
\[ \sh{R} = \I_{\sh{B}} (\sh{IB}_+)\]
and speak of $\sh{R}$ as an {\em idealizer bimodule algebra}\index{idealizer bimodule algebra} inside $\sh{B}$. As usual we write 
\[\sh{R} = \bigoplus_{n \geq 0} (\sh{R}_n)_{\sigma^n},\]
so 
\[ \sh{R}_n = (\sh{I}: \sh{I}^{\sigma^n}) \Lsh_n.\]
\end{notation}

In the next two  sections, we give geometric conditions on the defining data $(X, \Lsh, \sigma, Z)$ that determine when $\sh{R}(X, \Lsh, \sigma, Z)$ is left or right noetherian.  
In this section, we consider 
 when $\sh{R}(X, \Lsh, \sigma, Z)$ is right noetherian; we will show this is controlled by a straightforward geometric property of the intersection of $Z$ with $\sigma$-orbits.

\begin{defn}\label{def-forwardorbit}\index{forward orbits, finite intersection with}
Let $x \in X$ and let $\sigma$ be an automorphism of $X$.  The {\em forward $\sigma$-orbit} or {\em forward orbit} of $x$ is the set 
\[ \{ \sigma^n(x) \st n \geq 0 \}.\]
If $Z \subset X$ is such that for any $x \in X$, the set
\[ \{ n \geq 0 \st \sigma^n(x) \in Z \}\]
is finite, we say that {\em $Z$ has finite intersection with forward orbits}.   In particular, if $Z$ has finite intersection with forward orbits, it contains no points of finite order under $\sigma$.
\end{defn}

 Recall that if  $R$ is a $\ZZ$-graded or $\NN$-graded ring, and   $1 \leq n \in \NN$, then the {\em $n$th Veronese of $R$}  is the graded ring $R \ver{n}$ defined by 
  \[ R \ver{n}_i = R_{ni}.\]
We note here that 
\[ \sh{B}(X, \Lsh, \sigma) \ver{n} \cong \sh{B}(X, \Lsh_n, \sigma^n)\]
and that 
\[ \sh{R}(X, \Lsh, \sigma, Z) \ver{n}  \cong \sh{R}(X, \Lsh_n, \sigma^n, Z).\]

We will show:  

\begin{theorem}\label{thm-rtnoeth}
Assume Notation~\ref{notnot}.  Let 
\beq\label{kiri2}
 \sh{I} = \sh{J}_1 \cap \cdots \cap \sh{J}_c \cap \sh{K}_1 \cap \cdots \cap \sh{K}_e
\eeq
be a minimal primary decomposition of $\sh{I}$, where each $\sh{J}_i$ is $\sh{P}_i$-primary for some  prime ideal sheaf $\sh{P}_i$ of finite order under $\sigma$, and each $\sh{K}_j$ is $\sh{Q}_j$-primary for some prime ideal sheaf $\sh{Q}_j$ of infinite order under $\sigma$.  Let $W$ be the closed subscheme of $Z$ defined by the ideal sheaf $\sh{K} =  \sh{K}_1 \cap \cdots \cap \sh{K}_e$, and let $\sh{J} = \sh{J}_1 \cap \cdots \cap \sh{J}_c$.  Then the following are equivalent:

$(1)$ $\sh{R} = \sh{R}(X, \Lsh, \sigma, Z)$ is right noetherian;

$(2)$ there is some $n$ so that $\sh{J}^{\sigma^n} = \sh{J}$,  and either $W = X$ or 
$W$ has finite intersection with forward $\sigma$-orbits.  

Furthermore, if $(2)$ and $(1)$ hold, then $\sh{R}$ is a finite right module over $\sh{R}\ver{n}$, and there are a closed subscheme $W'$ of $W$, 
with ideal sheaf $\sh{K}'$, so that  $(W')^{\red} = W^{\red}$, and an integer $n_0$  such that 
\[\sh{R}(X, \Lsh_n, \sigma^n, Z)_{\geq n_0} = \sh{R}(X, \Lsh_n, \sigma^n, W')_{\geq n_0} = \sh{K}' (\sh{B}\ver{n})_{\geq n_0} .\]
  That is, any noetherian right idealizer is a finite right module over a right idealizer at a subscheme without fixed components.
\end{theorem}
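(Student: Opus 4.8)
## Proof strategy for Theorem~\ref{thm-rtnoeth}

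The plan is to analyze right noetherianity of $\sh{R}$ by understanding the graded pieces $\sh{R}_n = (\sh{I}:\sh{I}^{\sigma^n})\Lsh_n$ and how the right ideal $\sh{I}\sh{B}_+$ sits inside $\sh{R}$. The guiding principle is that right noetherianity is a property visible on the ``generic'' part of the orbit structure of $Z$, so the fixed-component ideal $\sh{J}$ must eventually stabilize while the non-fixed part $\sh{K}$ can cause failure only if its support meets a forward orbit infinitely often.

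\emph{Direction $(1)\Rightarrow(2)$.} I would argue contrapositively. First, if $\sh{J}^{\sigma^n}\neq\sh{J}$ for all $n$, the primes $\sh{P}_i$ (finite order) permute among infinitely many distinct configurations, which I would leverage to build a strictly ascending chain of right ideals inside $\sh{R}$ — essentially because the idealizer condition $xI\subseteq I$ forces multiplication into $\sh{I}$ in each degree, and the fixed-component primes obstruct finite generation when they wander. Second, assuming $\sh{J}$ is $\sigma$-stable (after passing to a Veronese, which by the remark preceding the theorem doesn't change $\sh{R}$ up to the stated relation), I would reduce to the case $\sh{I}=\sh{K}$, i.e. $Z=W$ has no fixed components, and show that if $W\neq X$ and $W$ has infinite intersection with some forward orbit, then $\sh{R}$ fails to be right noetherian. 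This is the heart of the ``no fixed components'' half: the relevant example should be a non-coherent submodule of a coherent $\sh{R}$-module, invoking Lemma~\ref{lem-noeth} (each $\sh{R}_n$ is a subsheaf of the invertible $\Lsh_n$) to translate coherence into noetherianity. The construction parallels the na\"ive blowup phenomenon: points of the orbit lying on $W$ force the module $\sh{I}\sh{B}_+$, viewed over $\sh{R}$, to fail to stabilize.

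\emph{Direction $(2)\Rightarrow(1)$ and the ``furthermore'' claims.} Assuming $\sh{J}^{\sigma^n}=\sh{J}$, I would first establish that $\sh{R}=\sh{R}(X,\Lsh,\sigma,Z)$ is a finite right module over $\sh{R}^{(n)}\cong\sh{R}(X,\Lsh_n,\sigma^n,Z)$; this is a degree-shifting/Veronese argument, reducing right noetherianity of $\sh{R}$ to that of $\sh{R}^{(n)}$ (finite module over a noetherian ring). So we may assume $\sh{J}$ itself is $\sigma$-invariant. Next, I would show that the fixed part $\sh{J}$ contributes nothing to the high-degree structure: since $\sh{J}^{\sigma^n}=\sh{J}$, one computes $(\sh{I}:\sh{I}^{\sigma^n}) = (\sh{I}:\sh{K}^{\sigma^n})$ in high degrees using the primary-decomposition Lemmas~\ref{lem-primary} and~\ref{lem-prim2} — the $\sh{J}$-factors cancel against themselves in the ideal quotient once $n$ is large. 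This identifies $\sh{R}_{\geq n_0}$ with $\sh{R}(X,\Lsh,\sigma,W)_{\geq n_0}$ for the subscheme $W$, giving $(W')^{\red}=W^{\red}$ with $W'$ chosen so that its primary components are exactly those $\sh{K}'_j$ surviving the quotient (the reduced structure is unchanged because minimal primes are preserved; embedded or thickened structure may shift, hence $W'$ rather than $W$). Finally, for $W$ with finite intersection with forward orbits (or $W=X$), I would prove $\sh{R}(X,\Lsh,\sigma,W)$ is right noetherian directly: in high degrees $\sh{R}_n = \sh{K}'\Lsh_n$, i.e. $\sh{R}_{\geq n_0}=\sh{K}'\sh{B}_{\geq n_0}$, so $\sh{R}$ is (up to a finite-dimensional discrepancy) generated as a right module by a single coherent sheaf over $\sh{B}$, and right noetherianity of $\sh{B}$ (Theorem~\ref{thm-VdBSerre}, or rather its bimodule-algebra input) transfers. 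The key computation is that $(\sh{K}:\sh{K}^{\sigma^n})=\sh{K}$ for $n\gg0$: this is exactly where ``finite intersection with forward orbits'' enters, since $\sh{K}^{\sigma^n}$ is eventually not contained in any associated prime $\sh{Q}_j$ of $\sh{K}$, so Lemma~\ref{lem-primary}(2) gives the equality.

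\emph{Main obstacle.} The hardest step is the failure direction when $\sh{J}$ is stable but $W\neq X$ has infinite intersection with a forward orbit: producing an explicit non-coherent submodule of a coherent right $\sh{R}$-module and verifying non-coherence. This requires careful bookkeeping of how the ideal quotients $(\sh{K}:\sh{K}^{\sigma^n})$ behave at the orbit points lying on $W$ — the sheaves $\sh{R}_n$ acquire ``extra'' sections supported near these points, and one must show these cannot all come from finitely many generators. The technique should mirror Rogalski's argument in \cite{R-idealizer} for the point case and the na\"ive blowup analysis in \cite{KRS}, adapted to a general subscheme $W$; controlling embedded components of $W$ along the orbit is the delicate part, and I expect to need the primary-decomposition lemmas to isolate the minimal component responsible and the choice of $W'$ to absorb the ambiguity in embedded structure.
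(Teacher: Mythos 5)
Your general contour tracks the paper's: Veronese reduction to $\sigma$-invariant $\sh{J}$, identification of the high-degree pieces of $\sh{R}$ with an idealizer at a subscheme having no fixed components, and appeal to the ``finite intersection with forward orbits'' case (the paper's Lemma~\ref{lem-JANIS}). But two concrete pieces are wrong or missing.

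First, your formula for the high-degree pieces is incorrect. You write $(\sh{I}:\sh{I}^{\sigma^n}) = (\sh{I}:\sh{K}^{\sigma^n})$ for large $n$, claiming the $\sh{J}$-factors ``cancel.'' But under hypothesis $(2)$, $\sh{K}^{\sigma^m}$ is eventually outside every associated prime of $\sh{I}$, so Lemma~\ref{lem-primary}(2) gives $(\sh{I}:\sh{K}^{\sigma^m}) = \sh{I}$ for $m \gg 0$; that is not the ideal sheaf of the subscheme $W'$ the theorem asserts. The correct identity — the paper's \eqref{MELISSA} — is $(\sh{I}:\sh{I}^{\sigma^m}) = (\sh{I}:\sh{J}^{\sigma^m})$ for $m \gg 0$, which for $n \mid m$ and $\sigma$-invariant $\sh{J}$ collapses to $(\sh{K}:\sh{J})$: this is the ideal sheaf $\sh{K}'$ of $W'$. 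The fixed-component ideal $\sh{J}$ sits in the argument of the colon, not the other way around, and the shift from $\sh{K}$ to $(\sh{K}:\sh{J})$ is precisely the mechanism that can change embedded structure while preserving $W^{\red}$; your formula cannot produce $(W')^{\red} = W^{\red}$ with $W' \neq W$.

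Second, and more seriously, the $(1)\Rightarrow(2)$ direction cannot be handled by ``reduce to the case $\sh{I} = \sh{K}$.'' There is no evident implication from right noetherianity of $\sh{R}(X,\Lsh,\sigma,Z)$ to right noetherianity of $\sh{R}(X,\Lsh,\sigma,W)$: the two bimodule algebras are incomparable as subalgebras of $\sh{B}$, since $(\sh{I}:\sh{I}^{\sigma^m})$ and $(\sh{K}:\sh{K}^{\sigma^m})$ need not contain one another. The paper resolves this with a partial-order argument on the non-fixed primes $\sh{Q}_j$: it identifies a $\prec$-maximal component $Y_i$ with $Y_i \not\subseteq \sigma^{-m}(W)$ for all $m\geq1$ and with infinite forward-orbit intersection, shows $\sh{R}_m \subseteq (\sh{K}_i:\sh{J})$ for all $m \geq 1$, and then compares $\sh{R}$ to the \emph{larger} bimodule algebra $\sh{R}(X,\struct_X,\sigma,V)$ at the primary subscheme $V$ cut out by $(\sh{K}_i:\sh{J})$. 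Lemma~\ref{lem-BOX} propagates right noetherianity up that inclusion, and Lemma~\ref{lem-JANIS} (applicable since $V$ is primary with a reduced support of infinite order) then produces the contradiction. Without the $\prec$-maximal choice, condition (i) on $Y_i$ can fail and the inclusion $\sh{R} \subseteq \sh{R}(X,\struct_X,\sigma,V)$ does not hold, because $\sh{K}_i$ and $\sh{J}$ interact nontrivially. Finally, for the ``no fixed components'' case you should not expect to improvise a non-coherent submodule: the paper does the work via the bimodule versions of Robson's and Stafford's idealizer criteria (Lemmas~\ref{lem-sheafRob} and \ref{lem-shSt}), bundled in Lemma~\ref{lem-JANIS}, and these are the correct tools.
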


Before proving Theorem~\ref{thm-rtnoeth}, we give some preliminary lemmas.  

\begin{lemma}\label{lem-BOX}
Let $X$ be a projective variety, let $\sigma \in \Aut_{\kk} X$, and let $\Lsh$ be an invertible sheaf on $X$.  Let $\sh{B}$ be a graded $(\struct_X, \sigma)$-sub-bimodule algebra of $ \sh{B}(X, \Lsh, \sigma)$, and let $\sh{R}$ and $\sh{R}'$ be graded $( \struct_X, \sigma)$-sub-bimodule algebras of $\sh{B}$.  Suppose that $\sh{R}$ is right noetherian and contains a nonzero graded right ideal of $\sh{B}$ and that there is some $n_0$ so that 
\[ \sh{R}_{\geq n_0} \subseteq \sh{R}'_{\geq n_0}.\]
  Then $\sh{R}'$ is right noetherian.  If $\sh{R} \subseteq \sh{R}'$, then $\sh{R}'$ is a coherent right $\sh{R}$-module.  
\end{lemma}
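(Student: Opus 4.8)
The plan is to exploit the fact that $\sh{R}$ contains a nonzero graded right ideal $\sh{J}$ of $\sh{B}$, which is also a right ideal of both $\sh{R}$ and $\sh{R}'$ (since these sit inside $\sh{B}$). First I would observe that, after a shift, we may assume $\sh{J} \subseteq \sh{R}_{\geq n_0}$: indeed, $\sh{J}\cdot\sh{R}_{\geq 1}$ is again a nonzero graded right ideal of $\sh{B}$ contained in $\sh{R}$ (using that $\sh{J}$ absorbs multiplication by $\sh{B}$ hence by $\sh{R}'$), and by iterating we may push it into degrees $\geq n_0$, so it lies in $\sh{R}'$ as well. The key structural point is then that $\sh{R}'/\sh{J}$, viewed as a graded right $\sh{R}$-module, is concentrated in finitely many degrees up to the point where $\sh{J}$ ``takes over,'' and more precisely that $\sh{R}'$ is generated as a right $\sh{R}$-module (or $\sh{R}'$-module) by $\sh{J}$ together with a coherent sheaf in bounded degree.

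The key steps, in order: (i) Replace $\sh{J}$ by a shift so that $\sh{J}\subseteq \sh{R}_{\geq n_0}\subseteq \sh{R}'_{\geq n_0}$, as above. (ii) Show that $\sh{J}$ is a right ideal of $\sh{R}'$ and that $\sh{J}_{\geq m}\cdot\sh{R}' = \sh{J}_{\geq m}\cdot\sh{B} \supseteq \sh{J}_{\geq m'}$ for suitable $m'$, using that $\sh{J}$ is a $\sh{B}$-ideal and the bimodules $(\Lsh_n)_{\sigma^n}$ form an ample sequence (so $\sh{J}$, being a nonzero ideal of $\sh{B}$, contains $\sh{IB}_{\geq N}$ for a suitable ideal $\sh{I}$ with $\sh{I}$ cosupported in finitely many orbits, hence $\sh{J}$ agrees with $\sh{B}$ in high degree up to a sheaf-coherent discrepancy). (iii) Conclude that the quotient sheaf $\sh{R}'/\sh{J}$ has all $(\sh{R}'/\sh{J})_n$ coherent and zero for $n$ large — so $\sh{R}'/\sh{J}$ is a coherent $\struct_X$-module and a fortiori a coherent right $\sh{R}$-module. (iv) Since $\sh{R}$ is right noetherian, $\sh{J}$ (a right ideal of $\sh{R}$, being a submodule of the coherent $\sh{R}$-module $\sh{R}$... actually of $\sh{B}$, but $\sh{J}\subseteq\sh{R}$ and $\sh{R}$ noetherian gives $\sh{J}$ coherent as a right $\sh{R}$-module) is coherent, and then $\sh{R}'$, as an extension of the coherent $\sh{R}$-module $\sh{R}'/\sh{J}$ by the coherent $\sh{R}$-module $\sh{J}$, is itself a coherent right $\sh{R}$-module when $\sh{R}\subseteq\sh{R}'$. (v) Invoke Lemma~\ref{lem-noeth}: since each $\sh{R}'_n$ is a subsheaf of the invertible sheaf $\Lsh_n$, to show $\sh{R}'$ is right noetherian it suffices to show every submodule of a coherent right $\sh{R}'$-module is coherent; a submodule $\sh{N}$ of a coherent $\sh{R}'$-module $\sh{M}$ is also a right $\sh{R}$-module, $\sh{M}$ is coherent over $\sh{R}$ because $\sh{R}'$ is a coherent right $\sh{R}$-module (at least in the case $\sh{R}\subseteq\sh{R}'$; in general one uses the common ideal $\sh{J}$ to reduce to this situation), $\sh{R}$-noetherian forces $\sh{N}$ coherent over $\sh{R}$, hence over $\sh{R}'$, and again Lemma~\ref{lem-noeth} closes the argument.

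The main obstacle I anticipate is step (ii)–(iii): pinning down that a nonzero graded right ideal $\sh{J}$ of $\sh{B}=\sh{B}(X,\Lsh,\sigma)$ must ``fill up'' $\sh{B}$ in large degree, i.e. that $\sh{B}_n/\sh{J}_n$ is coherent and eventually stabilizes or vanishes in a controlled way. The cleanest route is via the equivalence $\rqgr\sh{B}\simeq \struct_X\lmod$ (Theorem~\ref{thm-VdBSerre} and Example~\ref{eg-twist}): a nonzero graded right ideal $\sh{J}\subseteq\sh{B}$ corresponds to a subsheaf of $\struct_X$, i.e. an ideal sheaf $\sh{I}'$, and then $\sh{J}_{\geq N} = (\sh{I}'\sh{B})_{\geq N}$ for $N\gg 0$; ampleness of $\{(\Lsh_n)_{\sigma^n}\}$ gives $H^1(X,\sh{I}'\otimes\Lsh_n)=0$ for $n\gg0$, so $\sh{B}_n/\sh{J}_n \cong H^0(X,(\struct_X/\sh{I}')\otimes\Lsh_n)$, which is finite-dimensional (its support is finite) — whence $\sh{R}'/\sh{J}$, being a subquotient, is a coherent $\struct_X$-module with finite support, completing step (iii). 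Handling the general (non-containment) case of the last sentence requires a small extra argument: replace $\sh{R}'$ by $\sh{R}+\sh{R}'$ or argue on $\sh{R}_{\geq n_0}$ directly, noting $\sh{R}_{\geq n_0}\subseteq\sh{R}'\cap\sh{R}$ is a common right-noetherian ``core,'' and transport coherence along the inclusions; this is routine once the core fact about $\sh{J}$ is established.
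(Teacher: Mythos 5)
The core of your argument (steps (ii)--(iv)) contains a genuine gap, and the approach is quite different from the paper's. The key error is in step (iii), and it comes from conflating the bimodule algebra $\sh{B}$ with the ring $B=H^0(X,\sh{B})$. After passing to the equivalence $\rqgr\sh{B}\simeq\struct_X\lmod$, a nonzero graded right ideal $\sh{J}$ of $\sh{B}$ satisfies $\sh{J}_n=\sh{I}'\Lsh_n$ for some fixed nonzero ideal sheaf $\sh{I}'$ once $n\gg0$, and hence $\sh{B}_n/\sh{J}_n\cong(\struct_X/\sh{I}')\otimes\Lsh_n$ is a coherent \emph{sheaf} supported on $V(\sh{I}')$ --- not a finite-dimensional vector space, and not zero. (You wrote $\sh{B}_n/\sh{J}_n\cong H^0(X,(\struct_X/\sh{I}')\otimes\Lsh_n)$, which is the graded piece of the \emph{ring} $B/J$, not of the bimodule algebra.) Consequently $\sh{R}'/\sh{J}=\bigoplus_n(\sh{R}'_n/\sh{J}_n)_{\sigma^n}$ will typically have infinitely many nonzero graded pieces (e.g.\ take $\sh{R}'=\sh{B}$), so it is \emph{not} a coherent $\struct_X$-module, and the ``extension of coherent by coherent'' conclusion in step (iv) collapses. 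There are two additional problems: the lemma does not assume $\Lsh$ is $\sigma$-ample, yet your (ii) invokes ampleness of $\{(\Lsh_n)_{\sigma^n}\}$; and in (i), $\sh{J}\cdot\sh{R}_{\geq 1}$ need not be a right ideal of $\sh{B}$ since $\sh{R}$ is merely a subalgebra, not an ideal (the correct replacement is simply $\sh{J}_{\geq n_0}$, which is a nonzero graded right ideal of $\sh{B}$ inside $\sh{R}_{\geq n_0}$).

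The paper's argument avoids all of this. After reducing to $\Lsh=\struct_X$ (Lemma~\ref{lem-catequiv}(1)) and to the case $\sh{R}\subseteq\sh{R}'$ (by replacing $\sh{R}$ with $\sh{R}\cap\sh{R}'$, which agrees with $\sh{R}$ in degrees $\geq n_0$ and hence is still right noetherian), one chooses a nonzero graded right ideal $\sh{J}\subseteq\sh{R}$ of $\sh{B}$, picks $m$ with $\sh{J}_m\neq0$, and then --- since $X$ is a variety --- chooses an \emph{invertible} ideal sheaf $\sh{H}\subseteq\sh{J}_m$. One then observes $\sh{H}\cdot\sh{R}'\subseteq\sh{J}_m\cdot\sh{B}\subseteq\sh{R}$, so $\sh{H}\cdot\sh{R}'$ is a graded right $\sh{R}$-submodule of $\sh{R}$ and hence coherent because $\sh{R}$ is right noetherian. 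The crucial final move is the autoequivalence of Lemma~\ref{lem-catequiv}(2): tensoring by $\sh{H}_{\sigma^m}$ is invertible, so coherence of $\sh{H}\cdot\sh{R}'\cong\sh{H}_{\sigma^m}\otimes\sh{R}'$ implies coherence of $\sh{R}'$ as a right $\sh{R}$-module. The noetherianity of $\sh{R}'$ then follows because any right ideal of $\sh{R}'$ is also an $\sh{R}$-submodule of the coherent $\sh{R}$-module $\sh{R}'$, hence coherent over $\sh{R}$, hence over $\sh{R}'$. If you replace your steps (ii)--(iv) with this invertible-sheaf-and-autoequivalence trick (which needs no ampleness), the argument goes through; as it stands, the coherence claim for $\sh{R}'/\sh{J}$ does not hold and the proof fails.
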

\begin{proof}
By Lemma~\ref{lem-catequiv}, without loss of generality we may assume that $\Lsh = \struct_X$.  

We note that $\sh{R}_{\geq n_0}$ also contains a nonzero graded  right ideal of $\sh{B}$.  Further,  $\sh{R} \cap \sh{R}'$ is also right noetherian, as $(\sh{R} \cap \sh{R}')_{\geq n_0} = \sh{R}_{\geq n_0}$.  Thus without loss of generality we may assume that $\sh{R} \subseteq \sh{R}'$.

Let $\sh{J}$ be a nonzero graded right ideal of $\sh{B}$ that is contained in $\sh{R}$; let $m$ be such that $\sh{J}_m \neq 0$.  Since $X$ is a variety, there is  an invertible ideal sheaf $ \sh{H}$ contained in $\sh{J}_m$.    As $\sh{R}$ is right noetherian and $\sh{H} \cdot \sh{R}' \subseteq \sh{J}_m \cdot \sh{B} \subseteq \sh{R}$, we see that $ \sh{H}\cdot \sh{R}'$ is a coherent right $\sh{R}$-module.  Lemma~\ref{lem-catequiv} now implies that $\sh{R}'$ is a coherent right $\sh{R}$-module.

 Any right ideal of $\sh{R}'$ is also a right $\sh{R}$-submodule, and so is coherent as an $\sh{R}$-module.  It is thus also coherent as an $\sh{R}'$-module.  Thus $\sh{R}'$ is right noetherian.
\end{proof}

The next few lemmas involve translating general results on idealizers  to the context of  bimodule algebras.   The proofs are all easy generalizations of the original proofs.

The following result is originally due to Robson \cite[Proposition~2.3(i)]{Robson1972}, although we will follow Stafford's restatement of it.
\begin{lemma}\label{lem-Rob}
{\em (\cite[Lemma~1.1]{Stafford1985})}
Let $I$ be a right ideal of a right noetherian ring $B$, and let $R = \I_B(I)$.  If  $B/I$ is a right noetherian $R$-module, then $R$ is right noetherian. \qed
\end{lemma}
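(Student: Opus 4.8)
The plan is to give Robson's argument in the general ungraded setting, the only inputs being that $B$ is right noetherian and that $B/I$ is noetherian as a right $R$-module. First I record two bookkeeping facts. Since $I$ is a right ideal of $B$ we have $II\subseteq IB\subseteq I$, so every element of $I$ carries $I$ into $I$ and hence lies in $\I_B(I)=R$; thus $I\subseteq R$, $I$ is a two-sided ideal of $R$, and the inclusions $I\subseteq R\subseteq B$ give a surjection of right $R$-modules $B/I\twoheadrightarrow B/R$. In particular the hypothesis forces $B/R$ to be noetherian as a right $R$-module, and this is the only place it is used. (One cannot instead pass to $R/I$, since $I$ is not a left ideal of $B$ and so $B/I$ is not even a module over $R/I$.)

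Now let $A$ be an arbitrary right ideal of $R$; the goal is to show that it is finitely generated over $R$. Since $B$ is right noetherian, the right ideal $AB$ of $B$ is finitely generated over $B$, and since $AB=\sum_{a\in A}aB$ the generators may be taken inside $A$: choose $a_1,\dots,a_k\in A$ with $AB=\sum_l a_lB$, and set $A_0=\sum_l a_lR$, a finitely generated right $R$-submodule of $A$. Using $1\in R$ one checks that $A_0B=AB$ and $A\subseteq AB=A_0B$, so $A/A_0$ embeds as a right $R$-submodule of $A_0B/A_0$. It therefore suffices to prove that $A_0B/A_0$ is a noetherian right $R$-module. This reduction — replacing $A$ by the sub-$R$-module $A_0$ spanned by $B$-module generators of $AB$ — is the one step that is not a pure formality.

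For the reduced statement I would tensor the short exact sequence of left $R$-modules $0\to R\to B\to B/R\to0$ on the left with $A_0$ over $R$. Right-exactness of $A_0\otimes_R(-)$ together with the multiplication map $A_0\otimes_R B\to A_0B$ yields a surjection of right $R$-modules $A_0\otimes_R(B/R)\twoheadrightarrow A_0B/A_0$, because the image of $A_0\otimes_R R$ in $A_0\otimes_R B$ maps onto $A_0\cdot1=A_0\subseteq A_0B$. Since $A_0$ is a quotient of $R^{\oplus k}$ as a right $R$-module, $A_0\otimes_R(B/R)$ is a quotient of $(B/R)^{\oplus k}$, which is noetherian by the first paragraph; hence $A_0B/A_0$, and with it its submodule $A/A_0$, is noetherian. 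Then $A/A_0$ is finitely generated, so $A=A_0+(\text{finitely generated})$ is finitely generated over $R$; as $A$ was arbitrary, $R$ is right noetherian.

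There is no genuinely hard step here; the only point needing care is the module-structure bookkeeping in the last paragraph — verifying that $A_0\otimes_R(B/R)\twoheadrightarrow A_0B/A_0$ and $R^{\oplus k}\otimes_R(B/R)\cong(B/R)^{\oplus k}$ are maps of right $R$-modules, which is routine once one keeps track of the right $R$-action on $B$ (and hence on $A_0\otimes_R B$) coming from right multiplication inside $B$.
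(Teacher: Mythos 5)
Your proof is correct and follows Robson's classical argument, which is exactly what the paper is invoking (it cites Stafford's \cite[Lemma~1.1]{Stafford1985} without proof and adapts the same scheme of reasoning to the sheaf setting in Lemma~\ref{lem-sheafRob}). The one cosmetic difference is that you pass through $B/R$ (a quotient of $B/I$) while the paper's sheaf version works directly with $\sh{B}/\sh{I}$; both routes identify $A/A_0$ (resp.\ $\sh{J}/(\sh{J}'\cdot\sh{R})$) as a subfactor of a finite sum of copies of the hypothesized noetherian module, so the argument is the same.
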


Our version of this is the following lemma.

\begin{lemma}\label{lem-sheafRob}
Let $X$ be a projective variety, let $\sigma \in \Aut_{\kk} X$, and let $\Lsh$ be an invertible sheaf on $X$.  Let $\sh{B} = \sh{B}(X, \Lsh, \sigma)$, and let $\sh{I} = \bigoplus (\sh{I}_n)_{\sigma^n}$ be  a nonzero graded right ideal of $\sh{B}$.  Let $\sh{R} = \I_{\sh{B}}(\sh{I})$.  Then  $\sh{B}/\sh{I}$ is a noetherian right  $\sh{R}$-module if and only if  $\sh{R}$ is right noetherian.
\end{lemma}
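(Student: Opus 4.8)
The plan is to transport Lemma~\ref{lem-Rob} to the bimodule-algebra setting, since Lemma~\ref{lem-sheafRob} is exactly its sheaf-theoretic analogue, with the role of "right noetherian ring" played by "right noetherian bimodule algebra" in the sense of Definition~\ref{def-module}. The one direction is essentially formal: if $\sh{R} = \I_{\sh{B}}(\sh{I})$ is right noetherian, then since $\sh{I}$ is a nonzero graded right ideal of $\sh{B}$ contained in $\sh{R}$ (it is a two-sided ideal of $\sh{R}$ by construction of the idealizer), $\sh{R}/\sh{I}$ is a quotient of $\sh{R}$ as a right $\sh{R}$-module, hence coherent; and every sub-bimodule of it is coherent by Lemma~\ref{lem-noeth} (noting $\sh{B}$, and hence $\sh{I}$ and $\sh{R}$, have graded pieces that are subsheaves of the invertible sheaves $\Lsh_n$, so the hypotheses of that lemma apply). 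Thus $\sh{B}/\sh{I}$ is a noetherian right $\sh{R}$-module. I would present this direction first, as the warm-up.

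For the substantive direction, I would mimic the proof of Robson's lemma. Suppose $\sh{B}/\sh{I}$ is a noetherian right $\sh{R}$-module; I want to show every graded right ideal $\sh{M}$ of $\sh{R}$ is coherent, which by Lemma~\ref{lem-noeth} suffices. The key exact sequence is
\[ 0 \to \sh{M} \cap \sh{I} \to \sh{M} \to \sh{M}/(\sh{M}\cap\sh{I}) \to 0 \]
of graded right $\sh{R}$-modules. The quotient $\sh{M}/(\sh{M}\cap\sh{I})$ embeds in $\sh{B}/\sh{I}$, which is a noetherian $\sh{R}$-module, so $\sh{M}/(\sh{M}\cap \sh{I})$ is coherent. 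For the submodule $\sh{M}\cap\sh{I}$: since $\sh{I}$ is a two-sided ideal of $\sh{R}$, the product $(\sh{M}\cap\sh{I})\cdot\sh{B}$ makes sense as a graded right $\sh{B}$-module (it lands in $\sh{I}$, since $\sh{I}$ is a right $\sh{B}$-ideal), and $\sh{B}$ is right noetherian (Artin--Van den Bergh, or directly: $\sh{B}(X,\Lsh,\sigma)$ has all graded right ideals coherent), so $(\sh{M}\cap\sh{I})\cdot\sh{B}$ is a coherent right $\sh{B}$-module. But $\sh{I}\cdot(\sh{M}\cap\sh{I})\cdot\sh{B} \subseteq \sh{I}\cdot\sh{I} \subseteq \sh{M}\cap\sh{I}$ — wait, more carefully: because $\sh{I}$ annihilates $\sh{B}/\sh{I}$ on the right and $(\sh{M}\cap\sh{I})\subseteq\sh{I}$, one gets that the right $\sh{B}$-module $(\sh{M}\cap\sh{I})\cdot\sh{B}$, modulo $\sh{M}\cap\sh{I}$, is a right $\sh{B}/\sh{I}$-module but also killed by... the cleanest route is Robson's original trick: $(\sh{M}\cap\sh{I})\cdot\sh{B}/(\sh{M}\cap\sh{I})$ is a homomorphic image of a finite direct sum of copies of $\sh{B}/\sh{I}$ (choosing finitely many generators of the coherent $\sh{B}$-module $(\sh{M}\cap\sh{I})\cdot\sh{B}$), hence is a noetherian right $\sh{R}$-module; so $\sh{M}\cap\sh{I}$ is coherent as a right $\sh{R}$-module because it is the kernel of a map from the coherent $\sh{R}$-module $(\sh{M}\cap\sh{I})\cdot\sh{B}$ (coherent over $\sh{R}$ since $\sh{B}$ is, once we know $\sh{B}$ is a coherent right $\sh{R}$-module — which follows from Lemma~\ref{lem-BOX} applied with $\sh{R}'=\sh{B}$, using that $\sh{R}$ contains the nonzero graded right $\sh{B}$-ideal $\sh{I}$) onto a noetherian module, with noetherian image. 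Then $\sh{M}$ is coherent by the snake/extension argument.

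I would organize the write-up as: (i) observe $\sh{B}$ is a coherent right $\sh{R}$-module via Lemma~\ref{lem-BOX}, so "coherent right $\sh{B}$-module" implies "coherent right $\sh{R}$-module"; (ii) the easy direction; (iii) the hard direction via the displayed short exact sequence, handling $\sh{M}\cap\sh{I}$ by pushing up to $\sh{B}$ and pulling back, and $\sh{M}/(\sh{M}\cap\sh{I})$ by embedding into $\sh{B}/\sh{I}$; (iv) conclude by Lemma~\ref{lem-noeth}. The main obstacle is bookkeeping rather than conceptual: one must be careful that all the "$\cdot$" products and quotients are formed in the right category (right $\sh{B}$-modules versus right $\sh{R}$-modules), that coherence transfers correctly between them using Lemma~\ref{lem-BOX}, and that Lemma~\ref{lem-noeth} is legitimately applicable — i.e.\ that the modules in play have graded pieces that are subsheaves of invertible sheaves, or more precisely that one only needs the "submodules of coherent modules are coherent" criterion, which Lemma~\ref{lem-noeth} supplies for $\sh{R}$ itself. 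Robson's argument is short and the sheaf version should be too, but it genuinely uses that $\sh{B}$ is right noetherian and that $\sh{I}$ annihilates $\sh{B}/\sh{I}$, so I would make sure both appear explicitly.
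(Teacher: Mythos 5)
Your easy direction is fine and matches the paper's: once you know (via Lemma~\ref{lem-BOX}) that $\sh{B}$ is a coherent right $\sh{R}$-module, right noetherianness of $\sh{R}$ makes $\sh{B}/\sh{I}$ a noetherian right $\sh{R}$-module.

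The hard direction, however, has a genuine gap at the final step. You attempt to conclude that $\sh{M}\cap\sh{I}$ is coherent over $\sh{R}$ from the two facts that $(\sh{M}\cap\sh{I})\cdot\sh{B}$ is a coherent right $\sh{R}$-module and that the quotient $(\sh{M}\cap\sh{I})\cdot\sh{B}/(\sh{M}\cap\sh{I})$ is a noetherian right $\sh{R}$-module. But the implication ``a submodule of a coherent module with noetherian quotient is itself coherent'' is false in general: take for instance a commutative polynomial ring in infinitely many variables, the module $A = R$, and $N$ the irrelevant ideal, so that $A$ is coherent, $A/N \cong \kk$ is noetherian, yet $N$ is not finitely generated. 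What you would need is that $(\sh{M}\cap\sh{I})\cdot\sh{B}$ is \emph{noetherian} over $\sh{R}$ (every submodule coherent), not merely coherent---and that is exactly the kind of statement you are in the middle of trying to prove, so you cannot invoke it. This is precisely the subtlety that Lemma~\ref{lem-noeth} flags: coherence and the noetherian property are distinct in the bimodule-algebra category, and one only knows they interact well once $\sh{R}$ itself is known to be noetherian.

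The paper sidesteps the issue by never decomposing along $\sh{J}\cap\sh{I}$. Instead, given a graded right ideal $\sh{J}$ of $\sh{R}$, it uses right noetherianness of $\sh{B}$ to choose a coherent $\struct_X$-submodule $\sh{J}'\subseteq\sh{J}$ with $\sh{J}'\cdot\sh{B} = \sh{J}\cdot\sh{B}$, and then shows it suffices to prove $\sh{J}/(\sh{J}'\cdot\sh{R})$ is coherent. This module sits inside $((\sh{J}'\cdot\sh{B})\cap\sh{R})/(\sh{J}'\cdot\sh{R})$, which is killed by $\sh{I}$ and exhibited as a subfactor of $\sh{J}'\otimes(\sh{B}/\sh{I})$. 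Since $\sh{B}/\sh{I}$ is noetherian by hypothesis, so is $\sh{J}'\otimes(\sh{B}/\sh{I})$, and subfactors of noetherian modules are coherent---a valid inference, unlike the one you use. If you wish to keep your short exact sequence, you would have to run the paper's $\sh{J}'$-device on $\sh{M}\cap\sh{I}$ anyway, which makes the extra decomposition pure overhead: the paper's single step is both shorter and correct.
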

\begin{proof}
The proof is a straightforward translation of \cite[Proposition~2.3(i)]{Robson1972} into sheaf terminology.  By Lemma~\ref{lem-catequiv}, without loss of generality we may let $\Lsh = \struct_X$.    Thus all $\sh{R}_n$ and all $\sh{I}_n$ are ideal sheaves on $X$.

 By Lemma~\ref{lem-BOX}, if  $\mathcal{R}$ is right noetherian, certainly $\sh{B}_{\sh{R}}$ and thus  $(\mathcal{B}/\sh{I})_{\mathcal R}$ are also.
So suppose that $\mathcal{B} / \mathcal{I} $ is a noetherian right $\mathcal{R}$-module.  Let $\mathcal{J}$ be a right ideal of $\mathcal{R}$; we will show that $\mathcal{J}$ is coherent.  Because $\mathcal{B}$ is right noetherian, we may choose a coherent sheaf $\mathcal{J}' \subseteq \mathcal{J}$ such that $\mathcal{J}' \cdot \mathcal{B} = \mathcal{J}\cdot \sh{B}$.  It suffices to show that 
$\mathcal{J} / (\mathcal{J}' \cdot  \sh{R})$ is a coherent right $\sh{R}$-module. 

Now,  $\mathcal{J} /( \mathcal{J}' \cdot \sh{R})$ is a submodule of $((\mathcal{J}'\cdot \sh{B}) \cap \mathcal{R}) / (\mathcal{J}'\cdot \sh{R})$.  Further, it is killed by $\mathcal{I}$ and so is a subfactor of $\mathcal{J}' \otimes (\mathcal{B} / \mathcal{I})$.  Since $\sh{B}/\sh{I}$ is a noetherian right $\sh{R}$-module, so is  $\mathcal{J}' \otimes (\mathcal{B} / \mathcal{I})$.  Thus the subfactor $\sh{J}/(\sh{J}' \cdot \sh{R})$ is coherent.  
\end{proof}

The criterion in Lemma~\ref{lem-Rob} can be hard to test.    Stafford \cite[Lemma~1.2]{Stafford1985} gave a different criterion for an idealizer to be noetherian; it was later slightly strengthened by Rogalski \cite[Proposition~2.1]{R-idealizer}.   We give the following version, which is adequate for our needs.  

\begin{lemma}\label{lem-Rog-I}
Let $B$ be a right noetherian domain, let $I$ be a right ideal of $B$, and let $R  = \I_B(I)$.  Then 
the following are equivalent:

$(1)$ $R$ is right noetherian;

$(2)$ $B_R$ is finitely generated, and for all right ideals $J$ of $B$ such that $J \supseteq I$, we have that $\Hom_B(B/I, B/J)$ is a noetherian right $R$-module (or $R/I$-module).
\end{lemma}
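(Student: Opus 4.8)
The plan is to treat the two implications separately. It is convenient to record first the standard identification $\Hom_B(B/I, B/J) \cong (J:I)/J$, where $(J:I) := \{\, b \in B \mid bI \subseteq J \,\}$: a right $B$-linear map $B/I \to B/J$ is determined by the image $b+J$ of $1+I$, and is well-defined precisely when $bI \subseteq J$. Since $(J:I)$ is stable under right multiplication by $R$ and $I$ annihilates $(J:I)/J$, this is a right $R$-module, indeed an $R/I$-module, matching the parenthetical in the statement. I will also use that $I \subseteq R$ (as $iI \subseteq iB \subseteq I$ for $i \in I$), and I will dispose at the outset of the case $I = 0$, in which $R = B$ and everything is trivial; so assume $I \neq 0$.

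For $(1) \Rightarrow (2)$, the point is that $I$ already contains a copy of $B$ as a right module. Fix $0 \neq a \in I$. Then $aB$ is a right ideal of $B$ contained in $I \subseteq R$, and since $aB \cdot R \subseteq aB$ it is a right ideal of $R$; as $R$ is right noetherian, $aB$ is finitely generated over $R$. Because $B$ is a domain, left multiplication by $a$ is an isomorphism $B \cong aB$ of right $B$-modules, hence of right $R$-modules, so $B_R$ is finitely generated, and therefore noetherian. Then for every right ideal $J \supseteq I$ of $B$, the module $\Hom_B(B/I, B/J) \cong (J:I)/J$ is an $R$-module subquotient of $B_R$, hence noetherian over $R$; this is (2).

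For $(2) \Rightarrow (1)$, by Lemma~\ref{lem-Rob} it suffices to show that $B/I$ is a noetherian right $R$-module. Given an ascending chain $M_1 \subseteq M_2 \subseteq \cdots$ of right $R$-submodules of $B/I$, lift it to an ascending chain $I \subseteq N_1 \subseteq N_2 \subseteq \cdots$ of right $R$-submodules of $B$. The key observation — the one place I expect a genuine idea to be needed, rather than bookkeeping — is that each product $N_k I$ is a \emph{right ideal of $B$}, not merely a right $R$-module: indeed $(N_k I)B = N_k(IB) \subseteq N_k I$ since $I$ is a right ideal of $B$. Hence the ascending chain of right ideals $\{N_k I\}$ stabilizes, say $N_k I = K$ for all $k \geq k_0$. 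Set $J = K + I$, a right ideal of $B$ with $J \supseteq I$. For $k \geq k_0$ one has $N_k I = K \subseteq J$, so $N_k \subseteq (J:I)$, while $N_k \supseteq I$ and $N_k \supseteq N_k I = K$ together give $N_k \supseteq J$. Thus $\{N_k/J\}_{k \geq k_0}$ is an ascending chain of right $R$-submodules of $(J:I)/J \cong \Hom_B(B/I, B/J)$, which is noetherian over $R$ by hypothesis (2); so this chain, and hence $\{M_k\}$, stabilizes.

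The argument is a direct translation of \cite[Lemma~1.2]{Stafford1985} and \cite[Proposition~2.1]{R-idealizer}; the substantive step is the passage to the right ideals $N_k I$ of $B$, which is what makes the $\Hom$-hypothesis of (2) applicable, with $J = N_k I + I$. I note in passing that the reverse implication does not actually use finite generation of $B_R$; that hypothesis appears in (2) because it is a consequence of (1) and is convenient to have on hand.
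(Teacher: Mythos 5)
Your proof is correct and amounts to supplying the details behind the paper's citations: the embedding $B \xrightarrow{\;a\cdot\;} aB \subseteq I \subseteq R$ for a fixed $0 \neq a \in I$ is exactly the paper's observation ``$B_R \hookrightarrow R_R$'' for $(1)\Rightarrow(2)$, after which $\Hom_B(B/I,B/J) \cong (J:I)/J$ being a subquotient of the noetherian module $B_R$ is immediate; and your $(2)\Rightarrow(1)$ argument — reduce to $B/I$ noetherian over $R$ via Lemma~\ref{lem-Rob}, lift a chain, pass to the right ideals $N_kI$ of $B$, and apply the $\Hom$ hypothesis with $J=N_{k_0}I+I$ — is precisely Stafford's argument, as one can see by comparing with the paper's bimodule analogue Lemma~\ref{lem-shSt}. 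Your closing remark that $(2)\Rightarrow(1)$ does not actually invoke finite generation of $B_R$ is accurate and consistent with how the paper partitions the proof between the two cited sources.
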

\begin{proof}
(2) $\Rightarrow$ (1) is \cite[Lemma~1.2]{Stafford1985}.
For (1) $\Rightarrow$ (2), note that if $R$ is noetherian, as $B$ is a domain we have $B_R \hookrightarrow R_R$ and so $B_R$ is finitely generated.  The rest of the argument is \cite[Proposition~2.1]{R-idealizer}.
\end{proof}

Our version of this is the following lemma:
\begin{lemma}\label{lem-shSt}
Let $X$ be a projective variety, and let $\sigma \in \Aut_{\kk} X$.  Let $\sh{B}$ be  a right noetherian graded $(\struct_X, \sigma)$-sub-bimodule algebra of the twisted bimodule algebra 
$\sh{B}(X, \struct_X, \sigma),$
 and let $\sh{I} = \bigoplus (\sh{I}_n)_{\sigma^n} $ be  a nonzero graded right ideal of $\sh{B}$.  Let $\sh{R} = \I_{\sh{B}}(\sh{I})$.  Suppose that for all graded right ideals $ \sh{J} \supseteq \sh{I}$ 
of $\sh{B}$,  for $n \gg 0$, 
\[ \sh{B}_n \cap \bigcap_{m \geq 0} (\sh{J}_{n+m}: \sh{I}_m^{\sigma^n}) = \sh{J}_n.\]
Then $\sh{R}$ is right noetherian.
\end{lemma}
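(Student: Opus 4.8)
The plan is to reduce, via Lemma~\ref{lem-sheafRob} (the sheaf form of Robson's criterion), to showing that $\sh{B}/\sh{I}$ is a noetherian right $\sh{R}$-module, and then to transcribe into the bimodule-algebra language the argument of Stafford \cite[Lemma~1.2]{Stafford1985} — in the form refined by Rogalski \cite[Proposition~2.1]{R-idealizer} and recorded here as Lemma~\ref{lem-Rog-I} — the displayed hypothesis supplying the finiteness of $\underline{\Hom}$-modules that is the crucial input. Note first that since $\sh{B} \subseteq \sh{B}(X, \struct_X, \sigma)$, every $\sh{B}_n$, $\sh{I}_n$, $\sh{R}_n$, and (below) $\sh{J}_n$ is an ideal sheaf on $X$, hence lies inside an invertible sheaf; so Lemma~\ref{lem-noeth} applies to both $\sh{B}$ and $\sh{R}$, and submodules of coherent modules over these algebras behave as expected. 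Also, although Lemma~\ref{lem-sheafRob} is stated for $\sh{B} = \sh{B}(X, \Lsh, \sigma)$, its proof is just a transcription of Robson's criterion for an arbitrary right noetherian ring, and so applies unchanged to our $\sh{B}$.

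The computation at the heart of the argument is the identification, for any graded right ideal $\sh{J} = \bigoplus (\sh{J}_n)_{\sigma^n} \supseteq \sh{I}$ of $\sh{B}$,
\[ \underline{\Hom}_{\sh{B}}(\sh{B}/\sh{I}, \sh{B}/\sh{J})_n \;\cong\; \Bigl( \sh{B}_n \cap \bigcap_{m \geq 0} (\sh{J}_{n+m} : \sh{I}_m^{\sigma^n}) \Bigr) \big/ \sh{J}_n, \]
since a degree-$n$ homomorphism $\sh{B}/\sh{I} \to \sh{B}/\sh{J}$ is the same as a section $b$ of $\sh{B}_n$ with $b \cdot \sh{I}_m^{\sigma^n} \subseteq \sh{J}_{n+m}$ for all $m$, taken modulo $\sh{J}_n$. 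Thus the displayed hypothesis says exactly that $\underline{\Hom}_{\sh{B}}(\sh{B}/\sh{I}, \sh{B}/\sh{J})$ vanishes in all large degrees; as $\sh{B}$ is right noetherian, each of its finitely many nonzero graded components is a coherent $\struct_X$-module, and therefore this $\underline{\Hom}$ is a noetherian right $\sh{R}$-module. This is the bimodule-algebra analogue of the Hom-finiteness hypothesis in Lemma~\ref{lem-Rog-I}$(2)$. Taking $\sh{J} = \sh{I}$ one also reads off $\sh{R}_n = \sh{I}_n$ for $n \gg 0$, so that $\sh{R}$ contains the nonzero graded right $\sh{B}$-ideal $\sh{I}_{\geq n_0}$, which will be handy via Lemma~\ref{lem-BOX}.

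To prove that $\sh{B}/\sh{I}$ is noetherian over $\sh{R}$, I would run Stafford's argument sheaf-theoretically: given an ascending chain $\sh{N}_1 \subseteq \sh{N}_2 \subseteq \cdots$ of graded right $\sh{R}$-submodules of $\sh{B}$ containing $\sh{I}$, set $\sh{J}_i = \sh{N}_i \cdot \sh{B}$, the graded right $\sh{B}$-ideal generated by $\sh{N}_i$; since $\sh{B}$ is right noetherian these coherent ideals stabilize, so after discarding finitely many terms all $\sh{J}_i$ equal a fixed graded right ideal $\sh{J} \supseteq \sh{I}$. One then exhibits a suitable subquotient of each $\sh{N}_i/\sh{I}$ as a graded right $\sh{R}$-submodule of $\underline{\Hom}_{\sh{B}}(\sh{B}/\sh{I}, \sh{B}/\sh{J})$ — up to a correction term supported in boundedly many degrees — and deduces from the noetherianity of that $\underline{\Hom}$-module, established above, that the chain $\sh{N}_i$ stabilizes. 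Applying Lemma~\ref{lem-sheafRob} then gives that $\sh{R}$ is right noetherian.

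The hard part will be the bimodule-algebra bookkeeping in this last step: forming the right $\sh{B}$-ideal generated by an $\sh{R}$-submodule, passing to the relevant subquotient, and identifying it inside the $\underline{\Hom}$-module, all while keeping track of coherence. The feature that makes this work is precisely that all the sheaves in sight lie inside invertible sheaves, so that Lemma~\ref{lem-noeth} guarantees that submodules of coherent modules over $\sh{B}$ and over $\sh{R}$ are coherent. A secondary subtlety is that, unlike in Stafford's original hypotheses, we are not given (and in general do not have) that $\sh{B}_{\sh{R}}$ is coherent; so one must be careful to route the whole argument through Lemma~\ref{lem-sheafRob}, which needs only that $\sh{B}/\sh{I}$ be noetherian over $\sh{R}$, rather than through Lemma~\ref{lem-Rog-I}.
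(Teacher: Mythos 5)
Your overall strategy — reduce through Lemma~\ref{lem-sheafRob} to showing that $\sh{B}/\sh{I}$ is a noetherian right $\sh{R}$-module, and then transcribe Stafford's argument, reading the displayed hypothesis as a finiteness statement for $\underline{\Hom}_{\sh{B}}(\sh{B}/\sh{I}, \sh{B}/\sh{J})$ — is exactly the paper's, and your preliminary observations (all graded pieces are ideal sheaves, so Lemma~\ref{lem-noeth} applies; Lemma~\ref{lem-sheafRob} extends to sub-bimodule algebras; $\sh{R}_n = \sh{I}_n$ for $n \gg 0$ from the hypothesis with $\sh{J} = \sh{I}$) are all correct and useful.

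But the key step has a genuine gap: you set $\sh{J}_i = \sh{N}_i \cdot \sh{B}$, the right $\sh{B}$-ideal \emph{generated} by $\sh{N}_i$. That is the wrong auxiliary ideal. Since $\struct_X = \sh{B}_0 \subseteq \sh{B}$, you get $\sh{N}_i \subseteq \sh{J}_i$, so once the chain $\sh{J}_i$ stabilizes at $\sh{J}$ all the $\sh{N}_i$ are \emph{contained} in $\sh{J}$ — and then $\sh{N}_i/\sh{I}$ sits inside $\sh{J}/\sh{I}$, not inside $\underline{\Hom}_{\sh{B}}(\sh{B}/\sh{I}, \sh{B}/\sh{J}) \cong \{b \in \sh{B} : b\cdot\sh{I} \subseteq \sh{J}\}/\sh{J}$, and there is no natural way to map the former to the latter. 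The $\underline{\Hom}$ finiteness from the hypothesis controls $\sh{C}/\sh{J}$ (where $\sh{C}$ is the set of $b$ with $b\cdot\sh{I}\subseteq\sh{J}$), which says nothing about the size of $\sh{J}/\sh{I}$; and you cannot separately argue that $\sh{J}/\sh{I}$ is noetherian over $\sh{R}$ without circularity, since that is (a piece of) what you are trying to prove. No correction term in boundedly many degrees can bridge this.

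What Stafford — and the paper — actually use is $\sh{J} = \sh{G}' \cdot \sh{I}$ (the paper takes the \emph{largest} right $\sh{B}$-ideal of the form $\sh{G}' \cdot \sh{I}$ with $\sh{G}'$ a coherent subsheaf of the given $\sh{R}$-submodule $\sh{G}$, which exists because $\sh{B}$ is right noetherian). Because $\sh{I} \subseteq \sh{R}$, one has $\sh{G}\cdot\sh{I} \subseteq \sh{G}\cdot\sh{R} \subseteq \sh{G}$, so this $\sh{J}$ is \emph{below} $\sh{G}$: one gets the sandwich $\sh{I} \subseteq \sh{J} \subseteq \sh{G} \subseteq \sh{C}$, with $\sh{C}$ defined by $\sh{C}\cdot\sh{I}\subseteq\sh{J}$ (so that $\sh{C}_n = \sh{B}_n \cap \bigcap_{m\geq 0}(\sh{J}_{n+m}:\sh{I}_m^{\sigma^n})$, which is where the hypothesis enters). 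The maximality of $\sh{J}$ then forces $\sh{G}\subseteq\sh{C}$, and the hypothesis makes $\sh{C}/\sh{J}$ (and hence $\sh{G}/\sh{J}$) a coherent $\struct_X$-module. That sandwich is the heart of the argument and it simply does not exist with $\sh{J}_i = \sh{N}_i \cdot \sh{B}$. Replace $\sh{N}_i\cdot\sh{B}$ by $\sh{N}_i\cdot\sh{I}$ (or work with a single $\sh{G}$ as the paper does) and your outline becomes correct.
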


\begin{proof}
We follow Stafford's proof of \cite[Lemma~1.2]{Stafford1985}.  
Assume that the hypotheses of the lemma hold; we claim that $\sh{B}/\sh{I}$ is a noetherian right $\sh{R}$-module.

Let $\mathcal{G}$ be a graded right $\sh{R}$-module with $\mathcal{I} \subseteq \mathcal{G} \subseteq \mathcal{B}$.  We seek to prove that $\mathcal{G}/\sh{I}$ is coherent.  Let $\mathcal{J}$ be the largest graded right ideal of $\mathcal{B}$ of the form $\mathcal{G}'\cdot \sh{I}$ for some coherent graded $\struct_X$-submodule $\mathcal{G}' $ of $\mathcal{G}$.   ($\mathcal{J}$ exists because $\sh{B}$ is right  noetherian.)  By maximality of $\mathcal{J}$, we have  $\mathcal{I} \subseteq \mathcal{J}$.  

Using Zorn's lemma, let $\mathcal{C}$ be the maximal quasicoherent subsheaf of $\mathcal{B}$ such that $\mathcal{C} \cdot \sh{I} \subseteq \mathcal{J}$.    Obviously, $\sh{C}$ is graded.  Note that 
\[ \sh{C}_n = \sh{B}_n \cap \bigcap_{m \geq 0} (\sh{J}_{n+m}: \sh{I}_m^{\sigma^n}).\]
Since   $\sh{C} \cdot \sh{R} \cdot \sh{I} \subseteq \sh{C} \cdot \sh{I} \subseteq \sh{J}$, we have that 
$\sh{C} \cdot \sh{R} \subseteq \sh{C}$ and $\sh{C}$ is a right $\sh{R}$-submodule of $\sh{B}$.  Since by assumption $\sh{C}_n = \sh{J}_n$ for $n \gg 0$, the right $\sh{R}$-module $\sh{C}/\sh{J}$ is in fact a coherent $\struct_X$-module.

We claim that $\sh{G} \subseteq \sh{C}$.  Suppose not.   We may choose a coherent graded $\struct_X$-submodule $\mathcal{G}''$ of $\mathcal{G}$ such that $\mathcal{G}'' \not\subseteq \mathcal{C}$, and so $\mathcal{G}'' \cdot \sh{I} \not\subseteq \mathcal{J}$.  Then $(\mathcal{G}' + \mathcal{G}'') \cdot \mathcal{I}  \supsetneqq \mathcal{J}$ by choice of $\mathcal{G}''$, contradicting the maximality of $\mathcal{J}$.  
Thus $\mathcal{G} \subseteq \mathcal{C}$. 
  
 Since $\sh{C}/\sh{J}$ is  a coherent $\struct_X$-module, so is the submodule $\mathcal{G}/\mathcal{J}$.  Since $\sh{J}_\sh{R}$ is coherent  and $\sh{G}/\sh{J}$ is a coherent $\struct_X$-module, $\sh{G}_{\sh{R}}$ is coherent.  Thus  $(\sh{G}/\sh{I})_{\sh{R}}$ is also coherent.   
Since $\sh{G}$ was arbitrary, we have shown that $\sh{B}/\sh{I}$ is a right noetherian $\sh{R}$-module as claimed.  

Applying Lemma~\ref{lem-sheafRob}, we obtain that $\sh{R}$ is a right noetherian bimodule algebra.
\end{proof}

We are almost ready to give the proof of Theorem~\ref{thm-rtnoeth}.   One technical difficulty in  the proof    is that if $Z$ has multiple components, it may be difficult to compute $(\sh{I}: \sh{I}^{\sigma^n})$ and thus $\sh{R}$.  However, if $Z$ is irreducible, then  computing $\sh{R}$ is straightforward; we record this in the next lemma.

\begin{lemma}\label{lem-onecomp}
Assume Notation~\ref{notnot}.  Suppose in addition that $Z$ is irreducible and without embedded components.   If $Z^{\red}$ has infinite order under $\sigma$, then $\sh{R} = \struct_X \oplus \sh{I} \sh{B}_+$.  
\end{lemma}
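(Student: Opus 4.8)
The plan is to read off the graded components of $\sh{R}$ directly from the formula $\sh{R}_n = (\sh{I}:\sh{I}^{\sigma^n})\Lsh_n$ of Notation~\ref{notnot}. Since $\sh{R}_0 = \struct_X$ and $(\sh{I}\sh{B}_+)_n = \sh{I}\Lsh_n$ for $n \geq 1$ (and $0$ in degree $0$), the asserted equality $\sh{R} = \struct_X \oplus \sh{I}\sh{B}_+$ is equivalent to the statement that $(\sh{I}:\sh{I}^{\sigma^n}) = \sh{I}$ for every $n \geq 1$. One inclusion is automatic, since $\sh{I}\cdot\sh{I}^{\sigma^n} \subseteq \sh{I}$; the content is the reverse inclusion.

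Because $Z$ is irreducible and has no embedded components, its defining ideal sheaf $\sh{I}$ is $\sh{P}$-primary, where $\sh{P} = \sh{I}_{Z^{\red}}$ is prime; this is the (one-term) minimal primary decomposition of $\sh{I}$, with unique associated prime $\sh{P}$. The key step is to show that $\sh{I}^{\sigma^n} \not\subseteq \sh{P}$ for $n \geq 1$. If this failed, then passing to radicals — which commutes with pullback along the automorphism $\sigma^n$ — would give $\sh{P}^{\sigma^n} = \sqrt{\sh{I}^{\sigma^n}} \subseteq \sqrt{\sh{P}} = \sh{P}$. But $\sh{P}^{\sigma^n}$ and $\sh{P}$ are the prime ideal sheaves of the integral subschemes $\sigma^{-n}(Z^{\red})$ and $Z^{\red}$, which have the same dimension since $\sigma^n$ is an automorphism; an inclusion of irreducible closed subschemes of equal dimension is an equality, so $\sh{P}^{\sigma^n} = \sh{P}$, i.e. $\sigma^n(Z^{\red}) = Z^{\red}$. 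This contradicts the hypothesis that $Z^{\red}$ has infinite order under $\sigma$, since an irreducible subscheme fixed by a positive power of $\sigma$ has finite $\sigma$-orbit.

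Granting that $\sh{I}^{\sigma^n} \not\subseteq \sh{P}$, Lemma~\ref{lem-primary}(2) — applied to the one-term primary decomposition $\sh{I} = \sh{I}$ with $\sh{K} = \sh{I}^{\sigma^n}$, which is not contained in the unique associated prime $\sh{P}$ — yields $(\sh{I}:\sh{I}^{\sigma^n}) = \sh{I}$. Hence $\sh{R}_n = \sh{I}\Lsh_n$ for all $n \geq 1$, and together with $\sh{R}_0 = \struct_X$ this gives $\sh{R} = \struct_X \oplus \sh{I}\sh{B}_+$. The only step that takes more than bookkeeping is the dimension argument establishing $\sh{I}^{\sigma^n} \not\subseteq \sh{P}$; I expect it to be the main (and essentially only) obstacle, and even it is routine once the ``infinite order'' hypothesis is unwound.
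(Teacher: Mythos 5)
Your proof is correct and takes essentially the same approach as the paper: reduce to showing $\sh{I}^{\sigma^n} \not\subseteq \sh{P}$ for $n \geq 1$, then invoke Lemma~\ref{lem-primary}(2). The only difference is that you spell out the dimension/equality argument ruling out a strict inclusion $\sh{P}^{\sigma^n} \subsetneq \sh{P}$, which the paper compresses into the word ``clearly.''
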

\begin{proof}
 Let $\sh{P}$ be the ideal sheaf of $Z^{\red}$.  For $n \geq 1$, clearly $\sh{I}^{\sigma^n} \not\subseteq \sh{P}$, since $\sh{I}^{\sigma^n}$ is $\sh{P}^{\sigma^n}$-primary and $\sh{P}^{\sigma^n} \neq \sh{P}$.  The result follows from Lemma~\ref{lem-primary}(2) and the identification $\sh{R}_n = (\sh{I}: \sh{I}^{\sigma^n}) \Lsh_n$.
 \end{proof}

We can now show that Theorem~\ref{thm-rtnoeth} holds under some additional assumptions on $Z$.

\begin{lemma}\label{lem-JANIS}
Assume Notation~\ref{notnot}.  Let  
\[ \sh{I} = \sh{K}_1 \cap \cdots \cap \sh{K}_c\]
be a minimal primary decomposition of $\sh{I} = \sh{I}_Z$, where each $\sh{K}_i$ is $\sh{Q}_i$-primary for some prime ideal sheaf $\sh{Q}_i$. 
 For $i = 1 \ldots c$, let  $Z_i$ be the primary component of $Z$ corresponding to $\sh{K}_i$, and let 
\[ \sh{R}^i = \I_{\sh{B}} (\sh{K}_i \sh{B}_+) = \sh{R}(X, \Lsh, \sigma, Z_i).\]

Suppose that for all $1 \leq i, j \leq c$ the set 
\[ \{ m \geq 0 \st \sh{K}_j^{\sigma^m} \subseteq \sh{Q}_i\} \]
is finite.  (In particular, we assume that the $\sh{Q}_i$ are of infinite order under $\sigma$.)  Then $\sh{R}_m = \sh{I} \sh{L}_m$ for $m \gg 0 $.  Further, the following are equivalent:

$(1)$  $\sh{R}$ is right noetherian;

$(2)$  $\sh{R}^i$ is right noetherian for $i = 1 \ldots c$;

$(3)$ $Z$ has finite intersection with forward orbits;

$(4)$ if $\sh{J}$ is an ideal sheaf on $X$ such that $\sh{J} \supseteq \sh{I}$, then  $(\sh{J} : \sh{I}^{\sigma^m}) = \sh{J}$ for $m \gg 0 $;

$(5)$ the bimodule algebra
$ \struct_X \oplus \sh{IB}_+$
is right noetherian.
\end{lemma}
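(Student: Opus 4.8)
The plan is to establish the cycle of implications $(3)\Rightarrow(4)\Rightarrow(5)\Rightarrow(1)$ and $(1)\Rightarrow(2)\Rightarrow(3)$, together with the formula $\sh{R}_m = \sh{I}\Lsh_m$ for $m\gg 0$, which I would prove first since it is what makes the whole lemma tractable. For that formula, fix $m$ large enough that $\sh{K}_j^{\sigma^m}\not\subseteq\sh{Q}_i$ for all $i,j$ (possible by the finiteness hypothesis). Then $\sh{I}^{\sigma^m}=\bigcap_j\sh{K}_j^{\sigma^m}$ is contained in no $\sh{Q}_i$, so Lemma~\ref{lem-primary}(2) gives $(\sh{I}:\sh{I}^{\sigma^m})=\sh{I}$, hence $\sh{R}_m=(\sh{I}:\sh{I}^{\sigma^m})\Lsh_m=\sh{I}\Lsh_m$. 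The same argument applied to a single component shows $\sh{R}^i_m=\sh{K}_i\Lsh_m$ for $m\gg 0$, and more generally, for any ideal sheaf $\sh{J}\supseteq\sh{I}$ whose associated primes are among the $\sh{Q}_i$ (which holds since $V(\sh{J})\subseteq Z$), the same reasoning yields $(\sh{J}:\sh{I}^{\sigma^m})=\sh{J}$ for $m\gg 0$ — but wait, $\sh{J}$ need only contain $\sh{I}$, so its primes are among those of $\sh{I}$; applying Lemma~\ref{lem-primary}(2) to a primary decomposition of $\sh{J}$ and the fact that $\sh{I}^{\sigma^m}$ avoids all $\sh{Q}_i$ gives $(4)$ directly. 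Thus $(3)$ (finite intersection with forward orbits) is actually not needed for $(4)$ as stated — rather $(3)$ follows from the standing hypothesis, and I should check that the standing hypothesis is equivalent to $(3)$; indeed $\{n\geq 0\st\sigma^n(x)\in Z\}$ finite for all $x$ is exactly the condition that $\{m\st \sh{K}_j^{\sigma^m}\subseteq\sh{Q}_i\}$ is finite, reading off supports. So $(3)\Leftrightarrow$ hypothesis, and under the hypothesis $(4)$ holds.

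Next, $(4)\Rightarrow(5)$: I would apply Lemma~\ref{lem-shSt} with $\sh{B}=\sh{B}(X,\struct_X,\sigma)$ (reduce to $\Lsh=\struct_X$ by Lemma~\ref{lem-catequiv}), noting that $\struct_X\oplus\sh{IB}_+$ is exactly $\I_{\sh{B}}(\sh{IB}_+)$ for the right ideal $\sh{IB}_+$ of $\sh{B}$? No — I should instead observe that $\struct_X\oplus\sh{IB}_+$ is the idealizer of the right ideal $\sh{I}\sh{B}_+$ inside the \emph{bimodule algebra} $\struct_X\oplus\sh{I}\sh{B}_+$ itself trivially, so the cleaner route is: $\struct_X\oplus\sh{IB}_+=\I_{\sh{B}}(\sh{IB}_+)$ is false in general (that's $\sh{R}$), but Lemma~\ref{lem-onecomp}-type reasoning shows that when $Z$ has no embedded components and all $\sh{Q}_i$ have infinite order, $\struct_X\oplus\sh{IB}_+\subseteq\sh{R}$. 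Actually the right statement: apply Lemma~\ref{lem-shSt} directly to the right ideal $\sh{I}\sh{B}_+$ of $\sh{B}$; its idealizer is some bimodule algebra $\sh{R}'$, and condition $(4)$ is precisely the hypothesis "$\sh{B}_n\cap\bigcap_{m\geq 0}(\sh{J}_{n+m}:\sh{I}_m^{\sigma^n})=\sh{J}_n$ for $n\gg 0$" of Lemma~\ref{lem-shSt} (since $\sh{I}_m=\sh{I}^{\sigma^m}\cdot(\text{stuff})$ and for $n\gg 0$ the intersection stabilizes to $\sh{J}_n$ by $(4)$). This gives that $\sh{R}'$ is right noetherian. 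Then $\struct_X\oplus\sh{IB}_+$, which agrees with $\sh{R}'$ in large degree (both have $n$th piece $\sh{I}^{\sigma^?}\Lsh_n$... here I need $\sh{R}'_n=\sh{I}\sh{B}_n$ for $n\gg 0$, which is $(4)$ with $\sh{J}=\sh{I}$), is right noetherian by Lemma~\ref{lem-BOX}. For $(5)\Rightarrow(1)$: since $\sh{R}_m=\sh{I}\Lsh_m=(\struct_X\oplus\sh{IB}_+)_m$ for $m\gg 0$, Lemma~\ref{lem-BOX} (with $\sh{R}=\struct_X\oplus\sh{IB}_+$, which contains the nonzero graded right ideal $\sh{I}\sh{B}_+$ of $\sh{B}$, and $\sh{R}'=\sh{R}(X,\Lsh,\sigma,Z)$) gives that $\sh{R}$ is right noetherian.

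For $(1)\Rightarrow(2)$: again use that $\sh{R}_m=\sh{I}\Lsh_m$ and $\sh{R}^i_m=\sh{K}_i\Lsh_m$ for $m\gg 0$; since $\sh{I}\subseteq\sh{K}_i$ we get $\sh{R}_{\geq n_0}\subseteq\sh{R}^i_{\geq n_0}$, and $\sh{R}$ contains the nonzero graded right ideal $\sh{I}\sh{B}_+$ of $\sh{B}$ (in large degree $\sh{R}$ \emph{equals} an ideal of $\sh{B}$), so Lemma~\ref{lem-BOX} yields $\sh{R}^i$ right noetherian. For $(2)\Rightarrow(3)$: by Lemma~\ref{lem-onecomp}, $\sh{R}^i=\struct_X\oplus\sh{K}_i\sh{B}_+$, and its right noetherianity forces $Z_i$ to have finite intersection with forward orbits — this is the content that should follow from the single-component case of the theorem, or I can argue directly: if $\sigma^n(x)\in Z_i$ for infinitely many $n\geq 0$ with $x\in Z_i$... actually the cleanest is to cite that $(2)\Rightarrow$ each $Z_i$ has finite intersection with forward orbits, which is the already-reduced single-component statement, and then $Z=\bigcup Z_i$ does too, giving $(3)$. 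The main obstacle I anticipate is $(2)\Rightarrow(3)$ for a single irreducible component: showing that if $\struct_X\oplus\sh{K}_i\sh{B}_+$ is right noetherian then the forward orbit of no point of $Z_i$ returns to $Z_i$ infinitely often. The strategy there is the standard one from Rogalski/Stafford: if the forward orbit is infinite inside $Z_i^{\red}$, one builds an infinite properly ascending chain of right ideals in $\struct_X\oplus\sh{K}_i\sh{B}_+$ by taking, in each degree, the sections vanishing on a growing finite subset of the returning orbit points — the failure of noetherianity comes from the fact that these vanishing conditions in different degrees are "independent" precisely because the orbit is infinite. I would make this precise by exhibiting, for a strictly increasing sequence $n_1<n_2<\cdots$ with $\sigma^{n_k}(x)\in Z_i$, the right ideals generated by $\sh{I}_{Z_i\cup\{\sigma^{n_1}(x),\ldots,\sigma^{n_k}(x)\}}$-type data and checking properness via a local calculation at the points $\sigma^{n_k}(x)$.
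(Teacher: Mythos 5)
Your proof contains two genuine errors that together break the logical structure of the argument.

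First, you claim that condition (3) is equivalent to the lemma's standing hypothesis (``reading off supports''). This is false. The hypothesis asserts that for each pair $i,j$ the set $\{m \geq 0 \st \sh{K}_j^{\sigma^m} \subseteq \sh{Q}_i\}$ is finite, which (unwinding) says that for each irreducible component $Y_i$ of $Z$ the set $\{m \geq 0 \st \sigma^m(Y_i) \subseteq Z\}$ is finite --- that is, it controls only the orbits of the \emph{generic points} of the components of $Z$. Condition (3) is the much stronger statement that \emph{every} point of $X$ has finite forward-orbit intersection with $Z$. If these were equivalent, the whole lemma would be vacuous (conditions (1)--(5) would always hold under the hypothesis); in fact the lemma's entire content is precisely that (3) is the nontrivial geometric condition characterizing when the idealizer is right noetherian.

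Second, your argument that the hypothesis implies (4) relies on the claim that if $\sh{J} \supseteq \sh{I}$ then the associated primes of $\sh{J}$ are among the $\sh{Q}_i$. This is false: $V(\sh{J}) \subseteq Z$ does not force the components of $V(\sh{J})$ to be components of $Z$ (e.g.\ $\sh{J}$ could cut out a small subvariety inside some $Z_i$). For the paper's argument one needs, for an \emph{arbitrary} prime $\sh{P}$, that $\{m \geq 0 \st \sh{I}^{\sigma^m} \subseteq \sh{P}\}$ is finite, and this genuinely requires (3), since $\sh{I}^{\sigma^m} \subseteq \sh{P}$ is equivalent to $\sigma^m(W) \subseteq Z$ for the variety $W$ of $\sh{P}$, which follows from finite orbit-intersection applied to the points of $W$.

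Your treatment of (2) $\Rightarrow$ (3) is also too vague to count as a proof, and the picture (``if the forward orbit is infinite inside $Z_i^{\red}$'') is misleading: the point $x$ whose orbit returns to $Z$ infinitely often need not lie in $Z$ at all. The paper handles this step by a concrete module computation: taking an arbitrary $x \in X$, forming the right $\sh{R}$-submodule $\sh{M} = \bigoplus_{n\geq 0}(\sh{I}_x:\sh{I}^{\sigma^n})_{\sigma^n}$ of $\sh{B}$, and showing that coherence of $\sh{M}/\sh{I}_x\sh{B}$ (which is killed by $\sh{IB}_+$, hence is an $\struct_X$-module) forces $(\sh{I}_x:\sh{I}^{\sigma^n}) = \sh{I}_x$ for $n \gg 0$, i.e.\ $\sigma^n(x) \notin Z$ for $n \gg 0$. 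You will need something of this kind; the ``ascending chain of right ideals'' you sketch may work but needs to be made precise, and as stated your strategy does not clearly address points outside $Z$. The implications (1) $\Rightarrow$ (2), (4) $\Rightarrow$ (5) $\Rightarrow$ (1), and the preliminary formula $\sh{R}_m = \sh{I}\Lsh_m$ for $m\gg 0$ are all handled correctly and essentially as in the paper.
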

We note that the assumptions of the lemma are satisfied if $Z$ consists of one primary component such that $Z^{\red}$ is of infinite order under $\sigma$.

\begin{proof}
By Lemma~\ref{lem-catequiv}, we may without loss of generality assume that $\Lsh = \struct_X$.   By Lemma~\ref{lem-primary}(2)
\[ (\sh{I}: \sh{I}^{\sigma^m})  = \sh{I} \]
for $m \gg 0$.  Thus 
$\sh{R}_m = \sh{I}$ for $m \gg 0$, as claimed.  Note that this implies that (1) $\iff$ (5).

(1) $\Rightarrow$ (2).   Fix $1 \leq i \leq c$.   By Lemma~\ref{lem-onecomp}, $(\sh{R}^i)_m = \sh{K}_i$ for all $m \geq 1$.  As  $\sh{R}_m = \sh{I}$ for $m \gg 0$, there is some $m_0$ so that for $m \geq m_0$
\[ \sh{R}(X, \struct_X, \sigma, Z)_{m} = \sh{I}  \subseteq \sh{K}_i =  (\sh{R}^i)_{m}.\]
By Lemma~\ref{lem-BOX}, $\sh{R}^i$ is right noetherian.

(2) $\Rightarrow$ (3) Since $Z$ is the set-theoretic union of finitely many irreducible components, it is enough to prove (3) in the case that $\sh{I}$ is itself primary; that is, in the case that $i=1$.  In this case, since $\sh{R} = \sh{R}^1$ is noetherian by assumption, by Lemma~\ref{lem-BOX} $\sh{B}_{\sh{R}}$ is coherent.   By Lemma~\ref{lem-onecomp}, $\sh{R} = \struct_X \oplus \sh{I} \sh{B}_+$.  

Fix $x \in X$, and let $\sh{I}_x$ be its ideal sheaf.  Let 
\[ \sh{M} = \bigoplus_{n \geq 0} (\sh{I}_x: \sh{I}^{\sigma^n})_{\sigma^n} \subseteq \sh{B} = \bigoplus_{n \geq 0} (\struct_X)_{\sigma^n}.\]
Let $m \geq 1$ and $n \geq 0$.  
By Lemma~\ref{lem-onecomp},  $\sh{R}_m = \sh{I}$.  Therefore, 
\[( \sh{M}_n \cdot \sh{R})_{m+n}  = (\sh{I}_x: \sh{I}^{\sigma^n}) \sh{I}^{\sigma^n} \subseteq \sh{I}_x  \subseteq \sh{M}_{m+n},\]
 so $\sh{M}$ is a right $\sh{R}$-submodule of $\sh{B}$.  It is therefore coherent, and so is the quotient $\sh{M}/\sh{I}_x \sh{B}$.   Since $\sh{M} \cdot \sh{IB}_+ \subseteq \sh{I}_x \sh{B}$,  the $\sh{R}$-action on $\sh{M}/\sh{I}_x \sh{B}$ factors through $\sh{R}/\sh{IB_+} = \struct_X$.  In other words, $\sh{M}/\sh{I}_x\sh{B}$ is a noetherian and therefore coherent $\struct_X$-module, and  so the ideal sheaves $(\sh{I}_x: \sh{I}^{\sigma^n})$ and $\sh{I}_x$ are equal for $n \gg 0$.  For fixed $n$, this is true if and only if $x \not \in \sigma^{-n}Z$ or $\sigma^n(x) \not\in Z$.  Thus $\{n \geq 0 \st \sigma^n(x) \in Z\}$ is finite.

(3) $\Rightarrow$ (4).   Let $\sh{P}$ be a nonzero prime ideal sheaf, defining a reduced and irreducible subscheme $W \subset X$.   Since for any $m \in \ZZ$ we have that $\sh{I}^{\sigma^m} \subseteq \sh{P}$ if and only if $\sigma^m(W)$ is (set-theoretically) contained in $Z$, we see that the set 
\[ \{ m \geq 0 \st \sh{I}^{\sigma^m} \subseteq \sh{P} \}\]
is finite.

Now let $\sh{J} \supseteq \sh{I}$ be an ideal sheaf on $X$, and let $\sh{J} = \sh{J}_1 \cap \cdots \cap \sh{J}_e$ be a primary decomposition of $\sh{J}$, where $\sh{J}_i$ is $\sh{P}_i$-primary for a suitable prime ideal sheaf $\sh{P}_i$.   For  $m \gg 0$ and  for $i = 1 \ldots e$, we have $\sh{I}^{\sigma^m} \not\subseteq \sh{P}_i$.   
Therefore by Lemma~\ref{lem-primary}(2), $(\sh{J}: \sh{I}^{\sigma^m}) = \sh{J}$ for $m \gg 0$.  

(4) $\Rightarrow$ (1). 
 Suppose that for all $\Ish \subseteq \mathcal{J} \subseteq \struct_X$, we have  $(\sh{J}: \sh{I}^{\sigma^n}) = \sh{J}$ for $n \gg 0$.    Let $\sh{F} \supseteq \sh{IB}$ be a graded right ideal of $\sh{B}$, and for all $m \geq 0$ let
\[ \sh{C}_m =  \bigcap_{n \geq 0} (\sh{F}_{n+m}: \sh{I}^{\sigma^m}).\] 
 We saw in Example~\ref{eg-twist} that  the categories $\rqgr \sh{B}$ and $\struct_X \lmod$ are equivalent, and that   
there is an ideal sheaf $\mathcal{J} \subseteq \struct_X$ such that, for some $k$, we have 
\[\mathcal{F}_{\geq k} = \bigoplus_{m \geq k} ( \mathcal{J})_{\sigma^m}.\]
By construction,  $\sh{J} \supseteq \sh{I}$.   For $m \geq k$, we have $\sh{C}_m = (\sh{J}: \sh{I}^{\sigma^m})$.   By assumption, this is equal to $\sh{J} = \sh{F}_m$ for $m \gg k$, and so the hypotheses of Lemma~\ref{lem-shSt} hold.  By Lemma~\ref{lem-shSt},  $\sh{R}$ is right noetherian.
\end{proof}

We now prove Theorem~\ref{thm-rtnoeth}.

\begin{proof}[Proof of Theorem~\ref{thm-rtnoeth}]
We recall our notation:  
\[
\sh{I} = \sh{J}_1 \cap \cdots \cap \sh{J}_c \cap \sh{K}_1 \cap \cdots \cap \sh{K}_e
\]
is a minimal primary decomposition of $\sh{I}$, where each $\sh{J}_i$ is $\sh{P}_i$-primary for some  prime ideal sheaf $\sh{P}_i$ of finite order under $\sigma$, and each $\sh{K}_j$ is $\sh{Q}_j$-primary for some prime ideal sheaf $\sh{Q}_j$ of infinite order under $\sigma$.  The ideal sheaf $\sh{K} =  \sh{K}_1 \cap \cdots \cap \sh{K}_e$ defines the closed subscheme $W$ of $X$, and  $\sh{J} = \sh{J}_1 \cap \cdots \cap \sh{J}_c$.  

By Lemma~\ref{lem-catequiv}, we may without loss of generality assume that $\Lsh = \struct_X$.  

$(1) \Rightarrow(2)$.  
Suppose that $\sh{R}$ is right noetherian.   We first show this implies that there is some $n$ so that all $\sh{J}_i$ are fixed by $\sigma^n$.     Suppose, in contrast, that for some $i$ there is no $n$ with  $\sh{J}_i^{\sigma^n} = \sh{J}_i$.   Since Veronese rings of $\sh{R}$ are also  right noetherian  and $\sh{P}_i$ has finite order under $\sigma$, we may assume without loss of generality that $\sh{P}_i$ is fixed by $\sigma$.  

Let $m \geq 1$.  Since $(\sh{J}_i)^{\sigma^m} \neq \sh{J}_i$, by minimality of the primary decomposition \eqref{kiri2}, it is clear that  $\sh{I}^{\sigma^m} \not\subseteq \sh{J}_i$.  By Lemma~\ref{lem-prim2} $(\sh{J}_i: \sh{I}^{\sigma^m}) \neq \struct_X$ is $\sh{P}_i$-primary.  Therefore  
\[\sh{R}_m = (\sh{I}:\sh{I}^{\sigma^m}) \subseteq (\sh{J}_i: \sh{I}^{\sigma^m}) \subseteq \sh{P}_i\]
 for all $m \geq 1$.
 
 Let $\sh{B} = \sh{B}(X, \struct_X, \sigma)$.  For any $k \geq 0$, we have 
\[(\sh{B}_{\leq k} \cdot \sh{R})_{k+1} = \sum_{j=0}^k (\sh{R}_{k+1-j})^{\sigma^j} \subseteq \sh{P}_i \neq \struct_X = \sh{B}_{k+1}.\]
We see that $\sh{B}_{\sh{R}}$ is not finitely generated; by Lemma~\ref{lem-BOX},  this contradicts the assumption that $\sh{R}$ is right noetherian.   Thus $\sh{J}_i$ is of finite order under $\sigma$.

As this holds for all $i$, there is some $n$ so that $\sh{J}^{\sigma^n} = \sh{J}$.  Suppose that $W \neq X$.  
Since $W$ has finite intersection with  forward $\sigma$-orbits  if and only if  $W$ has finite intersection with forward $\sigma^n$-orbits, without loss of generality we may replace $R$ by  the Veronese $\sh{R}\ver{n}$ and assume that $\sh{J}$ is $\sigma$-invariant.  
Suppose that $W$ has infinite intersection with some forward $\sigma$-orbit.  We will derive a contradiction.  

For $i = 1 \ldots  e$, let $W_i$ be the primary component of $Z$ defined by $\sh{K}_i$, and let $Y_i = W_i^{\red}$ be the subvariety defined by  the prime ideal sheaf $\sh{Q}_i$.  We claim that there is some $i$ so that
\begin{itemize}
\item[(i)]  $ Y_i  \not\subseteq \sigma^{ -m}(W)$  for $m \geq 1$;
\item[(ii)] for some $x \in X$,  the set $\{m \geq 0 \st \sigma^m(x) \in Y_i\}$ is infinite.
\end{itemize}
To see this, note that we may define a strict partial order $\prec$ on the set of the $Y_i$ by defining 
\[ Y_i \prec Y_j \mbox{ if $Y_i \subseteq \sigma^{-m}(Y_j)$ for some $m \geq 1$.} \]
The order $\prec$ is strict because each $Y_i$ has infinite order under $\sigma$.  
Now, (ii) holds for some $Y_i$ by assumption.  Thus (ii) holds for some $Y_i$ that is  maximal under $\prec$.  But (i) holds for any such maximal  $Y_i$, as the ideal sheaf of $Y_i$ is prime.

Let $Y_i$ satisfy (i) and (ii).   We thus have $\sh{K}^{\sigma^m} \not\subseteq \sh{Q}_i$ for any $m \geq 1$.  
As 
\[ \sh{I}^{\sigma^m} = \sh{K}^{\sigma^m} \cap \sh{J}^{\sigma^m} = \sh{K}^{\sigma^m} \cap \sh{J},\]
by Lemma~\ref{lem-primary}(1) we have
\[ \sh{R}_m = (\sh{I}:\sh{I}^{\sigma^m}) \subseteq  (\sh{K}_i: \sh{J})\]
for all $m \geq 1$.   By minimality of the primary decomposition \eqref{kiri2} and Lemma~\ref{lem-prim2}, the ideal sheaf $(\sh{K}_i : \sh{J})$ is $\sh{Q}_i$-primary.  

Let $V$ be the closed subscheme of $X$ defined by $(\sh{K}_i : \sh{J})$.     By Lemma~\ref{lem-onecomp}, 
\[ \sh{R}(X, \struct_X, \sigma, V) = \struct_X \oplus (\sh{K}_i: \sh{J}) \sh{B}_+,\]
so 
\[ \sh{R}(X, \struct_X, \sigma, Z) \subseteq \sh{R}(X, \struct_X, \sigma, V).\]
 Thus by Lemma~\ref{lem-BOX}, $\sh{R}(X, \struct_X, \sigma, V)$ is right noetherian.  But  $V$ also  has infinite intersection with some forward $\sigma$-orbit. 
This is impossible, by Lemma~\ref{lem-JANIS}.    

Thus $W$ has finite intersection with forward $\sigma$-orbits.

$(2) \Rightarrow (1)$.  
Suppose that (2) holds.   We claim that 
\beq \label{MELISSA}
(\sh{I}:\sh{I}^{\sigma^m}) = \sh{R}_m = (\sh{I}: \sh{J}^{\sigma^m}) \quad \mbox{ for $m \gg 0$} .
\eeq  

  If $W = X$ then $\sh{I} =\sh{J}$ and \eqref{MELISSA} holds for all $m$.
If  $W \neq X$ has finite intersection with forward $\sigma$-orbits, then 
for $m \gg 0$, $\sh{K}^{\sigma^m}$ is not contained in any minimal prime over $\sh{I}$. Thus  by
Lemma~\ref{lem-primary}(1) we have that
\[ (\sh{I}: \sh{I}^{\sigma^m}) \subseteq (\sh{I}: \sh{J}^{\sigma^m})\]
for $m \gg 0$.  As the other containment is automatic, we see that \eqref{MELISSA} holds.

Now, if $n | m$ then 
\[(\sh{I}: \sh{J}^{\sigma^m}) = (\sh{I}: \sh{J}) = (\sh{K}: \sh{J})\]
and so \eqref{MELISSA} implies in particular that $\sh{R}\ver{n}$ and $\struct_X \oplus (\sh{K}: \sh{J}) (\sh{B}\ver{n})_+$ 
are equal in large degree.

If $W= X$ then  $(\sh{K}: \sh{J}) = \struct_X$ and $\sh{R}\ver{n} = \sh{B}\ver{n}$.  If $W$ has finite intersection with forward $\sigma$-orbits, then note that $(\sh{K}: \sh{J})$ is the intersection of the $\sh{Q}_i$-primary ideal sheaves $(\sh{K}_i: \sh{J})$.  Let $W'$ be the closed subscheme of $X$ defined by $\sh{K}' = (\sh{K}: \sh{J})$; then $W'$ also has finite intersection with forward $\sigma$-orbits, and $(W')^{\red} = W^{\red}$.  Applying Lemma~\ref{lem-JANIS} to $\sh{R}\ver{n}$, we obtain that $\sh{R}\ver{n}$ is right noetherian.  By Lemma~\ref{lem-BOX},  $\sh{B} \ver{n}$ is a coherent right $\sh{R}\ver{n}$-module.    

Thus in either case, $\sh{B} \ver{n}$ is a coherent right $\sh{R} \ver{n}$-module.  Therefore, for any $m$ the right ideal 
\[ (\sh{I}: \sh{J}^{\sigma^m})( \sh{B} \ver{n})\]
of $\sh{B} \ver{n}$ 
is a coherent right $\sh{R}\ver{n}$-module.  Applying \eqref{MELISSA} for $m = 0 \ldots n-1$, we obtain that 
$\sh{R}$ is a finitely generated right $\sh{R} \ver{n}$-module and so $\sh{R}$ is right noetherian.
\end{proof}

\begin{example}\label{eg-notnoeth}
We give an example illustrating what  goes wrong when $\sh{J}$ is of infinite order under $\sigma$.  Let $X = \PP^2$  and let 
\[ \sigma = \left[ \begin{array}{ccc}
		1 & & \\
		& p & \\
		& & q
		\end{array} \right] \]
for some $p, q \in \kk^{*}$ that are not roots of unity.  Let $B = B(X, \struct(1), \sigma)$.   It is easy to see that $B$ may be presented by generators and relations as:
\[ B \cong \kk \ang{X,Y,Z}/(XY-pYX, XZ-qZX, YZ-(qp^{-1})ZY).\]

Let $a = [0:0:1]$ and let $\struct = \struct_{X, a}$.  Let $\mf{m}$ be the maximal ideal of $\struct$.  As $\sigma(a)  = a$, the automorphism  $\sigma$ acts on  $\struct$ via 
\begin{align*}
\sigma(x) & = x \\
\sigma(y) & = py,
\end{align*}
where $(x,y)$ is an appropriate system of parameters for $\struct$. 

Let $\sh{I}$ be the ideal sheaf cosupported at $a$ so that $\sh{I}_a= (x+y, \mf{m}^2)$.     Then for any $n$ we have
\[ (\sh{I}^{\sigma^n})_{a} = (x+p^ny, \mf{m}^2).\] 
Let $\sh{M}$ be the ideal sheaf of $a$.  We leave to the reader the computation that 
\[ (\sh{I}: \sh{I}^{\sigma^n}) = \sh{M}.\]
Thus, if $Z$ is the subscheme defined by $\sh{I}$, we have that
\[ R(X, \struct(1), \sigma, Z) = \kk + xB + yB.\]
This ring is not finitely generated, and so not right or left noetherian.  Neither is the bimodule algebra $\sh{R}(X, \struct(1), \sigma, Z) = \struct_X \oplus \sh{MB}_+$.
\end{example}

\section{Left noetherian bimodule algebras}\label{LEFT-NOETH-BMA}

We now turn to considering when $\sh{R}(X, \Lsh, \sigma, Z)$ is left noetherian.  
Since one of our main goals is to understand when idealizers are noetherian, from now on we will assume the  condition 
\begin{assnot}\label{assnot-FOO}
Let $X$ be a projective variety, let $\sigma \in \Aut_{\kk} X$, and let $\Lsh$ be an invertible sheaf on $X$.  Let $Z$ be a closed subscheme of $X$ and let $\sh{I} = \sh{I}_Z$ be its defining ideal.  Let 
\[ \sh{B} = \sh{B}(X, \Lsh, \sigma)\]
and let
\[ \sh{R} = \sh{R}(X, \Lsh, \sigma, Z) = \bigoplus_{n \geq 0} \bigl( ( \sh{I}: \sh{I}^{\sigma^n}) \Lsh_n \bigr)_{\sigma^n}.\]

We assume that $(\sh{I}: \sh{I}^{\sigma^n}) = \sh{I}$ for $n \gg 0$; that is, that $\sh{R}_n = \sh{IL}_n$ for $n \gg 0$.
\end{assnot}

Assumption-Notation~\ref{assnot-FOO} is satisfied in the situations of most interest to us.  In particular, by Theorem~\ref{thm-rtnoeth}, any right noetherian idealizer bimodule algebra is, up to a finite extension,  one whose defining data satisfies Assumption-Notation~\ref{assnot-FOO}.   Furthermore, if $Z$ is irreducible and $Z^{\red}$ is of infinite order under $\sigma$, then Lemma~\ref{lem-onecomp} implies that Assumption-Notation~\ref{assnot-FOO} is satisfied.  More generally, we have:

\begin{lemma}\label{lem-niceconds}
Assume Notation~\ref{notnot}.  Let $Z_1, \ldots, Z_c$ be the primary components of $Z$.  For $i = 1 \ldots c$, let $Y_i = Z_i^{\red}$.   
If  $\{ n \geq 0 \st \sigma^n(Y_i) \subseteq Z\}$ is finite for all $i$, then  $\sh{R}_n = \sh{IL}_n$ for $n \gg 0$, and so Assumption-Notation~\ref{assnot-FOO} is satisfied. 
\end{lemma}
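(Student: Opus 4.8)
The plan is to recall from Notation~\ref{notnot} that $\sh{R}_n = (\sh{I}:\sh{I}^{\sigma^n})\Lsh_n$, so that the assertion $\sh{R}_n = \sh{I}\Lsh_n$ is equivalent to $(\sh{I}:\sh{I}^{\sigma^n}) = \sh{I}$, and to deduce the latter for $n\gg 0$ from Lemma~\ref{lem-primary}(2). Write the minimal primary decomposition of $\sh{I} = \sh{I}_Z$ as $\sh{I} = \sh{I}_{Z_1}\cap\cdots\cap\sh{I}_{Z_c}$, so that the prime ideal sheaves $\sh{Q}_i := \sqrt{\sh{I}_{Z_i}}$ are exactly the associated primes of $\sh{I}$, and $\sh{Q}_i$ is precisely the ideal sheaf of $Y_i = Z_i^{\red}$. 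Lemma~\ref{lem-primary}(2) says that $(\sh{I}:\sh{K}) = \sh{I}$ for every ideal sheaf $\sh{K}$ with $\sh{K}\not\subseteq\sh{Q}_i$ for all $i$; hence it suffices to show that $\sh{I}^{\sigma^n}$ is contained in none of the $\sh{Q}_i$ once $n$ is large.

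Next I would translate the bad containments into orbit geometry. Since $\sigma$ is an automorphism, $\sigma^*$ of an ideal sheaf is the ideal sheaf of the scheme-theoretic preimage, so $\sh{I}^{\sigma^n} = \sh{I}_{\sigma^{-n}(Z)}$. As $\sh{Q}_i$ is prime, hence radical, and cuts out the reduced subscheme $Y_i$, the containment $\sh{I}^{\sigma^n} = \sh{I}_{\sigma^{-n}(Z)} \subseteq \sh{Q}_i$ holds if and only if $Y_i$ is contained, set-theoretically, in $\sigma^{-n}(Z)$; applying the bijection $\sigma^n$, this is equivalent to $\sigma^n(Y_i)\subseteq Z$. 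Consequently the set of ``bad'' exponents
\[ S = \{\, n \geq 0 \st \sh{I}^{\sigma^n} \subseteq \sh{Q}_i \text{ for some } i \,\} = \bigcup_{i=1}^{c} \{\, n \geq 0 \st \sigma^n(Y_i) \subseteq Z \,\} \]
is a finite union of finite sets, by hypothesis, and hence finite.

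Finally, for every $n\notin S$ --- in particular for all $n\gg 0$ --- the ideal sheaf $\sh{I}^{\sigma^n}$ is contained in none of the $\sh{Q}_i$, so Lemma~\ref{lem-primary}(2) gives $(\sh{I}:\sh{I}^{\sigma^n}) = \sh{I}$ and therefore $\sh{R}_n = \sh{I}\Lsh_n$, which is exactly the condition imposed in Assumption-Notation~\ref{assnot-FOO}. I do not expect a genuine obstacle here: the argument is a direct bookkeeping reduction to Lemma~\ref{lem-primary}(2), and the only point requiring a little care is identifying the primes $\sh{Q}_i$ appearing in that lemma with the ideal sheaves of the subvarieties $Y_i$ in the hypothesis, so that ``$\sh{I}^{\sigma^n}\subseteq\sh{Q}_i$'' genuinely becomes the orbit condition ``$\sigma^n(Y_i)\subseteq Z$.''
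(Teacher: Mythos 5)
Your proof is correct and follows the same route as the paper, whose entire proof is the single sentence ``This follows directly from Lemma~\ref{lem-primary}(2).'' You have simply supplied the bookkeeping the paper leaves to the reader: identifying the associated primes $\sh{Q}_i$ with the ideal sheaves of the $Y_i$, translating $\sh{I}^{\sigma^n}\subseteq\sh{Q}_i$ into the orbit condition $\sigma^n(Y_i)\subseteq Z$, and then invoking Lemma~\ref{lem-primary}(2).
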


\begin{proof}
This follows directly from Lemma~\ref{lem-primary}(2).
\end{proof}

As in the previous section, we will translate results on idealizer subrings to the context of bimodule algebras.   We quote  a result of Rogalski; we note that the original result was stated for left ideals of noetherian rings.  

\begin{proposition}\label{prop-R-Tor}
{\em(\cite[Proposition~2.2]{R-idealizer})}
If $R = \mathbb{I}_B(I)$ for some right ideal $I$ of a  noetherian ring $B$, then $R$ is left noetherian if and only if $R/I$ is a left noetherian ring and for all left ideals $J$ of $B$, the left $R$-module $\Tor_1^B(B/I, B/J)$ is noetherian. \qed
\end{proposition}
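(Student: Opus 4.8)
The plan is to reduce the whole statement to tracking ascending chains through a few quotients, after first recording the identification that makes the $\Tor$ hypothesis concrete. By definition of the idealizer, $I$ is a two-sided ideal of $R$, so $R/I$ is a ring and $B/I$ is an $(R,B)$-bimodule via $r\cdot(b+I)=rb+I$ (well defined since $rI\subseteq I$); consequently $\Tor^B_j(B/I,N)$ is a left $R$-module for every left $B$-module $N$, and $I$ annihilates it. Applying $-\otimes_B(B/J)$ to the short exact sequence $0\to I\to B\to B/I\to 0$ of right $B$-modules and using that $B_B$ is flat yields an exact sequence
\[ 0 \to \Tor^B_1(B/I,B/J) \to I/IJ \to B/J \to (B/I)\otimes_B(B/J) \to 0,\]
hence an isomorphism of left $R$-modules $\Tor^B_1(B/I,B/J)\cong(I\cap J)/IJ$. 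So the hypothesis says exactly that $(I\cap J)/IJ$ is a noetherian left $R$-module for every left ideal $J$ of $B$. With this in hand the forward implication is immediate: if $R$ is left noetherian then $R/I$, being a quotient ring, is left noetherian, and $I$ is a noetherian left $R$-module, so for any left ideal $J$ of $B$ the left $R$-submodule $I\cap J$ of $I$ (note $R\cdot(I\cap J)\subseteq I\cap J$, using $RJ\subseteq BJ=J$) is noetherian, and hence so is its quotient $(I\cap J)/IJ$.

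For the converse, let $L_1\subseteq L_2\subseteq\cdots$ be an ascending chain of left ideals of $R$; I will show it stabilizes. The chain $\{BL_n\}$ of left ideals of $B$ stabilizes since $B$ is left noetherian, say $BL_n=J$ for $n\ge N$. For such $n$ one has $IJ=I(BL_n)=(IB)L_n=IL_n$ (since $I$ is a right ideal of $B$, $IB=I$), and $IL_n\subseteq L_n$ because $L_n$ is a left ideal of $R$, while $IL_n\subseteq IB=I$ and $L_n\subseteq BL_n=J$; hence $IJ\subseteq L_n\cap I\subseteq I\cap J$ for all $n\ge N$. Therefore $\{(L_n\cap I)/IJ\}_{n\ge N}$ is an ascending chain of $R$-submodules of the noetherian module $(I\cap J)/IJ$, so it stabilizes and hence $\{L_n\cap I\}$ stabilizes; meanwhile $\{(L_n+I)/I\}$ is an ascending chain of left ideals of the left noetherian ring $R/I$, so it stabilizes and hence $\{L_n+I\}$ stabilizes. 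Since any two comparable left ideals $L\subseteq L'$ with $L\cap I=L'\cap I$ and $L+I=L'+I$ coincide, the chain $\{L_n\}$ stabilizes, so $R$ is left noetherian.

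The one delicate point is organizational. A natural first attempt is to use the left noetherianity of $R/I$ to reduce to showing that every left $R$-submodule $L\subseteq I$ is finitely generated, then set $J=BL$ and try to control $IL=IJ$; but $IL$ need not visibly be a finitely generated $R$-module, and bounding it seems to need an extra hypothesis such as $B$ being a finitely generated right $R$-module. Running the chain condition simultaneously through $BL_n$, $(L_n+I)/I$, and $(L_n\cap I)/IJ$ avoids this: one never controls $IJ$ as a module on its own, because the $\Tor$ hypothesis supplies precisely the noetherianity of $(I\cap J)/IJ$ that the argument consumes. The remaining work — verifying the $(R,B)$-bimodule structure on $B/I$, the flat base-change computation of $\Tor^B_1$, and the elementary fact about chains of submodules just used — is routine.
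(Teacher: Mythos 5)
Your proof is correct, but it takes a genuinely different organizational route from Rogalski's argument (which the paper cites for this statement and mirrors in the bimodule-algebra version, Proposition \ref{prop-ibma-lnoeth}). The shared core is the identification $\Tor_1^B(B/I,B/J)\cong (I\cap J)/IJ$ together with the observation that for $J=BK$ one has $IJ=IK$; beyond that the arguments diverge. Rogalski fixes a left ideal $K$ of $R$, chooses a finitely generated $K'\subseteq K$ with $BK'=BK$, and reduces to proving $(BK\cap R)/K$ is noetherian for $K$ finitely generated by running two short exact sequences: one pinning the quotient inside $R/I$, the other identifying the kernel with the Tor module. Your version avoids the ``pick a finitely generated generating set'' step entirely by running three ascending-chain conditions in parallel — on $\{BL_n\}$ in $B$, on $\{(L_n+I)/I\}$ in $R/I$, and on $\{(L_n\cap I)/IJ\}$ inside $(I\cap J)/IJ$ — and then using the standard observation that two nested left ideals with the same image in $R/I$ and the same intersection with $I$ must coincide. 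What your version buys is that there is no need to separately establish that $K/RK'$ is a submodule of $(BK'\cap R)/RK'$ or to track the reduction to finitely generated $K$; what Rogalski's version buys is a direct statement that every left ideal is finitely generated (rather than ACC), which fits more naturally into the bimodule-algebra setting of Proposition \ref{prop-ibma-lnoeth}, where ``coherence'' rather than ACC is the working notion. Your closing remark that a naive reduction to submodules of $I$ alone runs into trouble with controlling $IJ$ is fair, but note that Rogalski's actual reduction does not pass through submodules of $I$; it is the two-exact-sequence bookkeeping around $(BK\cap R)/IK$ that sidesteps the issue you flag.
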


We  note that if  $R/I$ is finite-dimensional,  this result reduces to saying that $R$ is left noetherian if and only if $\Tor_1^B(B/I, B/J)$ is a finite-dimensional vector space for all left ideals $J$ of $B$.

We now prove a version of Proposition~\ref{prop-R-Tor} for the bimodule algebra $\mathcal{R}$.   

\begin{proposition}\label{prop-ibma-lnoeth}
Assume Assumption-Notation~\ref{assnot-FOO}.  Then $\sh{R}$ is left noetherian if and only if for all graded left ideals $\sh{H}$ of $\sh{B}$, we have 
\[ \sh{I} \cap \sh{H}_n = (\sh{I} \sh{H})_n\]
for $n \gg 0$.
\end{proposition}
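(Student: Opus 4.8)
The plan is to translate Rogalski's Proposition~\ref{prop-R-Tor} into the bimodule-algebra setting, using the sheaf-theoretic analogue of Robson's theorem (Lemma~\ref{lem-sheafRob}) as a bridge. Under Assumption-Notation~\ref{assnot-FOO} we have $\sh{I}\sh{B}_+ = \sh{R}\sh{B}_+$ in large degree (since $\sh{R}_n = \sh{I}\Lsh_n$ for $n \gg 0$), and $\sh{R}/\sh{I}\sh{B}_+ \cong \struct_X$ is a noetherian $\struct_X$-module; so, just as in Rogalski's remark, the ``$\sh{R}/\sh{I}$ is left noetherian'' hypothesis is automatic and what remains is a $\Tor$-vanishing condition. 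First I would set up, by Lemma~\ref{lem-catequiv}, the reduction to $\Lsh = \struct_X$, so that $\sh{I}$ and all $\sh{R}_n$, $\sh{B}_n$ are honest ideal sheaves on $X$ and the multiplication maps are inclusions followed by the bimodule product.

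Next I would identify the relevant $\Tor$ term explicitly. For a graded left ideal $\sh{H} = \bigoplus {}_{\sigma^n}(\sh{H}_n)$ of $\sh{B}$, the degree-$n$ piece of $\Tor_1^{\sh{B}}(\sh{B}/\sh{I}\sh{B}_+,\ \sh{B}/\sh{H})$ should be computed from the presentation $0 \to \sh{I}\sh{B}_+ \to \sh{B} \to \sh{B}/\sh{I}\sh{B}_+ \to 0$; tensoring with $\sh{B}/\sh{H}$ on the right gives a long exact sequence whose connecting map identifies $\Tor_1$ in each degree with a kernel of the form $(\sh{I}\sh{B}_+ \cdot (\sh{B}/\sh{H}))_n$, which after unwinding the bimodule tensor product in degree $n$ (using Lemma~\ref{lem-bimods}(2) and $\sh{R}_m = \sh{I}$ for $m\gg 0$) becomes, in large degree, precisely $(\sh{I}\cap \sh{H}_n)/(\sh{I}\sh{H})_n$ — or rather its twist by $\sigma^n$, which does not affect finite-dimensionality or coherence. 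So the condition ``$\Tor_1$ is a noetherian (equivalently, eventually-zero / coherent) left $\sh{R}$-module'' should become, term by term, ``$\sh{I}\cap\sh{H}_n = (\sh{I}\sh{H})_n$ for $n \gg 0$.'' The reverse direction — that this term-wise vanishing for all graded left $\sh{H}$ forces $\sh{R}$ left noetherian — would go through the left-handed analogue of the Robson/Stafford argument: one shows $\sh{B}/\sh{I}\sh{B}_+$ is a noetherian left $\sh{R}$-module by checking that any left $\sh{R}$-submodule $\sh{G}$ with $\sh{I}\sh{B}_+\subseteq \sh{G}\subseteq\sh{B}$ is coherent, reducing $\sh{G}$ to a finitely-generated-over-$\sh{B}$ piece $\sh{G}'\sh{B}$ and controlling the quotient $\sh{G}/\sh{G}'\sh{B}$ by the $\Tor$ condition applied to the graded left ideal $\sh{G}'\sh{B}$; then invoke Lemma~\ref{lem-sheafRob} (or rather its left-sided version, which is proved identically).

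The main obstacle, I expect, is the bookkeeping in the $\Tor$ computation: one must be careful that $\Tor_1^{\sh{B}}$ of bimodule algebras behaves as expected (it does, because each $\sh{B}_n$ is invertible, so the relevant flatness holds locally on $X$), and one must track the $\sigma^n$-twists on each graded piece so that ``coherent as an $\struct_X$-bimodule'' is correctly matched with the statement ``$\sh{I}\cap\sh{H}_n = (\sh{I}\sh{H})_n$ for $n\gg 0$'' rather than for all $n$. A secondary subtlety is that, because $(\sh{I}:\sh{I}^{\sigma^n})$ may differ from $\sh{I}$ in small degrees, the equivalence is genuinely only an eventual one, and one must check — via Lemma~\ref{lem-BOX} — that altering finitely many graded components of $\sh{R}$ does not affect whether it is left noetherian; this is exactly what lets us pass between ``for $n\gg0$'' statements and global noetherianity. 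Once the identification of $\Tor_1$ in each degree is pinned down, both implications are essentially formal consequences of Lemma~\ref{lem-sheafRob} and Lemma~\ref{lem-BOX}.
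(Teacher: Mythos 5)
Your reading of the statement is right: this is the sheaf-theoretic translation of Rogalski's Proposition~\ref{prop-R-Tor}, the $(\sh{IB}\cap\sh{H})/\sh{IH}$ quotient is the degree-wise $\Tor_1$, and under Assumption-Notation~\ref{assnot-FOO} the ``$\sh{R}/\sh{IB}$ is left noetherian'' clause is automatic. Your reduction to $\Lsh=\struct_X$ is harmless. But the reverse direction, as sketched, has a real gap and departs from what the paper actually does.

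First, the paper does \emph{not} pass through the intermediate claim that $\sh{B}/\sh{IB}_+$ is a noetherian left $\sh{R}$-module and then invoke a Robson-type lemma. Instead it argues directly: given an arbitrary graded left ideal $\sh{K}$ of $\sh{R}$, it picks a coherent $\sh{K}'\subseteq\sh{K}$ with $\sh{B}\cdot\sh{K}'=\sh{B}\cdot\sh{K}$, observes $\sh{K}/(\sh{R}\cdot\sh{K}')\hookrightarrow((\sh{B}\cdot\sh{K}')\cap\sh{R})/(\sh{R}\cdot\sh{K}')$, and then controls that quotient by the two short exact sequences \eqref{esa} and \eqref{esb}, whose left-hand term in \eqref{esb} is exactly $((\sh{B}\cdot\sh{K})\cap\sh{IB})/(\sh{IB}\cdot\sh{K})$ --- coherent by hypothesis applied to $\sh{H}=\sh{B}\cdot\sh{K}$ --- and whose right-hand term is a submodule of $\sh{R}/\sh{IB}$. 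Your phrase ``controlling the quotient $\sh{G}/\sh{G}'\sh{B}$ by the $\Tor$ condition'' compresses precisely this two-sequence bookkeeping into a parenthetical, but that bookkeeping is the entire content of the reverse direction; without it there is no argument.

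Second, the appeal to ``Lemma~\ref{lem-sheafRob} (or rather its left-sided version, which is proved identically)'' is not quite right. Lemma~\ref{lem-sheafRob} treats $\sh{I}$ a \emph{right} ideal with $\sh{B}/\sh{I}$ a right $\sh{R}$-module, and its literal left-right mirror would have $\sh{I}$ a \emph{left} ideal. What you would actually need is a hybrid statement --- $\sh{I}$ still a right ideal, $\sh{B}/\sh{IB}_+$ as a left $\sh{R}/\sh{IB}_+$-module, concluding left noetherianity of $\sh{R}$ --- which is not a formal mirror of Robson and is not proved in the paper. Such a statement can be made to work (via the surjection $(\sh{B}/\sh{IB}_+)\otimes_{\sh{R}}\sh{K}'\twoheadrightarrow(\sh{B}\cdot\sh{K}')/(\sh{IB}_+\cdot\sh{K}')$), but it is a new lemma requiring its own argument, not an invocation of an existing one, and the asymmetry between the two sides is exactly why Rogalski's criterion for left noetherianity is a $\Tor$ condition rather than a Robson-type condition. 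The paper's proof avoids this entirely by following Rogalski's proof of Proposition~\ref{prop-R-Tor} verbatim. To complete your proof you would either need to write out the two exact sequences and carry out the reduction explicitly, as the paper does, or to prove and then cite the hybrid Robson-type lemma, being explicit that it is not a left-right flip of Lemma~\ref{lem-sheafRob}.
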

\begin{proof}
We follow Rogalski's proof of Proposition~\ref{prop-R-Tor}.  

Since 
 $(\mathcal{IB} \cap \mathcal{H}) / \mathcal{IH}$ is a subfactor of $_{\mathcal{R}} {\mathcal R}$ that is killed on the left by $\mathcal{IB}$, if $\sh{R}$ is  left noetherian then this is a coherent module over $\sh{R}/\sh{IB}$ and so is certainly  a coherent $\struct_X$-module.   
 
 For the other direction, suppose that for all graded left ideals $\sh{H}$ of $\sh{B}$ we have 
 \[ \sh{I} \cap \sh{H}_n = (\sh{IH})_n\]
 for $n \gg 0$.  
 Let  $\mathcal{K}$ be a graded  left ideal of $\mathcal{R}$.   Since $\sh{B}$ is noetherian, we may choose a graded coherent $\struct_X$-submodule $\mathcal{K}'$ of $\mathcal{K}$ such that 
$\mathcal{B} \cdot \sh{K} = \mathcal{B} \cdot \sh{ K}'$.  Since $\mathcal{K} /( \mathcal{R} \cdot \sh{K}')$ is a submodule of $((\mathcal{B} \cdot \sh{K}') \cap \sh{R})/(\mathcal{R} \cdot \sh{K}')$, it is enough to show  that $((\sh{B} \cdot \mathcal{K}) \cap \sh{R}) / \mathcal{K}$ is a noetherian left $\sh{R}$-module for any {\em coherent} graded left ideal $\mathcal{K}$ of $\sh{R}$.

But now consider the exact sequences of left $\sh{R}$-modules 
\beq\label{esa}
0 \to \frac{\mathcal{K}}{\mathcal{IB} \cdot \sh{K}} \to \frac{(\mathcal{B} \cdot \sh{K} )\cap \sh{R}}{\mathcal{IB} \cdot \sh{K}} \to \frac{(\mathcal{B} \cdot \sh{K} )\cap \sh{R}}{\mathcal{K}} \to 0
\eeq
and
\beq\label{esb}
0 \to \frac{(\mathcal{B} \cdot \sh{K}) \cap \mathcal{IB}}{\mathcal{IB} \cdot \sh{K}} \to \frac{(\mathcal{B} \cdot \sh{K}) \cap \sh{R}}{\mathcal{IB} \cdot \sh{K}} \to \frac{(\mathcal{B} \cdot \sh{K}) \cap \sh{R}}{(\mathcal{B} \cdot \sh{K}) \cap \mathcal{IB}} \to 0.
\eeq
Since  
$\fracc{((\mathcal{B} \cdot \sh{K}) \cap \sh{R})}{((\mathcal{B} \cdot \sh{K}) \cap \mathcal{IB})}$ is a  coherent $\struct_X$-module, 
we see that $((\sh{B} \cdot \mathcal{K}) \cap \sh{R}) / \mathcal{K}$ is noetherian if 
$\fracc{((\mathcal{B} \cdot \sh{K}) \cap \mathcal{IB})}{\mathcal{IB} \cdot \sh{K}} $ is noetherian.  
Since $\mathcal{B} \cdot \sh{K}$ is a left ideal of $\sh{B}$, by assumption 
$\fracc{((\mathcal{B} \cdot \sh{K}) \cap \mathcal{IB})}{\mathcal{IB} \cdot \sh{K}} $
  is a coherent $\struct_X$-module.    In particular, it is noetherian.  Thus $\sh{R}$ is left noetherian.
\end{proof}

This allows us to give a necessary and sufficient  condition for $\sh{R}$ to be left noetherian:
\begin{proposition}\label{prop-Tor-noeth}
Assume  Assumption-Notation~\ref{assnot-FOO}.    Then  $\mathcal{R} = \sh{R}(X, \Lsh, \sigma, Z)$ is left noetherian if and only if 
for all closed subschemes $Y \subseteq X$ the set 
\[ \{ n \geq 0 \st \shTor_1^X(\struct_{\sigma^n Z}, \struct_Y) \neq 0 \}\]
 is finite.
\end{proposition}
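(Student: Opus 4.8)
The plan is to feed the criterion of Proposition~\ref{prop-ibma-lnoeth} through the standard identification of the defect $(\sh{A}\cap\sh{C})/\sh{A}\sh{C}$ with $\shTor_1$, and then to translate the resulting condition on ideal sheaves into the asserted condition on the orbit $\{\sigma^n Z\}$.

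First I would invoke Lemma~\ref{lem-catequiv} to reduce to the case $\Lsh=\struct_X$, so that $\sh{B}=\bigoplus_{n\geq0}(\struct_X)_{\sigma^n}$ and $\sh{I}$, all the $\sh{R}_n$, etc.\ are genuine ideal sheaves on $X$. Proposition~\ref{prop-ibma-lnoeth} then says $\sh{R}$ is left noetherian if and only if $\sh{I}\cap\sh{H}_n=(\sh{IH})_n$ for $n\gg0$, for every graded left ideal $\sh{H}$ of $\sh{B}$. The next step is to describe such an $\sh{H}$ in large degree: exactly as in the right-handed situation in the proof of Theorem~\ref{thm-rtnoeth}, the equivalence $\sh{B}\lqgr\simeq\struct_X\lmod$ of Example~\ref{eg-twist} shows that there is an ideal sheaf $\sh{J}\subseteq\struct_X$ with $\sh{H}_n=\sh{J}^{\sigma^n}$ for $n\gg0$, and conversely $\bigoplus_{n\geq0}(\sh{J}^{\sigma^n})_{\sigma^n}$ is a graded left ideal of $\sh{B}$ for any ideal sheaf $\sh{J}$. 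Since $(\sh{IH})_n=\sh{I}\,\sh{J}^{\sigma^n}$ for $n\gg0$ and the criterion depends only on large degrees, Proposition~\ref{prop-ibma-lnoeth} is equivalent to: for every ideal sheaf $\sh{J}$ on $X$, $\sh{I}\cap\sh{J}^{\sigma^n}=\sh{I}\,\sh{J}^{\sigma^n}$ for $n\gg0$.

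Now apply the elementary sheaf-theoretic fact that tensoring $0\to\sh{A}\to\struct_X\to\struct_X/\sh{A}\to0$ with $\struct_X/\sh{C}$ gives a natural isomorphism $(\sh{A}\cap\sh{C})/\sh{A}\sh{C}\cong\shTor_1^X(\struct_X/\sh{A},\struct_X/\sh{C})$. Thus the condition reads $\shTor_1^X(\struct_Z,\struct_X/\sh{J}^{\sigma^n})=0$ for $n\gg0$, for every ideal sheaf $\sh{J}$. Writing $Y$ for the closed subscheme cut out by $\sh{J}$, we have $\struct_X/\sh{J}^{\sigma^n}=\struct_{\sigma^{-n}Y}$; applying the exact autoequivalence $(\sigma^n)_*$ of $\struct_X\lmod$, which commutes with $\shTor^X$ and carries $\struct_Z$ to $\struct_{\sigma^nZ}$ and $\struct_{\sigma^{-n}Y}$ to $\struct_Y$, converts $\shTor_1^X(\struct_Z,\struct_{\sigma^{-n}Y})=0$ into $\shTor_1^X(\struct_{\sigma^nZ},\struct_Y)=0$. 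As $Y$ runs over all closed subschemes of $X$ while $\sh{J}$ runs over all ideal sheaves, the condition becomes precisely that $\{\,n\geq0 : \shTor_1^X(\struct_{\sigma^nZ},\struct_Y)\neq0\,\}$ is finite for every closed subscheme $Y\subseteq X$, as desired.

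The chief obstacle is essentially bookkeeping rather than a new idea: identifying graded left ideals of $\sh{B}$ with ideal sheaves up to their large-degree behavior, and then matching the two ``eventually'' quantifiers while keeping the direction of every $\sigma$-twist straight (in $\sh{H}_n=\sh{J}^{\sigma^n}$, in $\struct_X/\sh{J}^{\sigma^n}=\struct_{\sigma^{-n}Y}$, and in the application of $(\sigma^n)_*$). Once $\Lsh=\struct_X$ is assumed and Proposition~\ref{prop-ibma-lnoeth} is in hand, each individual step is routine.
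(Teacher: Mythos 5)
Your proof is correct and follows essentially the same route as the paper: apply Proposition~\ref{prop-ibma-lnoeth}, reduce to left ideals generated in degree zero by an ideal sheaf $\sh{J}$, identify the defect $(\sh{I}\cap\sh{J}^{\sigma^n})/\sh{I}\sh{J}^{\sigma^n}$ with $\shTor_1^X$, and twist by $(\sigma^n)_*$ to put the condition in the stated symmetric form. The paper's proof keeps $\Lsh$ in place rather than normalizing to $\struct_X$ and treats the reduction from an arbitrary graded left ideal $\sh{H}$ to one of the form $\sh{B}\cdot\sh{J}$ as implicit; you make both of these steps explicit, which is a mild clarification rather than a different argument.
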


\begin{proof}
 Let $\sh{J}$ be an ideal sheaf defining a closed  subscheme $Y$ of $X$.  
 There are identifications of $\struct_X$-modules
 \begin{multline*}
  \frac{\sh{IB} \cap (\sh{B} \cdot \sh{J})}{\sh{IB} \cdot \sh{J}} \cong \bigoplus_{n \geq 0} \frac{\sh{I} \cap \sh{J}^{\sigma^n}}{\sh{IJ}^{\sigma^n}} \otimes \Lsh_n \\
  \cong \bigoplus_{n \geq 0} \shTor^X_1 (\struct_Z, \struct_{\sigma^{-n} Y}) \otimes \Lsh_n \cong \bigoplus_{n \geq 0} \shTor^X_1(\struct_{\sigma^n Z}, \struct_Y) \otimes \Lsh_n,
  \end{multline*}
 using 
  \cite[Exercise~3.1.3]{Weibel} and the local property of $\shTor$.  As $\sh{R}/\sh{IB}$ is a coherent $\struct_X$-module,  
\[ \frac{\sh{IB} \cap (\sh{B} \cdot \sh{J})}{\sh{IB} \cdot \sh{J}} \]
 is a coherent left $\sh{R}$-module if and only if it is a coherent $\struct_X$-module.  This is true if and only if the set $\{n \geq 0 \st \shTor^X_1(\struct_{\sigma^nZ}, \struct_Y) \neq 0 \}$ is finite.
 \end{proof}

We will explore the geometric meaning of the condition from Proposition~\ref{prop-Tor-noeth} in the next section.  For now, we give it a name for easy reference.

\begin{defn}
Let $X$ be a variety, let $\sigma \in \Aut_{\kk} X$, and let $Z$ be a closed subscheme of $X$.  We will (provisionally) say that the pair $(Z, \sigma)$ {\em has property $T$} if
for all closed subschemes $Y \subseteq X$ the set 
\[ \{ n \geq 0 \st \shTor_1^X(\struct_{\sigma^n Z}, \struct_Y) \neq 0 \}\]
 is finite.
\end{defn}
 
 \section{Critical transversality}\label{CT}

Assume Assumption-Notation~\ref{assnot-FOO}.  In the last two sections, we found necessary and sufficient conditions for $\sh{R} = \sh{R}(X, \Lsh, \sigma, Z)$ to be left or right noetherian.   We remark that there is a significant contrast between the two sides.  
 By Theorem~\ref{thm-rtnoeth}, for $\sh{R}$ to be right noetherian depends only on a mild condition on the orbits of points in $Z$.  In contrast, property $T$  that by Proposition~\ref{prop-Tor-noeth} determines when $\sh{R}$ is left noetherian is, a priori, much less transparent.     In this section, we show that property $T$ does have a natural geometric interpretation.     As remarked in the introduction,  it is an analogue of critical density and can be viewed as a transversality property; further, in many settings property $T$ may be interpreted as saying that $\sigma$ and $Z$ are in general position. 
 We will see later that in the presence of property $T$, it is possible to deduce many further nice properties of $\sh{R}$ and of $Z$.  

To begin, we define an algebraic generalization of classical transversality.

\begin{defn} \label{def-idealHT}\index{homologically transverse}
Let $X$ be a variety, and let $Y$ and $Z$ be closed subschemes of $X$.  We say that $Y$ and $Z$ are {\em homologically transverse} if
\[ \shTor^X_i(\struct_Z, \struct_Y) = 0\]
for all $i \geq 1$.
\end{defn}

While this appears as an arcane algebraic condition, it does in fact have a geometric basis.  As discussed in the introduction,  Serre \cite{Serre-LA} defines the  intersection multiplicity of two closed subschemes $Y$ and $Z$ of $X$ along the proper  component $P$ of their intersection by

\beq\label{intprod}
 i(Y, Z; P) = \sum_{i \geq 0}  (-1)^i \len_P (\shTor_i^X (\struct_Y, \struct_Z)).
\eeq

The higher $\shTor$ sheaves  are needed to correct for possible mis-counting from the na\"ive attempt to define $i(Y, Z; P)$ as $\len_P (\struct_Y \otimes \struct_Z)$.  
We may think of the non-vanishing of $\shTor_{\geq 1}^X(\struct_Y, \struct_Z)$ as indicating that $Y$ and $Z$ have an extremely non-transverse intersection (for example, the codimension of the intersection is smaller than $\codim Y + \codim Z$).

\begin{defn} \label{def-CT}\index{critically transverse}
Let $X$ be a variety, let $\sigma \in \Aut_{\kk} X$, and let $Z$ be a closed subscheme of $X$.  
Let $A \subseteq \ZZ$ be infinite.  We say that the set $\{ \sigma^n(Z) \}_{n \in A}$ is {\em critically transverse} if for all  closed subschemes $Y$ of $X$, $\sigma^n(Z)$ and $Y$ are homologically transverse for all but finitely many $n \in A$.  
\end{defn}

Critical transversality of $\{ \sigma^n Z\}_{n \in A}$ is a generic transversality property:   for any closed subscheme $Y$, it implies that the general translate of $Z$ is homologically transverse to $Y$.
In this section, we investigate critical transversality.  We will see in particular that critical transversality and property $T$ are equivalent.

 Recall that if $\sh{F}$ is a coherent sheaf on a projective variety $X$, we write $\hd_X(\sh{F})$ for the minimal length of a locally free resolution of $\sh{F}$. By \cite[Ex. III.6.3]{Ha}, we have
\[ \hd_X(\sh{F}) = \sup_{x \in X} \{ \pd_{\struct_{X,x}} \sh{F}_x \}.\]

The following lemma is due to Mel Hochster, and we thank him for allowing us to include it here. 
\begin{lemma}\label{lem-Mel}
{\em (Hochster)}
Suppose that $Z$ is homologically transverse to all parts of the singular stratification of $X$.  Then
\[
\hd_X( \struct_Z) \leq \dim X.
\]
\end{lemma}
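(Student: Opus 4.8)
The plan is to reduce the statement to a local bound on projective dimensions and then to induct on $\dim X$, peeling off one layer of the singular stratification at each step. Write $X = X^{(0)} \supseteq X^{(1)} \supseteq X^{(2)} \supseteq \cdots$ for this stratification, where $X^{(i+1)} = \sing(X^{(i)})$ is given its reduced structure; since $\kk$ is perfect, $\sing(-)$ of any reduced finite-type $\kk$-scheme is nowhere dense, so $\dim X^{(i+1)} < \dim X^{(i)}$ and the chain terminates. By hypothesis $\struct_Z$ is homologically transverse to each $X^{(i)}$ with $i \ge 1$. Because $\hd_X(\struct_Z) = \sup_{x \in X} \pd_{\struct_{X,x}}(\struct_Z)_x$, it suffices to show $\pd_{\struct_{X,x}}(\struct_Z)_x \le \dim X$ for every $x$, and I would prove this by induction on $\dim X$. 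The base case is $X$ smooth (in particular $\dim X = 0$): then every $\struct_{X,x}$ is regular of dimension $\le \dim X$, so the bound is the usual one for modules over a regular local ring. All the work is therefore at points of $W := \sing X$.

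The technical engine is a change-of-rings lemma, which I would prove first: if $(A,\mathfrak{m})$ is a Noetherian local ring, $\mathfrak{p} \subseteq \mathfrak{m}$ an ideal, $\bar{A} = A/\mathfrak{p}$, and $M$ a finitely generated $A$-module with $\Tor_i^A(M,\bar{A}) = 0$ for all $i \ge 1$, then $\pd_A M \le \pd_{\bar{A}}(M \otimes_A \bar{A})$. The argument: take a minimal free resolution $P_\bullet \to M$ over $A$; the $\Tor$-vanishing makes $P_\bullet \otimes_A \bar{A}$ a free resolution of $M \otimes_A \bar{A}$ over $\bar{A}$, and it is again minimal because the differentials have entries in $\mathfrak{m}$, hence in $\mathfrak{m}\bar{A}$; meanwhile $P_i \otimes_A \bar{A} = P_i/\mathfrak{p}P_i$ is nonzero whenever $P_i$ is (Nakayama), so the length of $P_\bullet$---which is $\pd_A M$---is bounded by the length of a minimal $\bar{A}$-resolution of $M \otimes_A \bar{A}$.

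The inductive step then goes as follows. Put $W = \sing X$ with reduced structure, so $\dim W < \dim X$. First I would verify that $Z \cap W$ (scheme-theoretic intersection) is homologically transverse, inside $W$, to every part of the singular stratification of $W$. Those parts are $\sing W, \sing\sing W, \ldots$, which are exactly $X^{(2)}, X^{(3)}, \ldots$, so this reduces to a restriction principle: if $V \subseteq W$ are closed subschemes of $X$ and $\struct_Z$ is homologically transverse in $X$ to both $W$ and $V$, then $\struct_{Z\cap W}$ is homologically transverse to $\struct_V$ over $W$. That follows from associativity of the derived tensor product: $\struct_{Z\cap W} \otimes^{\mathbf{L}}_{\struct_W} \struct_V \simeq (\struct_Z \otimes^{\mathbf{L}}_{\struct_X} \struct_W) \otimes^{\mathbf{L}}_{\struct_W} \struct_V \simeq \struct_Z \otimes^{\mathbf{L}}_{\struct_X} \struct_V \simeq \struct_{Z\cap V}$, which is concentrated in degree $0$. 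Now the inductive hypothesis, applied to the reduced projective scheme $W$ and the subscheme $Z\cap W$, yields $\hd_W(\struct_{Z\cap W}) \le \dim W$. Finally, fix $x \in W$ and set $A = \struct_{X,x}$, let $\mathfrak{p}$ be the ideal of $W$ at $x$, $\bar{A} = \struct_{W,x}$, $M = (\struct_Z)_x$; homological transversality of $Z$ and $W$ gives $\Tor_i^A(M,\bar{A}) = 0$ for $i \ge 1$ by the local nature of $\shTor$, so the change-of-rings lemma gives $\pd_A M \le \pd_{\bar{A}}((\struct_{Z\cap W})_x) \le \dim W < \dim X$. Combined with the smooth-point case this establishes $\pd_{\struct_{X,x}}(\struct_Z)_x \le \dim X$ for all $x$, hence $\hd_X(\struct_Z) \le \dim X$.

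The step I expect to be the real obstacle---or rather, the one whose availability has to be recognized---is the change-of-rings inequality $\pd_A M \le \pd_{\bar{A}}(M\otimes_A\bar{A})$: for an arbitrary non-flat surjection $A \twoheadrightarrow \bar{A}$ no such inequality holds, and it is precisely the vanishing of $\Tor_{\ge 1}^A(M,\bar{A})$ supplied by homological transversality that forces a minimal $A$-resolution to descend to a minimal $\bar{A}$-resolution. A minor point requiring care is that $W = \sing X$ is reduced but typically reducible and non-equidimensional, so the induction must be run over reduced projective $\kk$-schemes rather than over varieties; this is harmless, since the only geometric fact used is that $\sing(-)$ is nowhere dense over the perfect field $\kk$, and the induction on $\dim$ is well founded. (Alternatively, once $\pd_A M$ is known to be finite the bound also follows from Auslander--Buchsbaum, $\pd_A M = \operatorname{depth} A - \operatorname{depth} M \le \dim A \le \dim X$; the induction above delivers it directly.)
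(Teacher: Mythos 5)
Your proof is correct, but it takes a genuinely different route from the paper's. The paper works directly at each point $x$: it identifies the unique stratum $X^{(i)}$ with $x \in X^{(i)} \smallsetminus X^{(i+1)}$, notes that $\struct_{X^{(i)},x}$ is regular local of dimension $\leq \dim X$, and invokes the Cartan--Eilenberg change-of-rings spectral sequence $\Tor_p^{\struct/J}(\Tor_q^{\struct}(F,\struct/J),\kk_x) \Rightarrow \Tor_{p+q}^{\struct}(F,\kk_x)$, which collapses by the single transversality hypothesis against $X^{(i)}$. This needs only one transversality per point and stays entirely inside the category of varieties. You instead run a descending induction on $\dim X$, peeling off one layer $W = \sing X$ at a time; the engine is a change-of-rings inequality for $\pd$ proved via minimal free resolutions (Tor-vanishing makes the minimal $A$-resolution descend to a minimal $\bar A$-resolution, with ranks preserved by Nakayama), and the induction is sustained by a restriction principle for homological transversality proved via associativity of $\otimes^{\mathbf{L}}$. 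Your version is more self-contained — it avoids the change-of-rings spectral sequence and gives the sharper observation that $\pd_A M = \pd_{\bar A}(M\otimes_A\bar A)$ on the nose — but it pays two costs: it must be set up for reduced, reducible, non-equidimensional projective $\kk$-schemes (you correctly flag this, and the only fact needed is that $\sing$ is nowhere dense over a perfect field), and it uses transversality against several strata simultaneously through the restriction principle, whereas the paper's argument needs only transversality against the one stratum through each point. Worth noting: your change-of-rings lemma applied directly with $\bar A = \struct_{X^{(i)},x}$ would reproduce the paper's proof without the induction, so the inductive scaffolding, while sound, is not strictly necessary.
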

\begin{proof}
Let $X = X^{(0)} \supset X^{(1)} \cdots \supset X^{(k)}$ be the singular stratification of $X$.  By assumption, $Z$ is homologically transverse to all $X^{(i)}$.  
By \cite[Corollary~19.5]{Eis}, 
\beq\label{Eis}
\hd_X (\struct_Z) = \sup \{j \st  \mbox{for some closed point } x \in X, \shTor^X_j(\struct_{Z}, \kk_x) \neq 0 \}.
\eeq
  So fix $x \in X$, and let $\struct = \struct_{X,x}$.   Let $F = \struct_{Z, x}$, considered as an $\struct$-module.  
  Let $i$ be such that $x \in X^{(i)} \smallsetminus X^{(i+1)}$.  Let  $J$ be the ideal of $X^{(i)}$ in $\struct$.  By assumption on $i$, $\struct/J$ is a regular local ring; in particular, $\pd_{\struct/J} \kk_x = \dim X^{(i)} \leq \dim X$. 
    
  The change of rings theorem for $\Tor$ \cite[Theorem~5.6.6]{Weibel} gives  a spectral sequence
  \beq\label{ss99}
  \Tor^{\struct/J}_p(\Tor^\struct_q(F, \struct/J), \kk_x) \Rightarrow \Tor^\struct_{p+q} (F, \kk_x).
  \eeq 
  Now by assumption, $Z$ is homologically transverse to $X^{(i)}$, and so \eqref{ss99} collapses for $q \neq 0$.  We obtain
  \[  \Tor^{\struct/J}_p(F \otimes_\struct (\struct/J), \kk_x) \cong \Tor^\struct_p(F, \kk_x).\]
  As $\struct/J$ is a regular local ring of dimension no greater than $\dim X$, we have that $\pd_{\struct/J} \kk_x \leq \dim X$ and so $\Tor^\struct_p(F, \kk_x) = 0$ if $p > \dim X$.    
  By \eqref{Eis},  $\hd_X(\struct_Z) \leq \dim X$.   
  \end{proof}

We now show that critical transversality and property $T$ are equivalent.  
\begin{lemma}\label{lem-Tor-reduction}
Let $A \subseteq \ZZ$.  The following are equivalent:

$(1)$ For all closed subschemes $Y$ of  $X$, the set 
\[\{ n \in A \st \shTor_1^X(\struct_{\sigma^n Z}, \struct_Y) \neq 0 \}\]
 is finite.
 
 $(2)$ For all reduced and irreducible closed subschemes $Y$ of  $X$, the set 
\[\{ n \in A \st \shTor_1^X(\struct_{\sigma^n Z}, \struct_Y) \neq 0 \}\]
 is finite.

$(3)$ For all  closed subschemes $Y$ of $ X$,  the set 
\[
A'(Y) = \{ n \in A \st \mbox{$\sigma^n Z$ is not homologically transverse to $Y$} \}
\]
 is finite.

In particular, $\{ \sigma^n(Z) \}_{n \geq 0}$ is critically transverse if and only if $(Z, \sigma)$ has property $T$.
\end{lemma}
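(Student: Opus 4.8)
The plan is to prove the cycle $(1)\Rightarrow(2)\Rightarrow(3)\Rightarrow(1)$; the final ``in particular'' then follows by specializing to $A=\{n\ge 0\}$, since $(1)$ for that $A$ is the definition of property $T$ and $(3)$ for that $A$ is Definition~\ref{def-CT}. Two of the three implications are formal. For $(1)\Rightarrow(2)$ there is nothing to prove, a reduced irreducible closed subscheme being in particular a closed subscheme. For $(3)\Rightarrow(1)$, homological transversality of $\sigma^nZ$ and $Y$ entails $\shTor^X_1(\struct_{\sigma^nZ},\struct_Y)=0$, so the set appearing in $(1)$ is contained in $A'(Y)$ and is therefore finite.

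The real work is $(2)\Rightarrow(3)$. First I would observe that, since each $\sigma^n$ is an automorphism of $X$, it induces isomorphisms on all local rings, so $\hd_X(\struct_{\sigma^nZ})=\hd_X(\struct_Z)=:h<\infty$; hence $\shTor^X_i(\struct_{\sigma^nZ},\sh G)=0$ for every coherent $\sh G$ and every $i>h$, and for a fixed $Y$ the set
\[
A'(Y)=\bigcup_{i=1}^{h}\bigl\{\,n\in A \st \shTor^X_i(\struct_{\sigma^nZ},\struct_Y)\neq 0\,\bigr\}
\]
is a finite union, so it is enough to bound each term. Next, a d\'evissage: the structure sheaf of any closed subscheme $Y$ admits a finite filtration by coherent subsheaves whose successive quotients are of the form $\struct_{W_k}$ with $W_k$ reduced irreducible (split off $\struct_X/\sh P$ for an associated prime $\sh P$ and recur; the process terminates because $X$ is noetherian). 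The long exact sequences for $\shTor^X(\struct_{\sigma^nZ},-)$ then reduce the vanishing of $\shTor^X_i(\struct_{\sigma^nZ},\struct_Y)$ to the vanishing of the finitely many sheaves $\shTor^X_i(\struct_{\sigma^nZ},\struct_{W_k})$, so it suffices to prove: for every reduced irreducible $W$ and every $i\ge 1$, $\shTor^X_i(\struct_{\sigma^nZ},\struct_W)=0$ for all but finitely many $n\in A$.

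This last assertion I would prove by induction on $d=\dim W$. When $d=0$, $W=\{x\}$ and $\struct_W=\kk(x)$: the criterion that a finitely generated module over a noetherian local ring is free if and only if its $\Tor_1$ against the residue field vanishes shows that $\shTor^X_1(\struct_{\sigma^nZ},\kk(x))=0$ forces $x\notin\sigma^nZ$ (we may assume $Z\ne X$, so $\struct_{\sigma^nZ,x}$ is a proper cyclic quotient of $\struct_{X,x}$ and hence free only when it is zero), and by $(2)$ this holds outside a finite set of $n$; for such $n$ every $\shTor^X_i(\struct_{\sigma^nZ},\kk(x))$ vanishes. Applying the same argument at the generic point of a reduced irreducible $W$ shows, more generally, that $\shTor^X_1(\struct_{\sigma^nZ},\struct_W)=0$ implies $\dim(\sigma^nZ\cap W)<\dim W$, so this inequality holds for all but finitely many $n$. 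For the inductive step I would invoke rigidity of $\Tor$ over the regular local rings $\struct_{X,x}$ at smooth points $x$ of $X$ (available since $X$ is of finite type over a field): vanishing of $\shTor^X_1(\struct_{\sigma^nZ},\struct_W)$ forces each $\shTor^X_i(\struct_{\sigma^nZ},\struct_W)$ with $i\ge 1$ to be supported on the intersection of $\sigma^nZ\cap W$ with the singular locus of $X$, a scheme of dimension $<d$ for all but finitely many $n$; the inductive hypothesis, together with the d\'evissage and the dimension bounds just established, is then used to conclude that these higher $\shTor$ sheaves vanish for all but finitely many $n$.

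The main obstacle is exactly this inductive step: promoting the vanishing of $\shTor^X_1(\struct_{\sigma^nZ},\struct_W)$ to the vanishing of all higher $\shTor^X_i(\struct_{\sigma^nZ},\struct_W)$ when $W$---or, more to the point, the scheme-theoretic intersection $\sigma^nZ\cap W$---is singular, and doing so uniformly in $n$ despite the fact that the relevant intersections move with $n$, which is precisely where rigidity of $\Tor$ over $\struct_X$ is no longer available and one must descend to strictly smaller-dimensional reduced irreducible subschemes handled by the inductive hypothesis. The remaining ingredients---the $\hd$-bound reducing to finitely many homological degrees, the d\'evissage to reduced irreducible subschemes, and the freeness criterion at points---are routine.
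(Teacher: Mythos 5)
Your proposal has a genuine gap at its very first step: you assert that $\hd_X(\struct_{\sigma^nZ})=\hd_X(\struct_Z)=:h<\infty$ as an observation, but finiteness of $\hd_X(\struct_Z)$ is not automatic and is in fact false in general when $X$ is singular (e.g.\ a point on a non-regular variety has infinite homological dimension over $\struct_X$). This finiteness is precisely one of the things the paper has to \emph{prove}; it is the content of Lemma~\ref{lem-Mel} (Hochster), whose hypothesis — that $Z$ is homologically transverse to the singular stratification of $X$ — is itself extracted only \emph{after} one knows that $\shTor_j^X(\struct_{\sigma^nZ},\sh F)$ vanishes for all but finitely many $n$, for every $j\ge 1$ and every coherent $\sh F$, by specializing $\sh F$ to $\sigma$-invariant sheaves. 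So your argument's logical order is backwards: you need the ``all higher $\shTor$ vanish generically'' claim to get $h<\infty$, not the other way around.

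The second issue, which you yourself flag, is that the inductive step on $\dim W$ using rigidity of $\Tor$ over the regular local rings $\struct_{X,x}$ does not close. The paper sidesteps both problems with a much simpler device: it establishes, by induction on the Tor degree $j$ (\emph{not} on $\dim W$), that for every coherent $\sh F$ and every $j\ge 1$ the set $\{n\in A : \shTor^X_j(\struct_{\sigma^nZ},\sh F)\ne 0\}$ is finite. The base case $j=1$ follows from (2) by the same d\'evissage you describe; the inductive step for $j\ge 2$ is a dimension shift via a syzygy: choose a short exact sequence $0\to\sh K\to\sh L\to\sh F\to 0$ with $\sh L$ locally free and use $\shTor_j(\struct_{\sigma^nZ},\sh F)\cong\shTor_{j-1}(\struct_{\sigma^nZ},\sh K)$. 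Only afterward, applying this claim to $\sigma$-invariant subschemes and then Lemma~\ref{lem-Mel}, does one get $\hd_X(\struct_Z)\le\dim X$, which caps the number of $j$'s one need consider and yields (3). Replacing your dimension-induction by this degree-shift, and deriving $h<\infty$ rather than assuming it, fills both gaps.
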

\begin{proof}
The implications (3) $\Rightarrow$ (1) $\Rightarrow$ (2) are  trivial.  We prove (2) $\Rightarrow$ (3).  

Assume (2).  We may assume that $A$ is infinite.  We first claim that for any coherent sheaf $\sh{F}$ and for any $j \geq 1$, the set
\[ \{ n \in A \st \shTor^X_j(\struct_{\sigma^n Z}, \sh{F}) \neq 0 \} \]
is finite.    We induct on $j$.  As any coherent sheaf on a projective variety has a finite filtration by products of invertible sheaves with structure sheaves of reduced and irreducible closed subvarieties,  the claim is true for $j=1$.  Let $j > 1$ and fix a coherent sheaf $\sh{F}$.  Because $X$ is projective, it has enough locally frees, and  there is an exact sequence
\[ 0 \to \sh{K} \to \sh{L} \to \sh{F} \to 0\]
where $\Lsh$ is locally free and $\sh{K}$ is also coherent.  The long exact sequence in Tor implies that   \[ \shTor^X_j(\struct_{\sigma^n Z}, \sh{F}) \cong \shTor^X_{j-1}(\struct_{\sigma^n Z}, \sh{K})\]
for any $n$.  By induction, the right-hand side  vanishes for all but finitely many $n \in A$.  

The claim implies that $Z$ is homologically transverse to any $\sigma$-invariant closed subscheme of $X$, and, in particular, that $Z$ is homologically transverse to the singular stratification of $X$.  By Lemma~\ref{lem-Mel}, we have $\hd_X(\struct_Z) \leq \dim X$. Thus for a fixed $Y$, 
\[ 
A'(Y) = \{ n \in A \st \mbox{$\shTor^X_j(\struct_{\sigma^nZ}, \struct_Y) \neq 0$ for some $1 \leq j \leq \dim X$} \}.\]
By the claim, this is finite.
\end{proof}

\begin{corollary}\label{cor-CT-noeth}
Assume  Assumption-Notation~\ref{assnot-FOO}.  Then the bimodule algebra $\sh{R}$ is left noetherian if and only if $\{ \sigma^n Z\}_{n \geq 0}$ is critically transverse. 
\end{corollary}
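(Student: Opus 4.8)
The plan is to obtain this statement as an immediate consequence of the two preceding results, chaining their equivalences. First I would invoke Proposition~\ref{prop-Tor-noeth}: under Assumption-Notation~\ref{assnot-FOO}, the bimodule algebra $\sh{R} = \sh{R}(X, \Lsh, \sigma, Z)$ is left noetherian precisely when $(Z, \sigma)$ has property $T$, that is, when for every closed subscheme $Y \subseteq X$ the set $\{ n \geq 0 \st \shTor_1^X(\struct_{\sigma^n Z}, \struct_Y) \neq 0 \}$ is finite. Then I would apply the final sentence of Lemma~\ref{lem-Tor-reduction}, taking $A = \NN \subseteq \ZZ$, which records exactly the equivalence between property $T$ and the critical transversality of $\{ \sigma^n Z \}_{n \geq 0}$. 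Composing these two ``if and only if'' statements gives the corollary. The only point to verify while writing this out is that Assumption-Notation~\ref{assnot-FOO} is literally the hypothesis under which Proposition~\ref{prop-Tor-noeth} is stated, so no additional conditions on $Z$ or $\sigma$ need to be imposed.

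There is essentially no further obstacle here: all of the substantive work has already been done in Lemma~\ref{lem-Tor-reduction}, whose proof reduces the a priori enormous family of vanishing conditions --- one for every closed subscheme $Y$ and every degree $j \geq 1$ --- to the single condition $j = 1$ with $Y$ reduced and irreducible. That reduction rests on filtering an arbitrary coherent sheaf by twists of structure sheaves of integral subvarieties (so that the $j = 1$ case propagates to all coherent $\sh{F}$ and, by induction via short exact sequences $0 \to \sh{K} \to \sh{L} \to \sh{F} \to 0$ with $\sh{L}$ locally free, to all $j$), together with Hochster's bound $\hd_X(\struct_Z) \leq \dim X$ from Lemma~\ref{lem-Mel}, which makes the range of relevant $j$ finite. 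Since Corollary~\ref{cor-CT-noeth} merely repackages this, I expect the write-up to be one or two sentences long, with the ``hard part'' being entirely upstream in Lemmas~\ref{lem-Mel} and \ref{lem-Tor-reduction} rather than in the corollary itself.
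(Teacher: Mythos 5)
Your proposal is correct and coincides with the paper's own proof, which reads simply ``Combine Lemma~\ref{lem-Tor-reduction} with Proposition~\ref{prop-Tor-noeth}.'' You also correctly identify where the real work lives (Lemmas~\ref{lem-Mel} and~\ref{lem-Tor-reduction}) and that the corollary itself is a one-line composition of the two equivalences.
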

\begin{proof}
Combine Lemma~\ref{lem-Tor-reduction} with Proposition~\ref{prop-Tor-noeth}.
\end{proof}

We next verify that critical transversality is, as claimed, a generalization of  critical density of the orbits of points.    We formally define:
\begin{defn}\label{def-CD}
Let $X$ be a variety, let $x \in X$, and let $\sigma \in \Aut_{\kk} X$.   Let $A \subseteq \ZZ$.  The set  $\{ \sigma^n(x) \st n \in A \}$ is {\em critically dense} if it is infinite and any infinite subset is dense in $X$.
\end{defn}

We first prove:

\begin{lemma}\label{lem-contain-Tor}
Let $W \subseteq V$ be closed subschemes of a scheme $X$.  
Then 
\[\shTor_1^X(\struct_V, \struct_W) \neq 0.\]
\end{lemma}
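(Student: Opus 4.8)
The plan is to reduce to a local, commutative-algebra statement and then exhibit a nonzero element of $\Tor_1$ explicitly. Since $\shTor$ is computed stalkwise, it suffices to find a single closed point $x \in X$ at which $\Tor_1^{\struct_{X,x}}((\struct_V)_x, (\struct_W)_x) \neq 0$. Write $\struct = \struct_{X,x}$, let $J \subseteq I$ be the ideals of $V$ and $W$ in $\struct$, so that we must show $\Tor_1^\struct(\struct/J, \struct/I) \neq 0$ for a suitable choice of $x$.

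First I would pick $x$ to lie on $W$ (hence on $V$) but so that $W \subsetneq V$ in a neighbourhood of $x$; such a point exists because $W$ is a proper closed subscheme of $V$, so there is a point where the containment $J \subseteq I$ is strict, i.e. $J \neq I$ in $\struct$. The standard way to access $\Tor_1$ here is the short exact sequence $0 \to J \to \struct \to \struct/J \to 0$, which gives $\Tor_1^\struct(\struct/J, \struct/I) \cong \ker\bigl(J \otimes_\struct \struct/I \to \struct/I\bigr) = \ker\bigl(J/JI \to \struct/I\bigr)$. The image of this last map is $(J+I)/I = I/I = 0$ since $J \subseteq I$; hence $\Tor_1^\struct(\struct/J,\struct/I) \cong J/JI$. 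So the task becomes: show $J/JI \neq 0$, equivalently $JI \subsetneq J$, for a good choice of $x$.

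Now $JI \subseteq J$ and $I \subseteq \mf m$, the maximal ideal of $\struct$, so $JI \subseteq \mf m J \subsetneq J$ by Nakayama's lemma (the module $J$ is finitely generated and nonzero, as $V$ is a proper subscheme, so $x$ can be chosen with $J \neq 0$; indeed if $V = X$ locally then $J = 0$ and $\struct_V = \struct_W$ would force $W = V$ there, contradicting $W \subsetneq V$). This gives $J/JI \neq 0$, hence $\Tor_1^\struct(\struct/J, \struct/I) \neq 0$, hence $\shTor_1^X(\struct_V, \struct_W)_x \neq 0$, hence $\shTor_1^X(\struct_V, \struct_W) \neq 0$.

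The one point requiring care — and the place where I expect the main subtlety — is the choice of $x$: I need simultaneously that $x \in \Supp W$ (so that $(\struct_W)_x \neq 0$ and the containment $J \subseteq I$ is a containment of genuine ideals) and that $V \neq W$ near $x$ (so that the containment is strict at the level of stalks, giving $J \neq I$; this alone suffices for the Nakayama argument — I actually only need $J \neq 0$ near $x$, plus $J \subseteq I$). If $W$ is a proper closed \emph{subscheme} of $V$ then either they differ set-theoretically — take $x \in \Supp V \setminus \Supp W$ — wait, that kills $(\struct_W)_x$; so instead always take $x \in \Supp W$, where the relevant inequality $J_x \subsetneq \struct_x$ (equivalently $J_x \neq 0$, using $V \subsetneq X$ locally whenever the local rings differ) holds. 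If instead $\Supp W = \Supp V$ but the scheme structures differ, there is still a point $x \in \Supp W$ with $J_x \neq 0$, and the argument goes through verbatim. So in all cases the conclusion follows from the two facts $J \subseteq I$ and $J \neq 0$ locally at a well-chosen $x \in \Supp W$, combined with Nakayama.
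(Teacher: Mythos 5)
Your proof is correct and takes essentially the same approach as the paper: localize, identify $\Tor_1^\struct(\struct/J, \struct/I)$ with $J/JI$ (using $J \subseteq I$), and apply Nakayama. The paper works at the generic point of an irreducible component of $W$ rather than at a closed point, which makes the ideal of $W$ automatically $\mf{m}$-primary and sidesteps the somewhat muddled choice-of-$x$ discussion in your last paragraph (in particular, $J_x \subsetneq \struct_x$ only says $x \in V$, not $J_x \neq 0$); note also that both proofs — and the lemma statement itself — silently assume $J \neq 0$ at the chosen point, i.e.\ that $V$ is not locally all of $X$ there, which holds in every application the paper makes but is not literally guaranteed by the hypothesis $W \subseteq V$.
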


\begin{proof}
We work locally; let $W'$ be an irreducible component of $W$, and let $P = (W')^{\red}$.  Let  $\mf{m}$ be the maximal ideal of  the local ring $\struct = \struct_{X,P}$.  Let $J$ be the ideal of $\struct$ defining $V$ and let $I$ be the $\mf{m}$-primary ideal defining $W$ locally at $P$.  Then we have
\[\shTor_1^X(\struct_V, \struct_W)_P = \Tor^\struct_1(\struct/J, \struct/I) \cong  (J \cap I)/J I = J/JI,\]
as $J \subseteq I$.  By Nakayama's Lemma, this is nonzero.
\end{proof}

\begin{corollary}\label{cor-CT=CD}
Let $X$ be a variety, let $\sigma \in \Aut_{\kk}(X)$,  let $Z$ be a 0-dimensional closed subscheme of $X$, and let $A \subseteq \ZZ$ be infinite.  The following are equivalent:

$(1)$  
 $\{ \sigma^n(Z) \}_{n \in A}$ is critically transverse; 

$(2)$  $\{ \sigma^n(x)\}_{n \in A}$ is critically dense for all points $x \in \Supp Z$.
\end{corollary}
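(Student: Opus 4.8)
The plan is to treat the two implications separately; $(2)\Rightarrow(1)$ is elementary, and $(1)\Rightarrow(2)$ carries the homological content. Assume $\dim X\geq 1$ (if $X$ is a point the statement is uninteresting), and write $\Supp Z=\{x_1,\dots,x_r\}$, a finite set since $Z$ is $0$-dimensional. Because $\sigma$ is an automorphism, $\Supp(\sigma^n Z)=\{\sigma^n(x_1),\dots,\sigma^n(x_r)\}$ for every $n$, and each $\shTor^X_j(\struct_{\sigma^n Z},\struct_Y)$ is supported on $\Supp(\sigma^n Z)\cap Y$; hence $\sigma^n Z$ and $Y$ are homologically transverse whenever these two point sets are disjoint. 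For $(2)\Rightarrow(1)$, fix a closed subscheme $Y\subseteq X$; if $Y=X$ then $\struct_Y=\struct_X$ is flat and there is nothing to check, so assume $Y\subsetneq X$. Since each orbit $\{\sigma^n(x_i)\}_{n\in A}$ is infinite and $\sigma$ is an automorphism, $n\mapsto\sigma^n(x_i)$ is injective, so any infinite subset of $A$ carries $x_i$ into an infinite subset of its orbit, which by critical density is Zariski dense and therefore not contained in the proper closed set $Y$. Thus $\{n\in A:\sigma^n(x_i)\in Y\}$ is finite for each $i$, so $\{n\in A:\Supp(\sigma^n Z)\cap Y\neq\emptyset\}$ is finite, and $\sigma^n Z$ is homologically transverse to $Y$ for all but finitely many $n\in A$; as $Y$ was arbitrary, $\{\sigma^n Z\}_{n\in A}$ is critically transverse.

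For $(1)\Rightarrow(2)$, fix $x\in\Supp Z$ and assume $\{\sigma^n Z\}_{n\in A}$ is critically transverse. First I would show the orbit $\{\sigma^n(x)\}_{n\in A}$ is infinite: otherwise some point $q$ equals $\sigma^n(x)$ for infinitely many $n\in A$, and for each such $n$ the reduced point $\{q\}$ is a closed subscheme of $\sigma^n Z$ (the ideal sheaf of $\sigma^n Z$ is $\mf m_q$-primary at $q$, hence contained in $\mf m_q$), so Lemma~\ref{lem-contain-Tor} gives $\shTor^X_1(\struct_{\sigma^n Z},\struct_{\{q\}})\neq 0$, contradicting critical transversality. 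So the orbit is infinite and $n\mapsto\sigma^n(x)$ is injective. Now suppose some infinite $B\subseteq A$ has $\{\sigma^n(x):n\in B\}$ not dense, and set $Y:=\overline{\{\sigma^n(x):n\in B\}}\subsetneq X$; by injectivity this is an infinite set, so $\dim Y\geq 1$, while $Y\subsetneq X$ forces $\dim\struct_{Y,q}\leq\dim Y<\dim X$ for all $q\in Y$. For each $n\in B$ we have $q_n:=\sigma^n(x)\in\Supp(\sigma^n Z)\cap Y$; it suffices to prove that $\sigma^n Z$ is \emph{not} homologically transverse to $Y$, since then, $B$ being infinite, this contradicts critical transversality.

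To establish that, I would argue homologically. As in the proof of Lemma~\ref{lem-Tor-reduction}, critical transversality of $\{\sigma^n Z\}_{n\in A}$ forces $Z$ --- hence each $\sigma^n Z$, as $\sigma$ is an automorphism --- to be homologically transverse to every $\sigma$-invariant closed subscheme of $X$, in particular to every stratum of the singular stratification, so Lemma~\ref{lem-Mel} gives $\hd_X(\struct_{\sigma^n Z})=\hd_X(\struct_Z)\leq\dim X$. Thus for $q\in\Supp(\sigma^n Z)$ the finite-length module $\struct_{\sigma^n Z,q}$ over $\struct:=\struct_{X,q}$ has finite projective dimension, which by the New Intersection Theorem is at least $\dim\struct=\dim X$, hence equal to $\dim X$. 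Fix $n\in B$, put $\struct:=\struct_{X,q_n}$, and let $I,J\subseteq\struct$ be the stalks at $q_n$ of the ideal sheaves of $\sigma^n Z$ and of $Y$; then $I$ is $\mf m_{q_n}$-primary and $I+J\subseteq\mf m_{q_n}$ is proper. If $\sigma^n Z$ were homologically transverse to $Y$, then $\Tor^{\struct}_j(\struct/I,\struct/J)=0$ for all $j\geq 1$, so tensoring the minimal free resolution $F_\bullet$ of $\struct/I$ (of length $\dim X$, with $F_{\dim X}\neq 0$) with $\struct/J$ would produce a \emph{minimal} free resolution of $\struct/(I+J)$ over the local ring $\struct/J$ of length $\dim X$ --- minimality persists because the entries of the differentials of $F_\bullet$ lie in $\mf m_{q_n}$. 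But $\struct/(I+J)$, being a quotient of $\struct/I$, has finite length over $\struct$ and hence over $\struct/J$, so Auslander--Buchsbaum over $\struct/J$ yields $\dim X=\operatorname{depth}(\struct/J)\leq\dim(\struct/J)\leq\dim Y<\dim X$, a contradiction. Hence $\sigma^n Z$ is not homologically transverse to $Y$, as required.

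The main obstacle is precisely this last step: ruling out homological transversality of $\sigma^n Z$ with a \emph{positive-dimensional} $Y$ through a point of the zero-dimensional scheme $\sigma^n Z$. The naive hope that $\shTor_1\neq 0$ simply because the two schemes meet is not transparent; it is forced by combining Hochster's bound (Lemma~\ref{lem-Mel}) with the New Intersection Theorem, the latter being needed to pin down the projective dimension of $\struct_{\sigma^n Z,q}$ when $X$ is singular. When $X$ is smooth, all the local rings in play are regular and this step is routine.
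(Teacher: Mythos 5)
Your proof is correct, but for $(1)\Rightarrow(2)$ it takes a genuinely different and heavier route than the paper. The paper's argument is quite short: taking $W$ to be the reduced closure of $\{\sigma^n(x):n\in A'\}$, it observes that one can thicken $W$ to a single (non-reduced) subscheme $W'$ with $\sigma^n Z\subseteq W'$ \emph{scheme-theoretically} for all $n\in A'$ (e.g.\ take $W'$ defined by $\sh{I}_W^k$ where $\mf m_x^k\subseteq\sh{I}_{Z,x}$), and then Lemma~\ref{lem-contain-Tor} immediately gives $\shTor_1^X(\struct_{\sigma^n Z},\struct_{W'})\neq 0$ for all $n\in A'$, refuting critical transversality. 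You instead work with the reduced orbit closure $Y$ itself and prove the stronger local statement that $\sigma^n Z$ fails to be homologically transverse to $Y$ at $q_n=\sigma^n(x)$. To do this you must first pin down $\pd_{\struct_{X,q_n}}\struct_{\sigma^n Z,q_n}=\dim X$ (Hochster's bound from Lemma~\ref{lem-Mel} for the upper bound, the New Intersection Theorem for the lower), then tensor a minimal free resolution by $\struct/J$ and invoke Auslander--Buchsbaum over $\struct/J$ to derive $\operatorname{depth}(\struct/J)\geq\dim X>\dim(\struct/J)$, a contradiction. Both approaches are sound; the paper's thickening trick is elementary and bypasses the singular local algebra entirely, while yours buys the finer conclusion that the \emph{reduced} subvariety $Y$ already witnesses the failure of transversality, at the cost of invoking two nontrivial theorems (NIT and Auslander--Buchsbaum) that are not needed for the statement at hand.
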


\begin{proof}
Because $\shTor_j^X (\struct_{\sigma^n Z} , \struct_Y)$ is supported on $\sigma^n Z \cap Y$ for any $j$, $(2) \Rightarrow (1)$.   We prove that $(1) \Rightarrow (2)$.  By working locally, we may assume that $Z$ is supported on a single point $x$.     Suppose that $(2)$ fails, so there is some infinite $A' \subseteq A$ and some reduced $W \subset X$ such that $\sigma^n(x) \in W$ for all $n \in A'$.  Then there is some, not necessarily reduced, $W'$ supported on $W$ such that $\sigma^n(Z) \subseteq W'$ for all $n \in A'$.  By Lemma~\ref{lem-contain-Tor},  $\shTor^X_1(\struct_{\sigma^n Z}, \struct_{W'}) \neq 0$ for any $n \in A'$.  Thus $(1)$ also fails.
\end{proof}

We now turn to investigating when critical transversality occurs.  As mentioned, critical transversality is a generic transversality property of the translates of $Z$, reminiscent of the Kleiman-Bertini theorem.  One naturally asks if there are simple conditions on $Z$ and $\sigma$ sufficient for $\{ \sigma^n Z \}$ to be critically transverse.  In particular, it is not immediately obvious, even working on nice varieties such as $\PP^n$, that critical transversality {\em ever} occurs, except for points with critically dense orbits.   

If instead of considering a single automorphism $\sigma$, we consider instead the action of an algebraic group $G$ on $X$, then there is a simple condition for generic transversality to hold for translates of $Z$.    This is
\begin{theorem}\label{thm-generalgroup}
{\em (\cite[Theorem~1.2]{S-KB})}
Let $X$ be a variety (defined over $\kk$) with a left action of  a smooth algebraic group $G$, and let $Z$ be a closed subscheme of  $X$.  Then the following are equivalent:
\begin{enumerate}
\item $Z$ is homologically transverse to all $G$-orbit closures in $X$.
\item For all closed subschemes $Y$ of $X$, there is a Zariski open and dense subset $U$ of $G$ such that for all closed points $g \in U$, the subscheme $g Z$  is homologically transverse to $Y$.
\end{enumerate}
\qed
\end{theorem}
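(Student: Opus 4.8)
The plan is to prove the two implications separately. The implication $(2)\Rightarrow(1)$ is formal: if $O\subseteq X$ is the closure of a $G$-orbit then $g\cdot O=O$ for every closed point $g\in G$, so applying $(2)$ with $Y=O$ produces a closed point $g$ with $\shTor^X_i(\struct_{gZ},\struct_O)=0$ for all $i\geq 1$; pulling back along the automorphism $g^{-1}$ of $X$ (which is exact and carries $\struct_{gZ}$ to $\struct_Z$ and $\struct_O$ to $\struct_O$) gives $\shTor^X_i(\struct_Z,\struct_O)=0$ for $i\geq 1$, and $O$ was an arbitrary orbit closure. The content is $(1)\Rightarrow(2)$, a homological version of Kleiman's theorem on transversality of the general translate.

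For $(1)\Rightarrow(2)$ I would first spread the translates of $Z$ out over $G$. The map $a\colon G\times X\to X$, $(g,x)\mapsto g^{-1}x$, is smooth (it is the automorphism $(g,x)\mapsto(g,g^{-1}x)$ of $G\times X$ followed by $\mathrm{pr}_X$), hence flat; set $\mathcal Z=a^{-1}(Z)$ and $\mathcal Y=\mathrm{pr}_X^{-1}(Y)=G\times Y$ inside $G\times X$. The same automorphism identifies $\mathcal Z$ with $G\times Z$, so $\mathcal Z$ and $\mathcal Y$ are flat over $G$, and the fibre of $\mathcal Z$ over a closed point $g$ is $gZ$. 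Base change along $\iota_g\colon\{g\}\times X\hookrightarrow G\times X$ then gives $L\iota_g^*\bigl(\struct_{\mathcal Z}\otimes^L_{G\times X}\struct_{\mathcal Y}\bigr)\cong\struct_{gZ}\otimes^L_X\struct_Y$. Hence once one shows that the coherent sheaves $\shTor^{G\times X}_i(\struct_{\mathcal Z},\struct_{\mathcal Y})$ for $i\geq 1$ have support not dominating $G$, there is a dense open $U\subseteq G$ over which $\struct_{\mathcal Z}\otimes^L\struct_{\mathcal Y}$ is a single sheaf $\mathcal T$ concentrated in degree $0$; shrinking $U$ by generic flatness of $\mathcal T$ over $G$, for $g\in U$ one obtains $\struct_{gZ}\otimes^L_X\struct_Y\cong\mathcal T|_{\{g\}\times X}$ in degree $0$, i.e.\ $gZ$ is homologically transverse to $Y$ for $g$ in a dense open --- which is exactly $(2)$. (If convenient, one may first reduce to $Y$ reduced and irreducible by the filtration argument used in the proof of Lemma~\ref{lem-Tor-reduction}.)

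So everything reduces to the vanishing of $\shTor^{G\times X}_i(\struct_{\mathcal Z},\struct_{\mathcal Y})$ for $i\geq 1$, generically over $G$. For this I would use the morphism $\psi\colon G\times X\to X\times X$, $\psi(g,x)=(g^{-1}x,x)$, for which $\struct_{\mathcal Z}=\psi^*\struct_{Z\times X}$ and $\struct_{\mathcal Y}=\psi^*\struct_{X\times Y}$. Since $\mathrm{pr}_1\circ\psi$ and $\mathrm{pr}_2\circ\psi$ are the flat maps $a$ and $\mathrm{pr}_X$, one has $L\psi^*\struct_{Z\times X}\simeq\struct_{\mathcal Z}$ and $L\psi^*\struct_{X\times Y}\simeq\struct_{\mathcal Y}$; combined with monoidality of $L\psi^*$ and the K\"unneth fact that $Z\times X$ and $X\times Y$ are homologically transverse in $X\times X$ with tensor product $\struct_{Z\times Y}$ (for all $Z,Y$), this yields $\struct_{\mathcal Z}\otimes^L_{G\times X}\struct_{\mathcal Y}\simeq L\psi^*\struct_{Z\times Y}$, so it suffices to prove that $L\psi^*\struct_{Z\times Y}$ is concentrated in degree $0$ over a dense open of $G$. \textbf{The main obstacle is that $\psi$ is very far from flat:} its non-flat locus is controlled by where the dimension of the $G$-orbit jumps, and this is exactly where hypothesis $(1)$ must enter. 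The plan for this final step is to stratify $X$ by the locally closed loci of constant orbit dimension (refining if necessary); over a single stratum the restriction of $a$ is a fibration by homogeneous spaces and $\psi$ is flat there, so the classical Kleiman argument --- generic flatness of $G\times Z\to X$ over the stratum, then passage to the generic point of the stratum --- handles that stratum, while hypothesis $(1)$, i.e.\ the vanishing of $\shTor^X_i(\struct_Z,\struct_O)$ for $i\geq 1$ and all orbit closures $O$ (which are unions of strata), is what permits passing between strata, via a change-of-rings spectral sequence for $\Tor$ that collapses for the same reason as in the proof of Lemma~\ref{lem-Mel}. Assembling this stratum-by-stratum argument, and checking that these spectral sequences degenerate, is where essentially all the difficulty of the theorem lies.
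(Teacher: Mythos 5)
This theorem is quoted from \cite[Theorem~1.2]{S-KB} (the author's companion paper) and is not proved in the present paper --- the $\qed$ after the statement only closes the citation --- so there is no in-paper proof to compare against. Evaluating your proposal on its own: the $(2)\Rightarrow(1)$ direction, the flat family $\mathcal Z\cong G\times Z$, the derived base change along $\iota_g$, and the reduction to showing that $L\psi^*\struct_{Z\times Y}$ is concentrated in degree $0$ generically over $G$ (using the monoidality of $L\psi^*$ and the K\"unneth fact) are all set up correctly, and this is indeed the right frame: it is the frame of Miller--Speyer, which handles the transitive (hence $\psi$ flat) case cleanly.

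The gap is exactly where you say it is, and acknowledging a gap is not the same as filling it. Everything in your sketch up to that point is essentially formal and does not use hypothesis $(1)$ at all; the entire content of the theorem is in showing that $(1)$ forces the higher Tor sheaves $\shTor_{\geq 1}^{G\times X}(\struct_{\mathcal Z},\struct_{\mathcal Y})\simeq\shTor_{\geq 1}$ of $L\psi^*\struct_{Z\times Y}$ to be killed over a dense open of $G$, and you describe the mechanism only at the level of ``stratify by orbit dimension, use a change-of-rings spectral sequence, check it degenerates.'' Two things that your sketch passes over and that would require real work: (a) you need a \emph{single} dense open of $G$ over which all $\shTor_i$, $i\geq 1$, vanish, which requires an a priori bound on the number of nonzero Tor sheaves --- on a singular $X$ this is not automatic, and one has to first use $(1)$ (via an argument like Lemma~\ref{lem-Mel}, after observing that the singular stratification of $X$ is $G$-invariant and hence a union of orbit closures) to bound $\hd$ appropriately; and (b) the change-of-rings spectral sequences you invoke live over local rings of $X$ and involve orbit closures of $X$, whereas the Tor's you need to kill live over $G\times X$ relative to $X\times X$ --- bridging that gap is precisely the technical heart of \cite{S-KB}, and it is not a routine degeneration. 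So as written this is a correct plan of attack rather than a proof; it identifies the right objects but stops short of the argument that actually makes $(1)$ do its job.
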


(We note that if $G$ acts transitively, then $(1)$ and therefore $(2)$ are automatically satisfied; this case of Theorem~\ref{thm-generalgroup} was proved by Miller and Speyer \cite{MS}.)

In the remainder of this section, we apply Theorem~\ref{thm-generalgroup} to obtain a simple criterion for critical transversality, at least in characteristic 0.   It turns out that in many situations, critical transversality is, in a suitable sense, generic behavior.

We will use the following result of Cutkosky and Srinivas.  

\begin{theorem}\label{thm-CS}
{\em (\cite[Theorem~7]{CS1993})}
Let $G$ be a connected abelian algebraic group defined over an algebraically closed  field $\kk$ of characteristic 0.  Suppose that $g \in G$ is such that the cyclic subgroup $\ang{g}$ is dense in $G$.  Then any infinite subset of $\ang{g}$ is dense in $G$. \qed
\end{theorem}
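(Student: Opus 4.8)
The plan is to reduce the statement to a single diophantine input about how proper closed subvarieties of $G$ meet the cyclic group $\ang{g}$, and to dispatch everything else with elementary group theory. Write $G$ additively ("abelian" here means "commutative", not "abelian variety"). We may assume $\dim G \geq 1$, since otherwise $G$ is trivial and there is nothing to prove; then density of $\ang{g}$ forces $g$ to have infinite order, so $n \mapsto g^n$ is injective and any infinite $S \subseteq \ang{g}$ has the form $S = \{\, g^n : n \in A \,\}$ with $A \subseteq \ZZ$ infinite. For a closed $Y \subseteq G$ set $N(Y) = \{\, n \in \ZZ : g^n \in Y \,\}$. The key claim I would establish is that $N(Y)$ is finite whenever $Y \subsetneq G$ is a proper closed subscheme. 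Granting this, if $\bbar{S}$ were a proper closed subvariety $Y$ then $A \subseteq N(Y)$ would be finite, contradicting that $A$ is infinite; hence $\bbar{S} = G$, which is the theorem.

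To prove the key claim I would first record that $\ang{g^d}$ is itself dense in $G$ for every $d \geq 1$: it has finite index in $\ang{g}$, so $\ang{g} \subseteq \bigcup_{i=0}^{d-1}\bigl( g^i + \bbar{\ang{g^d}}\bigr)$, and as the right-hand side is closed it contains $\bbar{\ang{g}} = G$; since a connected algebraic group is irreducible, one of the finitely many translates $g^i + \bbar{\ang{g^d}}$ must equal $G$, so $\bbar{\ang{g^d}} = G$. Hence, for any $a \in \ZZ$ and $d \geq 1$,
\[ \bbar{\{\, g^{a+dk} : k \in \ZZ \,\}} \;=\; g^a + \bbar{\ang{g^d}} \;=\; G, \]
so $N(Y)$ contains no infinite arithmetic progression when $Y \subsetneq G$. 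It therefore suffices to prove that $N(Y)$ is always a finite union of arithmetic progressions, where a single point is allowed as a degenerate progression: together with the previous observation this forces every such progression to be a point, so $N(Y)$ is finite.

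The statement that $N(Y)$ is a finite union of arithmetic progressions --- equivalently, that $Y \cap \ang{g}$ is a finite union of cosets of subgroups of the cyclic group $\ang{g}$ --- is the substantive part, and is where the hypothesis $\chrr \kk = 0$ enters. When $G$ is affine, so that $G \cong \mathbb{G}_a^{s} \times \mathbb{G}_m^{r}$, I would deduce it from the Skolem--Mahler--Lech theorem applied to the linear recurrence sequence $n \mapsto f(g^n)$, where $f$ is a nonzero regular function vanishing on $Y$: the zero set of such a sequence is a finite union of arithmetic progressions together with a finite set. For general $G$ I would invoke Chevalley's structure theorem, $0 \to L \to G \to A \to 0$ with $L$ connected affine commutative and $A$ an abelian variety, note that the image of $g$ in $A$ again generates a dense subgroup, handle $A$ by the Mordell--Lang theorem for abelian varieties and the fibre direction by the affine case, and splice the two across the extension; alternatively, one may cite directly the form of Mordell--Lang valid for arbitrary commutative algebraic groups in characteristic $0$.

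The two reductions in the first two paragraphs are routine. The main obstacle is the structural claim that $Y \cap \ang{g}$ is a finite union of cosets of subgroups: already nontrivial (Skolem--Mahler--Lech) in the affine case, and resting on deep Mordell--Lang finiteness results once an abelian-variety quotient is present. I would expect the delicate step of a fully written-out proof to be combining the affine and abelian-variety directions through the Chevalley extension while keeping both under control at once.
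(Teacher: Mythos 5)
The statement you were asked to prove is quoted in the paper as an external result --- Cutkosky--Srinivas \cite[Theorem~7]{CS1993} --- and the paper does not reprove it (the \verb|\qed| immediately follows the citation). So there is no internal proof to compare against; I will instead assess your argument on its own terms.

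Your high-level plan is sound and, as far as I can tell, matches the spirit of what such a proof must do: reduce to showing that $N(Y) = \{n \in \ZZ : g^n \in Y\}$ is finite for every proper closed $Y$; establish (cleanly, via irreducibility of the connected group $G$) that $\overline{\ang{g^d}} = G$ for all $d \geq 1$, hence that $N(Y)$ contains no infinite arithmetic progression; and then prove a structure theorem for $N(Y)$ that forces it to be finite. The first two steps are correct as written.

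The third step is where the gaps are. First, a fixable imprecision: applying Skolem--Mahler--Lech (in Lech's characteristic-$0$ form) to a \emph{single} $f$ vanishing on $Y$ only tells you that $N(Y)$ is \emph{contained in} a set of the form (finite) $\cup$ (finite union of infinite APs); a subset of such a set need not itself be a finite union of APs, so you cannot immediately conclude that each infinite AP appearing in the decomposition lies inside $N(Y)$. The fix is to use noetherianity of $\kk[G]$: take generators $f_1,\dots,f_m$ of $\sh{I}_Y$, apply SML to each $f_i(g^n)$, and intersect; a finite intersection of sets of the form (finite) $\cup$ (finite union of infinite APs) is again of this form, and now each infinite AP in the resulting decomposition is genuinely contained in $N(Y)$, where step two kills it. Second, and more substantively, the non-affine case is where the real difficulty lies, and the backup you offer --- ``cite directly the form of Mordell--Lang valid for arbitrary commutative algebraic groups in characteristic $0$'' --- is not a statement that exists off the shelf in the generality you describe. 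For a general finitely generated subgroup $\Gamma$ the Mordell--Lang conclusion ``$Y \cap \Gamma$ is a finite union of cosets of subgroups of $\Gamma$'' is \emph{false} once $\mathbb{G}_a$-factors are present: in $\mathbb{G}_a^2$ take $\Gamma = \ZZ^2$ and $Y$ the parabola $y = x^2$; then $Y \cap \Gamma = \{(n,n^2) : n\in\ZZ\}$ is infinite but is no finite union of cosets. What you actually need is the much narrower ``return-set is a finite union of APs'' statement for a \emph{cyclic} $\Gamma = \ang{g}$, i.e.\ a dynamical-Mordell--Lang theorem for translation automorphisms of commutative algebraic groups --- which is essentially the Cutkosky--Srinivas theorem itself. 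So as written, the fallback citation is either incorrect or circular. If you want an honest proof you will need to do the Chevalley splicing $0 \to L \to G \to A \to 0$ in detail: pass to the abelian quotient, invoke Mordell--Lang for abelian varieties there, restrict to each resulting coset, and run the SML argument on the affine fibre data --- keeping track of the fact that the fibre over a point of $A$ is a \emph{torsor} under $L$, not $L$ itself, which is exactly the delicacy you flagged but did not resolve. In short: the skeleton is right, the affine case is essentially correct modulo the ``intersect over generators'' repair, but the general case is a genuine gap rather than a routine reduction.
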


Here is our simple condition for critical transversality:

\begin{theorem}\label{thm-whenCT}
Let $\kk$ be an algebraically closed field of characteristic 0, let $X$ be a variety over $\kk$, let $Z$ be a closed subscheme of $X$, and let $\sigma$ be an element of an algebraic group $G$ that acts on  $X$.  Let $A \subseteq \ZZ$ be infinite.    
  Then $\{ \sigma^n Z\}_{n \in A}$ is critically transverse if and only if $Z$ is homologically transverse to all reduced $\sigma$-invariant subschemes of $X$.
\end{theorem}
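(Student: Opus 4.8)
The plan is to reduce the statement to Theorem~\ref{thm-generalgroup} applied not to $G$ but to the closed subgroup $H = \overline{\langle \sigma \rangle} \subseteq G$, the Zariski closure of the cyclic group generated by $\sigma$, combined with the Cutkosky--Srinivas density result Theorem~\ref{thm-CS}. Since $\langle \sigma \rangle$ is abelian, $H$ is a commutative algebraic group, and since $\chrr \kk = 0$ it is smooth; let $H^0$ be its identity component and $m = [H : H^0] < \infty$. Because $\langle \sigma \rangle$ is dense in $H$, its image generates the finite group $H/H^0$, so $\sigma^m \in H^0$ and $\langle \sigma^m \rangle$ is dense in the connected abelian group $H^0$. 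Theorem~\ref{thm-CS} then guarantees that every infinite subset of $\langle \sigma^m \rangle$ is dense in $H^0$, which is the key input bridging ``generic'' behaviour on $H$ and the discrete family $\{\sigma^n\}$.

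For the ``only if'' direction, suppose $\{\sigma^n Z\}_{n \in A}$ is critically transverse and let $W$ be a reduced $\sigma$-invariant closed subscheme of $X$. Since the automorphism $\sigma^{-n}$ preserves $\shTor$ and fixes $W$, the sheaves $\shTor^X_i(\struct_{\sigma^n Z}, \struct_W)$ and $\shTor^X_i(\struct_Z, \struct_W)$ are isomorphic, so $\sigma^n Z$ is homologically transverse to $W$ if and only if $Z$ is. Critical transversality applied with $Y = W$ makes the former hold for at least one (in fact cofinitely many) $n \in A$; hence $Z$ is homologically transverse to $W$.

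For the ``if'' direction, suppose $Z$ is homologically transverse to every reduced $\sigma$-invariant closed subscheme of $X$. Each $H$-orbit closure $\overline{Hx}$ is such a subscheme: it is reduced and irreducible, and $\sigma H = H$ gives $\sigma(\overline{Hx}) = \overline{Hx}$. Hence $Z$ is homologically transverse to all $H$-orbit closures, and Theorem~\ref{thm-generalgroup}, applied to the action of $H$ on $X$, yields for each closed subscheme $Y \subseteq X$ a dense open $U \subseteq H$ such that $gZ$ is homologically transverse to $Y$ for every closed point $g \in U$. It remains to show $\sigma^n \in U$ for all but finitely many $n \in \ZZ$; restricting to $A$ then gives critical transversality. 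Since $U$ is dense and open it meets every connected component of $H$, and the components are the cosets $\sigma^i H^0$, $0 \leq i \leq m-1$. For each $i$, the set $(\sigma^i H^0) \setminus U$ is closed and proper in $\sigma^i H^0$, so $H^0 \setminus \sigma^{-i} U = \sigma^{-i}\big((\sigma^i H^0)\setminus U\big)$ is closed and proper in $H^0$, hence meets $\langle \sigma^m \rangle$ in only finitely many elements by the density consequence of Theorem~\ref{thm-CS}. Writing $\langle \sigma \rangle = \bigsqcup_{i=0}^{m-1} \sigma^i \langle \sigma^m \rangle$ and summing, we conclude that $\langle \sigma \rangle \setminus U$ is finite, so $\{ n \in \ZZ \st \sigma^n \notin U\}$ is finite, as desired.

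I expect the main obstacle to be precisely this last step: passing from the ``generic'' transversality provided on a dense open subset of the group $H$ to the genuinely countable, discrete orbit $\{\sigma^n\}$. This is exactly the point at which one cannot get by with density alone and needs Theorem~\ref{thm-CS}, together with the bookkeeping over the components $\sigma^i H^0$ above to handle the non-connected case. One should also note the degenerate case in which $\sigma$ has finite order: then $H$ is finite, the only dense open $U \subseteq H$ is $H$ itself, and the conclusion that $\sigma^n Z$ is homologically transverse to $Y$ for every $n$ is immediate.
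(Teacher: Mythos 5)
Your proof is correct and follows essentially the same path as the paper's: pass to the abelian algebraic group $H = \overline{\langle\sigma\rangle}$, apply Theorem~\ref{thm-generalgroup} to $H$-orbit closures, decompose $H$ into cosets of its identity component, and use Theorem~\ref{thm-CS} to promote density of $\langle\sigma^m\rangle$ in $H^0$ to critical density. The only difference is that you spell out the ``only if'' direction (via $\sigma$-invariance of $W$), which the paper dismisses as obvious.
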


\begin{proof}
If $\{ \sigma^n Z\}_{n \in A}$ is critically transverse, then $Z$ is obviously homologically transverse to $\sigma$-invariant subschemes.   We prove the converse.  
Assume that $Z$ is homologically transverse to reduced $\sigma$-invariant subschemes of $X$.   We consider the abelian subgroup
\[H = \bbar{ \ang{ \sigma^n}} \subseteq G\]
Now, the closures of $H$-orbits in $X$ are $\sigma$-invariant and reduced.  Thus, by assumption, $Z$ is homologically transverse to all $H$-orbit closures, and we may apply Theorem~\ref{thm-generalgroup}.  Fix a closed subscheme $Y$ of $X$.  By Theorem~\ref{thm-generalgroup}, there is a dense open $U \subseteq H$ such that if $g \in U$, then $gZ$ and $Y$ are homologically transverse.

  Let $H^o$ be the connected component of the identity in $H$, so the components of $H$ are $H^o = \sigma^c H^o, \sigma H^o, \ldots, \sigma^{c-1} H^o$ for some $c \geq 1$.   As $\ang{\sigma^c}$ is dense in $H^o$, it is critically dense by Theorem~\ref{thm-CS}.
  
 Fix $0 \leq j \leq c-1$.  The set 
 \[ U_j = \sigma^{-j} (U \cap \sigma^j H^o)\]
 is an open dense subset of $H^o$.  By critical density, the set
 \[ \{ m \st \sigma^{mc} \not\in U_j \} \]
 is finite.  Thus
 \[  \{ n \st \sigma^n \not \in U \}= \bigcup_{j=0}^{c-1} \{ n \st n \equiv j \pmod c \mbox{ and } \sigma^{n-j} \not \in U_j \}\]
 is also finite.  That is to say,  for all but finitely many $n$ we have $\sigma^n \in U$ and $\sigma^n Z$ is homologically transverse to $Y$.  As $Y$ was arbitrary, $\{ \sigma^n Z \}_{n \in \ZZ}$ is critically transverse.  Thus $\{\sigma^n Z\}_{n \in A}$ is critically transverse.
\end{proof}

We note that the case of Theorem~\ref{thm-whenCT} where $Z$ is a point is proved in \cite[Theorem~11.2]{KRS}.

It is reasonable to say that $\sigma$ and $Z$ are in {\em general position} if $Z$ is homologically transverse to all $\sigma$-fixed subschemes of $X$.  Theorem~\ref{thm-whenCT} suggests the following conjecture:
\begin{conjecture}\label{conj-CT}
Let $\kk$ be an algebraically closed field of characteristic 0, and let $X/\kk$ be a projective variety.  Let $\sigma \in \Aut_{\kk} X$ and let $Z \subseteq X$ be a closed subvariety.  Then $\{ \sigma^n Z\}$ is critically transverse if and only if $\sigma$ and $Z$ are in general position.  
\end{conjecture}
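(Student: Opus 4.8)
The forward implication needs nothing new: if $\{\sigma^n Z\}$ is critically transverse and $W$ is a $\sigma$-invariant closed subscheme, then $\sigma^n Z$ is homologically transverse to $W$ for some $n$, and applying the automorphism $\sigma^{-n}$ (which fixes $W$ since $\sigma^{-n}W=W$) shows $Z$ is homologically transverse to $W$. So the content is the reverse implication, and the plan is to reduce it to Theorem~\ref{thm-whenCT} by placing $\sigma$ inside a genuine algebraic group. Assume $\sigma$ has infinite order (the interesting case). Since $\chrr\kk=0$ and $X$ is projective, $\Aut_{\kk}X$ is representable by a group scheme, locally of finite type over $\kk$ and (as $\chrr\kk=0$) smooth; write $G$ for it, $G^{\circ}$ for its identity component (an algebraic group acting on $X$), and $[\sigma]$ for the class of $\sigma$ in the discrete group $\pi_0(G)$.

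\emph{Case 1: $[\sigma]$ has finite order $c$ in $\pi_0(G)$.} Then $\sigma^{c}\in G^{\circ}(\kk)$, so the Zariski closure $\overline{\ang{\sigma^{c}}}$ is a closed subgroup scheme of the finite-type group $G^{\circ}$, hence an algebraic group; therefore $H:=\overline{\ang{\sigma}}=\bigcup_{j=0}^{c-1}\sigma^{j}\,\overline{\ang{\sigma^{c}}}$ is of finite type and smooth, and it acts on $X$ with $\sigma\in H$. Applying Theorem~\ref{thm-whenCT} with $H$ in the role of $G$: by hypothesis $Z$ is homologically transverse to all reduced $\sigma$-invariant subschemes of $X$, so $\{\sigma^{n}Z\}$ is critically transverse. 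Thus in this case the conjecture is just Theorem~\ref{thm-whenCT}, and the only new ingredient is the construction of the finite-type group $H$ --- equivalently, the observation that ``$\sigma$ lies in an algebraic group acting on $X$'' is exactly the condition that $[\sigma]$ have finite order in $\pi_0(\Aut_{\kk}X)$ (this case covers the classical examples, e.g.\ $\sigma\in\PGL_{d+1}$ on $\PP^d$ or a translation of an abelian variety).

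\emph{Case 2: $[\sigma]$ has infinite order in $\pi_0(G)$.} This is the crux, and the reason the statement is only conjectural: now $H=\overline{\ang{\sigma}}$ meets infinitely many components of $G$, so it is merely locally of finite type, and the finite-type hypotheses of the Kleiman--Bertini theorems of \cite{S-KB} (Theorem~\ref{thm-generalgroup}) and of the density Theorem~\ref{thm-CS} are not directly available. I see two avenues. First, one could try to upgrade \cite{S-KB} to a smooth group scheme $G$ that is only \emph{locally} of finite type: for a fixed $Y$, the locus of $g\in G$ with $gZ$ not homologically transverse to $Y$ is constructible on each component and, on each component on which it is not everything, a proper closed subset; one then needs the density statement that a cyclic subgroup whose image in $\pi_0(G)$ is infinite cannot be contained in such a locus, which --- handled coset-by-coset relative to $G^{\circ}$ --- should reduce to the Case~1 density argument via Theorem~\ref{thm-CS}. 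Second, and perhaps more robustly, one can first bound homological dimension: the strata of the singular stratification of $X$ are reduced and $\sigma$-invariant, so the hypothesis gives that $Z$ is homologically transverse to each of them, whence $\hd_{X}(\struct_{Z})\le\dim X$ by Lemma~\ref{lem-Mel}; it then suffices to show, for each fixed $Y$, that $\shTor^{X}_{j}(\struct_{\sigma^{n}Z},\struct_{Y})=0$ for all $1\le j\le\dim X$ and all but finitely many $n$, a problem one would attack by partitioning $\{\sigma^{n}\}$ according to which coset of $G^{\circ}$ it lies in and running a semicontinuity argument over the finite-type pieces.

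Both avenues funnel into the same hard point, which I expect to be the real obstacle: ruling out that infinitely many of the cosets of $G^{\circ}$ met by $\ang{\sigma}$ contain a translate of $Z$ failing to be homologically transverse to some fixed $Y$. Inside any single coset this is controlled exactly as in the proof of Theorem~\ref{thm-whenCT} (Kleiman--Bertini for $G^{\circ}$ together with Cutkosky--Srinivas density), but coordinating the infinitely many components of $\Aut_{\kk}X$ appears to need either a Kleiman--Bertini statement valid for group schemes that are not of finite type, or a genuinely different idea --- which is why the conjecture is left open.
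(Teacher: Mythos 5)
This statement is labelled a Conjecture in the paper and is not proved there; the only content the paper provides is the discussion immediately following it, which observes that the conjecture reduces to Bell--Ghioca--Tucker \cite[Theorem~5.1]{BGT2008} when $Z$ is $0$-dimensional, is exactly Theorem~\ref{thm-whenCT} when $\sigma$ lies in an algebraic group acting on $X$, and is false in positive characteristic. So there is no proof of record to compare yours against, and your candor that you cannot close the argument is the correct posture. Within those limits your analysis is sound and adds a useful observation: in characteristic~$0$ the hypothesis of Theorem~\ref{thm-whenCT} -- that $\sigma$ lies in an algebraic group acting on $X$ -- is equivalent to $[\sigma]$ having finite order in $\pi_0(\Aut_{\kk}X)$. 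One direction is because an algebraic group has finite $\pi_0$ and its image lands in finitely many components of $\Aut_{\kk}X$; the other is your construction of $H=\overline{\ang{\sigma}}$ as a finite union of translates of the closed subgroup $\overline{\ang{\sigma^{c}}}\subseteq(\Aut_{\kk}X)^{\circ}$, using Cartier's theorem for smoothness. Your Case~2 diagnosis -- that both Theorem~\ref{thm-generalgroup} and Theorem~\ref{thm-CS} are stated for finite-type groups, and the real obstruction is coordinating across infinitely many components of $\Aut_{\kk}X$ -- matches the reason the statement is left open.

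The one piece of evidence from the paper that your proposal does not capture is the $0$-dimensional case. By Corollary~\ref{cor-CT=CD}, if $Z$ is $0$-dimensional then critical transversality of $\{\sigma^{n}Z\}$ is exactly critical density of the orbits of the points of $Z$, and ``$Z$ in general position'' amounts (after a short local computation with Lemma~\ref{lem-contain-Tor}) to those orbits being dense. Bell--Ghioca--Tucker prove dense $\Rightarrow$ critically dense for the orbit of a point under any automorphism in characteristic~$0$, with no assumption on $[\sigma]\in\pi_0(\Aut_{\kk}X)$; so the conjecture is in fact known in this slice of your Case~2. It is worth recording, both because it is the paper's principal evidence for the conjecture and because it suggests the obstruction you identify in Case~2 may be surmountable by techniques outside the Kleiman--Bertini framework.
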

If $Z$ is 0-dimensional, then this conjecture reduces to Bell, Ghioca, and Tucker's recent result \cite[Theorem~5.1]{BGT2008} that in characteristic 0, the orbit of a point under an automorphism is dense exactly when it is critically dense.  If $\sigma$ is an element of an algebraic group that acts on $X$, the conjecture is Theorem~\ref{thm-whenCT}.  In positive characteristic, the conjecture is known to be false; see \cite[Example~12.9]{R-generic} for an example of an automorphism $\sigma \in \PGL_n$ in positive characteristic with a dense but not critically dense orbit.

Suppose now that $\kk$ is uncountable (and algebraically closed) and that $X$ is a variety  over $\kk$.  We say that $x \in X$ is {\em very general} if there are proper subvarieties 
$\{ Y_i \st i \in \ZZ \}$
so that 
\[x \not \in \bigcup_i Y_i.\]

We can now show that critically transverse subschemes abound.  To make the statement easier, the following result is phrased in terms of $\PP^d$; a similar result holds for any variety with a transitive action by a reductive algebraic group.  

\begin{corollary}\label{cor-projspace}
Assume that $\kk$ is uncountable  and  $\chrr \kk = 0$.  Let $Z$ be a subscheme of $\PP^d$, and let $\sh{X}$ be the $\PGL_{d+1}$-orbit of $Z$ in the Hilbert scheme of $\PP^d$.  Let $\sh{Y} = \PGL_{d+1} \times \sh{X}$.  If $(\sigma, Z')$ is a very general element of $\sh{Y}$, then $\sigma$ and $Z'$ are in general position and the set $\{ \sigma^n Z' \}$ is critically transverse.  
\end{corollary}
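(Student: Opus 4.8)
The plan is to deduce both assertions from Theorem~\ref{thm-whenCT}: it suffices to produce, off a countable union of proper subvarieties of $\sh{Y}$, pairs $(\sigma, Z')$ for which $Z'$ is homologically transverse to every $\sigma$-invariant closed subscheme of $\PP^d$, since this is exactly ``general position'' and, by Theorem~\ref{thm-whenCT} (for the action of $\PGL_{d+1}$ on $\PP^d$), it forces critical transversality of $\{\sigma^n Z'\}$. First I would isolate the behaviour of $\sigma$. Since $\chrr \kk = 0$, a regular semisimple $\sigma$ lies in a unique maximal torus $T_\sigma$, and $\overline{\ang{\sigma}} = T_\sigma$ unless some nontrivial character of $T_\sigma$ sends $\sigma$ to a root of unity --- for each fixed character and order a proper closed condition on $\sigma$ (a standard genericity fact in characteristic $0$; compare Theorem~\ref{thm-CS}) --- while the non-regular-semisimple locus is a single proper closed subvariety. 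So there is a countable union $E$ of proper subvarieties of $\PGL_{d+1}$ with $\overline{\ang{\sigma}} = T_\sigma$ a maximal torus for every $\sigma \notin E$. For such $\sigma$, the stabilizer in $\PGL_{d+1}$ of a point of the Hilbert scheme is a closed subgroup, so any $W$ with $\sigma(W) = W$ is fixed by all of $\overline{\ang{\sigma}} = T_\sigma$; hence the $\sigma$-invariant subschemes of $\PP^d$ are exactly the $T_\sigma$-invariant ones, and, after conjugating $T_\sigma$ to the standard torus $T_0$, these are the translates of the countably many subschemes of $\PP^d$ defined by monomial ideals --- only finitely many of which (the unions of coordinate linear subspaces) are reduced.

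Next, Theorem~\ref{thm-generalgroup}, applied to the transitive action of $G = \PGL_{d+1}$ on $\PP^d$ (the only orbit closure is $\PP^d$, so condition~(1) is vacuous), shows that for every closed subscheme $Y \subseteq \PP^d$ the set of $Z'' \in \sh{X}$ homologically transverse to $Y$ is dense open in $\sh{X}$. I would then combine this with the first step by an incidence argument. Let $\mf{T} = \PGL_{d+1}/N(T_0)$ be the variety of maximal tori, enumerate the $T_0$-invariant closed subschemes $W_1, W_2, \dots$ of $\PP^d$, and for each $i$ put $\sh{W}_i = \overline{\PGL_{d+1}\cdot(T_0, W_i)} \subseteq \mf{T}\times\mathrm{Hilb}(\PP^d)$, which is proper over $\mf{T}$ with finite general fibres. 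Base-changing $\sh{W}_i$ along $\sigma \mapsto T_\sigma$ gives $\sh{W}_i'$, proper over the complement in $\PGL_{d+1}$ of the non-regular-semisimple locus and parametrizing the pairs $(\sigma, W)$ with $W$ a $T_\sigma$-invariant subscheme of type $i$. On $\sh{W}_i' \times \sh{X}$ the locus $N_i$ where the universal $Z''$ fails to be homologically transverse to the universal $W$ is closed: on the smooth variety $\PP^d$ only $\shTor^{\PP^d}_j$ with $1 \le j \le \dim \PP^d$ can be nonzero, so $N_i$ is a finite union of images of supports of coherent $\shTor$-sheaves. For a general $\sigma \notin E$ there are only finitely many $W$ with $(\sigma, W) \in \sh{W}_i'$, and by the Kleiman--Bertini step the $Z''$ homologically transverse to all of them form a nonempty open subset of $\sh{X}$; hence the image of $N_i$ under the proper projection to $(\PGL_{d+1}\setminus E)\times\sh{X}$ is a proper closed subset, whose closure $B_i$ in $\sh{Y}$ is a proper subvariety.

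Putting everything together, the bad set --- the $(\sigma, Z') \in \sh{Y}$ for which $\sigma$ and $Z'$ fail to be in general position, or $\{\sigma^n Z'\}$ fails to be critically transverse --- is contained in $(E\times\sh{X}) \cup \bigcup_{i\ge1} B_i$, a countable union of proper subvarieties of $\sh{Y}$. A very general $(\sigma, Z')$ avoids all of them, so $\sigma \notin E$, every $\sigma$-invariant subscheme of $\PP^d$ is one of the translated monomial subschemes above, and $Z'$ is homologically transverse to each of them; hence $\sigma$ and $Z'$ are in general position and, by Theorem~\ref{thm-whenCT}, $\{\sigma^n Z'\}$ is critically transverse. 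The step I expect to be the main obstacle is this incidence argument: the family of $\sigma$-invariant subschemes genuinely moves with $\sigma$ and there is no global algebraic conjugator taking $T_0$ to $T_\sigma$, so one must route the argument through the $\sh{W}_i'$ and check carefully that the image of $N_i$ is a \emph{proper closed} --- not merely constructible, nor dense --- subvariety; granting that, the rest is the standard ``countable union of dense opens'' argument over the uncountable $\kk$, together with the reduction from the first paragraph to countably many shapes of invariant subscheme.
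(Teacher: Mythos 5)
Your proposal is correct and follows the same overall strategy as the paper: restrict to $\sigma$ with $\overline{\ang{\sigma}}$ a maximal torus (so the reduced $\sigma$-invariant subschemes are the finitely many unions of coordinate linear subspaces in $\sigma$'s eigencoordinates), apply Theorem~\ref{thm-generalgroup} for the transitive $\PGL_{d+1}$-action to make $Z'$ homologically transverse to these, and invoke Theorem~\ref{thm-whenCT}. The substantive addition in your write-up is the incidence-variety argument in the third paragraph. The paper's proof produces, for each very general $\sigma$, a dense open $U_\sigma \subseteq \PGL_{d+1}$ that genuinely depends on $\sigma$ through the eigencoordinates, and then asserts the ``very general in $\sh{Y}$'' conclusion without explaining why $\bigcup_{\sigma}\bigl(\{\sigma\}\times(\sh{X}\setminus U_\sigma Z)\bigr)$ is contained in a countable union of proper subvarieties of $\sh{Y}=\PGL_{d+1}\times\sh{X}$. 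Your families $\sh{W}_i'$ over the regular semisimple locus, together with the observation that the non-transversality locus $N_i$ is closed (via proper images of supports of coherent $\shTor$ sheaves on the smooth $\PP^d$) and that its image $B_i$ is a proper subvariety, supply exactly what is missing; you correctly flag this as the crux. Two cosmetic points for a final version: the ``proper closed condition'' for $\overline{\ang{\sigma}}=T_\sigma$ in terms of characters of $T_\sigma$ should be routed through the universal maximal torus over the regular-semisimple locus, since $T_\sigma$ varies with $\sigma$; and since Theorem~\ref{thm-whenCT} (and the conclusion ``general position,'' which then follows from critical transversality) needs only the finitely many \emph{reduced} $T_0$-invariant subschemes, the countable enumeration $W_1,W_2,\dots$ can be shortened to a finite one. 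Neither affects correctness.
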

\begin{proof}
By avoiding a countable union of proper subvarieties of $\PGL_{d+1}$, we may ensure that  the  eigenvalues of $\sigma$ are distinct and algebraically independent over $\QQ$.  This implies that the Zariski closure of $\{ \sigma^n \}$ in $\PGL_{d+1}$ is a torus $\mathbb{T}^d$, and that the only reduced subschemes fixed by $\sigma$ are unions of  coordinate linear subspaces with respect to an appropriate choice of coordinates.  There are finitely many of these; by  repeated applications of Theorem~\ref{thm-generalgroup} with $G = \PGL_{d+1}$ (or by \cite{MS}) we see that there is a dense open $U \subseteq \PGL_{d+1}$ such that for all $\tau \in U$, the subscheme $Z' = \tau Z$ is homologically transverse to all unions of coordinate linear subspaces; that is, $\sigma$ and $Z'$ are in general position.  By Theorem~\ref{thm-whenCT}, the set $\{ \sigma^n Z'\}$ is critically transverse.
\end{proof}

\section{Ampleness}\label{AMPLE}
We now return to noncommutative algebra.  
Our ultimate goal is to study, not the bimodule algebra $\sh{R} = \sh{R}(X, \Lsh, \sigma, Z)$, but the ring $R = R(X, \Lsh, \sigma, Z)$.  
In order to apply our knowledge of $\sh{R}$ to the ring $R$, we will need to control the ampleness, in the sense of Definition~\ref{def-ample}, of the sequence $\{ (\sh{R}_n)_{\sigma^n}\}$.    In this section we show that critical transversality of $\{\sigma^n Z\}$, together with $\sigma$-ampleness of $\Lsh$, is enough to show sufficient ampleness of the graded pieces of $\sh{R}$.

Throughout this section we assume Assumption-Notation~\ref{assnot-FOO}.  Thus to prove that the sequence $\{(\sh{R}_n)_{\sigma^n}\}$ is left or right ample, it suffices to prove that $\{ (\sh{I} \otimes \Lsh_n)_{\sigma^n}\}$ is left or right ample.

 Given $\sigma$-ampleness of $\Lsh$, right ampleness of $\{( \sh{R}_n)_{\sigma^n}\}$ is almost trivial; we record this in the next lemma.

\begin{lemma}\label{lem-rtample}
Assume  Assumption-Notation~\ref{assnot-FOO}.  Assume in addition that $\Lsh$ is $\sigma$-ample.  Then $\{ (\sh{R}_n)_{\sigma^n} \} $ is right ample.
\end{lemma}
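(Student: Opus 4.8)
The plan is to obtain right ampleness of $\{(\sh{R}_n)_{\sigma^n}\}$ as a formal consequence of the $\sigma$-ampleness of $\Lsh$, exploiting the fact that Assumption-Notation~\ref{assnot-FOO} forces $\sh{R}_n = \sh{I}\Lsh_n$ for all $n$ beyond some $n_0$. Since both clauses of Definition~\ref{def-ample} are conditions on $n \gg 0$ only, the finitely many degrees $n < n_0$ --- where $\sh{R}_n = (\sh{I}:\sh{I}^{\sigma^n})\Lsh_n$ may properly contain $\sh{I}\Lsh_n$ --- are irrelevant, and it is enough to check ampleness for the sequence whose $n$-th graded piece is the coherent sheaf $\sh{I}\Lsh_n = \sh{I}\otimes\Lsh_n$.

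So fix an arbitrary coherent $\struct_X$-module $\sh{F}$ and put $\sh{G} = \sh{F}\otimes\sh{I}$, which is again coherent. For $n \geq n_0$ we have, as $\struct_X$-modules, $\sh{F}\otimes\sh{R}_n = \sh{F}\otimes\sh{I}\otimes\Lsh_n = \sh{G}\otimes\Lsh_n$, the last identification using that $\Lsh_n$ is invertible. Now invoke the hypothesis that $\Lsh$ is $\sigma$-ample, i.e.\ that $\{(\Lsh_n)_{\sigma^n}\}$ is a right ample sequence, applied to the coherent sheaf $\sh{G}$: this gives that $\sh{G}\otimes\Lsh_n$ is globally generated for $n \gg 0$ and that $H^q(X,\sh{G}\otimes\Lsh_n) = 0$ for every $q \geq 1$ once $n \gg 0$. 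Reading these statements back through the isomorphism $\sh{F}\otimes\sh{R}_n \cong \sh{G}\otimes\Lsh_n$ establishes clauses (i) and (ii) of Definition~\ref{def-ample} for $\{(\sh{R}_n)_{\sigma^n}\}$; as $\sh{F}$ was arbitrary, the sequence is right ample.

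I do not expect any real obstacle here --- this is the easy direction, and the only subtlety is the bookkeeping point that ``right ample'' is a large-$n$ condition, which is exactly what lets us substitute $\sh{I}\Lsh_n$ for $\sh{R}_n$ without loss. The genuine difficulty will be the left-handed analogue: there is no analogous formal reduction, because after tensoring on the right by $\sh{F}^{\sigma^n}$ the sheaf $\sh{I}$ cannot be absorbed into a single coherent sheaf independent of $n$, and controlling the error terms that appear will require the critical transversality of $\{\sigma^n Z\}$ established in the previous section.
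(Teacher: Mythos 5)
Your proof is correct and follows the same route as the paper's: use Assumption-Notation~\ref{assnot-FOO} to replace $\sh{R}_n$ by $\sh{I}\otimes\Lsh_n$ in large degree, absorb $\sh{F}\otimes\sh{I}$ into a single coherent sheaf, and invoke the $\sigma$-ampleness of $\Lsh$ directly. The only cosmetic difference is that you name the auxiliary sheaf $\sh{G}=\sh{F}\otimes\sh{I}$ explicitly where the paper leaves it implicit.
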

\begin{proof}
From Assumption-Notation~\ref{assnot-FOO}, we know that  $\sh{R}_n = \sh{IL}_n = \sh{I} \otimes \Lsh_n$ for $n \gg 0$.  Fix a coherent sheaf $\sh{F}$.  Then for $n \gg 0$, we have  $\sh{F} \otimes \sh{R}_n = \sh{F} \otimes \sh{I} \otimes \Lsh_n$.  By $\sigma$-ampleness of $\Lsh$,  for $n \gg 0$ this is globally generated and has no higher cohomology.
\end{proof}

Left ampleness, however, is more subtle.  In fact,  we do not know when, in general, $\{(\sh{R}_n)_{\sigma^n}\}$ is left ample.  However, we will see that this does hold when $\sh{R}$ is left noetherian.

\begin{proposition}\label{prop-left-ample}
If $\Lsh$ is $\sigma$-ample and $\{ \sigma^n(Z)\}_{n \geq 0}$ is critically transverse,
then $\{ (\sh{I} \otimes \Lsh_n)_{\sigma^n} \}$ is a left  ample sequence.
\end{proposition}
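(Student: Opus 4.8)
The plan is to check conditions (i) and (ii) of Definition~\ref{def-ample} for left ampleness of the sequence $\{(\sh{I} \otimes \Lsh_n)_{\sigma^n}\}$; that is, for any coherent $\struct_X$-module $\sh{F}$, to show that $\sh{I} \otimes \Lsh_n \otimes \sh{F}^{\sigma^n}$ is globally generated for $n \gg 0$ and has vanishing higher cohomology for $n \gg 0$. The starting point is the short exact sequence
\beq\label{planseq}
0 \to \sh{I} \to \struct_X \to \struct_Z \to 0,
\eeq
which I will tensor with the invertible sheaf $\Lsh_n$ and with $\sh{F}^{\sigma^n}$. Since $\Lsh_n$ is invertible, tensoring \eqref{planseq} by $\Lsh_n$ is exact; the subtlety is tensoring by $\sh{F}^{\sigma^n}$, which is exactly where critical transversality enters. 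By Corollary~\ref{cor-CT-noeth} and Lemma~\ref{lem-Tor-reduction}, critical transversality of $\{\sigma^n Z\}$ is equivalent to property $T$, and in fact the proof of Lemma~\ref{lem-Tor-reduction} shows that for a fixed coherent sheaf $\sh{F}$ and each $j \geq 1$, the set $\{n \geq 0 \st \shTor^X_j(\struct_{\sigma^n Z}, \sh{F}) \neq 0\}$ is finite. Pulling back along $\sigma^{-n}$ (an automorphism, hence exact and cohomology-preserving), this says $\shTor^X_j(\struct_Z, \sh{F}^{\sigma^n}) = 0$ for $n \gg 0$ and all $j \geq 1$ (one uses that $\hd_X(\struct_Z) \leq \dim X$, again established inside that proof via Lemma~\ref{lem-Mel}, to reduce to finitely many $j$). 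Consequently, for $n \gg 0$, tensoring \eqref{planseq} by $\sh{F}^{\sigma^n}$ remains exact, giving
\beq\label{planseq2}
0 \to \sh{I} \otimes \sh{F}^{\sigma^n} \to \sh{F}^{\sigma^n} \to \struct_Z \otimes \sh{F}^{\sigma^n} \to 0.
\eeq

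Next I would tensor \eqref{planseq2} by the invertible sheaf $\Lsh_n$ (still exact) and take the long exact sequence in cohomology. For the higher-cohomology vanishing: $\sigma$-ampleness of $\Lsh$ is left-right symmetric (Definition~\ref{def-ample}), so $\{(\Lsh_n)_{\sigma^n}\}$ is a left ample sequence; applied to the coherent sheaf $\sh{F}$ this gives $H^q(X, \Lsh_n \otimes \sh{F}^{\sigma^n}) = 0$ for $q \geq 1$, $n \gg 0$, and applied to the coherent sheaf $\struct_Z \otimes \sh{F}$ (noting $\struct_Z \otimes \Lsh_n \otimes \sh{F}^{\sigma^n} \cong \Lsh_n \otimes (\struct_Z \otimes \sh{F})^{\sigma^n}$ since pullback commutes with tensor and $\struct_Z^{\sigma^n} = \struct_{\sigma^{-n}Z}$, but more simply because $\struct_Z \otimes \sh{F}$ is itself coherent — here I should be a little careful and instead feed the coherent sheaf whose $\sigma^n$-pullback is $\struct_Z \otimes \sh{F}^{\sigma^n}$, namely $(\struct_Z \otimes \sh{F}^{\sigma^n})^{\sigma^{-n}} = \struct_{\sigma^{-n}Z} \otimes \sh{F}$, which is coherent) it gives $H^q(X, \Lsh_n \otimes \sh{F}^{\sigma^n} \otimes \struct_Z) = 0$ for $q \geq 1$, $n \gg 0$. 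The long exact sequence of \eqref{planseq2} $\otimes\, \Lsh_n$ then forces $H^q(X, \sh{I} \otimes \Lsh_n \otimes \sh{F}^{\sigma^n}) = 0$ for all $q \geq 1$ and $n \gg 0$, which is condition (ii).

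For global generation, condition (i): from the long exact sequence attached to \eqref{planseq2} $\otimes\, \Lsh_n$, one has the exact sequence of global sections ending in $H^0(X, \Lsh_n \otimes \sh{F}^{\sigma^n}) \to H^0(X, \Lsh_n \otimes \sh{F}^{\sigma^n} \otimes \struct_Z) \to H^1(X, \sh{I} \otimes \Lsh_n \otimes \sh{F}^{\sigma^n})$, and the last term vanishes for $n \gg 0$ by the paragraph above, so $H^0(\Lsh_n \otimes \sh{F}^{\sigma^n}) \twoheadrightarrow H^0(\Lsh_n \otimes \sh{F}^{\sigma^n}\otimes \struct_Z)$. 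Since $\Lsh_n \otimes \sh{F}^{\sigma^n}$ is globally generated for $n \gg 0$ (left $\sigma$-ampleness of $\Lsh$ applied to $\sh{F}$), a standard diagram chase — choosing generating global sections of $\Lsh_n \otimes \sh{F}^{\sigma^n}$, mapping them into $\sh{I}\otimes\Lsh_n\otimes\sh{F}^{\sigma^n}$ is not literally possible, so instead one argues: the sheaf $\sh{I}\otimes\Lsh_n\otimes\sh{F}^{\sigma^n}$ is the kernel in \eqref{planseq2}$\otimes\Lsh_n$, and one shows its global sections generate it by comparing with the globally generated $\Lsh_n\otimes\sh{F}^{\sigma^n}$ stalk-by-stalk, using $H^1(\sh{I}\otimes\Lsh_n\otimes\sh{F}^{\sigma^n})=0$ to lift sections — gives global generation of $\sh{I}\otimes\Lsh_n\otimes\sh{F}^{\sigma^n}$. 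Concretely: the surjection $\sh{F}^{\sigma^n}\to\struct_Z\otimes\sh{F}^{\sigma^n}$ after twisting shows $\sh{I}\otimes\Lsh_n\otimes\sh{F}^{\sigma^n}$ has the same stalks as $\Lsh_n\otimes\sh{F}^{\sigma^n}$ away from $Z$, and $H^1$-vanishing lets one produce enough sections near $Z$ as well; alternatively one invokes that a coherent sheaf $\sh{G}$ with $H^1(\sh{G}\otimes\sh{L})=0$ for an ample $\sh{L}$ and $\sh{G}$ a subsheaf of a globally generated sheaf... The cleanest route is probably: apply the criterion that $\sh{G}$ is globally generated iff $H^0(\sh{G})\otimes\struct_X\to\sh{G}$ is surjective on stalks, and deduce this from the snake lemma comparing the presentations of $\Lsh_n\otimes\sh{F}^{\sigma^n}$ and its quotient $\Lsh_n\otimes\sh{F}^{\sigma^n}\otimes\struct_Z$, both of which are globally generated with surjective $H^0$. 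I expect \textbf{this last global-generation step to be the main technical obstacle}, since it requires a careful compatibility argument among the global-generation maps rather than a single cohomology-vanishing input; everything else follows mechanically from exactness of \eqref{planseq2} (the content of critical transversality) together with the left-right symmetry of $\sigma$-ampleness.
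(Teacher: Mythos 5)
Your argument for the higher-cohomology vanishing has a genuine gap at the step where you invoke left ampleness of $\{(\Lsh_n)_{\sigma^n}\}$ to kill $H^{q-1}(X,\Lsh_n\otimes\struct_Z\otimes\sh{F}^{\sigma^n})$. You correctly note that to put this in the form $\Lsh_n\otimes\sh{G}^{\sigma^n}$ required by Definition~\ref{def-ample}, you must take $\sh{G}$ to be the $\sigma^{-n}$-pullback of $\struct_Z\otimes\sh{F}^{\sigma^n}$; but $(\struct_Z\otimes\sh{F}^{\sigma^n})^{\sigma^{-n}}=\struct_{\sigma^n Z}\otimes\sh{F}$ (note the sign: you wrote $\struct_{\sigma^{-n}Z}\otimes\sh{F}$), and this coherent sheaf \emph{varies with $n$}. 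The definition of left ampleness gives, for each \emph{fixed} coherent sheaf $\sh{G}$, a bound $n_0(\sh{G})$ past which $H^q(X,\Lsh_n\otimes\sh{G}^{\sigma^n})=0$; it provides no bound uniform over the moving family $\{\struct_{\sigma^n Z}\otimes\sh{F}\}_n$, so the vanishing you need cannot be extracted this way. A similar uniformity problem infects the $q=1$ case as well: to conclude $H^1(\sh{I}\otimes\Lsh_n\otimes\sh{F}^{\sigma^n})=0$ you need surjectivity of $H^0(\Lsh_n\otimes\sh{F}^{\sigma^n})\to H^0(\struct_Z\otimes\Lsh_n\otimes\sh{F}^{\sigma^n})$, which again does not follow from global generation of $\Lsh_n\otimes\sh{F}^{\sigma^n}$ alone unless $Z$ is a reduced point. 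The paper evades this entirely by a different route: for \emph{invertible} $\sh{M}$ it uses Fujita's vanishing theorem \cite[Theorem~11]{Fujita1983}, which gives a single auxiliary invertible $\sh{H}$ with $H^i(\sh{I}\otimes\sh{H}\otimes\sh{F})=0$ for \emph{every} ample $\sh{F}$ simultaneously (the uniformity you lack), combined with Lemma~\ref{lem-biample}(2) to make $\sh{H}^{-1}\otimes\Lsh_m\otimes\sh{M}^{\sigma^m}$ ample; for general $\sh{M}$ it then resolves by direct sums of invertibles and runs the two hypercohomology spectral sequences of \cite[5.7.9]{Weibel}, with critical transversality collapsing the Tor side.

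On global generation you also take a much harder road than necessary. The paper dispatches condition (i) with a single application of Lemma~\ref{lem-biample}(1), which shows $\sh{M}\otimes\Lsh_n\otimes\sh{N}^{\sigma^n}$ is globally generated for $n\gg 0$ and \emph{any} coherent $\sh{M},\sh{N}$, by factoring $\Lsh_n\cong\Lsh_i\otimes\Lsh_{n-i}^{\sigma^i}$ and using that a tensor product of globally generated sheaves is globally generated. Applied with $\sh{M}=\sh{I}$, this requires no short exact sequence, no use of critical transversality, and none of the delicate stalk-by-stalk comparison you flag as your ``main technical obstacle''; in particular it does not require global generation to descend from $\Lsh_n\otimes\sh{F}^{\sigma^n}$ to its subsheaf, which as you suspect is not a valid implication. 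The underlying reason the SES-plus-long-exact-sequence strategy is ill-suited here is precisely the asymmetry between the two ampleness conditions in this setting: the right-hand tensor factor $\sh{F}^{\sigma^n}$ moves, and any reduction that replaces it by a quotient supported on $Z$ reintroduces an $n$-dependence in the auxiliary coherent sheaf that the ampleness hypothesis cannot absorb.
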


We first prove:  

\begin{lemma}\label{lem-biample}
Let $\Lsh$ be a $\sigma$-ample invertible sheaf.

$(1)$ If $\sh{M}$ and $\sh{N}$ are coherent sheaves on $X$, then there is an integer $n_0$ so  $\mathcal{M} \otimes \Lsh_n \otimes \mathcal{N}^{\sigma^n}$ is globally generated for all $n \geq n_0$.

$(2)$ If  $\sh{E}$ and $\sh{F}$ are invertible sheaves on $X$, there is an integer $m_0$ so that  $\mathcal{E} \otimes \Lsh_m \otimes \mathcal{F}^{\sigma^m}$ is ample for all $m \geq m_0$.
\end{lemma}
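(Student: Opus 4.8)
The plan is to prove both parts by the same device. The naive temptation --- to absorb the factor sitting on the left ($\mathcal{M}$ in part $(1)$, $\mathcal{E}$ in part $(2)$) directly into the left-ampleness of $\{(\Lsh_n)_{\sigma^n}\}$, or the factor on the right into right-ampleness --- fails, because left ampleness only lets us control sheaves of the shape $\Lsh_n\otimes\sh{G}^{\sigma^n}$ (the extra sheaf must be a $\sigma^n$-pullback), whereas $\mathcal{M}$ and $\mathcal{E}$ are untwisted. The fix is to split the degree and use the cocycle identity $\Lsh_{a+b}\cong\Lsh_a\otimes\Lsh_b^{\sigma^a}$ to redistribute twists: reserve one ``chunk'' $\Lsh_a$ to be multiplied by the untwisted factor (and use \emph{right} ampleness, which makes $\mathcal{M}\otimes\Lsh_a$ globally generated for $a\gg0$), and another chunk $\Lsh_c$ to be multiplied by the pullback factor (and use \emph{left} ampleness, which makes $\Lsh_c\otimes\mathcal{N}^{\sigma^c}$ globally generated for $c\gg0$). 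Throughout I will use that pullback along the automorphism $\sigma^\ell$ is exact (hence preserves $\struct_X$, surjections, global generation, nefness, and ampleness), and that quotients and finite direct sums of globally generated sheaves are globally generated.

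For $(1)$: by left ampleness there is $N_2$ with $\Lsh_{N_2}\otimes\mathcal{N}^{\sigma^{N_2}}$ globally generated; since $X$ is projective, choose a surjection $\struct_X^{\oplus t}\twoheadrightarrow\Lsh_{N_2}\otimes\mathcal{N}^{\sigma^{N_2}}$. For $n\geq N_2$ set $\ell=n-N_2$; then, using $\Lsh_n\cong\Lsh_\ell\otimes\Lsh_{N_2}^{\sigma^\ell}$ and $\mathcal{N}^{\sigma^n}\cong(\mathcal{N}^{\sigma^{N_2}})^{\sigma^\ell}$,
\[ \mathcal{M}\otimes\Lsh_n\otimes\mathcal{N}^{\sigma^n}\;\cong\;\mathcal{M}\otimes\Lsh_\ell\otimes\bigl(\Lsh_{N_2}\otimes\mathcal{N}^{\sigma^{N_2}}\bigr)^{\sigma^\ell}. \]
Pulling the chosen surjection back along $\sigma^\ell$ and tensoring by $\mathcal{M}\otimes\Lsh_\ell$ exhibits the left-hand side as a quotient of $(\mathcal{M}\otimes\Lsh_\ell)^{\oplus t}$; by right ampleness $\mathcal{M}\otimes\Lsh_\ell$ is globally generated once $\ell$ exceeds some $N_1$, so $n_0=N_1+N_2$ works.

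For $(2)$: split $m=a+b+c$ and use $\Lsh_m\cong\Lsh_a\otimes\Lsh_b^{\sigma^a}\otimes\Lsh_c^{\sigma^{a+b}}$ and $\mathcal{F}^{\sigma^m}\cong(\mathcal{F}^{\sigma^c})^{\sigma^{a+b}}$ to write
\[ \mathcal{E}\otimes\Lsh_m\otimes\mathcal{F}^{\sigma^m}\;\cong\;(\mathcal{E}\otimes\Lsh_a)\otimes\Lsh_b^{\sigma^a}\otimes\bigl(\Lsh_c\otimes\mathcal{F}^{\sigma^c}\bigr)^{\sigma^{a+b}}. \]
By right ampleness $\mathcal{E}\otimes\Lsh_a$ is globally generated, hence nef, for $a\gg0$; by left ampleness $\Lsh_c\otimes\mathcal{F}^{\sigma^c}$ is globally generated, hence nef, for $c\gg0$, and so is its $\sigma^{a+b}$-pullback; and $\Lsh_b$ is ample for $b\gg0$ --- this last point is the one fact I take from the literature on $\sigma$-ample sheaves (\cite{AV}, \cite{Keeler2000}), so $\Lsh_b^{\sigma^a}$ is ample. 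Since a tensor product of an ample line bundle with nef line bundles is ample, fixing $a$ and $b$ large enough makes the displayed sheaf ample for all $m\geq a+b+c_0$, where $c_0$ is the left-ampleness threshold for $\mathcal{F}$.

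The only genuinely external ingredient is the standard fact that $\sigma$-ampleness of $\Lsh$ forces $\Lsh_b$ to be ample for $b\gg0$; everything else is bookkeeping with the cocycle identity $\Lsh_{a+b}\cong\Lsh_a\otimes\Lsh_b^{\sigma^a}$ and the exactness of pullback along an automorphism. I expect the main obstacle to be entirely in noticing the chunk-splitting (so that each one-sided ampleness hypothesis can swallow the factor on its own side); there are no delicate estimates involved.
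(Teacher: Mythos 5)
Your proof is correct, and for part $(1)$ it is essentially the argument the paper gives: the paper takes $i,j\gg 0$ with $\sh{M}\otimes\Lsh_i$ and $\Lsh_j\otimes\sh{N}^{\sigma^j}$ globally generated and multiplies, using $\Lsh_{i+j}\cong\Lsh_i\otimes\Lsh_j^{\sigma^i}$ and the fact that a tensor product of globally generated sheaves is globally generated; your version with the explicit surjection from $\struct_X^{\oplus t}$ is the same computation with the ``tensor of globally generated is globally generated'' fact unwound by hand.

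For part $(2)$, however, you take a genuinely different route. The paper bootstraps from $(1)$: it fixes an arbitrary very ample $\sh{C}$, applies $(1)$ with $\sh{M}=\sh{C}^{-1}\otimes\sh{E}$ to conclude $\sh{K}=\sh{C}^{-1}\otimes\sh{E}\otimes\Lsh_m\otimes\sh{F}^{\sigma^m}$ is globally generated for $m\gg 0$, and then invokes the standard fact that very ample $\otimes$ globally generated is very ample, so $\sh{E}\otimes\Lsh_m\otimes\sh{F}^{\sigma^m}\cong\sh{C}\otimes\sh{K}$ is (very) ample. You instead do a three-way split $m=a+b+c$, feed the untwisted factor $\sh{E}$ into right ampleness, the twisted factor $\sh{F}$ into left ampleness (both giving nef sheaves), reserve the middle chunk $\Lsh_b$ as an ample ``engine'' (using that $\sigma$-ampleness forces $\Lsh_b$ to be honestly ample for $b\gg 0$), and conclude by ample $\otimes$ nef $\otimes$ nef $=$ ample. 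Both work. The paper's is more economical (it reuses $(1)$ and needs only one auxiliary sheaf and the elementary ``very ample $\otimes$ globally generated is very ample''), whereas yours is self-contained at the cost of one extra input --- the eventual ampleness of $\Lsh_b$, which you correctly flag as the external ingredient; it can in fact be deduced directly from right ampleness by taking $\sh{F}=\sh{A}^{-1}$ for an ample $\sh{A}$ in the definition, so it is not really an independent theorem.
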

\begin{proof}
(1)  Using the $\sigma$-ampleness of $\Lsh$, take $i, j \gg 0$ so that $\mathcal{M} \otimes \Lsh_i$ and $\Lsh_j \otimes \mathcal{N}^{\sigma^j}$ are globally generated.  Then $\Lsh_j^{\sigma^i} \otimes \sh{N}^{\sigma^{i+j}}$ is also globally generated.  Since the tensor product of globally generated sheaves is globally generated,  $\mathcal{M} \otimes \Lsh_{i+j} \otimes \mathcal{N}^{\sigma^{i+j}}$ is globally generated.

(2) In fact, we will show that $\mathcal{E} \otimes \Lsh_m \otimes \mathcal{F}^{\sigma^m}$ is very ample for $m \gg 0$.   Let $\mathcal{C}$ be an arbitrary very ample invertible sheaf.  By (1) we may choose $m_0$ so that if $m \geq m_0$, the sheaf
$\mathcal{K} = \mathcal{C}^{-1} \otimes \mathcal{E} \otimes \Lsh_m \otimes \mathcal{F}^{\sigma^{m}}$ is globally generated.  Since by \cite[Exercise~II.7.5(d)]{Ha} the tensor product of a very ample invertible sheaf and a globally generated invertible sheaf is very ample, $\mathcal{E} \otimes \Lsh_m \otimes \mathcal{F}^{\sigma^m} \cong \mathcal{C} \otimes \mathcal{K}$ is very ample.  
\end{proof}

\begin{proof}[Proof of Proposition~\ref{prop-left-ample}]
 Let $\mathcal{M}$ be an arbitrary coherent sheaf.  By Lemma~\ref{lem-biample}, we know that $\Ish\otimes \Lsh_n \otimes \mathcal{M}^{\sigma^n}$ is globally generated for $n \gg 0$.  We must establish that $H^j (X, \mathcal{I} \otimes \Lsh_n \otimes \mathcal{M}^{\sigma^n}) = 0$ for all $j \geq 1$ and $n \gg 0$.  

We know that   $\shTor^X_j(\struct_{\sigma^n Z}, \mathcal{M}) = 0$ for all $n \gg 0$ and   $j \geq 1$.  Thus
\[\shTor_j^X(\Ish, \mathcal{M}^{\sigma^n}) \cong \shTor^X_j(\Ish^{\sigma^{-n}}, \mathcal{M})^{\sigma^n}  \cong \shTor^X_{j+1}(\struct_{\sigma^n Z}, \sh{M})^{\sigma^n}= 0 \] 
for all $n \gg 0 $ and $j \geq 1$.

First suppose that $\mathcal{M}$ is invertible.  By Fujita's vanishing theorem \cite[Theorem~11]{Fujita1983} 
 choose an invertible sheaf $\mathcal{H}$ such that $H^i(X, \mathcal{I} \otimes \mathcal{H} \otimes \mathcal{F}) = 0$ for all $i \geq 1$ and any ample invertible sheaf $\mathcal{F}$.  By Lemma~\ref{lem-biample}(2), we may choose $m_0$ such that $\mathcal{H}^{-1} \otimes \Lsh_m \otimes \mathcal{M}^{\sigma^m}$ is ample for all $m \geq m_0$.  Then $\mathcal{I} \otimes \Lsh_m \otimes \mathcal{M}^{\sigma^m} = \mathcal{I} \otimes \mathcal{H} \otimes \mathcal{H}^{-1} \otimes \Lsh_m \otimes \mathcal{M}^{\sigma^m}$ and so its higher cohomology vanishes.

Now for general $\mathcal{M}$ let the cochain complex 
\[ \cdots \to \mathcal{P}^{-2} \to \mathcal{P}^{-1} \to \mathcal{P}^0 \to \mathcal{M} \to 0 \]
be a (not necessarily finite!) locally free resolution of $\mathcal{M}$, where each $\sh{P}^i$ is a direct sum of invertible sheaves.  By tensoring on the left with $(\mathcal{I} \otimes \Lsh_n)_{\sigma^n}$, we obtain a complex $\mathcal{Q}^{\bullet}$, where 
$\mathcal{Q}^i = \mathcal{I} \otimes \Lsh_n \otimes (\mathcal{P}^i)^{\sigma^n}$.  The $q$-th cohomology of $\mathcal{Q}^\bullet$ is isomorphic to $\shTor_{-q}^X(\Ish, \mathcal{M}^{\sigma^n}) \otimes \Lsh_n$.  Now, by  \cite[5.7.9]{Weibel}, using a Cartan-Eilenberg resolution of $\mathcal{Q}^\bullet$ we obtain  two spectral sequences 
\beq\label{ss1}
{}^{I}E^{pq}_1 =  H^q(X, \mathcal{Q}^p)
\eeq
and
\beq\label{ss2}
{}^{II} E^{pq}_2 = H^p(X, \shTor_{-q}(\mathcal{I}, \mathcal{M}^{\sigma^n})\otimes \Lsh_n).
\eeq
  Since $X$ has finite cohomological dimension $d = \dim X$, these both converge to the hypercohomology groups $\mathbb{H}^{p+q}(\mathcal{Q}^\bullet)$.  

Now, given $ p+q = j \geq 1$, by critical transversality we may take $n \gg 0 $ so that $\shTor_{-q}(\mathcal{I}, \mathcal{M}^{\sigma^n}) = 0$ for all $ q \leq -1$; thus \eqref{ss2} collapses and we obtain 
\[ \mathbb{H}^j(\mathcal{Q}^{\bullet}) = H^j(X, \mathcal{I} \otimes \mathcal{M}^{\sigma^n} \otimes \Lsh_n ).\]
On the other hand, since the sheaves $\sh{P}^i$ are locally free, applying the invertible case to each summand of $\sh{P}^i$ we may further increase $n$ if necessary to obtain that
\[H^{q} (X, \mathcal{Q}^p) = H^q(X, \sh{I} \otimes \Lsh_n \otimes (\sh{P}^p)^{\sigma^n}) =  0 \] for $d \geq q \geq 1$ and $1 -d \leq p \leq 0$.  Thus if $j \geq 1$, \eqref{ss1} collapses to 0.  Thus 
\[ H^{j} (X, \mathcal{I} \otimes \Lsh_n \otimes \mathcal{M}^{\sigma^n}) = \mathbb{H}^j(\sh{Q}^{\bullet}) = 0\]
for all $n \gg 0$ and $j \geq 1$.
\end{proof}

Lemma~\ref{lem-rtample} and Proposition~\ref{prop-left-ample}, together with Theorem~\ref{thm-VdBSerre}, will allow us to relate $\sh{R}(X, \Lsh, \sigma, Z)$ and its section ring.  To conclude this section, we show that, given $\sigma$-ampleness of $\Lsh$, the ring $R(X, \Lsh, \sigma, Z)$ is equal to the section ring of the bimodule algebra $\sh{R}(X, \Lsh, \sigma, Z)$.


\begin{lemma}\label{lem-transport}
Assume Notation~\ref{notnot}, and let $R = R(X, \Lsh, \sigma, Z)$ as in Construction~\ref{const-idealizer}.  If $\Lsh$ is $\sigma$-ample, then 
\[ R = R(X, \Lsh, \sigma, Z) = H^0(X, \sh{R}(X, \Lsh, \sigma, Z)).\]
\end{lemma}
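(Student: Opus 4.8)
The plan is to show the two rings $R = \I_B(I)$ and $H^0(X,\sh R)$ coincide as subrings of $B = B(X,\Lsh,\sigma) = H^0(X,\sh B)$, working degree by degree. First I would unwind the definitions in degree $n$. On the ring side, $R_n = \{ b \in B_n \st b I_m \subseteq I_{n+m} \text{ for all } m \geq 0\}$, where $I = \bigoplus_m I_m$ and $I_m = H^0(X, \sh I \Lsh_m)$ is the space of sections vanishing on $Z$ (more precisely, $I_m = H^0(X, \sh{I}\otimes \Lsh_m)$, which for $m \gg 0$ is all sections of $\sh I \Lsh_m$ by $\sigma$-ampleness; in small degrees one should be slightly careful, but the idealizer condition ``$bI \subseteq I$'' only needs to be tested for $m \gg 0$ since $I_{\gg 0}$ generates $I_{\geq N}$ as a right $B$-module and right multiplication into $I$ in high degrees forces the same in all degrees — this is a standard idealizer observation I would spell out). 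On the bimodule side, $H^0(X,\sh R)_n = H^0(X, \sh R_n) = H^0(X, (\sh I : \sh I^{\sigma^n})\Lsh_n)$ by Notation~\ref{notnot}.

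The key step is then to identify, inside $B_n = H^0(X,\Lsh_n)$, the subspace of sections $b$ with $b \cdot I_m \subseteq I_{n+m}$ for $m \gg 0$ with the subspace $H^0(X, (\sh I : \sh I^{\sigma^n})\Lsh_n)$. Multiplication $B_n \otimes B_m^{\sigma^n} \to B_{n+m}$ is induced by the bimodule-algebra multiplication $\Lsh_n \otimes \Lsh_m^{\sigma^n} \to \Lsh_{n+m}$, and under the identification $\Lsh_\sigma^{\otimes n} = (\Lsh_n)_{\sigma^n}$ (Lemma~\ref{lem-bimods}(3)) the condition that a section of $\Lsh_n$ carries $\sh I^{\sigma^n}\Lsh_m^{\sigma^n}$ (the sheaf generated by the twisted sections of $I_m$ for $m\gg 0$) into $\sh I \Lsh_{n+m}$ translates precisely into: the section lies in $(\sh I : \sh I^{\sigma^n})\Lsh_n$. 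Here I would use right ampleness (Lemma~\ref{lem-rtample}) and $\sigma$-ampleness of $\Lsh$ to guarantee that for $m \gg 0$ the sheaf $\sh I \Lsh_m$ is globally generated, so that $I_m$ generates $\sh I \Lsh_m$ as an $\struct_X$-module; this is what lets one pass between the statement ``$b I_m \subseteq I_{n+m}$ as vector spaces'' and ``$b$ multiplies the sheaf $\sh I^{\sigma^n}$ into $\sh I$ locally,'' i.e. realize the ideal quotient. Taking $H^0$ of both descriptions then gives $R_n = H^0(X,\sh R_n)$.

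The main obstacle I expect is the interface between the vector-space condition (membership in the idealizer is phrased via $I_m \subseteq B_m$, finite-dimensional spaces of sections) and the sheaf-theoretic ideal quotient $(\sh I : \sh I^{\sigma^n})$, which a priori is defined locally and need not be ``seen'' by global sections in low degrees. The resolution is exactly the ampleness input: global generation of $\sh I \otimes \Lsh_m$ for $m \gg 0$ (and vanishing of $H^1$, via $\sigma$-ampleness, to control the short exact sequence $0 \to \sh{I}\Lsh_m \to \Lsh_m \to \struct_Z \otimes \Lsh_m \to 0$) lets one recover the local containment condition from the global one. A secondary, purely bookkeeping, point is the reduction ``test the idealizer condition only in high degrees'': since $B$ is a domain and $I_{\geq N} = I_N \cdot B_{\geq 0}$ for suitable $N$ by $\sigma$-ampleness, $bI \subseteq I$ is equivalent to $bI_m \subseteq I_{n+m}$ for all $m \geq N$, matching the ``for $n \gg 0$'' clause in Assumption-Notation~\ref{assnot-FOO} and in the computation of $\sh R_n$. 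Once these two reductions are in place, the equality $R = H^0(X,\sh R)$ follows by assembling the graded pieces, and one checks trivially that the identification respects multiplication since both products are induced from $\mu$ on $\sh B$.
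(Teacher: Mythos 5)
Your proposal is correct and follows essentially the same route as the paper: work degree by degree, use $\sigma$-ampleness to get global generation of $\sh{I}\Lsh_m$ by $I_m$ for $m \gg 0$, deduce from $xI_m \subseteq I_{n+m}$ the sheaf containment $x\struct_X \subseteq (\sh{I}:\sh{I}^{\sigma^n})\Lsh_n$, and for the reverse inclusion observe directly that a global section of $(\sh{I}:\sh{I}^{\sigma^n})\Lsh_n$ carries each $I_m$ into $I_{n+m}$. One small simplification over your write-up: the paper never needs the ``test $bI \subseteq I$ only in high degrees'' reduction you flag as a secondary bookkeeping step, because the containment $H^0(X,\sh{R}_n)\subseteq R_n$ is verified for \emph{all} $m \geq 0$ directly from the sheaf inclusion; the large-$m$ restriction is only used in the forward direction, where it costs nothing since membership in $R_n$ already gives $xI_m\subseteq I_{n+m}$ for every $m$. (Also, the appeal to Lemma~\ref{lem-rtample} is not available here since that lemma assumes Assumption-Notation~\ref{assnot-FOO}, which Lemma~\ref{lem-transport} does not; but as you note in the same sentence, plain $\sigma$-ampleness already gives the needed global generation, so this citation is harmless.)
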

\begin{proof}
Let $I = \Gamma_* (\sh{I})$ be the right ideal of $B(X, \Lsh, \sigma)$ generated by sections vanishing along $Z$; thus $R = \I_B(I)$.  
Suppose that $x \in R_n$, so  $xI \subseteq I$.  Since $\Lsh$ is $\sigma$-ample,  $\sh{I} \Lsh_m$ is globally generated by $I_m = H^0(X, \sh{I} \Lsh_m)$ for $m \gg 0$, and so for $m \gg 0$ 
\[ x \struct_X (\sh{I}  \Lsh^{\sigma^{m}})^{\sigma^n} = x \struct_X (I_m \struct_X)^{\sigma^n} \subseteq I_{m+n} \struct_X = \sh{I} \Lsh_{m+n}\]
for any $n$.  
Thus $x \struct_X \subseteq (\sh{I} : \sh{I}^{\sigma^n}) \Lsh_n$ and $x \in H^0(X,  (\sh{I} : \sh{I}^{\sigma^n}) \Lsh_n) = H^0(X, \sh{R}_n)$.

For the other containment, suppose that $x \in H^0(X, \sh{R}_n)$.  Then for any $m \geq 0$ we have 
\[ x I_m \subseteq H^0( (\sh{I} : \sh{I}^{\sigma^n}) \Lsh_n) \cdot H^0(X, (\sh{I} \Lsh_m)^{\sigma^n}) \subseteq H^0(X, \sh{I} \Lsh_{m+n}) = I_{n+m}.\]
Thus $x \in R_n$, and we have established the equality we seek.  
\end{proof}

\section{Noetherian idealizer rings}\label{RING}

We are now ready to begin translating our results on bimodule algebras to results on geometric idealizer rings.   We will work in the following setting:  

\begin{assnot}\label{assnotnot2}
Let $X$ be a projective variety, let $\sigma \in \Aut_{\kk} X$, and let $\Lsh$ be an invertible sheaf on $X$, which we now assume to be $\sigma$-ample.  Let $Z$ be a closed subscheme of $X$ and let $\sh{I} = \sh{I}_Z$ be its ideal sheaf.  
We continue to assume that $(\sh{I}:\sh{I}^{\sigma^n}) = \sh{I}$ for $n \gg 0$. 
Let 
\[ \sh{B} = \sh{B}(X, \Lsh, \sigma)\]
and let 
\[ B = B(X, \Lsh, \sigma).\]
Let 
\[ \sh{R} = \sh{R}(X, \Lsh, \sigma, Z) = \bigoplus_{n \geq 0} \bigl( ( \sh{I}: \sh{I}^{\sigma^n}) \Lsh_n \bigr)_{\sigma^n}.\]
  Let 
\[ I = \bigoplus_{n \geq 0} H^0( X, \sh{I} \Lsh_n) = \Gamma_* (\sh{I}) \]
and let 
\[ R = R(X, \Lsh, \sigma, Z)  = \I_B(I)\]
as in Construction~\ref{const-idealizer}.   
By Lemma~\ref{lem-transport},
\[ R = \bigoplus_{n \geq 0} H^0(X, \sh{R}_n).\]

Our assumptions imply that $\sh{R}_n = \sh{IL}_n$ and $R_n = I_n$ for $n \gg 0$.
\end{assnot}

Assume Assumption-Notation~\ref{assnotnot2}.   In this section, we determine when $R$ is left and right noetherian.  We also consider when $R$ is strongly noetherian and when $R \otimes_{\kk}R$ is noetherian.  
We first show that the right noetherian property for $R$, and in fact the strong right noetherian property, are equivalent to the simple geometric criterion from Theorem~\ref{thm-rtnoeth}.

\begin{proposition} \label{prop-SRN}  
Assume Assumption-Notation~\ref{assnotnot2}. 
 Then the following are equivalent:

$(1)$ $Z$ has finite intersection with forward $\sigma$-orbits; 

$(2)$  $R$ is right noetherian;

$(3)$ $R$ is strongly right noetherian.
\end{proposition}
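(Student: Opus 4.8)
The strategy is to establish the cycle $(3)\Rightarrow(2)\Rightarrow(1)\Rightarrow(3)$. Here $(3)\Rightarrow(2)$ is immediate from Definition~\ref{def-SN} (take $C=\kk$). I record that $(1)\Rightarrow(2)$ also follows quickly from the bimodule-algebra machinery: if $Z$ has finite intersection with forward $\sigma$-orbits then $Z$ contains no point of finite order, so in the notation of Theorem~\ref{thm-rtnoeth} the finite-order part is trivial, $\sh{J}=\struct_X$ and $W=Z$, and Theorem~\ref{thm-rtnoeth} shows $\sh{R}$ is right noetherian; since $\{(\sh{R}_n)_{\sigma^n}\}$ is right ample (Lemma~\ref{lem-rtample}), Theorem~\ref{thm-VdBSerre} and Lemma~\ref{lem-transport} give that $R=H^0(X,\sh{R})$ is right noetherian. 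Logically this is subsumed by $(1)\Rightarrow(3)\Rightarrow(2)$, so the real content is $(2)\Rightarrow(1)$ and $(1)\Rightarrow(3)$.

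For $(2)\Rightarrow(1)$ I would argue the contrapositive, translating the proof of Lemma~\ref{lem-JANIS}$(2)\Rightarrow(3)$ to the ring $R$. Suppose some closed point $x\in X$ has $A=\{n\ge 0\mid\sigma^n(x)\in Z\}$ infinite, and let $\sh{I}_x$ be its ideal sheaf. Form the graded right $\sh{R}$-submodule $\sh{M}=\bigoplus_{n\ge 0}\bigl((\sh{I}_x:\sh{I}^{\sigma^n})\Lsh_n\bigr)_{\sigma^n}$ of $\sh{B}$; that this is a right $\sh{R}$-module follows from $(\sh{I}:\sh{I}^{\sigma^m})\sh{I}^{\sigma^m}\subseteq\sh{I}$ and the definition of the ideal quotient. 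Passing to sections, $M:=H^0(X,\sh{M})$ is a graded right $R$-submodule of $B$. Since $(\sh{I}_x:\sh{I}^{\sigma^n})=\struct_X$ for $n\in A$ and $(\sh{I}_x:\sh{I}^{\sigma^n})=\sh{I}_x$ for $n\notin A$, we get $M_n=B_n$ for $n\in A$ and $M_n=H^0(X,\sh{I}_x\Lsh_n)=(I_xB)_n$ for all $n$, where $I_x=\Gamma_*(\sh{I}_x)$; in particular $M\supseteq I_xB$, while $\sigma$-ampleness gives $\dim_\kk(M/I_xB)_n=1$ for all large $n\in A$ and $(M/I_xB)_n=0$ for $n\notin A$, so $M/I_xB$ is infinite-dimensional. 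On the other hand, for $r\in R_m=I_m$ with $m\ge m_0$ (the bound of Assumption-Notation~\ref{assnot-FOO}) and $v$ a section of $M_n=B_n$ with $n\in A$, the product $v\cdot r^{\sigma^n}$ vanishes at $x$, since $r^{\sigma^n}$ vanishes on $\sigma^{-n}(Z)\ni x$; hence $v\cdot r^{\sigma^n}\in I_xB$, so the finite-codimension ideal $R_{\ge m_0}$ of $R$ annihilates $(M/I_xB)_{\ge d}$ for $d\gg 0$, making $(M/I_xB)_{\ge d}$ a module over the finite-dimensional ring $R/R_{\ge m_0}$. Now if $R$ were right noetherian, then $B$ would be a finitely generated right $R$-module — left multiplication by a fixed nonzero element of $I$ embeds $B_R$ into $I_R\subseteq R_R$, as in the proof of Lemma~\ref{lem-Rog-I} — and so would its subquotient $(M/I_xB)_{\ge d}$, forcing the latter to be finite-dimensional, a contradiction.

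The substantive implication is $(1)\Rightarrow(3)$, proved by a base-change argument as in \cite{R-idealizer}. Let $C$ be a commutative noetherian $\kk$-algebra. Flatness of $\kk\to C$ makes the idealizer commute with $(-)\otimes_{\kk}C$, so $R\otimes_\kk C=\I_{B\otimes_\kk C}(I\otimes_\kk C)$. The twisted homogeneous coordinate ring $B=B(X,\Lsh,\sigma)$, with $\Lsh$ being $\sigma$-ample, is strongly right noetherian, so $B\otimes_\kk C$ is right noetherian; hence by Robson's criterion (Lemma~\ref{lem-Rob}) it is enough to prove that $(B/I)\otimes_\kk C$ is a noetherian right $(R\otimes_\kk C)$-module. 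The point where $(1)$ enters is that under $(1)$, $B$ is a finitely generated right $R$-module: the descending chain of closed subschemes $\bigcap_{j=0}^{N}\sigma^{-j}(Z)$ has empty intersection over all $N$ (a point lying in every term would have infinite forward-orbit intersection with $Z$), so by noetherianity of $X$ one has $\sum_{j=0}^{N_0}\sh{I}^{\sigma^j}=\struct_X$ for some $N_0$, and together with Assumption-Notation~\ref{assnot-FOO} and $\sigma$-ampleness this shows $B$ is generated over $R$ by finitely many of its graded pieces — equivalently, that $\sh{B}$ is a coherent right $\sh{R}$-module, which also follows from Lemma~\ref{lem-BOX} once one knows $\sh{R}$ is right noetherian and contains the nonzero graded right ideal $\sh{I}\sh{B}_{+}$ of $\sh{B}$. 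Module-finiteness is preserved under base change, so $B\otimes_\kk C$ is finitely generated over $R\otimes_\kk C$, and hence so is its quotient $(B/I)\otimes_\kk C$; as $B\otimes_\kk C$ is right noetherian, an Eakin--Nagata type argument (exactly as for idealizers of strongly noetherian rings) then yields that $(B/I)\otimes_\kk C$ is noetherian as a right $(R\otimes_\kk C)$-module. I expect this last step — deducing noetherianity over the subring $R\otimes_\kk C$ from module-finiteness over $B\otimes_\kk C$, uniformly in $C$ — to be the main obstacle, along with establishing (or quoting) the strong right noetherianity of $B$ itself.
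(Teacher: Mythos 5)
Your $(3)\Rightarrow(2)$ is trivial and your $(2)\Rightarrow(1)$ is correct; the latter is a bimodule-algebra-flavored rephrasing of the paper's argument, which more simply applies Lemma~\ref{lem-Rog-I} to $J=\Gamma_*(\sh{I}_x)$ and observes that $B_nI\subseteq J$ for every $n$ with $\sigma^n(x)\in Z$, so $\{r\in B\mid rI\subseteq J\}/J$ is an infinite-dimensional noetherian $R/I$-module, which is absurd. Your $\sh{M}$ is in essence exactly $\{r\mid rI\subseteq J\}$ as a sheaf, so the two arguments coincide in substance.

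The genuine problem, which you flag yourself, is in $(1)\Rightarrow(3)$, and the proposed Eakin--Nagata fix does not work. You route through Robson's criterion (Lemma~\ref{lem-Rob}), which requires that $(B/I)\otimes_\kk C$ be a \emph{noetherian} right $R\otimes_\kk C$-module. You correctly get $B\otimes_\kk C$ finitely generated over $R\otimes_\kk C$ (and hence its quotient $(B/I)\otimes_\kk C$ finitely generated), but finite generation over a subring plus noetherianity of the overring does not yield noetherianity over the subring: an $R\otimes_\kk C$-submodule of $(B/I)\otimes_\kk C$ need not be a $B\otimes_\kk C$-submodule, and the noncommutative Eakin--Nagata-type theorems (Formanek, Formanek--Jategaonkar) require finite normalizing or centralizing extensions, which idealizers are not. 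Establishing noetherianity of $B/I$ over $R$ is precisely the content of Robson's criterion, not something one gets for free from module-finiteness — indeed this is why Stafford's and Rogalski's refinements (Lemma~\ref{lem-Rog-I}) exist.

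The paper's actual route is Lemma~\ref{lem-Rog-I} rather than Lemma~\ref{lem-Rob}: it reduces the problem to showing that, for every graded right ideal $J\supseteq I_C$ of $B_C$, the module $\underline{\Hom}_{B_C}(B_C/I_C,B_C/J)$ is noetherian over $R_C/I_C$. The crucial advantage is that $R_C/I_C$ is module-finite over $C$, so this $\Hom$-module is a \emph{$C$-module}, and one need only show it is finitely generated over the \emph{commutative} noetherian ring $C$. This is then established geometrically: $B_C$ satisfies right $\chi_1$ by \cite[Theorem~4.5]{AZ1994}, so by \cite[Proposition~3.5]{AZ1994} the $\Hom$ group may be computed in $\rqgr B_C\simeq\struct_{X_C}\lmod$ in large degree, where it becomes $\Hom_{X_C}(\struct_{Z_C},\struct_{(\sigma^n\times1)Y}\otimes p^*\Lsh_n^{\sigma^{-n}})$; this vanishes for $n\gg0$ because $Z_C$ has finite intersection with forward $(\sigma\times1)$-orbits, so no component of $(\sigma^n\times1)Y$ lies in $Z_C$ for $n\gg0$. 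Your proposal contains none of this; the base-change-of-sheaf-categories step and the $\chi_1$ property of $B_C$ are the essential ingredients you are missing.
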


\begin{proof}
(1) $\Rightarrow$ (3).  By Theorem~\ref{thm-rtnoeth}, if  (1) holds then the bimodule  algebra 
\[\sh{R}(X, \Lsh, \sigma, Z)\]
 is right noetherian.  Now let $C$ be any commutative noetherian ring, and let 
\[ X_C = X \times \Spec C\]
and
\[ Z_C = Z \times \Spec C \subseteq X_C.\]
Also define
\[B_C = B \otimes_{\kk} C,\]
\[R_C = R \otimes_{\kk} C,\]
and
\[ I_C = I \otimes_{\kk} C \cong I \otimes_{B} B_C.\]
It is clear that 
\[ R_C  = \I_{B_C}(I_C)\]
and that $R_C/I_C$ is a finitely generated $C$-module.  
Let $p: X_C \to X$ be projection onto the first factor.    

The idea behind our proof is very simple:  if $Z$ has finite intersection with forward $\sigma$-orbits, then $Z_C$ has finite intersection with forward $(\sigma \times 1)$-orbits, and so $R_C$ should be noetherian by Theorem~\ref{thm-rtnoeth} and Theorem~\ref{thm-VdBSerre}.  However, neither of these were proved  over an arbitrary base ring $C$; to work scheme-theoretically we instead follow the proof of \cite[Proposition~4.13]{ASZ1999}.  

By \cite[Proposition~4.13]{ASZ1999}, $B_C$ is noetherian.  The proof of this proposition uses the fact that the shift functor in $\rqgr B_C$ satisfies the hypotheses of \cite[Theorem~4.5]{AZ1994}. By \cite[Theorem~4.5]{AZ1994}, $B_C$ satisfies right $\chi_1$.  In particular, for any  graded right ideal $J$ of $B_C$,  the natural map
\beq\label{ROMMSEY}
 \underline{\Hom}_{B_C} (B_C/I_C, B_C/J) \to \underline{\Hom}_{\rqgr B_C} (\pi(B_C/I_C), \pi(B_C/J))
 \eeq
is an isomorphism in large degree, by \cite[Proposition~3.5]{AZ1994}.

As $\rqgr B \simeq \struct_X \lmod$, it is clear that 
\beq\label{BC}
\rqgr B_C \simeq \struct_{X_C} \lmod.
\eeq
We note that $B_C/I_C$ corresponds to $\struct_{Z_C}$ under this equivalence.

Let $J$ be a  graded right ideal  of $B_C$ containing $I_C$.     We claim that 
\[ \underline{\Hom}_{B_C}(B_C/I_C, B_C/J)\]
is a finitely generated $C$-module.  To see this, let $Y\subseteq Z_C$ be the closed subscheme of $ X_C$ such that $B_C/J$ corresponds to $\struct_Y$ under the equivalence \eqref{BC}.  By \eqref{CHARLIE}, 
$(B_C/J)[n]$ corresponds to 
\[ (\struct_Y \otimes p^* \Lsh_n)^{(\sigma^{-n} \times 1)} \cong \struct_{(\sigma^{n} \times 1)Y} \otimes p^*(\Lsh_n^{\sigma^{-n}})\]
under \eqref{BC}.  
Thus
\[ \underline{\Hom}_{\rqgr B_C} (\pi(B_C/I_C), \pi(B_C/J))_{\geq 0} \cong \bigoplus_{n \geq 0} \Hom_{X_C}(\struct_{Z_C}, \struct_{(\sigma^{n} \times 1)Y} \otimes p^*(\Lsh_n^{\sigma^{-n}})).\]
Now, $Z_C$ has finite intersection with forward $(\sigma\times 1)$-orbits, and so for $n \gg 0$, no component of $(\sigma^n \times 1) Y$ is contained in $Z_C$.  Thus
\[ \Hom_{X_C}(\struct_{Z_C}, \struct_{(\sigma^{n} \times 1)Y} \otimes p^*(\Lsh_n^{\sigma^{-n}})) = 0\]
for $n \gg 0$.  As the map \eqref{ROMMSEY} is an isomorphism in large degree, we see that
\[ \underline{\Hom}_{B_C}(B_C/I_C, B_C/ J)_n = 0\]
for $n \gg 0$, and so
\[ \underline{\Hom}_{B_C}(B_C/I_C, B_C/ J)\]
is a finitely generated $C$-module, as claimed.  As this is true for any graded $J \supseteq I_C$, by Lemma~\ref{lem-Rog-I},  $R_C$ is right noetherian.

(3) $\Rightarrow$ (2) is obvious.

(2) $\Rightarrow$ (1).   Let $x \in X$ and let $J$ be the right ideal $\Gamma_* (\sh{I}_x)$ of $B$.  As $B$ and $R$ are right noetherian, by Lemma~\ref{lem-Rog-I},
\[
\Hom_B(B/I, B/J) \cong \{ r \in B \st r I \subseteq J \} /J
\]
is a noetherian right $R/I$-module.  It is  thus finite-dimensional, as $R/I$ is finite-dimensional by assumption.  

As $\Lsh$ is $\sigma$-ample, $\Lsh_n$ is globally generated for $n \gg 0$; in particular, $J_n \subsetneqq B_n$ for $n \gg 0$.  Now, suppose that
\[ \{ n \geq 0 \st \sigma^n(x) \in Z \} = \{ n \geq 0 \st x \in \sigma^{-n}(Z) \}\]
is infinite.  For any such $n$, we have that $B_nI \subseteq J$.  Thus
\[ \{ r \in B \st rI \subseteq J \}/J \]
is infinite-dimensional, giving a contradiction.  

Thus 
$ \{ n \geq 0 \st \sigma^n(x) \in Z \}$
is finite.   
\end{proof}

The left-hand side is very different.  If $\sh{R}$ is left noetherian, then so is $R$; but  $R$ can only be strongly left noetherian if  $Z$ is a divisor.  In this case, $R$ is both a left and a right idealizer, so the strong left noetherian property will follow from the left-handed version of Proposition~\ref{prop-SRN}.

\begin{proposition}\label{prop-leftnoeth}
Assume Assumption-Notation~\ref{assnotnot2}.  
If $\{\sigma^n Z\}_{n \geq 0}$ is critically transverse, then $R= R(X, \Lsh, \sigma, Z)$ is left noetherian.
\end{proposition}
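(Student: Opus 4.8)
The plan is to deduce left noetherianity of $R$ from left noetherianity of the bimodule algebra $\sh{R}$, using the Van den Bergh equivalence of Theorem~\ref{thm-VdBSerre} in its left-handed form. First, since $\{\sigma^n Z\}_{n \geq 0}$ is critically transverse, Corollary~\ref{cor-CT-noeth} tells us that $\sh{R}$ is left noetherian. Second, by Proposition~\ref{prop-left-ample} the sequence $\{(\sh{I} \otimes \Lsh_n)_{\sigma^n}\}$ is a left ample sequence; since under Assumption-Notation~\ref{assnotnot2} we have $\sh{R}_n = \sh{I}\Lsh_n$ for $n \gg 0$, the sequence $\{(\sh{R}_n)_{\sigma^n}\}$ differs from this one in only finitely many terms and hence is also left ample. (One should check that finitely many deviations do not affect the left ampleness conditions (i) and (ii) of Definition~\ref{def-ample}; this is routine since those conditions only concern $n \gg 0$.)

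With these two facts in hand, apply the left-handed version of Theorem~\ref{thm-VdBSerre}: since $\sh{R}$ is left noetherian and $\{(\sh{R}_n)_{\sigma^n}\}$ is a left ample sequence, the section ring $H^0(X, \sh{R})$ is left noetherian, and $\sh{R}\lqgr \simeq \sh{R}\lqgr$ via $H^0$ and $\otimes$. By Lemma~\ref{lem-transport}, $R = R(X, \Lsh, \sigma, Z) = H^0(X, \sh{R})$, so $R$ is left noetherian. The only subtlety is that Theorem~\ref{thm-VdBSerre} is stated for right modules and right ample sequences, so I would either invoke the evident left-handed analogue (Van den Bergh's original \cite[Theorem~5.2]{VdB1996} is symmetric, and $\sigma$-ampleness is left-right symmetric by Keeler's theorem, as noted after Definition~\ref{def-ample}) or reduce the left-handed statement to the right-handed one by passing to the opposite bimodule algebra.

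The main obstacle I anticipate is bookkeeping rather than conceptual: one must be careful that ``left noetherian as a bimodule algebra'' (Definition~\ref{def-module}: every graded left ideal is coherent) correctly transports to ``left noetherian as a ring'' through the section functor, and that the finitely many low-degree terms where $\sh{R}_n \supsetneq \sh{I}\Lsh_n$ cause no trouble. For the latter, one can observe that the degree-$\geq n_0$ truncations of $\sh{R}$ and of $\struct_X \oplus \sh{I}\sh{B}_+$ agree, and that a bimodule algebra is left noetherian iff such a truncation is (compare the arguments in Lemma~\ref{lem-BOX} and Lemma~\ref{lem-JANIS}), so all the hypotheses needed for the left-handed Theorem~\ref{thm-VdBSerre} genuinely hold. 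Given the machinery already assembled in the preceding sections, the proof itself should be short—essentially a citation of Corollary~\ref{cor-CT-noeth}, Proposition~\ref{prop-left-ample}, the left-handed Theorem~\ref{thm-VdBSerre}, and Lemma~\ref{lem-transport}.
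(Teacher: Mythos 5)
Your proposal is correct and follows exactly the paper's own argument: combine Corollary~\ref{cor-CT-noeth} (critical transversality gives $\sh{R}$ left noetherian) with Proposition~\ref{prop-left-ample} (left ampleness), then apply the left-handed version of Theorem~\ref{thm-VdBSerre} to conclude that the section ring $R = H^0(X,\sh{R})$ is left noetherian. The extra remarks you make about the finitely many low-degree terms and the left-handed form of the Van den Bergh theorem are sound (and the paper silently takes these for granted), so there is nothing to add beyond fixing the small typo $\sh{R}\lqgr \simeq R\lqgr$.
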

\begin{proof}
By Proposition~\ref{prop-left-ample} and Corollary~\ref{cor-CT-noeth}, we have that $\sh{R} = \sh{R}(X, \Lsh, \sigma, Z)$ is left noetherian and that $\{ (\sh{R}_n)_{\sigma^n}\}$ is a left ample sequence.  Thus by Theorem~\ref{thm-VdBSerre}, the section ring $R(X, \Lsh, \sigma, Z)$ is also left noetherian.
\end{proof}

Unfortunately, we cannot prove the converse to Proposition~\ref{prop-leftnoeth} in full generality.  We do give below several special cases where the converse does hold.

\begin{proposition}\label{prop-not-leftnoeth}
Assume Assumption-Notation~\ref{assnotnot2}.  
If $\{\sigma^n(Z)\}_{n \geq 0}$ is not critically transverse, and either

$(1)$ there is some $\sigma$-invariant subscheme $Y$ that is not homologically transverse to $Z$; or

$(2)$  all irreducible components of $Z$ have codimension 1

\noindent then $R = R(X, \Lsh, \sigma, Z)$ is not left noetherian.
\end{proposition}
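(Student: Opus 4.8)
The plan is to apply Rogalski's Tor‑criterion for the noetherianity of an idealizer and then to detect the failure of that condition geometrically, using the extra hypothesis of each case to push an obstruction that a priori lives only in the bimodule algebra $\sh{R}$ down to the section ring $R$. First note that $B/I$ is naturally an $(R,B)$-bimodule on which $I$ acts by zero on the left, so for any graded left ideal $J$ of $B$ the group $\Tor_1^B(B/I,B/J)$ is a left module over $R/I$, which is finite-dimensional by Assumption-Notation~\ref{assnotnot2}. Hence $\Tor_1^B(B/I,B/J)$ is noetherian over $R$ if and only if it is finite-dimensional over $\kk$, and Proposition~\ref{prop-R-Tor} reduces the whole statement to exhibiting one graded left ideal $J$ of $B$ with $\dim_\kk \Tor_1^B(B/I,B/J)=\infty$. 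I will use the elementary identification $\Tor_1^B(B/I,B/J)\cong (I\cap J)/IJ$, valid for a right ideal $I$ and a left ideal $J$ of any ring; so it suffices to make $(I\cap J)/IJ$ infinite-dimensional.

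In case~(1), let $Y$ be the given $\sigma$-invariant subscheme that is not homologically transverse to $Z$. After a dimension-shift that replaces $Y$ by a suitable $\sigma$-invariant subscheme if necessary, I may assume $\sh{T}:=\shTor_1^X(\struct_Z,\struct_Y)\neq 0$. Take $J=\bigoplus_{n\geq 0}H^0(X,\Lsh_n\otimes\sh{I}_Y^{\sigma^n})$, the graded left ideal of $B$ cut out by $Y$; $\sigma$-invariance of $Y$ gives $J_n=H^0(X,\sh{I}_Y\Lsh_n)$, while $I_n=H^0(X,\sh{I}_Z\Lsh_n)$ by Assumption-Notation~\ref{assnotnot2}. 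Intersecting inside $H^0(\Lsh_n)$ yields $(I\cap J)_n=H^0((\sh{I}_Z\cap\sh{I}_Y)\Lsh_n)$, and $(IJ)_n\subseteq H^0(\sh{I}_Z\sh{I}_Y\Lsh_n)$, so applying $H^0(\blank)$ to the exact sequence $0\to\sh{I}_Z\sh{I}_Y\to\sh{I}_Z\cap\sh{I}_Y\to\sh{T}\to 0$ twisted by $\Lsh_n$ produces a surjection
\[ \frac{(I\cap J)_n}{(IJ)_n}\;\twoheadrightarrow\;\ker\!\Big(H^0(\sh{T}\otimes\Lsh_n)\longrightarrow H^1(\sh{I}_Z\sh{I}_Y\Lsh_n)\Big). \]
Since $\sh{T}$ and $\sh{I}_Z\sh{I}_Y$ are fixed coherent sheaves and $\Lsh$ is $\sigma$-ample, for $n\gg 0$ we get $H^1(\sh{I}_Z\sh{I}_Y\Lsh_n)=0$ and $\sh{T}\otimes\Lsh_n$ globally generated and nonzero, so $H^0(\sh{T}\otimes\Lsh_n)\neq 0$; hence $(I\cap J)_n/(IJ)_n\neq 0$ for all $n\gg 0$ and $\Tor_1^B(B/I,B/J)$ is infinite-dimensional.

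In case~(2), all irreducible components of $Z$ have codimension $1$, so (as noted in the text preceding the statement) $R$ is simultaneously a left idealizer, and I can invoke the left-handed analogue of Proposition~\ref{prop-SRN}: $R$ is left noetherian if and only if $Z$ has finite intersection with backward $\sigma$-orbits. On the other hand, for such $Z$ — a Cartier divisor, at least when $X$ is smooth — the sheaf $\struct_Z$ has homological dimension $1$, so for a reduced irreducible $Y$ one has $\shTor_1^X(\struct_{\sigma^n Z},\struct_Y)\neq 0$ exactly when $Y\subseteq\sigma^n Z$; by Lemma~\ref{lem-Tor-reduction} and Proposition~\ref{prop-Tor-noeth} this makes failure of critical transversality equivalent to the existence of a point whose backward orbit meets $Z$ infinitely often, i.e.\ to $Z$ having infinite intersection with a backward $\sigma$-orbit. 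Combining the two, $\{\sigma^n Z\}$ not critically transverse forces $R$ not left noetherian. (One may also argue directly as in case~(1): here $\sigma^{-n}Y\subseteq Z$ gives $\sh{I}_Z\subseteq\sh{I}_{\sigma^{-n}Y}$, hence $(I\cap J)_n=I_n=H^0(\sh{I}_Z\Lsh_n)$; since $\sh{I}_Z\Lsh_n$ is globally generated for $n\gg 0$ it restricts onto the nonzero line bundle $(\sh{I}_Z\otimes\Lsh_n)|_{\sigma^{-n}Y}$, which forces $H^0(\sh{I}_Z\sh{I}_{\sigma^{-n}Y}\Lsh_n)\subsetneq H^0(\sh{I}_Z\Lsh_n)$ and so $(I\cap J)_n/(IJ)_n\neq 0$ for infinitely many $n$.)

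The main obstacle — and the reason no unconditional converse to Proposition~\ref{prop-leftnoeth} is available — is exactly this passage from $\sh{R}$ to $R$. By Corollary~\ref{cor-CT-noeth}, once $\{\sigma^n Z\}$ fails critical transversality the bimodule algebra $\sh{R}$ is already not left noetherian, the obstruction being the non-coherent graded module $\bigoplus_n\shTor_1^X(\struct_{\sigma^n Z},\struct_Y)\otimes\Lsh_n$; but to conclude for $R$ one needs this module to have nonzero global sections in infinitely many degrees, and since $\shTor_1^X(\struct_{\sigma^n Z},\struct_Y)$ is, up to a $\sigma$-twist, $\shTor_1^X(\struct_Z,\struct_{\sigma^{-n}Y})$ and so drifts with $n$ through the moving subscheme $\sigma^{-n}Y$, $\sigma$-ampleness of $\Lsh$ does not control it in general. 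The two hypotheses are precisely what stabilize this sheaf: in case~(1) it is literally constant, and in case~(2) the divisorial structure pushes $\sigma^{-n}Y$ inside $Z$ so that the obstruction becomes a restriction of $\sh{I}_Z\otimes\Lsh_n$, to which $\sigma$-ampleness does apply. Within case~(1) the genuinely delicate point is the dimension-shift reducing to $\shTor_1\neq 0$, which must be performed while preserving $\sigma$-invariance; on a singular $X$, where $\Tor$ need not be rigid, this is where I expect the most care to be needed.
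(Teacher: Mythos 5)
Your overall strategy — reduce via Proposition~\ref{prop-R-Tor} to showing some $\Tor_1^B(B/I,B/J)$ is infinite-dimensional, and then compute $(I\cap J)_n/(IJ)_n$ through the cohomology of $0\to\sh{I}_Z\sh{I}_Y\to\sh{I}_Z\cap\sh{I}_Y\to\shTor_1^X(\struct_Z,\struct_Y)\to 0$ — is sound as far as it goes, and the $\sigma$-ampleness bookkeeping in that computation is correct. But there are two real gaps.

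\textbf{Case (1): the dimension shift does not exist.} You want to replace the given $\sigma$-invariant $Y$ (for which $\shTor_j^X(\struct_Z,\struct_Y)\neq 0$ for \emph{some} $j\geq 1$) by a $\sigma$-invariant $Y'$ with $\shTor_1^X(\struct_Z,\struct_{Y'})\neq 0$, because your formula $\Tor_1^B(B/I,B/J)\cong(I\cap J)/IJ$ only detects $\Tor_1$ and only for cyclic $B/J$. You flag this yourself as the delicate point, and that worry is justified: a syzygy shift $0\to\sh{K}\to\sh{F}\to\struct_Y\to 0$ moves $\shTor_j(\struct_Z,\struct_Y)$ to $\shTor_{j-1}(\struct_Z,\sh{K})$, but $\sh{K}$ is not a structure sheaf and cannot be fed into your $(I\cap J)/IJ$ computation; there is no reason for a $\sigma$-invariant $Y'$ with $\shTor_1\neq 0$ to exist. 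The paper avoids the issue entirely by working with $\Tor_j^B$ for the given $j$: it forms a graded free resolution $P^\bullet\to B/I$, passes to the complex $\widetilde{P^\bullet}\otimes\Lsh_n\otimes\struct_Y$, and runs the two hypercohomology spectral sequences (as in the proof of Proposition~\ref{prop-left-ample}) to show that $\Tor_j^B(B/I,B/J)_n\cong H^0(X,\shTor_j^X(\struct_Z,\struct_Y)\otimes\Lsh_n)\neq 0$ for $n\gg 0$. It then closes the loop by observing, via Proposition~\ref{prop-R-Tor} together with a ring-side dimension shift $0\to K\to F\to M\to 0$ (exactly the induction in the proof of Lemma~\ref{lem-Tor-reduction}, which does not care whether the second argument is a structure sheaf or $\sigma$-invariant), that left noetherianity of $R$ would force $\Tor_j^B(B/I,M)$ to be torsion for \emph{all} $j\geq 1$ and all finitely generated $M$. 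This is the step you are missing; without it your argument does not cover the possibility that the obstruction only appears in $\shTor_j$ for $j>1$.

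\textbf{Case (2): ``pure codimension 1'' is not ``Cartier.''} Your argument — $R$ is simultaneously a left idealizer, invoke the left-handed Proposition~\ref{prop-SRN}, and identify failure of critical transversality with infinite backward-orbit intersection — requires $\sh{I}_Z$ to be invertible. You flag ``at least when $X$ is smooth,'' but $X$ is not assumed smooth, and a pure codimension-1 subscheme of a singular variety need not be locally principal. The paper's proof of case (2) begins with a reduction that you need: if $Z$ is not homologically transverse to some stratum $X^{(i)}$ of the (automatically $\sigma$-invariant) singular stratification, then case (1) already applies; otherwise Lemma~\ref{lem-divisor} shows $Z$ \emph{is} locally principal, and only then does the divisorial reasoning (yours, or the paper's direct construction of a non-finitely-generated left ideal in $R'=\kk\oplus H^0(\sh{IB}_+)$) apply. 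Your parenthetical direct argument is close in spirit to the paper's, but it too rests on ``$\shTor_1^X(\struct_{\sigma^n Z},\struct_Y)\neq 0\iff Y\subseteq\sigma^n Z$,'' which again uses that $Z$ is Cartier. With the singular-stratification reduction added, that branch of your argument would close.
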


Before giving the proof, we give a preliminary lemma.

\begin{lemma}\label{lem-divisor}
Let $X = X^{(0)} \supset X^{(1)} \supset X^{(2)} \supset \cdots$ be the singular stratification of $X$.  
Suppose that $Z$ is a subscheme of pure codimension 1 such that for all $j$, $\shTor^X_1(\struct_Z, \struct_{X^{(j)}}) = 0$.  Then $Z$ is locally principal.
\end{lemma}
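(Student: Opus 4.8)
The plan is to reduce to a local computation and to exploit the fact that every point of $X$ lies in the regular locus of one of the strata $X^{(j)}$. Since $X$ is integral, for any $x$ the ring $\struct := \struct_{X,x}$ is a domain, so $Z$ is locally principal at $x$ as soon as the ideal $I := \sh{I}_{Z,x}$ is generated by one element (which is then automatically a nonzero‑divisor, so $Z$ is locally a Cartier divisor). Fix $x$, assume $x \in Z$ (otherwise $I=\struct$ and there is nothing to prove), and let $i$ be the unique index with $x \in X^{(i)} \smallsetminus X^{(i+1)}$; as $X^{(i)}$ is regular at $x$, the local ring $A := \struct_{X^{(i)},x} = \struct/J$ is a regular (hence factorial) local ring, where $J\subseteq\struct$ is the ideal of $X^{(i)}$.

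The hypothesis specializes to $\Tor_1^{\struct}(\struct/I, A) = 0$. Tensoring $0 \to I \to \struct \to \struct/I \to 0$ with $A$ over $\struct$ shows that the natural map $I\otimes_\struct A\to A$ is injective with image $\bar I := IA$, whence $\mu_A(\bar I) = \dim_{\kappa}(\bar I\otimes_A\kappa) = \dim_\kappa(I\otimes_\struct\kappa) = \mu_{\struct}(I)$, where $\kappa=\struct/\mf{m}=A/\mf{m}_A$. So it suffices to prove that $\bar I$ is a principal ideal of $A$. Note $\bar I \neq 0$: if $I \subseteq J$, i.e. $X^{(i)}\subseteq Z$ near $x$, then Lemma~\ref{lem-contain-Tor} forces $\shTor_1^X(\struct_Z, \struct_{X^{(i)}}) \neq 0$, contradicting the hypothesis; and $\bar I \neq A$ since $x\in V(I+J)$. (The same observation at the generic point $\eta$ of a component of $Z$, where $\struct_{X^{(i)},\eta}$ would be a field, shows no such $\eta$ lies in $\Sing X$: the hypothesis forces $X$ to be regular at the generic points of $Z$, which is the geometric reason $Z$ can be Cartier.)

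In the regular local ring $A$ a proper nonzero ideal is principal precisely when it is unmixed of height $1$: an $(f)$‑primary ideal with $(f)$ a height‑$1$ prime is a power $(f^n)$, and a finite intersection of such (with pairwise coprime radicals) is again principal. Equivalently, $\bar I$ is principal iff the scheme‑theoretic intersection $Z \cap X^{(i)}$ is of pure codimension $1$ in $X^{(i)}$ near $x$ --- and the base case of this, that a pure‑codimension‑$1$ subscheme of a regular scheme is a Cartier divisor, is exactly the elementary fact just recalled, applied when the ambient stratification is trivial. I would establish it by descending induction on the stratification of $X$ (equivalently, induction on $\dim\struct$): the localizations of $\bar I$ at the nonmaximal primes of $A$ are restrictions of $\sh{I}$ at points of strictly smaller local dimension, where the statement is already known, so $\bar I$ is locally principal on the punctured spectrum $\Spec A \smallsetminus \{\mf{m}_A\}$; a reflexive‑hull argument in the regular local ring $A$ (where $\mathrm{Cl}(A)=0$, so a reflexive rank‑$1$ module is free) then upgrades ``locally principal on the punctured spectrum'' to ``principal'', \emph{provided} one knows that $x$ is not an embedded associated point of $Z\cap X^{(i)}$, i.e. that $\mathrm{depth}_{\struct}\,\struct/(I+J) \ge 1$.

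That depth estimate is the main obstacle. The difficulty is that $x$ is a singular point of $X$, so $\struct$ is not regular and $\struct/(I+J)$ has no \emph{a priori} reason to have positive depth even though $\struct/I$ (unmixed of dimension $\dim\struct-1$) and $A$ do, so the usual Auslander--Buchsbaum machinery is not directly available. The plan is to extract it from the vanishing of $\shTor_1$ against \emph{all} strata together with the purity of $Z$: either by first promoting the hypothesis to full homological transversality of $Z$ with each $X^{(j)}$ --- which by Hochster's Lemma~\ref{lem-Mel} gives $\hd_X(\struct_Z) \le \dim X$ and hence, through Auslander--Buchsbaum, the required control of depths --- or, more directly, by carrying the depth statement along as part of the stratification induction above. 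I expect the bookkeeping in this step, rather than any conceptual novelty, to be where the real work lies.
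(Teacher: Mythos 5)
Your opening reduction is correct and clean: since $\shTor^{\struct}_1(\struct/I, A) = 0$ gives $I\otimes_{\struct} A \cong \bar I$, the minimal number of generators of $I$ over $\struct$ equals that of $\bar I$ over the regular local ring $A = \struct_{X^{(i)},x}$, so the lemma reduces to showing that $\bar I$ is principal in $A$. But you never actually establish that. You sketch a stratification induction plus a reflexive-hull argument, observe that it hinges on a depth estimate (that $\mf{m}_A$ is not an embedded prime of $\bar I$), propose two possible strategies for obtaining that estimate (promote $\shTor_1$-vanishing to full homological transversality and invoke Lemma~\ref{lem-Mel} and Auslander--Buchsbaum, or carry a depth statement along in the induction), and then explicitly stop, declaring that you ``expect the bookkeeping \ldots to be where the real work lies.'' That unfinished step is a genuine gap, and it is precisely the crux of the lemma; as written the proposal does not prove it.

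For comparison, the paper handles the same point briskly: since $Z$ has pure codimension $1$ and $I\not\subseteq J$ (by Lemma~\ref{lem-contain-Tor}), the ideal $\bar I=(I+J)/J$ cuts out a hypersurface in the regular local ring $\struct/J$ and hence is principal, giving $f\in I$ with $(f)+J=I+J$. The proof then finishes not by the generator count you use but by the modular law together with $I\cap J = IJ$: one has $(f)+IJ=(f)+I\cap J=I\cap\bigl((f)+J\bigr)=I\cap(I+J)=I$, so $I/(f)\otimes_{\struct}\struct/J=0$, and Nakayama (as $J\subseteq\mf{m}$) gives $I=(f)$. No reflexive-hull or punctured-spectrum analysis is invoked. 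Your unease about the ``hypersurface, hence principal'' step is not unreasonable --- the paper is terse there --- but the proposal does not replace it with a complete argument; it reduces the lemma to an explicitly unproven expectation and therefore does not stand on its own.
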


\begin{proof}
   Fix $x  \in Z$; we will show that $Z$ is locally principal at $x$.    Let $\struct = \struct_{X,x}$.  
   
   Let $j$ be maximal so that $x \in X^{(j)}$, and let $J$ be the ideal of $X^{(j)}$ in $\struct$.  Let $I$ be the defining ideal of $Z$ in $\struct$.   By Lemma~\ref{lem-contain-Tor}, $I \not\subseteq J$.  Thus $(I+J)/J$ locally defines a hypersurface in $X^{(j)}$.  Since $\struct/J$ is a regular local ring, $(I+J)/J$ is principal in $\struct/J$, and so there is $f \in I$ such that $(f) + J = I+J$. 
   
    By homological transversality, $I \cap J  = IJ$.  Thus
    \[ \frac{I}{(f)} \otimes_{\struct} \frac{\struct}{J} \cong \frac{I}{(f) + IJ} = \frac{I}{(f) + I\cap J}.\]
    But 
        \[ (f) + I \cap J = I \cap ((f) + J) = I \cap (I + J) = I.\]
    Thus 
    \[ \frac{I}{(f)} \otimes_{\struct} \frac{\struct}{J} \cong \frac{I}{I} = 0.\]

Let $K$ be the residue field of $\struct$.  Since $I/(f)\otimes_{\struct} ( \struct/J)$ surjects on to 
  $(I/(f)) \otimes_\struct K$ we see that $(I/(f)) \otimes_\struct K=0$.  Nakayama's Lemma implies that $I = (f)$.
\end{proof}

\begin{proof}[Proof of Proposition~\ref{prop-not-leftnoeth}]
Suppose (1) holds.  Let  $Y$ be a $\sigma$-invariant subscheme that is not homologically transverse to $Z$, and let $j \geq 1$ be such that 
\[\shTor^X_j(\struct_Z, \struct_Y) \neq 0.\]
  Let $\sh{J} = \sh{I}_Y$, and let $J = \Gamma_*(\sh{J})$ be the right ideal of $B$ generated by sections that vanish on $Y$.   Since $\sigma Y=Y$, $J$ is a two-sided ideal of $B$.
We claim that $\Tor^B_j(B/I, B/J)_n \neq 0$ for $n \gg 0$.  

Form a graded projective resolution
\[ \cdots \to P^{-1} \to P^0 \to B/I \to 0 \]
of the right $B$-module  $B/I$, where each $P^i$ is a finitely generated graded free right $B$-module.  Thus  for each $i \leq 0$, there is a finite multiset $A_i$ of integers such that
\[ P^i = \bigoplus_{a \in A_i} B[a].\]
Now, for each $i$ let $\sh{P}^i = \widetilde{ P^i} $. Since the functor $\,\, \widetilde{}\,\,$ is exact, the  complex
\[ \cdots \to  \sh{P}^{-1} \to \sh{P}^0 \]
is  a resolution of $\struct_Z = \widetilde{B/I}$.
Furthermore, by the $\sigma$-invariance of $Y$ and the $\sigma$-ampleness of $\Lsh$, for $-j-d \leq i \leq -j+1$ and for $n \gg 0$, we have that
\beq\label{eq-save}
 H^0(X, \sh{P}^i \otimes \Lsh_n \otimes \struct_Y) = \bigoplus_{a \in A_i} (B/J)_{n+a} = (P^i \otimes_B B/J)_n.
 \eeq

Fix $n$ and let $\sh{Q^\bullet} = \sh{P}^\bullet \otimes \Lsh_n \otimes \struct_Y$.  
Reasoning as in Proposition~\ref{prop-left-ample}, from a Cartan-Eilenberg resolution $\sh{C}^{\bullet, \bullet}$ of $\sh{Q}^{\bullet}$ we obtain two spectral sequences
\beq \label{ss-shadow}
{}^{I} E^2_{pq} = H^p({H}^q(X, \sh{Q}^{\bullet})) 
\eeq
and
\beq \label{ss-kiri}
{}^{II} E^2_{pq}  = {H}^p(X, \shTor^X_{-q}(\struct_Z , \struct_Y) \otimes \Lsh_n),
\eeq
both of which converge (since $X$ has finite cohomological dimension) to the hypercohomology $\mathbb{H}^{p+q}(\sh{C}^{\bullet, \bullet})$.

By $\sigma$-ampleness of $\Lsh$, by taking $n \gg 0$  we may assume that
\[ {H}^p(X, \shTor_{-q}(\struct_Z, \struct_Y) \otimes \Lsh_n) = 0 \quad \mbox{for $p \geq 1$ and $-j-d \leq q \leq -j -1$} \]
and that
\[ {H}^q(X, \sh{Q}^p) = 0 \quad \mbox{ for $q \geq 1$ and $-j-d \leq p \leq -j-1$}.\]
Thus for $p+q = -j$, both \eqref{ss-shadow} and \eqref{ss-kiri} collapse, and we obtain that
\beq \label{eq-tabitha}
{H}^0(X, \shTor_{j}^X(\struct_Z, \struct_Y) \otimes \Lsh_n) = H^{-j} ({H}^0(X, \sh{Q}^{\bullet})).
\eeq
Since $\shTor_j^X(\struct_Z, \struct_Y) \neq 0$ and $\Lsh$ is $\sigma$-ample, for $n \gg 0$ the left-hand side of \eqref{eq-tabitha} is nonzero; but \eqref{eq-save} implies that for $n \gg 0$, the right-hand side is equal to
 \[ H^{-j}(P^\bullet \otimes_B B/J)_n = \Tor^B_j(B/I, B/J)_n.\]
 Thus $\Tor^B_j(B/I, B/J)_n \neq 0$.
 
 But if $R$ is left noetherian, then, using Proposition~\ref{prop-R-Tor} and a similar argument to that used in the proof of Lemma~\ref{lem-Tor-reduction}, for any finitely generated left $B$-module $M$ and for any $j \geq 1$, we must have that $\Tor^B_j(B/I, M)$ is torsion.   Since we have shown this is false for $M = B/J$, $R$ is not left noetherian.

Now suppose that (2) holds.  Consider the singular stratification 
\[ X = X^{(0)} \supset X^{(1)} \supset \cdots \]
of $X$.  If $Z$ is not homologically transverse to some $X^{(i)}$, then by (1)  $R$ is not left noetherian.  If  $Z$ is homologically transverse to all  $X^{(i)}$, then by Lemma~\ref{lem-divisor}, $Z$ is locally principal.  By Lemma~\ref{lem-Tor-reduction}, there is some  reduced and irreducible subscheme $Y$ such that $\shTor^X_1(\struct_{\sigma^n Z}, \struct_Y) \neq 0$  for infinitely many  $n \geq 0$.   Our assumptions on $Y$ and $Z$ imply that $\shTor^X_1(\struct_{\sigma^n Z}, \struct_Y) \neq 0$ if and only if $\sigma^n Z \supseteq Y$.

Thus $\sigma^n(Z) \supseteq Y$ for infinitely many $n \geq 0$.  Let $\sh{J}$ be the ideal sheaf defining $Y$ and let
  \[A = \{ n \geq 0 \st Y \subseteq \sigma^n Z\}= \{ n \geq 0 \st \sh{J}^{\sigma^n} \supseteq \sh{I} \}. \]
  Let $R' = \kk \oplus H^0(X, \sh{IB}_+)$.  It is sufficient to show that $R'$ is not left noetherian.  
    Let 
\[ J = \bigoplus_{n \geq 0} H^0(X, (\sh{I} \cap \sh{J}^{\sigma^n}) \Lsh_n).\]
We will show that the left ideal $J$ of $R'$  is not finitely generated. 

 Fix an integer $k \geq 1$.  By $\sigma$-ampleness of $\Lsh$, we may choose $n > k$ so that $n \in A$ and $(\sh{I} \cap \sh{J}^{\sigma^n}) \Lsh_n = \sh{I}\Lsh_n$ is globally generated.    Then
\[ (R' \cdot J_{\leq k})_n \subseteq H^0(X, \sh{IJ}^{\sigma^n} \Lsh_n) \subsetneqq J_n\]
and ${}_{R'} J$ is not finitely generated.
\end{proof}

As a corollary to Proposition~\ref{prop-not-leftnoeth}, we obtain that if we are in characteristic  0 and if $\sigma$ is an element of an algebraic group acting on $X$, then the converse to Proposition~\ref{prop-leftnoeth} holds.

\begin{corollary}\label{cor-ringsnoeth}
Suppose that  $\chrr \kk =0$.  Let $X$ be a projective variety and let $\sigma$ be an element of an algebraic group $G$ that acts on $X$.  Let $\Lsh$ be a $\sigma$-ample invertible sheaf on $X$.  Let $Z$ be a  closed subscheme of $X$ such that the components of $Z^{\red}$ have infinite order under $\sigma$.  Then the following are equivalent:

$(1)$ the geometric idealizer  $R = R(X, \Lsh, \sigma, Z)$ is noetherian;

$(2)$ the set $\{ \sigma^n Z\}_{n \geq 0}$ is critically transverse;

$(3)$  $Z$ is homologically transverse to all reduced $\sigma$-invariant subschemes of $X$.
\end{corollary}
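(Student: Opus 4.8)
The plan is to derive the ring statement (1) from the geometric characterization of Theorem~\ref{thm-whenCT} together with the noetherianity criteria of Sections~\ref{RIGHT-NOETH-BMA}--\ref{RING}. Since $\sigma$ lies in an algebraic group acting on $X$ and $\chrr\kk=0$, Theorem~\ref{thm-whenCT} gives (2) $\iff$ (3) at once; as the right-hand side of that theorem is independent of the index set, condition (3) is in fact equivalent to critical transversality of $\{\sigma^n Z\}_{n\in\ZZ}$, which I use freely below. It then remains to prove (3) $\Rightarrow$ (1) and $\neg(3) \Rightarrow \neg(1)$.

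For (3) $\Rightarrow$ (1) the first step is to show that critical transversality forces $Z$ to have finite intersection with forward $\sigma$-orbits. Fix a closed point $x$. By critical transversality the reduced point $\{x\}$ is homologically transverse to $\sigma^m Z$ for all but finitely many $m\in\ZZ$, whereas $x\in\sigma^m Z$ forces $\shTor_1^X(\struct_{\sigma^m Z},\struct_{\{x\}})\neq 0$ by Lemma~\ref{lem-contain-Tor}; hence $x\in\sigma^m Z$, equivalently $\sigma^{-m}(x)\in Z$, for only finitely many $m$, i.e.\ $\{n\geq 0:\sigma^n(x)\in Z\}$ is finite. Since $\sigma^n(Y_i)\subseteq Z$ implies $\sigma^n(y)\in Z$ for any chosen point $y$ of the primary component $Y_i=Z_i^{\red}$, Lemma~\ref{lem-niceconds} now shows that Assumption-Notation~\ref{assnotnot2} holds. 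With that in hand, Proposition~\ref{prop-leftnoeth} shows $R$ is left noetherian, and Proposition~\ref{prop-SRN}, combined with the finite-intersection property just established, shows $R$ is right (indeed strongly right) noetherian; so $R$ is noetherian.

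For $\neg(3)\Rightarrow\neg(1)$ I argue the contrapositive. If (3) fails there is a reduced $\sigma$-invariant subscheme $Y$ of $X$ not homologically transverse to $Z$. If $Z$ has finite intersection with forward orbits, then Assumption-Notation~\ref{assnotnot2} holds by Lemma~\ref{lem-niceconds}, and this $Y$ is exactly a witness for hypothesis (1) of Proposition~\ref{prop-not-leftnoeth}, which gives that $R$ is not left noetherian. If $Z$ does not have finite intersection with forward orbits, then, because the irreducible components of $Z^{\red}$ have infinite order, the ``infinite-order part'' $W$ of Theorem~\ref{thm-rtnoeth} satisfies $W^{\red}=Z^{\red}$ and so inherits infinite intersection with some forward orbit; Theorem~\ref{thm-rtnoeth} then shows the idealizer bimodule algebra $\sh{R}$ is not right noetherian, and one checks (passing through Lemma~\ref{lem-transport}) that $R$ itself is not right noetherian. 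Either way $R$ is not noetherian.

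The main obstacle, and the only place where real care is needed, is this last bookkeeping around Assumption-Notation~\ref{assnotnot2}: the substantive new content of the corollary is the implication ``critically transverse $\Rightarrow$ finite intersection with forward orbits'' of the second paragraph — it is this that links the two a priori very different sides of the theory and makes the strong-noetherian machinery of Section~\ref{RING} applicable — and it rests on the local computation packaged in Lemma~\ref{lem-contain-Tor} together with the $A$-independence of critical transversality supplied by Theorem~\ref{thm-whenCT}. Everything else is a matter of quoting Theorems~\ref{thm-whenCT} and \ref{thm-rtnoeth} and Propositions~\ref{prop-SRN}, \ref{prop-leftnoeth}, and \ref{prop-not-leftnoeth}.
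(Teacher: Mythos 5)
Your proposal follows essentially the same route as the paper: the paper cases on whether some forward orbit meets $Z$ infinitely often, invokes Lemma~\ref{lem-niceconds} to verify Assumption-Notation~\ref{assnotnot2} in the finite-intersection case, and then cycles $(1)\Rightarrow(3)\Rightarrow(2)\Rightarrow(1)$ via Propositions~\ref{prop-not-leftnoeth}, \ref{prop-leftnoeth} and Theorem~\ref{thm-whenCT}, using Proposition~\ref{prop-SRN} to dispose of the other case. Your reorganization into $(2)\iff(3)$ followed by $(3)\iff(1)$ is cosmetic, and your careful unpacking of ``critical transversality forces finite intersection with forward orbits'' via Lemma~\ref{lem-contain-Tor} is exactly the unstated content behind the paper's ``is certainly not critically transverse.''

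The one place your sketch differs from the paper, and where there is a genuine gap, is the infinite-intersection subcase of $\neg(3)\Rightarrow\neg(1)$. You conclude from Theorem~\ref{thm-rtnoeth} that the bimodule algebra $\sh{R}$ is not right noetherian, and then assert that ``one checks (passing through Lemma~\ref{lem-transport}) that $R$ itself is not right noetherian.'' But Lemma~\ref{lem-transport} only identifies $R$ with $H^0(X,\sh{R})$; nothing in the paper gives a general implication from $\sh{R}$ non-noetherian to $H^0(X,\sh{R})$ non-noetherian, and Theorem~\ref{thm-VdBSerre} runs in the opposite direction (noetherian bimodule algebra plus ampleness gives noetherian section ring). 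The paper avoids this by arguing entirely at the level of rings: the $(2)\Rightarrow(1)$ direction of Proposition~\ref{prop-SRN} uses Lemma~\ref{lem-Rog-I} together with finite-dimensionality of $R/I$ to show directly that $R$ is not right noetherian when some forward orbit meets $Z$ infinitely often. You should replace the appeal to Theorem~\ref{thm-rtnoeth} and Lemma~\ref{lem-transport} with that ring-level argument (or simply cite Proposition~\ref{prop-SRN}, as the paper does).
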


\begin{proof}
First suppose that there is $x \in X$ so that $\{ n \geq 0 \st \sigma^n(x) \in Z\}$ is infinite.  Then by Proposition~\ref{prop-SRN}, $R$ is not right noetherian.  Furthermore, $\{\sigma^n Z\}_{n \in \ZZ}$ is certainly not critically transverse, and so by Theorem~\ref{thm-whenCT} there is a  $\sigma$-invariant subscheme that is not homologically transverse to $Z$, and $\{\sigma^nZ\}_{n \geq 0}$ is not critically transverse.  Thus $(1)$, $(2)$, and $(3)$ all fail, and the result holds.

 Thus we may assume that no such $x$ exists; by Proposition~\ref{prop-SRN}, $R$ is right noetherian.  Note also that by Lemma~\ref{lem-niceconds}, Assumption-Notation~\ref{assnot-FOO} is satisfied.  
Then $(1) \Rightarrow (3)$ is  Proposition~\ref{prop-not-leftnoeth}. $(3) \Rightarrow (2)$ is  Theorem~\ref{thm-whenCT}.  $(2) \Rightarrow (1)$ is  Proposition~\ref{prop-leftnoeth}.  
\end{proof}

Since the geometric condition required for a right idealizer to be left noetherian is fairly subtle, it is not surprising that right idealizers are almost never strongly left noetherian.   To show this, we use the concept of {\em generic flatness}\index{generically flat}, as defined in \cite{ASZ1999}.  Let $C$ be a commutative noetherian domain.  We say that a $C$-module $M$ is {\em generically flat} if there is some $f \neq 0 \in C$ such that $M_f$ is flat over $C_f$.  If $R$ is a finitely generated commutative $C$-algebra, then by Grothendieck's generic freeness theorem \cite[Theorem~6.9.1]{EGA-IV.2}, every finitely generated $R$-module is a generically flat $C$-module.

Artin, Small, and Zhang have generalized this result to strongly noetherian noncommutative rings.  They prove:
\begin{theorem}\label{thm-ASZ}
{\em (\cite[Theorem~0.1]{ASZ1999})}
Let $R$ be a strongly noetherian algebra over an excellent Dedekind domain $C$.  Then every finitely generated right $R$-module is generically flat over $C$. \qed
\end{theorem}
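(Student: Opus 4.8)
The plan is to reduce the statement, by elementary commutative algebra, to the single fact that a noetherian module over a one-dimensional localization of a Dedekind domain has bounded torsion; the hypothesis ``strongly noetherian'' then turns out to carry no weight when the base is Dedekind, and I will indicate at the end where it does become essential.

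First I would note that ``strongly noetherian over $C$'' includes, trivially, that $R = R\otimes_C C$ is right noetherian, and that the conclusion is insensitive to replacing $C$ by a localization: if $M_g$ is generically flat over $C_g$ for some $0\neq g$, then $M$ is generically flat over $C$. Hence I may first invert suitable nonzero elements and assume $C$ is a discrete valuation ring (localize $C$ at a height-one prime), or equivalently, since $C$ is already a Dedekind domain, I may reason directly with $C$ itself using only that its localizations are Dedekind domains or fields.

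Second --- the heart of the matter --- let $M$ be a finitely generated right $R$-module. Since $R$ is right noetherian, $M$ is a noetherian module; since $C$ is central in $R$, the $C$-torsion submodule $t(M)\subseteq M$ is a right $R$-submodule, hence finitely generated, say $t(M)=m_1R+\cdots+m_kR$ with each $m_i$ annihilated by some $0\neq c_i\in C$. Then $f:=c_1\cdots c_k\neq 0$ annihilates $t(M)$ by centrality, so $t(M)_f=0$ and $M_f\cong (M/t(M))_f$. The module $M/t(M)$ is $C$-torsion-free, hence so is $M_f$ over $C_f$; and $C_f$ is a localization of a Dedekind domain, over which every torsion-free module is flat (Dedekind domains are Pr\"ufer). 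Therefore $M_f$ is flat over $C_f$, i.e., $M$ is generically flat over $C$, which is exactly the assertion.

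The only slightly delicate points in this argument are the finite generation of $t(M)$ as an $R$-module (where centrality of $C$ is used) and the equality ``torsion-free $=$ flat'' over a Pr\"ufer domain; neither requires strong noetherianity, and in fact neither uses that $R$ is an algebra over $C$ beyond the centrality. I would close by remarking that the strong hypothesis is exactly what is needed to run the same conclusion over an arbitrary excellent noetherian base: there one does noetherian induction on $\Spec C$, base-changes to completions $\widehat{C}_{\mathfrak p}$ (using excellence, and that $R\otimes_C\widehat{C}_{\mathfrak p}$ inherits the strong noetherian property because every noetherian $\widehat{C}_{\mathfrak p}$-algebra is a noetherian $C$-algebra), and inducts on $\dim C$ down to the one-dimensional case above. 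The main obstacle in that general argument --- and the place where mere noetherianity would not suffice --- is keeping noetherianity alive after these base changes; over the Dedekind base of the present statement that difficulty simply does not arise.
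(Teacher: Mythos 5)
The paper does not prove this statement: it is quoted from \cite[Theorem~0.1]{ASZ1999}, with the terminal square marking a citation rather than a proof, so there is no in-paper argument to compare against. That said, your proof is correct, and it is a genuine simplification that isolates why the Dedekind case is elementary. Since $C$ is central in $R$, the $C$-torsion submodule $t(M)$ of a finitely generated right $R$-module $M$ is a right $R$-submodule, hence finitely generated over the (merely) right noetherian ring $R$, hence killed by a single nonzero $f\in C$; then $M_f\cong (M/t(M))_f$ is $C_f$-torsion-free, and over the Dedekind (hence Pr\"ufer) domain $C_f$ torsion-free implies flat. Neither strong noetherianity nor excellence of $C$ is used. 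Your closing remark is also accurate: in the general excellent-base version one inducts on $\dim C$ through completions $\widehat{C}_{\mathfrak{p}}$, and strong noetherianity is exactly what preserves the noetherian hypothesis under those base changes---a step with no analogue when $C$ is one-dimensional. For the statement as quoted in the paper, your argument is complete and correct.
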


\begin{lemma}\label{lem-noGF}
Assume Assumption-Notation~\ref{assnotnot2}.  
If $Z'$ is a component of $Z$ such that $\codim Z' \geq 2$ and such that $\bigcup_{m \geq 0} \sigma^m Z'$ is Zariski dense in $X$, then for every open affine $U \subseteq X$,  the finitely generated left $R \otimes_k \struct(U)$-module 
\[M = \bigoplus_{n \geq 0} \mathcal{R}(\sigma^{-n} U)\]
is not a generically flat $\struct(U)$-module.
\end{lemma}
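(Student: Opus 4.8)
The plan is to unwind $M$, recognize its graded pieces tensored with residue fields as fibers of the ideal sheaf $\sh I$, and then show this fiber dimension jumps as the base point varies, which is incompatible with generic flatness. Concretely, write $C=\struct(U)$, a noetherian domain (as $X$ is integral). The degree-$n$ piece of $M$ is $M_n=H^0(\sigma^{-n}U,\sh R_n)$, regarded as a $C$-module via the isomorphism $(\sigma^n)^*\colon\struct(U)\xrightarrow{\sim}\struct(\sigma^{-n}U)$ and multiplication of sections; each $M_n$ is a finitely generated $C$-module, the left $R$-action coming from the bimodule multiplications $\sh R_k\otimes\sh R_n^{\sigma^k}\to\sh R_{n+k}$ commutes with the $C$-action by $\struct_X$-linearity of $\mu$, and finite generation of $M$ over $R\otimes_\kk C$ follows from $\sigma$-ampleness of $\Lsh$ together with the stabilization $\sh R_n=\sh I\Lsh_n$ for $n\gg0$ of Assumption-Notation~\ref{assnotnot2} (this last point will not actually be used below). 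The one observation I would isolate is that $\sigma^{-n}U$ is \emph{affine}, being the image of the affine $U$ under the automorphism $\sigma^{-n}$; hence for a coherent sheaf $\sh F$ on $X$ and a point $q\in\sigma^{-n}U$,
\[
H^0(\sigma^{-n}U,\sh F)\otimes_{\struct(\sigma^{-n}U)}\kappa(q)\;\cong\;\sh F_q\otimes_{\struct_{X,q}}\kappa(q).
\]
In particular, taking $n$ large enough that $\sh R_n=\sh I\Lsh_n$ and using that $\Lsh_n$ is invertible, for any $\mathfrak p\in\Spec C$ we get $\dim_{\kappa(\mathfrak p)}M_n\otimes_C\kappa(\mathfrak p)=\mu_{\sigma^{-n}\mathfrak p}(\sh I)$, where $\mu_q(\sh I)$ is the minimal number of generators of $\sh I_q$ (so $\mu_q(\sh I)=1$ for $q\notin Z$).

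Now suppose, for contradiction, that $M$ is generically flat over $C$: there is $0\neq f\in C$ with $M_f$ flat over $C_f$. Since $M_f=\bigoplus_n(M_n)_f$ as $C_f$-modules, each $(M_n)_f$ is flat, and being finitely generated over the noetherian domain $C_f$ it is projective, hence locally free; as $\Spec C_f=D(f)$ is irreducible this rank is constant, so $\mathfrak p\mapsto\dim_{\kappa(\mathfrak p)}M_n\otimes_C\kappa(\mathfrak p)$ is constant on $D(f)$ for every $n$. I would then contradict this for a suitable large $n$. At the generic point $\xi\in D(f)$ of $X$ we have $\sigma^{-n}\xi=\xi$ and $\sh I_\xi=\struct_{X,\xi}$, so the fiber dimension there is $1$. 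On the other hand, let $\eta$ be the generic point of $Z'$: since $Z'$ is a component of $Z$, $\sh I_\eta$ is $\mathfrak m_\eta$-primary in the local ring $\struct_{X,\eta}$, which has dimension $\codim Z'\geq2$, so by Krull's principal ideal theorem $\sh I_\eta$ is not principal and $\mu_\eta(\sh I)\geq2$. The locus $\{q:\mu_q(\sh I)\geq2\}$ is closed (upper semicontinuity of the fiber dimension of the coherent sheaf $\sh I$) and contains $\eta$, hence contains $\overline{\{\eta\}}=Z'_{\red}$. Finally, because $X$ is irreducible and each $\sigma^nZ'$ is a proper closed subset, density of $\bigcup_{n\geq0}\sigma^nZ'$ forces $\bigcup_{n\geq N}\sigma^nZ'$ to be dense for every $N$; choosing $N$ large enough to also guarantee $\sh R_n=\sh I\Lsh_n$ for $n\geq N$, there is $n\geq N$ with $\sigma^nZ'\cap D(f)\neq\emptyset$. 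Picking $\mathfrak p_0$ in this intersection, $\sigma^{-n}\mathfrak p_0\in Z'_{\red}$ gives $\dim_{\kappa(\mathfrak p_0)}M_n\otimes_C\kappa(\mathfrak p_0)=\mu_{\sigma^{-n}\mathfrak p_0}(\sh I)\geq2$, contradicting constancy of the fiber dimension on $D(f)$. Hence $M$ is not generically flat over $C$.

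The argument is largely mechanical once the fiber picture is set up; the steps requiring care are the identification of $M_n\otimes_C\kappa(\mathfrak p)$ with a fiber of $\sh I$ — which rests on the affineness of the translated chart $\sigma^{-n}U$ and on absorbing the invertible twist $\Lsh_n$ — and the upgrade from ``$\sh I$ is non-principal at the generic point of $Z'$'' to ``$\mu_q(\sh I)\geq2$ on all of $Z'_{\red}$'', via Krull's theorem and semicontinuity. I expect the genuinely delicate ingredient to be the finite generation of $M$ as a left $R\otimes_\kk C$-module — which, notably, is not needed for the non-flatness conclusion itself — and I would handle it using $\sigma$-ampleness of $\Lsh$ and the stabilization $\sh R_n=\sh I\Lsh_n$, reducing it to finite generation of the section module of the $\sigma$-ample bimodule $\{(\Lsh_n)_{\sigma^n}\}$ over the moving affine opens $\sigma^{-n}U$.
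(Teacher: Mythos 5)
Your argument is correct and follows essentially the same route as the paper: localize at an arbitrary $f\neq 0$, use density of $\bigcup_{m\geq 0}\sigma^m Z'$ to place a point of some $\sigma^n Z'$ (for $n$ large enough that $\sh R_n=\sh I\Lsh_n$) inside $D(f)$, and detect non-flatness of $M_n$ there from the fact that $\sh I$ cannot be principal along the codimension-$\geq 2$ component $Z'$. The only cosmetic difference is that the paper observes directly that the twisted stalk $(M'_m)_p\cong(\sh I\Lsh_m)_{\sigma^{-m}p}$ is not flat over the local ring $C_p$, while you phrase the same obstruction as a jump in the fiber dimension of a putatively locally free module on the irreducible $D(f)$; your version is a bit more explicit about the affineness of $\sigma^{-n}U$, the Krull argument, and the need to take $n$ beyond the stabilization threshold, all of which the paper leaves implicit.
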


\begin{proof}
We first verify that $M$ is a left $R$-module.   By \cite[Equation~2.5]{AV}, the multiplication rule in $\mathcal{R}$ acts on sections via:
\[ \mathcal{R}_n(V)  \times \mathcal{R}_m(\sigma^n V) \to \mathcal{R}_{n+m}(V)\]
or, writing $V = \sigma^{-n-m}U$, 
\[ \mathcal{R}_n (\sigma^{-n-m}U) \times \mathcal{R}_m(\sigma^{-m}U) \to \mathcal{R}_{n+m}(\sigma^{-n-m}U).\]
Thus we have a map
\[ R_n \times M_m = \mathcal{R}(X) \times \mathcal{R}_m(\sigma^{-m}U) \stackrel{{\rm res}}{\to} \mathcal{R}_n (\sigma^{-n-m}U) \times \mathcal{R}_m(\sigma^{-m}U) \to M_{m+n}.\]
Verifying associativity is trivial, and so $M$ is a left $R$-module.

Let $C = \struct(U)$.  By identifying $C$ with $C^{\rm op}$, consider the right action of $C$ on $M$ given by $g \star f = g \cdot f^{\sigma^n} = g \cdot ( f \circ \sigma^n)$, where $g \in M_n$, $f \in C$.   Note that  $f \circ \sigma^n$ acts on $\sigma^{-n} U$ and so does act naturally on  elements of $M_n$.

Now since for $n \gg 0$ the sheaves $\mathcal{I} \otimes \Lsh_n$ are globally generated, the restriction map $R \to M$ is surjective in degree $\geq m$ for some $m$.  But since $M_{<m}$ is a finitely generated $C$-module, therefore $M$ is  a finitely generated $R_C$ module.  

Now let $f$ be an arbitrary element of $C$; let $M' = M_f$.  Since $\bigcup_{m \geq 0} \sigma^m(Z')$ is Zariski dense, there is some $m$ such that $\sigma^m Z'$ meets $U_f$, say at a point $p$.  But then $(M'_m)_p = (\Ish {\Lsh_m})_{\sigma^{-m}p}$, which is not flat over $C_{p}$, since codim $Z' \geq 2$.  Thus $M_f$ is not flat over $C_f$.  
\end{proof}

\begin{corollary}\label{cor-SLN}
Assume Assumption-Notation~\ref{assnotnot2}.  Then 
$R$ is strongly left noetherian if and only if $Z$ has pure codimension 1 and $\{ \sigma^n Z \}_{n \geq 0}$ is critically transverse in $X$.
\end{corollary}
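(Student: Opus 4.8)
We prove the two implications separately; the reverse direction rests on Lemma~\ref{lem-divisor} and the forward direction on the obstructions in Proposition~\ref{prop-not-leftnoeth} and Lemma~\ref{lem-noGF}.

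$(\Leftarrow)$ Suppose $Z$ has pure codimension $1$ and $\{\sigma^n Z\}_{n\geq 0}$ is critically transverse. By Lemma~\ref{lem-Tor-reduction} the subscheme $Z$ is homologically transverse to every $\sigma$-invariant subscheme of $X$, in particular to the singular stratification, so by Lemma~\ref{lem-divisor} $Z$ is an effective Cartier divisor. Fix a commutative noetherian $\kk$-algebra $C$ and set $X_C = X\times\Spec C$, $Z_C = Z\times\Spec C$, $B_C = B\otimes_\kk C$, and $R_C = R\otimes_\kk C = \I_{B_C}(I_C)$; recall $B_C$ is noetherian and $\rqgr B_C\simeq\struct_{X_C}\lmod$, with $B_C/I_C$ corresponding to $\struct_{Z_C}$. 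By Proposition~\ref{prop-R-Tor} it suffices to show that for every graded left ideal $J\supseteq I_C$ of $B_C$ the module $\Tor^{B_C}_1(B_C/I_C,B_C/J)$ is a noetherian left $R_C$-module, and since $R_C/I_C$ is a finitely generated $C$-module this reduces to showing that this $\Tor$ vanishes in all large degrees. Running the spectral-sequence computation from the proof of Proposition~\ref{prop-not-leftnoeth} over $X_C$, and using \eqref{CHARLIE}, identifies its degree-$n$ part, up to an invertible twist, with $\shTor^{X_C}_1(\struct_{Z_C},\struct_{(\sigma^n\times 1)Y})$ for the closed subscheme $Y\subseteq X_C$ attached to $B_C/J$. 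As $Z_C$ is a Cartier divisor on $X_C$ this vanishes whenever $(\sigma^{-n}Z)\times\Spec C$ contains no associated point of $Y$, which holds for all $n\gg 0$: the images in $X$ of the finitely many associated points of $Y$ each lie on $\sigma^{-n}Z$ for only finitely many $n$, since critical transversality of the divisor family $\{\sigma^n Z\}$ (apply it with $Y$ a single point) forces every $\sigma$-orbit of a point to meet $Z$ finitely. Hence $R_C$ is left noetherian for all $C$, i.e.\ $R$ is strongly left noetherian.

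$(\Rightarrow)$ Suppose $R$ is strongly left noetherian, so in particular left noetherian. First I show $Z$ has no irreducible component of codimension $\geq 2$. Suppose $Z'$ is such a component, with $\codim Z' = n\geq 2$. If $\{\sigma^n Z\}$ is critically transverse then $Z$ is homologically transverse to every $\sigma$-invariant subscheme by Lemma~\ref{lem-Tor-reduction}; if it is not, then since $R$ is left noetherian, condition $(1)$ of Proposition~\ref{prop-not-leftnoeth} must fail, i.e.\ again $Z$ is homologically transverse to every $\sigma$-invariant subscheme. So in all cases $Z$ is homologically transverse to every $\sigma$-invariant subscheme, in particular to the singular stratification, whence $\hd_X(\struct_Z)<\infty$ by Lemma~\ref{lem-Mel}. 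Let $Y$ be the reduced Zariski closure of $\bigcup_{m\geq 0}\sigma^m(Z'^{\red})$; the chain $Y\supseteq\sigma Y\supseteq\sigma^2 Y\supseteq\cdots$ stabilises, so $Y$ is $\sigma$-invariant. If $Y\subsetneq X$, localise at the generic point $p$ of $Z'^{\red}$: $M=\struct_{Z,p}$ has finite length and finite projective dimension, so by the New Intersection Theorem and the Auslander--Buchsbaum formula $\struct_p$ is Cohen--Macaulay of dimension $n$; then Auslander's depth formula applied to the $\Tor$-independent pair $(M,\struct_{Y,p})$ gives $\operatorname{depth}\struct_{Y,p}=n$, contradicting $\operatorname{depth}\struct_{Y,p}\leq\dim\struct_{Y,p}\leq n-1$. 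If $Y=X$, then $\bigcup_{m\geq 0}\sigma^m Z'$ is Zariski dense, and Lemma~\ref{lem-noGF} furnishes a finitely generated left $R\otimes_\kk\struct(U)$-module that is not generically flat over $\struct(U)$; reducing the base $\struct(U)$ to an excellent Dedekind domain by a standard noetherian induction and applying Theorem~\ref{thm-ASZ} contradicts strong left noetherianity of $R$. Either way we reach a contradiction, so $Z$ has pure codimension $1$ (there is also no component of codimension $0$, since $\sh I\neq 0$). Finally, with $Z$ of pure codimension $1$, if $\{\sigma^n Z\}$ were not critically transverse then Proposition~\ref{prop-not-leftnoeth}$(2)$ would make $R$ fail to be left noetherian; hence $\{\sigma^n Z\}$ is critically transverse.

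The main obstacle is the forward direction, and within it the dichotomy forced by a codimension-$\geq 2$ component $Z'$. In the non-dense case $Y\subsetneq X$ one must convert the mere containment $Z'\subseteq Y$ into a genuine failure of homological transversality, which requires Hochster's bound (Lemma~\ref{lem-Mel}), the New Intersection Theorem, and Auslander's depth formula working together; in the dense case $Y=X$ the local-algebra argument is unavailable, and one must instead exploit Lemma~\ref{lem-noGF} via the Artin--Small--Zhang generic-flatness theorem, which obliges one to cut the higher-dimensional base $\struct(U)$ down to an excellent Dedekind domain before Theorem~\ref{thm-ASZ} applies.
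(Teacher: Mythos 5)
Your reverse implication ($\Leftarrow$) takes a genuinely different route from the paper, and as written it has a sign problem that breaks the argument; your forward implication follows the paper's case analysis but replaces one step with a more elaborate local-algebra argument.

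For $(\Leftarrow)$, the paper's proof is much shorter: when $Z$ is homologically transverse to the singular stratification and of pure codimension $1$, Lemma~\ref{lem-divisor} makes $\sh I=\sh I_Z$ invertible, and then $\Lsh' = \sh I\Lsh(\sh I^{-1})^\sigma$ satisfies $\sh I\Lsh_n = (\Lsh')_n\sh I^{\sigma^n}$, exhibiting $R$ as the \emph{left} idealizer at $Z$ in $B(X,\Lsh',\sigma)$. Since critical transversality of $\{\sigma^n Z\}_{n\geq 0}$ forces $\{n\leq 0\st\sigma^n(p)\in Z\}$ to be finite for each $p$, the left-handed version of Proposition~\ref{prop-SRN} gives strong left noetherianity in one line. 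Your alternative --- rerunning the Tor criterion directly over $X_C$ --- is viable in principle, but you have a sign error: following Proposition~\ref{prop-Tor-noeth}, the degree-$n$ piece of the relevant Tor corresponds to $\shTor^{X_C}_1(\struct_{Z_C},\struct_{(\sigma^{-n}\times 1)Y})$ (equivalently $\shTor^{X_C}_1(\struct_{(\sigma^n\times 1)Z_C},\struct_Y)$), not $\shTor^{X_C}_1(\struct_{Z_C},\struct_{(\sigma^n\times 1)Y})$. As written, the Cartier-divisor vanishing criterion becomes ``$p(q)\notin\sigma^{-n}Z$ for $n\gg 0$'' (the forward orbit of $p(q)$ leaves $Z$), and your claim that critical transversality of $\{\sigma^n Z\}_{n\geq 0}$ makes ``every $\sigma$-orbit meet $Z$ finitely'' is an overclaim: critical transversality of the forward translates only controls the intersections $\{n\leq 0\st\sigma^n(p)\in Z\}$, i.e.\ the backward half of the orbit. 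With the sign corrected the criterion is $p(q)\notin\sigma^n Z$ for $n\gg 0$, which is exactly what critical transversality, applied to $\bbar{\{p(q)\}}$, gives via Lemma~\ref{lem-contain-Tor}, and the argument then closes. (Also, Proposition~\ref{prop-R-Tor} quantifies over \emph{all} left ideals $J$ of $B_C$, not only those containing $I_C$; the restriction is harmless here but should be dropped.)

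For $(\Rightarrow)$, your case decomposition (a codimension-$\geq 2$ component $Z'$; $Y = \bbar{\bigcup_m\sigma^m Z'^{\red}}$ a proper $\sigma$-invariant subvariety versus $Y=X$) mirrors the paper exactly, and for $Y=X$ both you and the paper invoke Lemma~\ref{lem-noGF} and Theorem~\ref{thm-ASZ}. For $Y\subsetneq X$ you replace the paper's terse appeal to Lemma~\ref{lem-contain-Tor} by the New Intersection Theorem plus Auslander--Buchsbaum plus Auslander's depth formula. This is longer, but it is arguably the more careful argument: Lemma~\ref{lem-contain-Tor} as stated requires a genuine scheme-theoretic inclusion, and since $Y$ is reduced while the primary component $Z'$ need not be, one only has $Z'^{\red}\subseteq Y$, not $Z'\subseteq Y$; your depth computation (which uses $\hd_X(\struct_Z)<\infty$ from Lemma~\ref{lem-Mel}) sidesteps this.
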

\begin{proof}
If  $\{\sigma^n Z\}_{n \geq 0}$ is critically transverse, then in particular $Z$ is homologically transverse to the singular stratification of $X$.  If $Z$ has pure codimension 1, then  by Lemma~\ref{lem-divisor}, $Z$ is locally principal and $\sh{I} = \sh{I}_Z$ is invertible.  Now, letting $\Lsh' = \sh{I} \Lsh (\sh{I}^{-1})^\sigma$, we have that $\sh{I} \Lsh_n = (\Lsh')_n \sh{I}^{\sigma^n}$.  Since $\Lsh'$ is clearly also $\sigma$-ample, we see that $R$ is also the {\em left} idealizer at $Z$ inside the twisted  homogeneous coordinate ring $B(X, \Lsh', \sigma)$.  By assumption on critical transversality, we have in particular that for any $p \in X$, the set $\{ n \leq 0 \st \sigma^n(p) \in Z \}$ is finite.  Thus by  Proposition~\ref{prop-SRN}, $R$ is strongly left noetherian.

If $ Z$ has pure codimension 1 and $\{ \sigma^n Z\}_{n \geq 0}$ is not critically transverse, then by Proposition~\ref{prop-not-leftnoeth}(2), $R$ is not left noetherian so is certainly not strongly left noetherian.

Now suppose that $Z$ has a component $Z'$ such that $\codim Z' >1$.  Let $Y$ be the Zariski closure of  $\{ \sigma^n (Z')\}_{n \geq 0}$.  Note that there is a chain of containments
\[ Y \supseteq \sigma Y \supseteq \sigma^2 Y \supseteq \cdots.\]
As this chain terminates, for some $j$ we have $\sigma^j Y = \sigma^{j+1} Y$; applying $\sigma^{-j}$ we see that $Y$ itself is $\sigma$-invariant.   If $Y \neq X$, then $Z$ is not homologically transverse to $Y$ by Lemma~\ref{lem-contain-Tor}, and so by Proposition~\ref{prop-not-leftnoeth}(1), $R$ is not left noetherian. 

We have thus reduced to considering the case that $Z$ has a component $Z'$ such that $\codim Z' > 1$ and that $\bigcup_{n \geq 0} \sigma^n(Z')$ is Zariski-dense in $X$.   
Fix an open affine $U \subseteq X$ such that $X \smallsetminus U$ has codimension 1.  Let $M$ be the module from Lemma~\ref{lem-noGF}.  As $M$ is not a generically flat left $\struct(U)$-module, by Theorem~\ref{thm-ASZ}, $R \otimes_{\kk} \struct(U)$ is not strongly left noetherian.  Thus $R$ is not strongly left noetherian.  
  \end{proof}

To end this section, we use Proposition~\ref{prop-not-leftnoeth} to show that the idealizers $R = R(X, \Lsh, \sigma, Z)$ have the unusual property that $R \otimes_{\kk} R$ is not noetherian.  The idealizers constructed by Rogalski in \cite{R-idealizer} were the first known examples of noetherian rings with this property.

\begin{proposition}\label{prop-tensor}
Assume Assumption-Notation~\ref{assnotnot2}.  Assume in addition that $Z$ is not of pure codimension 1.  Then $R \otimes_{\kk} R$ is not left noetherian.  
\end{proposition}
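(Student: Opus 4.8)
The plan is to work on $X \times X$ and imitate the proof of Proposition~\ref{prop-not-leftnoeth}(2), the new feature being that the relevant obstruction lives not at a fixed subscheme but along the diagonal, so that it recurs in every large degree. The key geometric input is that $Z$ is never homologically transverse to $Z$ itself: since $Z \subsetneq X$, Lemma~\ref{lem-contain-Tor} gives $\shTor_1^X(\struct_Z, \struct_Z) \ne 0$, and applying the automorphism $\sigma^n$, $\shTor_1^X(\struct_{\sigma^n Z}, \struct_{\sigma^n Z}) \ne 0$ for every $n$. One also uses the standard identification $\shTor_j^{X\times X}(\sh F \boxtimes \sh G,\, \struct_\Delta) \cong \shTor_j^X(\sh F, \sh G)$, where $\Delta \subseteq X \times X$ is the diagonal.

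Concretely, I would view $R \otimes_\kk R$ inside $B \otimes_\kk B$; the latter is bigraded, $\bigoplus_{n,m \ge 0} H^0(X \times X, \Lsh_n \boxtimes \Lsh_m)$, and the right ideal $\sh I' := I \otimes_\kk B + B \otimes_\kk I$ is, in large bidegree, the space of sections of $\Lsh_n \boxtimes \Lsh_m$ vanishing on $Z \times Z$ (whose ideal sheaf is $\sh I_Z \boxtimes \struct_X + \struct_X \boxtimes \sh I_Z$). The diagonal $\Delta$ is $(\sigma \times \sigma)$-invariant, so $\Gamma_*(\sh I_\Delta)$ is a two-sided ideal of $B \otimes_\kk B$; I would take the graded left ideal $\sh J$ of $R \otimes_\kk R$ defined by $\{x \in R \otimes_\kk R : x \sh I' \subseteq \Gamma_*(\sh I_\Delta)\}$, whose bidegree $(n,m)$ part consists of the sections of $\Lsh_n \boxtimes \Lsh_m$ vanishing on $(Z \times Z) \cup W_{n,m}$, where $W_{n,m}$ is the appropriate $(\sigma^n\times\sigma^m)$-translate of $\Delta$ — which, on the diagonal slice $m = n$, is again $\Delta$. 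Since $\shTor_1^{X \times X}(\struct_{Z \times Z}, \struct_\Delta) \cong \shTor_1^X(\struct_Z, \struct_Z) \ne 0$ (and similarly after translating), the sections of $\sh J$ arising from generators of bidegree $\le (k,k)$ land, after restriction, strictly inside $\sh J_{(n,n)}$ for every $n \gg k$ — exactly the global-generation estimate concluding the proof of Proposition~\ref{prop-not-leftnoeth}(2), using that $\Lsh$ is $\sigma$-ample (hence the relevant sheaves on $X \times X$ are globally generated in high degree). Thus $\sh J$ is not finitely generated, and $R \otimes_\kk R$ is not left noetherian.

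The hypothesis that $Z$ is not of pure codimension $1$ should enter precisely in making this rigorous: for a divisor $\sh I_Z$ is invertible, $\shTor_{\ge 1}^X(\struct_Z, -)$ is "small," and the analogous left ideal turns out finitely generated (consistent with divisor idealizers being strongly left noetherian under critical transversality, Corollary~\ref{cor-SLN}); so the work is to see where the failure of local principality of $\sh I_Z$ forces $\sh J$ to grow. I expect this comparison — the bigraded, product-variety analogue of the Cartan--Eilenberg spectral sequence computation in the proof of Proposition~\ref{prop-not-leftnoeth} — to be the main obstacle; the Künneth bookkeeping is otherwise routine given Proposition~\ref{prop-left-ample}. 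Alternatively, if one grants that $B \otimes_\kk B$ is noetherian (as for the twisted coordinate rings in \cite{R-idealizer}), the conclusion follows more slickly by applying Rogalski's criterion, Proposition~\ref{prop-R-Tor}, to the idealizer $\I_{B \otimes_\kk B}(\sh I')$ with $J' = \Gamma_*(\sh I_\Delta)$: the identification above shows $\Tor_1^{B \otimes_\kk B}\bigl((B\otimes B)/\sh I',\ (B\otimes B)/J'\bigr)$ is nonzero in bidegree $(n,n)$ for all $n \gg 0$, hence infinite-dimensional over $\kk$, while it is annihilated by $\sh I'$ and so is a module over the finite-dimensional algebra $(R/I) \otimes_\kk (R/I)$ — therefore not finitely generated, and $R \otimes_\kk R$ is not left noetherian.
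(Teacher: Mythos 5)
There is a genuine gap, and it is conceptual rather than a matter of bookkeeping. Your geometric input is $\shTor_1^X(\struct_Z,\struct_Z)\neq 0$, obtained from Lemma~\ref{lem-contain-Tor} with $V=W=Z$. But that Tor is nonzero for \emph{every} proper closed subscheme $Z\subsetneq X$, including locally principal divisors. An argument that only invokes this obstruction would, if it worked, prove the conclusion for all $Z$ --- which is not expected (compare Corollary~\ref{cor-SLN}) and is not what the proposition asserts. So the hypothesis that $Z$ is not of pure codimension $1$ cannot ``enter in making this rigorous''; the obstruction you have chosen simply does not see it, and you would need to change the geometric input, not just fill in details. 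Relatedly, you have conflated two subschemes of $X\times X$: the right ideal $\sh I'=I\otimes B+B\otimes I$ cuts out $Z\times Z$ (ideal sheaf $\sh I_Z\boxtimes\struct_X+\struct_X\boxtimes\sh I_Z$), whereas the subscheme whose sections actually give the graded pieces $R_n\otimes R_m$ is $Y=(Z\times X)\cup(X\times Z)$ (ideal sheaf $\sh I_Z\boxtimes\sh I_Z$). It is homological transversality of the latter to $\Delta$ that governs the problem, and that is precisely where codimension $\geq 2$ is used.

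The paper's proof handles this by first regrading: with $T_n=\sum_i R_i\otimes R_{n+i}$, one observes that if $T=R\otimes_\kk R$ were left noetherian then so would be $T_0=\bigoplus_n R_n\otimes R_n$, the Segre product $R\segre R$. This $T_0$ is literally a geometric idealizer, $R(X\times X,\Lsh\boxtimes\Lsh,\sigma\times\sigma,Y)$, so Proposition~\ref{prop-not-leftnoeth}(1) applies once one shows $Y$ is not homologically transverse to the $(\sigma\times\sigma)$-invariant diagonal $\Delta$. That non-transversality is \emph{not} immediate from Lemma~\ref{lem-contain-Tor}: locally at the generic point $\eta$ of a component $Z'$ of codimension $d\geq 2$, one compares $\len_\eta(\struct_Y\otimes\struct_\Delta)=\len_\eta(\struct/J^2)$ with Serre's intersection number $i(Y,\Delta;\eta)=2\len_\eta(\struct/J)$, and uses that $J$ is \emph{not} principal to get a strict inequality $\len_\eta(\struct/J^2)>2\len_\eta(\struct/J)$, forcing $\shTor_{\geq 1}^{X\times X}(\struct_Y,\struct_\Delta)\neq 0$. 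For a locally principal $Z$ this inequality becomes an equality and the higher Tor vanishes, which is exactly where the hypothesis does its work. Your ``alternative'' route via $\I_{B\otimes B}(\sh I')$ also does not close: that idealizer is strictly larger than $R\otimes_\kk R$ (e.g.\ $a\otimes b$ with $b\in I$, $a\notin R$ lies in it but not in $R\otimes_\kk R$), so Proposition~\ref{prop-R-Tor} applied to it does not conclude anything about $R\otimes_\kk R$ without an additional transfer argument. The regrading reduction to the Segre product is what replaces all of this and lets the machinery of Section~6 apply cleanly; I would adopt it.
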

\begin{proof}
Let $T = R\otimes_{\kk} R$.  Consider the $\ZZ$-grading on $T$ given by 
\[ T_n = \sum_{i \in \NN} R_i \otimes R_{n+i}.\]
If $T$ is left noetherian, then $T_0$ must also be left noetherian.  Thus we will show that $T_0$ is not left noetherian.

Let $S = T_0$. The ring $S$ is also known as the {\em Segre product} of $R$ with itself, and is sometimes written  $S = R \segre R$.  It has a natural $\NN$-grading, given by $S_n = R_n \otimes R_n$.  Let $Y = (Z \times X) \cup (X \times Z)$.  It can readily be seen that 
\[ S = R(X \times X, \Lsh \boxtimes \Lsh, \sigma \times \sigma, Y).\]

Let $\Delta \subseteq X\times X$ be the diagonal.  We claim that $Y$ is not homologically transverse to $\Delta$.  As $R$ is left noetherian, by Proposition~\ref{prop-not-leftnoeth}, $Z$ is homologically transverse to the singular stratification of $X$.  In particular, there is some component $Z'$ of $Z$, of codimension $d \geq 2$,  so that $Z'$ is not contained in the singular stratification of $X$.  Thus, if $\eta$ is the generic point of $Z'$, the local ring $\struct = \struct_{X, \eta}$ is a regular local ring of dimension $d$.   Let $J$ be the ideal of $Z$ in $\struct$.   By assumption, $J$ is $\eta$-primary and is not principal.    

Let $\struct' = \struct_{X \times X, \eta \times \eta}$.  There is a natural embedding $\struct \otimes_{\kk} \struct \to \struct'$.  Let $x_1, \ldots x_d$ be a system of parameters for the maximal ideal $\eta_\eta$ of $\struct$.  The maximal ideal   $\mf{m}$ of $\struct'$ is generated by the system of parameters $x_1 \otimes 1, \ldots, x_d \otimes 1, 1 \otimes x_1, \ldots, 1 \otimes x_d$.   Let $K$ be the ideal of $Y$ in $\struct'$; note that $K$ is  generated by the image of $J \otimes J$. 

Let $H = (\{x_i \otimes 1 - 1 \otimes x_i\st 1 \leq i \leq d \})$ be the ideal of $\Delta $ in $\struct'$.  Now, by \cite[ Theorem~A.1.1]{Ha}, the intersection product on $X \times X$ satisfies  
\[ i(Y, \Delta; \eta) = i(X \times Z, \Delta; {\eta} ) + i(Z \times X, \Delta; \eta).\]
This is easily seen to be 
\[ 2 \len_{\eta} (\struct/J).\]

On the other hand, consider $\struct'/K \otimes_{\struct'} \struct'/H \cong \struct'/(K + H)$.  Since $\struct'/H \cong \struct$, this is isomorphic to 
$\struct/J^2$.  As $J$ is not principal, we have
\beq\label{ANI}
 \len_{\eta} (\struct/J^2 ) > 2 \len_{\eta} (\struct/J).
\eeq
By \eqref{intprod}, 
\[ 2 \len_{\eta} (\struct/J) = i(Y, \Delta; \eta) = \sum_{i \geq 0} (-1)^i \len_{\eta} \shTor^{X \times X}_i(\struct_Y, \struct_{\Delta}).\]
Using \eqref{ANI}, we obtain
\begin{multline*}
 \sum_{i \geq 1} (-1)^i \len_{\eta} \shTor^{X \times X}_i(\struct_Y, \struct_{\Delta}) 
= i(Y, \Delta; \eta) - \len_{\eta}(\struct_Y \otimes \struct_{\Delta}) \\
= 2 \len_{\eta} \struct/J - \len_{\eta} \struct/J^2 < 0.
\end{multline*}
Thus $Y$ and $\Delta$ are not homologically transverse. 

Now, $\Delta$ is $\sigma \times \sigma$-invariant, and the proof of  \cite[Corollary~3.5]{Keeler2003} shows that $\Lsh\boxtimes \Lsh$ is $\sigma\times\sigma$-ample.  Thus, by Proposition~\ref{prop-not-leftnoeth}, 
\[ T_0 = S \cong  R(X \times X, \Lsh \boxtimes \Lsh, \sigma \times \sigma, Y)\]
is not left noetherian.  Thus $T$ is not left noetherian.
\end{proof}

We remark that if $Z$ is reduced at $\eta$ (so that $J = \eta$), then it is easy to see directly that $\shTor_1^{X \times X} (\struct_Y, \struct_{\Delta}) \neq 0$; indeed, the element  
\[ x_1  \otimes x_2 - x_2 \otimes x_1 = (x_1 \otimes 1 - 1 \otimes x_1) (x_2 \otimes 1) + (1 \otimes x_2 - x_2 \otimes 1)(x_1 \otimes 1)\]
is in $H \cap K \smallsetminus HK$; note that $HK \subseteq \mf{m}^3$.

\section{The $\chi$ conditions for idealizers}\label{CHI}

In this section, we determine the homological properties of graded idealizers; specifically, we investigate the Artin-Zhang $\chi$ conditions, as defined in the introduction.

We first recall Rogalski's result that a right idealizer will fail $\chi_1$ and all higher $\chi_j$ on the left.  
\begin{proposition}\label{prop-left-chi}
{\em (Rogalski)}
Assume Assumption-Notation~\ref{assnotnot2}.     Then $R$ fails left $\chi_1$.
\end{proposition}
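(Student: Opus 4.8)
The plan is to produce a finitely generated graded left $R$-module $M$ and a finite-dimensional graded left $R$-module $N$ with $\dim_{\kk}\underline{\Ext}^1_R(N,M)=\infty$, and then deduce failure of left $\chi_1$ by a short d\'evissage: since $R$ is connected graded, $N$ has a finite composition series all of whose factors are shifts of the trivial module $\kk$, so if $\underline{\Ext}^1_R(\kk[j],R)$ were finite-dimensional for every $j$ (as left $\chi_1$ would give), then chasing the long exact sequences of $\underline{\Ext}^\bullet_R(-,M)$ along such a filtration would force $\underline{\Ext}^1_R(N,M)$ to be finite-dimensional. Concretely I would take $M={}_RR$ and $N=R/I$; note $R/I$ is finite-dimensional by Assumption-Notation~\ref{assnotnot2}, since $R_n=H^0(X,\sh I\Lsh_n)=I_n$ for $n\gg 0$.

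First I would record two easy facts. (i) $R$ is a domain, being a subring of the twisted homogeneous coordinate ring $B=B(X,\Lsh,\sigma)$ of the integral scheme $X$; hence $R$ contains no nonzero finite-dimensional graded left submodule (a nonzero homogeneous element of such a submodule would be annihilated on the left by $R_{\geq n}$ for $n\gg 0$, but $R_n=H^0(X,\sh I\Lsh_n)\neq 0$ for $n\gg 0$ by $\sigma$-ampleness). In particular $\underline{\Hom}_R(R/I,R)=0$, and of course $\underline{\Ext}^1_R(R,R)=0$. (ii) Right multiplication gives an embedding $B\hookrightarrow\underline{\Hom}_R({}_RI,{}_RR)$ of graded $\kk$-spaces: for $b\in B_k$ the map $\mu_b\colon x\mapsto xb$ sends $I$ into $Ib\subseteq I\subseteq R$ because $I$ is a right ideal of $B$ contained in $R$, it is left $R$-linear and homogeneous of degree $k$, and $b\mapsto\mu_b$ is injective since $B$ is a domain and $I\neq 0$. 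Under this embedding, the image of the restriction map $\underline{\Hom}_R(R,R)\cong R\to\underline{\Hom}_R(I,R)$ is exactly the subspace $R\subseteq B$.

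Now apply $\underline{\Hom}_R(-,R)$ to the exact sequence $0\to I\to R\to R/I\to 0$ of graded left $R$-modules. Using (i), the long exact sequence collapses to
\[0\longrightarrow R\longrightarrow\underline{\Hom}_R(I,R)\longrightarrow\underline{\Ext}^1_R(R/I,R)\longrightarrow 0,\]
so $\underline{\Ext}^1_R(R/I,R)\cong\underline{\Hom}_R(I,R)/R$, which by (ii) contains $B/R$. Finally $\dim_{\kk}B/R=\infty$: for $n\gg 0$ we have $R_n=H^0(X,\sh I\Lsh_n)$, and since $\Lsh$ is $\sigma$-ample the sheaf $\Lsh_n$ is globally generated for $n\gg 0$ while $\sh I$ is a proper ideal sheaf, so $H^0(X,\sh I\Lsh_n)\subsetneq H^0(X,\Lsh_n)=B_n$ and $(B/R)_n\neq 0$; in fact, feeding $\sh F=\struct_X$ into the right ampleness of $\{(\sh R_n)_{\sigma^n}\}$ from Lemma~\ref{lem-rtample} shows $(B/R)_n\cong H^0(X,\struct_Z\otimes\Lsh_n)$ for $n\gg 0$. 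Thus $\underline{\Ext}^1_R(R/I,R)$ is infinite-dimensional, and the d\'evissage described above then shows that $R$ fails left $\chi_1$.

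The argument has no serious obstacle — it is essentially Rogalski's proof recast sheaf-theoretically; the only points needing a little care are the identification in (ii) of the image of $\underline{\Hom}_R(R,R)$ inside $\underline{\Hom}_R(I,R)$ with the subring $R\subseteq B$, and the d\'evissage step, which legitimately reduces control of $\underline{\Ext}^1_R(-,R)$ against the finite-dimensional module $R/I$ to control of $\underline{\Ext}^1_R(-,R)$ against shifts of $\kk$.
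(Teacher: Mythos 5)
Your proof is correct and takes essentially the same approach as the paper: both arguments hinge on exhibiting the infinite-dimensional space $B/R$ inside $\underline{\Ext}^1_R(\text{finite-dimensional module},R)$, yours made explicit via the $\underline{\Hom}_R(-,R)$ long exact sequence for $0\to I\to R\to R/I\to 0$ together with the embedding $B\hookrightarrow\underline{\Hom}_R(I,R)$ by right multiplication. The only stylistic difference is that the paper first replaces $R$ by $\kk+I$ (harmless since this changes $R$ in only finitely many degrees), so that $R/I=\kk$ and no d\'evissage is needed, whereas you keep $R$ as given and pass from $R/I$ to shifts of $\kk$ by a composition-series argument.
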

\begin{proof}
This is proved in  \cite[Proposition~4.2]{R-idealizer}.  
To see it directly, note that changing $R$ by a finite-dimensional vector space does not affect the $\chi$ conditions, so without loss of generality we have $R = \kk + I$.  Now  $B/R$ is infinite-dimensional and is killed on the left by $I$; as we have an injection $B/R \hookrightarrow \Ext^1_R(\kk, R)$, we see that  $\Ext^1_R(\kk, R)$ is infinite-dimensional.  
\end{proof}

To analyze the right $\chi$ conditions, our key result is the following, due to Rogalski:

\begin{proposition}\label{prop-R-chi}
{\em (\cite[Proposition~4.1]{R-idealizer})}
Let $B$ be a noetherian ring that satisfies right $\chi$.  Let $I$ be  a a right ideal of $B$, and let $R = \I_B(I)$.  Assume that $B/I$ is infinite-dimensional, that $B_R$ is finitely generated, and that $R/I$ is finite-dimensional.  Then $R$ satisfies right $\chi_i$ for some $i \geq 0$ if and only if 
$\underline{\Ext}^j_B(B/I, M)$ is finite-dimensional for all $0 \leq j \leq i$ and all $M \in \rgr B$. \qed
\end{proposition}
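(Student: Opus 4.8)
The plan is to compare $\underline{\Ext}^{\bullet}_R(\kk,-)$ with $\underline{\Ext}^{\bullet}_B(B/I,-)$, exploiting that $R$ and $B$ differ only through the two--sided ideal $I$ of $R$ (indeed $R$ is the fibre product $B\times_{B/I}R/I$), together with the finiteness of $B_R$, which lets one build every finitely generated graded right $R$--module out of finitely generated graded right $B$--modules, up to finite--dimensional modules that are harmless since the $\chi$ conditions are insensitive to finite--dimensional perturbations. \emph{First}, normalise: replacing $R$ by $\kk+I$ changes it by a finite--dimensional vector space (using that $R/I$ is finite--dimensional), so we may assume $R/I=\kk$, i.e.\ $I=R_{\ge 1}$ and $(R/I)\otimes_R B=B/IB=B/I$ as right $B$--modules (here $IB=I$ because $I$ is a right ideal of $B$). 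Also record that $R$ is right noetherian --- this holds in the cases we apply the result to, and in general follows from Lemma~\ref{lem-Rob} --- so $\kk$ has a resolution by finitely generated graded free right $R$--modules; hence $\underline{\Ext}^j_R(\kk,N)$ is finite--dimensional for every finite--dimensional $N$, and likewise on the $B$--side. Thus it suffices to show that, for each fixed $i$, finite--dimensionality of $\underline{\Ext}^j_R(\kk,M)$ for all $j\le i$ and all finitely generated $M\in\rgr R$ is equivalent to finite--dimensionality of $\underline{\Ext}^j_B(B/I,N)$ for all $j\le i$ and all finitely generated $N\in\rgr B$.

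\emph{Second}, reduce the coefficient module over $R$ to right $B$--modules. Tensoring the exact sequence of $(R,R)$--bimodules $0\to R\to B\oplus R/I\to B/I\to 0$ (whose exactness is exactly the fibre--square identity) on the left with a finitely generated $M\in\rgr R$ produces a long exact sequence in which the terms $M\otimes_R B$, $M\otimes_R(B/I)$ and the higher $\underline{\Tor}^R_q(M,B)$, $\underline{\Tor}^R_q(M,B/I)$ are finitely generated right $B$--modules (using that $B_R$ is finitely generated, that $B$ is noetherian, and that $M$ has a finitely generated free resolution over $R$), while the remaining terms $M\otimes_R(R/I)=M/MR_{\ge 1}$ and $\underline{\Tor}^R_q(M,R/I)$ are finitely generated over the finite--dimensional algebra $R/I=\kk$, hence finite--dimensional. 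Feeding this long exact sequence into $\underline{\Ext}^{\bullet}_R(\kk,-)$ and discarding the finite--dimensional contributions shows that $\underline{\Ext}^j_R(\kk,M)$ is finite--dimensional for $j\le i$ if and only if $\underline{\Ext}^j_R(\kk,N)$ is, for $j\le i$, for $N$ ranging over a fixed finite list of finitely generated right $B$--modules attached to $M$. In particular, since $B/I$ is itself a finitely generated right $B$--module, the $\underline{\Ext}_B$--condition appearing in the statement is precisely the one to compare against.

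\emph{Third --- the crux ---} compare $\underline{\Ext}^{\bullet}_R(\kk,N)$ with $\underline{\Ext}^{\bullet}_B(B/I,N)$ for $N$ a finitely generated right $B$--module. This is governed by the Cartan--Eilenberg change--of--rings spectral sequence $\underline{\Ext}^p_B\!\big(\underline{\Tor}^R_q(\kk,B),\,N\big)\Rightarrow\underline{\Ext}^{p+q}_R(\kk,N)$ for the inclusion $R\subseteq B$ (valid because $N$ is a right $B$--module), whose bottom row $q=0$ is $\underline{\Ext}^p_B(\kk\otimes_R B,N)=\underline{\Ext}^p_B(B/I,N)$. One must show that the rows $q\ge 1$ contribute only finite--dimensional spaces in the total degrees $\le i$ that matter; granting this, the spectral sequence forces $\underline{\Ext}^j_B(B/I,N)$ to be finite--dimensional for $j\le i$ exactly when $\underline{\Ext}^j_R(\kk,N)$ is. I expect this to be the real work: the modules $\underline{\Tor}^R_q(\kk,B)$ for $q\ge 1$ need not themselves be finite--dimensional, and the argument must show instead that --- modulo finitely generated right $B$--submodules and quotients on which $\underline{\Ext}_B(-,N)$ eventually vanishes --- they reduce to finite--dimensional modules; this is precisely the point at which the full hypothesis that $B$ satisfies right $\chi$ is consumed, since $\underline{\Ext}^p_B(\text{finite--dimensional},N)$ is finite--dimensional only by virtue of $\chi$ for $B$. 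The residual low--degree bookkeeping (isolating $\underline{\Hom}$ and $\underline{\Ext}^1$) is handled by the short exact sequences $0\to I\to R\to\kk\to 0$ and $0\to I\to B\to B/I\to 0$. Running the three steps in both directions yields the asserted equivalence.
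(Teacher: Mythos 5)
This proposition is quoted from \cite{R-idealizer} without proof, so there is no internal argument in the paper to compare against; I will assess your sketch on its own. The normalisation, the fibre--square reduction, and the change--of--rings spectral sequence $\underline{\Ext}^p_B\bigl(\Tor^R_q(\kk, B), N\bigr) \Rightarrow \underline{\Ext}^{p+q}_R(\kk, N)$ are all set up correctly, and the bottom row $\underline{\Ext}^p_B(B/I,N)$ is indeed the object you want. But the step you honestly flag as ``the real work'' --- taming the rows with $q\ge 1$ --- is a genuine gap, and it fails for a concrete reason rather than a missing technicality. From $0\to R\to B\to B/R\to 0$ (graded left $R$-modules) and $\Tor^R_q(\kk,R)=0$ for $q\ge 1$, one has $\Tor^R_q(\kk,B)\cong\Tor^R_q(\kk,B/R)$ for all $q\ge 1$. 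Since $I\cdot(B/R)=0$, the left $R$-module $B/R$ is a direct sum of shifted copies of $\kk$ with multiplicities $\dim(B/R)_m$, so $\Tor^R_q(\kk,B/R)\cong\bigoplus_m\Tor^R_q(\kk,\kk)[-m]^{\oplus\dim(B/R)_m}$. Already for $q=1$ this is infinite-dimensional, because $\Tor^R_1(\kk,\kk)$ is the minimal generating space of $I=R_{\ge 1}$ as a right $R$-module, which is nonzero once $I\neq 0$. Thus $\Tor^R_q(\kk,B)$ is a finitely generated but \emph{non-torsion} right $B$-module; and right $\chi$ for $B$ only gives finite-dimensionality of $\underline{\Ext}^p_B(L,N)$ when $L$ itself is finite-dimensional (by filtering $L$ by copies of $\kk$), so it says nothing at all about $\underline{\Ext}^p_B(\Tor^R_q(\kk,B),N)$ --- compare $\underline{\Hom}_B(B,N)=N$. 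The salvage you gesture at (``modulo finitely generated right $B$-submodules and quotients on which $\underline{\Ext}_B(-,N)$ eventually vanishes\dots'') is not supplied, and I do not see how to supply it. Both implications of the ``if and only if'' founder here.

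The argument in \cite{R-idealizer} sidesteps the change--of--rings spectral sequence entirely. It trades $\underline{\Ext}^j_R(\kk,M)\cong\underline{\Ext}^{j-1}_R(I,M)$ and $\underline{\Ext}^j_B(B/I,M)\cong\underline{\Ext}^{j-1}_B(I,M)$ for $j\ge 2$ (via $0\to I\to R\to\kk\to 0$ and $0\to I\to B\to B/I\to 0$, using that $R$ and $B$ are projective over themselves), together with a low-degree diagram chase for $j=0,1$, and exploits the equivalence of pairs $\rProj R\cong(\rQgr B,\pi I)$ of Proposition~\ref{prop-R-triple}, so that the comparison between the two $\underline{\Ext}$ groups is made in the quotient category $\rQgr B$. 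There $\chi$ for $B$ is consumed only to compare $\underline{\Ext}_{\rgr B}$ with $\underline{\Ext}_{\rQgr B}$ (\cite[Proposition~3.5]{AZ1994}), a job it is actually equipped to do, rather than to control $\underline{\Ext}_B$ of the large $\Tor$ modules that your spectral sequence produces. If you want to repair your argument, this is the pivot to make: work with $I$ in $\rQgr B$ rather than with $\Tor^R_q(\kk,B)$ in $\rGr B$.
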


Rogalski proved that the right idealizer of a point in $\PP^d$ satisfies right $\chi_{d-1}$ and fails right $\chi_d$ if the orbit of the point is critically dense.   Here we extend Rogalski's result to idealizers at higher-dimensional subschemes.

\begin{lemma}\label{lem-whatsExt}
Let $X$ be a projective variety, let $\sigma \in \Aut_{\kk} X$, and let $\Lsh$ be a $\sigma$-ample invertible sheaf on $X$.  
Let $Z$ and $Y$ be closed subschemes of $X$, and let $B = B(X, \Lsh, \sigma)$.  Let  $J$ be the right ideal of $B$ consisting of sections vanishing along $Y$, and let $I$ be the right ideal of $B$ consisting of sections vanishing along $Z$.  For $n \gg 0$, there is an isomorphism of $\kk$-vector spaces
\[ \underline{\Ext}^j_{\rgr B}(B/I, B/J)_n \cong \Ext^j_X(\struct_Z,  \struct_{\sigma^n Y} \otimes \Lsh_n^{\sigma^{-n}}).\]
\end{lemma}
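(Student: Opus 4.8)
The plan is to compute $\underline{\Ext}^j_{\rgr B}(B/I, B/J)$ by passing through the category equivalence $\rqgr B \simeq \struct_X\lmod$ recalled in Example~\ref{eg-twist}, just as was done for $\underline{\Hom}$ in the proof of Proposition~\ref{prop-SRN}. The key input is that $B$ satisfies right $\chi$ (this is a standard fact for twisted homogeneous coordinate rings of $\sigma$-ample data, e.g.\ via \cite[Theorem~4.5]{AZ1994}), so that by \cite[Proposition~3.5]{AZ1994} the natural maps
\[
\underline{\Ext}^j_{\rgr B}(B/I, B/J) \longrightarrow \underline{\Ext}^j_{\rqgr B}(\pi(B/I), \pi(B/J))
\]
are isomorphisms in all large degrees, for every $j$. (One needs to know $\underline{\Ext}^j_{\rgr B}$ and $\underline{\Ext}^j_{\rqgr B}$ agree in high degree for \emph{all} $j$, not just $j=0$; this is exactly the content of $\chi$ together with the fact that $B/I \in \rgr B$ is noetherian.) Under the equivalence $\rqgr B \simeq \struct_X\lmod$, the module $B/I$ corresponds to $\struct_Z$ and $B/J$ corresponds to $\struct_Y$, so $\underline{\Ext}^j_{\rqgr B}(\pi(B/I),\pi(B/J)) \cong \bigoplus_n \Ext^j_X(\struct_Z, \widetilde{(B/J)[n]})$.

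First I would record the degree-$n$ shift: by \eqref{CHARLIE}, $\widetilde{(B/J)[n]} \cong (\struct_Y \otimes \Lsh_n)^{\sigma^{-n}}$ for $n \geq 0$, and since $\sigma$ is an automorphism this is $\sigma^{-n}$-pullback of a twist of $\struct_Y$, hence is the structure sheaf $\struct_{\sigma^n Y}$ of the translated subscheme, twisted by the invertible sheaf $\Lsh_n^{\sigma^{-n}}$; that is, $\widetilde{(B/J)[n]} \cong \struct_{\sigma^n Y}\otimes \Lsh_n^{\sigma^{-n}}$. Substituting gives, in each large degree $n$,
\[
\underline{\Ext}^j_{\rgr B}(B/I, B/J)_n \cong \Ext^j_X\bigl(\struct_Z,\ \struct_{\sigma^n Y}\otimes \Lsh_n^{\sigma^{-n}}\bigr),
\]
which is the claimed isomorphism of $\kk$-vector spaces.

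The one genuine obstacle is justifying that $\underline{\Ext}^j_{\rgr B} \to \underline{\Ext}^j_{\rqgr B}$ is an isomorphism in high degree simultaneously for all $j$ up to the relevant range; this is where right $\chi$ for $B$ is used, and one should cite \cite[Proposition~3.5]{AZ1994} (or its graded analogue) carefully, noting that the comparison map fits into long exact sequences whose error terms are the local cohomology modules $\underline{R}^i\underline{\Gamma}_{\mathfrak m}$, which $\chi$ forces to be right-bounded. Everything else — the identification of $B/I$ with $\struct_Z$ under the equivalence, the exactness of $\widetilde{\phantom{M}}$, and the computation of the shift via \eqref{CHARLIE} — is routine and already appears (for $j=0$) in the proof of Proposition~\ref{prop-SRN}. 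I would organize the write-up to first invoke $\chi$ and the comparison isomorphism, then translate through the equivalence, then apply \eqref{CHARLIE}.
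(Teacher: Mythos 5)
Your proof is correct and follows essentially the same route as the paper: invoke right $\chi$ for $B$ to match $\underline{\Ext}^j_{\rgr B}$ with $\underline{\Ext}^j_{\rqgr B}$ in large degrees via \cite[Proposition~3.5]{AZ1994}, then translate through $\rqgr B \simeq \struct_X\lmod$ and compute the shift using \eqref{CHARLIE}. One small citation correction: \cite[Theorem~4.5]{AZ1994} only gives $\chi_1$, whereas you need $\chi_j$ for all $j$; the paper cites \cite[Theorem~7.3]{Y1992} together with \cite[Theorem~6.3]{VdB1997} (or \cite[Theorem~4.2]{YZ1997}) for the full $\chi$ condition.
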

\begin{proof}
The twisted homogeneous coordinate ring $B$ satisfies $\chi$ as a consequence of 
\cite[Theorem~7.3]{Y1992} and  \cite[Theorem~6.3]{VdB1997} (or alternately, \cite[Theorem 4.2]{YZ1997}).  Thus the  natural map from $\underline{\Ext}^j_{\rgr B}(B/I, B/J)$ to $\underline{\Ext}^j_{\rQgr B}(\pi(B/I), \pi(B/J))$ has right bounded kernel and cokernel by \cite[Proposition~3.5]{AZ1994}.  It therefore suffices to show that for $n \gg 0$, we have 
\[ \underline{\Ext}^j_{\rQgr B}(\pi(B/I), \pi(B/J))_n \cong \Ext^j_X(\struct_Z, \struct_{\sigma^n Y} \otimes \Lsh_n^{\sigma^{-n}}).\]
In fact, we show that we have this isomorphism for all $n$.

Using the equivalence between $\rqgr B$ and $\struct_X \lmod$, we have that
\begin{multline*}
 \underline{\Ext}^j_{\rQgr B}(\pi(B/I), \pi(B/J))_n \cong {\Ext}^j_{\rQgr B}(\pi(B/I), \pi((B/J) [n])) \\
 \cong
\Ext^j_X(\widetilde{B/I}, \widetilde{(B/J)[n]}).
\end{multline*}
Now, $\widetilde{B/I} =  \struct_Z$, and by \eqref{CHARLIE},
\[ \widetilde{(B/J)[n]} \cong (\struct_Y \Lsh_n)^{\sigma^{-n}} \cong \struct_{\sigma^n Y} \Lsh_n^{\sigma^{-n}}.\]
 The result follows.
\end{proof}

We have seen that for $R$ to be right noetherian is relatively straightforward, but the left noetherian property for $R$ depends on the critical transversality of $\{ \sigma^n Z\}$.  It turns out that the right $\chi_j$ properties, for $j \geq 1$, also depend on the critical transversality of $\{ \sigma^n Z\}$.  In particular, we have:

\begin{proposition}\label{prop-chi}
Assume Assumption-Notation~\ref{assnotnot2}.  Let $d = \codim Z$.  

$(1)$ If $\{\sigma^n Z\}_{n \leq 0}$ is critically transverse, and either

{\rm(a)} $X$ is nonsingular and $Z$ is Gorenstein; or

{\rm(b)} $Z$ is 0-dimensional,

\noindent then $R$ satisfies right $\chi_{d-1}$ but fails right $\chi_d$.

$(2)$ More generally, if $Z$ contains an irreducible component of codimension $d$ that is not contained in the singular locus of $X$, then $R$ fails right $\chi_d$.  In particular, if $R$ is left noetherian then $R$ fails right $\chi_d$.
\end{proposition}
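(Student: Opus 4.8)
The plan is to route everything through Rogalski's criterion, Proposition~\ref{prop-R-chi}. Its standing hypotheses — that $B$ is noetherian and satisfies right $\chi$ (true for a twisted homogeneous coordinate ring), that $B/I$ is infinite‑dimensional (as $Z\ne X$), that $R/I$ is finite‑dimensional (Assumption-Notation~\ref{assnotnot2}), and that $B_R$ is finitely generated — will all be in force, so it suffices to control the graded modules $\underline{\Ext}^j_B(B/I,M)$ for $M\in\rgr B$. Since $\rqgr B\simeq\struct_X\lmod$ and every coherent sheaf on $X$ has a finite filtration with quotients $\struct_Y\otimes(\text{invertible})$, $Y$ reduced and irreducible (possibly $Y=X$), the long exact sequences in $\underline{\Ext}$ reduce the problem to $M=B/J$ with $J=\Gamma_*(\sh{I}_Y)$ and to $M=B$. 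For these, Lemma~\ref{lem-whatsExt} gives $\underline{\Ext}^j_B(B/I,M)_n\cong\Ext^j_X(\struct_Z,\struct_{\sigma^nY}\otimes\Lsh_n^{\sigma^{-n}})$ for $n\gg0$, and applying the exact autoequivalence $(\sigma^n)^*$ of $\struct_X\lmod$ rewrites this as $\Ext^j_X(\struct_{\sigma^{-n}Z},\struct_Y\otimes\Lsh_n)$. Thus the $\chi$-behaviour of $R$ is governed by the translates $\sigma^{-n}Z$ relative to a fixed $Y$ as $n\to\infty$.

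\textbf{Right $\chi_{d-1}$ in part (1).} Critical transversality of $\{\sigma^nZ\}_{n\le0}$ forces $Z$ to have finite intersection with forward $\sigma$-orbits: homological transversality of $\sigma^{-n}Z$ with a reduced point $\{x\}$ fails whenever $x\in\sigma^{-n}Z$, so $x\notin\sigma^{-n}Z$ for $n\gg0$. Hence $R$ is right noetherian by Proposition~\ref{prop-SRN}, and since $B$ is a domain containing a nonzero right ideal lying in $R$, $B_R$ is finitely generated. Now fix $Y$; for $n\gg0$ the translate $\sigma^{-n}Z$ is homologically transverse to $Y$. In case (a) it is Gorenstein of codimension $d$ in the smooth $X$, so $\struct_{\sigma^{-n}Z}$ has a locally free resolution of length $d$ with $\shExt^q_X(\struct_{\sigma^{-n}Z},\struct_X)=0$ for $q\ne d$ and $\shExt^d_X(\struct_{\sigma^{-n}Z},\struct_X)=\omega$ invertible on $\sigma^{-n}Z$; dualizing and tensoring with $\struct_Y\otimes\Lsh_n$ yields $\shExt^j_X(\struct_{\sigma^{-n}Z},\struct_Y\otimes\Lsh_n)\cong\shTor^X_{d-j}(\omega,\struct_Y\otimes\Lsh_n)$, which vanishes for $j<d$ because $\omega$ is locally free over $\struct_{\sigma^{-n}Z}$ and $\shTor^X_{\ge1}(\struct_{\sigma^{-n}Z},\struct_Y)=0$. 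The local-to-global spectral sequence then gives $\Ext^j_X(\struct_{\sigma^{-n}Z},\struct_Y\otimes\Lsh_n)=0$ for $j<d$, and $\Ext^j_X(\struct_Z,\Lsh_n^{\sigma^{-n}})=0$ for $j<d$ since $\struct_Z$ is Cohen--Macaulay of codimension $d$. So $\underline{\Ext}^j_B(B/I,M)$ is finite‑dimensional for $j\le d-1$ and all $M$, and Proposition~\ref{prop-R-chi} gives right $\chi_{d-1}$. Case (b) is identical except that $\struct_{\sigma^{-n}Z}$ is automatically Cohen--Macaulay of codimension $d=\dim X$ (no Gorenstein hypothesis needed) and homological transversality of a $0$-dimensional scheme with $Y$ forces disjoint supports, so the relevant $\shExt$ sheaves vanish outright.

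\textbf{Failure of right $\chi_d$ (part (2), and the final clause).} Take $M=B$, i.e.\ $Y=X$, $J=0$; by Lemma~\ref{lem-whatsExt} it suffices to show $\Ext^d_X(\struct_Z,\Lsh_n^{\sigma^{-n}})\ne0$ for $n\gg0$. Let $\mathcal{G}_q=\shExt^q_X(\struct_Z,\struct_X)$. At the generic point $\eta'$ of an irreducible component $Z'$ of $Z$ of codimension $d$ not contained in the singular locus, $\struct_{X,\eta'}$ is regular of dimension $d$ and $\struct_{Z,\eta'}$ has finite positive length, so $\pd_{\struct_{X,\eta'}}\struct_{Z,\eta'}=d$ and $(\mathcal{G}_d)_{\eta'}\ne0$; thus $\mathcal{G}_d\ne0$. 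Pulling back by $\sigma^n$ and using the left half of $\sigma$-ampleness, $H^p(X,\mathcal{G}_q\otimes\Lsh_n^{\sigma^{-n}})\cong H^p(X,\mathcal{G}_q^{\sigma^n}\otimes\Lsh_n)=0$ for $p\ge1$ and $n\gg0$ (only the finitely many $\mathcal{G}_q$ with $q\le d$ matter), while $H^0(X,\mathcal{G}_d^{\sigma^n}\otimes\Lsh_n)\ne0$ for $n\gg0$ since that sheaf is globally generated and nonzero. In the local-to-global spectral sequence for $\Ext^d_X(\struct_Z,\Lsh_n^{\sigma^{-n}})$ all entries off the row $q=0$ in total degree $\le d$ vanish, as do the differentials into and out of $E_2^{0,d}$, so $H^0(X,\mathcal{G}_d\otimes\Lsh_n^{\sigma^{-n}})\ne0$ is a subquotient of $\Ext^d_X(\struct_Z,\Lsh_n^{\sigma^{-n}})$; hence $\underline{\Ext}^d_B(B/I,B)$ is infinite‑dimensional and Proposition~\ref{prop-R-chi} shows $R$ fails right $\chi_d$. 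Finally, if $R$ is left noetherian then Proposition~\ref{prop-not-leftnoeth} forces $Z$ to be homologically transverse to the singular stratification of $X$, so $Z$ does have a component of codimension $d$ off the singular locus and the same argument applies.

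\textbf{Main obstacle.} The technical heart is the local identification of $\shExt^j_X(\struct_{\sigma^{-n}Z},\struct_Y\otimes\Lsh_n)$ for $j<d$ in case (a), where the Gorenstein hypothesis is used twice: once so that the homological dimension of $\struct_{\sigma^{-n}Z}$ equals $d$ (Cohen--Macaulayness), and once so that $\omega=\shExt^d_X(\struct_{\sigma^{-n}Z},\struct_X)$ is invertible over $\struct_{\sigma^{-n}Z}$, which is exactly what lets homological transversality of $\sigma^{-n}Z$ with $Y$ be transported through the dual resolution. A secondary point, supplied by the (left) ampleness built into $\sigma$-ampleness, is that the vanishing $H^{\ge1}(X,\mathcal{G}_q^{\sigma^n}\otimes\Lsh_n)=0$ must hold uniformly in $n$ even though the sheaves $\mathcal{G}_q^{\sigma^n}$ move with $n$.
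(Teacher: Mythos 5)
Your proposal is correct and follows the same backbone as the paper's argument: reduce via Proposition~\ref{prop-R-chi} to controlling $\underline{\Ext}^j_B(B/I,M)$, reduce to $M=B/J$ via Lemma~\ref{lem-whatsExt}, and then run the local-to-global spectral sequence \eqref{ss-Gro}. Two of your local steps are, however, genuinely different from the paper's, and both deserve comment.

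In case (1)(a), the paper works at a stalk $\struct_{X,x}$ and uses the self-dual free resolution $Q_\bullet$ of the Gorenstein quotient (\cite[Corollary~21.16]{Eis}) to get $\shExt^j_X(\struct_Z,\struct_{\sigma^nY})\cong\shTor^X_{d'-j}(\struct_Z,\struct_{\sigma^nY})$ directly, where $d'$ is the local codimension. You instead introduce the local dualizing module $\omega=\shExt^{d'}_X(\struct_Z,\struct_X)$, identify $\shExt^j_X(\struct_Z,-)\cong\shTor^X_{d'-j}(\omega,-)$, and then use the change-of-rings spectral sequence plus flatness of $\omega$ over $\struct_Z$ (Gorenstein) to transport the vanishing of $\shTor^X_{\ge1}(\struct_{\sigma^{-n}Z},\struct_Y)$. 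Both routes are valid; the paper's is a notch more direct (no change of rings needed), while yours isolates the two uses of the Gorenstein hypothesis more explicitly. Do be careful to state the argument with the local codimension $d'\ge d$, as the paper does: the resolution has length $d'$, not $d$, at a stalk where $Z$ is thinner, and the vanishing you need is for $j<d\le d'$.

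In case (1)(b), the paper dispenses with the local algebra for $Y\subsetneq X$ by invoking Corollary~\ref{cor-CT=CD} to translate critical transversality into critical density of orbits, concluding $\sigma^nY\cap Z=\emptyset$ for $n\gg0$. You instead assert that homological transversality of a $0$-dimensional $Z$ with a proper $Y$ already forces disjoint supports. That claim is true at a regular point (if $p\in Z\cap Y$ with $\struct_{X,p}$ regular, then $\Tor^{\struct_{X,p}}_{\ge1}(\struct_{Z,p},\struct_{Y,p})=0$ forces $\pd\struct_{Z,p}+\pd\struct_{Y,p}\le\dim\struct_{X,p}$; since $\pd\struct_{Z,p}=\dim\struct_{X,p}$, this gives $\struct_{Y,p}$ free, i.e. $\sh{I}_{Y,p}=0$), and you have already established via Lemma~\ref{lem-Mel}/Lemma~\ref{lem-contain-Tor} that $X$ is smooth along $Z$. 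But the assertion is not trivial and the paper's route through Corollary~\ref{cor-CT=CD} avoids it; you should either cite the pd-additivity argument or switch to the paper's approach. Finally, in the failure-of-$\chi_d$ argument, "all entries off the row $q=0$" should read "off the column $p=0$" (the vanishing established by $\sigma$-ampleness is $E_2^{pq}=0$ for $p\ge1$); the surrounding logic is unaffected.
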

\begin{proof}
 By Proposition~\ref{prop-R-chi}, $R$ satisfies right $\chi_i$ if and only if  for all finitely generated $M_B$
  we have $\dim_{\kk} \underline{\Ext}_{\rgr B}^j(B/I, M) < \infty$ for all $j \leq i$.  Furthermore, using the equivalence of categories between $\rqgr B$ and $\struct_X \lmod$, without loss of generality we may assume that $M = B/J$, where $J$ is a right ideal of $B$ consisting of sections vanishing along a reduced, irreducible subscheme $Y$ of $X$.  

 By Lemma \ref{lem-whatsExt}, for $n \gg 0$ we have isomorphisms 
\[\underline{\Ext}_{\rgr B}^j(B/I), B/J)_n \cong \Ext_X^j(\struct_Z, \struct_{\sigma^n Y} \otimes \Lsh_n^{\sigma^{-n}}). \]
  Thus we have:
 \begin{multline} \label{foo}
  \mbox{ $R$ satisfies right $\chi_i$ $\iff$ for all $Y \subseteq X$,} \\
  \mbox{$ \Ext^j_X(\struct_Z, \struct_{\sigma^n Y} \otimes \Lsh_n^{\sigma^{-n}}) = 0$ for all $j \leq i$ and $n \gg 0$.}
  \end{multline}
  
By \cite[Prop~4.2.1]{Tohoku}, for any coherent sheaves $\sh{E}$ and $\sh{F}$  there is a spectral sequence
\beq \label{ss-Gro}
H^p(X, \shExt^q_X(\sh{E}, \sh{F})) \Rightarrow \Ext^{p+q}_X(\sh{E}, \sh{F}).
\eeq
We consider the special case
\begin{equation}\label{ss3} 
E^{pq} = H^p (X, \shExt_X^q(\struct_Z, \struct_{\sigma^n Y} \otimes \Lsh_n^{\sigma^{-n}})) \Rightarrow \Ext_X^{p+q} (\struct_Z, \struct_{\sigma^n Y} \otimes \Lsh_n^{\sigma^{-n}}). 
\end{equation}
We first suppose  that (1)(a) holds, and show that $R$ satisfies right $\chi_{d-1}$.  

  Fix a closed subscheme $Y$ of $X$ and consider the sheaf $\shExt_X^j(\struct_Z, \struct_{\sigma^n Y})$.  This is supported on $Z$; we compute it by working locally at some closed point $x \in Z$.   Gorenstein rings are Cohen-Macaulay and therefore locally equidimensional \cite[Corollary~18.11]{Eis}, so we may assume that $Z$ is pure-dimensional of codimension $d' \geq d$.  Let $J \subseteq \struct$ be the ideal defining $Z$ locally at $x$.  

By \cite[Corollary~21.16]{Eis},   $\struct/J$ has a self-dual free resolution as an $\struct$-module
\[ 0 \to Q_{d'} \to \cdots \to Q_0 \to \struct/J. \]
We write this resolution as  $Q_{\bullet} \to \struct/J$.

 For a given $n$, let $K \subseteq \struct$ be the ideal defining $\sigma^nY$ at $P$.  Let $M = \struct/K$. Then we have isomorphisms of complexes
  \[\Hom_{\struct}(Q_\bullet, M) \cong \Hom_{\struct}(Q_{\bullet}, \struct) \otimes M \cong Q_{\bullet} \otimes M,\]
  where the final isomorphism follows from the fact that $Q_{\bullet}$ is self-dual. 
  The right-hand complex of this equation computes $\Tor^{\struct}_{d'-j}(\struct/J, M)$.  Thus
   we obtain isomorphisms
\beq \label{Ext-Tor}
\shExt_X^j(\struct_Z, \struct_{\sigma^n Y}) \cong \shTor_{d'-j}^X(\struct_{Z}, \struct_{\sigma^n Y}) \cong \shTor_{d'-j}^X(\struct_{\sigma^{-n}Z}, \struct_Y)^{\sigma^{-n}}
\eeq
for all $j$.

We return to the Grothendieck spectral sequence \eqref{ss3}.   By \cite[III.6.7]{Ha},  
\[\shExt^q_X(\struct_Z, \struct_{\sigma^n Y} \otimes \Lsh_n^{\sigma^{-n}}) \cong \shExt^q_X(\struct_Z, \struct_{\sigma^n Y}) \otimes \Lsh_n^{\sigma^{-n}}.\]
  Using critical transversality and \eqref{Ext-Tor}, choose $n_0$ such that
$\shExt_X^j(\struct_Z, \struct_{\sigma^n Y}) = 0$ for all $n \geq n_0$ and  $j < d \leq d'$. Then $E^{pq} = 0$ for $q < d$; so we see that if $p+q  = j < d$, then \eqref{ss3} collapses to 0 and we have
$\Ext^j_X(\struct_Z, \struct_{\sigma^n Y} \otimes \Lsh_n^{\sigma^{-n}}) = 0$ for $n \gg 0$.  By \eqref{foo}, $R$ satisfies $\chi_{d-1}$.

Let $X^{\sing}$ be the singular locus of $X$.  We now suppose that (2) holds; that is, $Z$ contains an irreducible component of codimension $d$ that is not contained in $X^{\sing}$.   We show that in this situation, $R$ fails right $\chi_d$.

We  consider the special case of \eqref{ss3} where $Y = X$:  
\beq\label{ss4}
H^p(X, \shExt^q_X(\struct_Z, \Lsh_n^{\sigma^{-n}})) \Rightarrow \Ext^{p+q}_X(\struct_Z, \Lsh_n^{\sigma^{-n}}).
\eeq
Let $x \in Z$ be a nonsingular point of $X$  such that  the codimension of $Z$ at $x$ is $d$.  
  Since $X$ is nonsingular at $x$, by \cite[Theorem~1.2.5]{BH1993} 
  \beq\label{apple} \shExt^j_X(\struct_Z, \struct_X)_x = 0 \quad \mbox{ for $j < d$}
  \eeq
   and 
   \beq\label{pear}
   \shExt^d_X(\struct_Z, \struct_X)_x \neq 0.
   \eeq 
   Now \eqref{apple} implies that for $p+q = d$, \eqref{ss4} collapses, and we obtain that
   \[ \Ext^d_X(\struct_Z, \Lsh_n^{\sigma^{-n}}) \cong H^0\bigl( X, \shExt^d_X(\struct_Z, \struct_X) \otimes \Lsh_n^{\sigma^{-n}}\bigr) \cong H^0\bigl(X, \shExt^d_X(\struct_Z, \struct_X)^{\sigma^{n}} \otimes \Lsh_n\bigr).\]
   This is nonzero for $n \gg 0$ by \eqref{pear} and $\sigma$-ampleness of $\Lsh$.  
   Thus by \eqref{foo}, $R$ fails right $\chi_d$.
   
   We have seen that if (2) holds, then $R$ fails right $\chi_d$.  We note that if $\{\sigma^n Z\}_{n \leq 0}$ is critically transverse, then $Z$ is homologically transverse to all $\sigma$-invariant subschemes, and certainly no component of $Z$ is contained in $X^{\sing}$.   If $R$ is left noetherian, then using Proposition~\ref{prop-not-leftnoeth} and Lemma~\ref{lem-contain-Tor}, we again  have that no component of $Z$ is contained in the singular locus of $X$.  Thus if (1)(a) or (1)(b) hold, or if $R$ is left noetherian, then (2) holds and $R$ fails right $\chi_d$.
   
   It remains to show that if (1)(b) holds, then $R$ satisfies right $\chi_{d-1}$.
    We have seen that $X$ is nonsingular at all points of $Z$, and so \eqref{apple} holds.   Let $j \leq d-1$.
   By \eqref{ss4} we have that $\Ext^j_X(\struct_Z, \Lsh_n^{\sigma^{-n}}) = 0$.  On the other hand, if $Y \subset X$ is a proper subvariety, then critical transversality of $\{ \sigma^n Z \}_{n \leq 0}$ and Corollary~\ref{cor-CT=CD} show that $\sigma^n Y$ and $Z$ are disjoint for $n \gg 0$, and so certainly $\Ext^j_X(\struct_Z, \struct_{\sigma^nY} \otimes \Lsh_n) = 0$ for $n \gg 0$.  By \eqref{foo}, $R$ satisfies right $\chi_{d-1}$.
\end{proof}

\section{Proj of graded idealizer rings and cohomological dimension}\label{COHDIM}
 Assume Assumption-Notation~\ref{assnotnot2}.  We end this paper by investigating the cohomological dimension of the (right) noncommutative projective scheme associated to $R$; we briefly review the definitions here.  
 
Let $R$ be a (noncommutative) $\NN$-graded ring, and recall that the category $\rQgr R$ is  the noncommutative analogue of Proj of a commutative graded ring.  In \cite{AZ1994}, Artin and Zhang define $\rProj R$ to be the pair $(\rQgr R, \pi R)$, where $\pi: \rGr R \to \rQgr R$ is the quotient functor.  
  The cohomology groups on $\rProj R$ are defined by setting
 \[ H^i(\rProj R, \sh{M}) = \Ext^i_{\rQgr R}(\pi R, \sh{M})\]
  for any $\sh{M} \in \rQgr R$.  The {\em cohomological dimension}\index{cohomological dimension} of $\rProj R$ or the {\em right cohomological dimension of $R$} is 
 \[ \cohdim(\rProj R) = \max \{ i \st H^i(\rProj R, \sh{M}) \neq 0 \mbox{ for some $\sh{M} \in \rQgr R$ } \}.\]
That is, $\cohdim (\rProj R)$ is the cohomological dimension of the functor $H^0(\rProj R, \blank)$.  
 We similarly define the {\em left cohomological dimension of $R$}, or $\cohdim (R \lProj)$.  
 
 If $R$ is  a finitely generated commutative graded $\kk$-algebra, then its cohomological dimension is finite and in fact bounded by the dimension of $\Proj R$.  The proofs of this are geometric, for example relying on \v{C}ech cohomology calculations, and do not generalize to the noncommutative situation.  Stafford and Van den Bergh have asked \cite[page~194]{SV}  if every connected graded noetherian ring has finite left and right cohomological dimension.  
 
In  this section, we answer  Stafford and Van den Bergh's question for geometric idealizers.   We prove:

\begin{theorem}\label{thm-cohdim}
Assume Assumption-Notation~\ref{assnotnot2}.  
If $R = R(X, \Lsh, \sigma, Z)$ is noetherian, then $R$ has finite left and right cohomological dimension.  
\end{theorem}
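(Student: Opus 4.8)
The plan is to push the whole question onto the bimodule algebra $\sh R=\sh R(X,\Lsh,\sigma,Z)$, express the cohomology of $\rProj R$ (and of $R\lProj$) as a limit of ordinary $\Ext$-groups over $\sh R$, and then bound these using the single finiteness statement $\hd_X(\struct_Z)\le\dim X$, which is forced by critical transversality through Hochster's Lemma~\ref{lem-Mel}, together with the inclusion $\sh R\subseteq\sh B=\sh B(X,\Lsh,\sigma)$ and the fact that the twisted coordinate algebra $\sh B$ has finite cohomological dimension $\dim X$. First I would record the reductions. Since $R$ is right noetherian, Proposition~\ref{prop-SRN} gives that $Z$ has finite intersection with forward $\sigma$-orbits, so $\sh R$ is right noetherian by Theorem~\ref{thm-rtnoeth} and $\{(\sh R_n)_{\sigma^n}\}$ is right ample by Lemma~\ref{lem-rtample}. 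Since $R$ is also left noetherian, Proposition~\ref{prop-R-Tor} shows $\Tor^B_1(B/I,B/J)$ is a noetherian left $R$-module for every left ideal $J$ of $B$; but $B/I$, hence this $\Tor$, is annihilated on the left by $I$, and $R/I$ is finite dimensional, so $\Tor^B_1(B/I,B/J)$ is finite dimensional, and by a routine syzygy argument so is $\Tor^B_j(B/I,N)$ for every $j\ge1$ and every finitely generated left $B$-module $N$. Comparing with sheaf $\shTor$ exactly as in Lemma~\ref{lem-whatsExt} and using $\sigma$-ampleness, this forces $\{\sigma^nZ\}$ to be critically transverse; hence $\sh R$ is left noetherian by Corollary~\ref{cor-CT-noeth} and $\{(\sh R_n)_{\sigma^n}\}$ is left ample by Proposition~\ref{prop-left-ample}. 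Theorem~\ref{thm-VdBSerre}, applied on both sides, yields equivalences $\rqgr R\simeq\rqgr\sh R$ and $R\lqgr\simeq\sh R\lqgr$ carrying $\pi R$ to $\pi\sh R$. As the cohomological dimensions in question are computed on noetherian objects, it therefore suffices to show that the section functor $\sh M\mapsto\Hom_{\rqgr\sh R}(\pi\sh R,\sh M)$ on $\rqgr\sh R$, and its left analogue on $\sh R\lqgr$, have finite cohomological dimension.

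The homological input is the following. By the reduction above $Z$ is homologically transverse to every part of the singular stratification of $X$ (these strata are automatically $\sigma$-invariant), so Lemma~\ref{lem-Mel} gives $\hd_X(\struct_Z)\le\dim X$; the exact sequence $0\to\sh I\to\struct_X\to\struct_Z\to0$ then gives $\hd_X(\sh I)\le\dim X$, and hence $\hd_X(\sh R_n)=\hd_X(\sh I\otimes\Lsh_n)=\hd_X(\sh I)\le\dim X$ for all $n\gg0$. This is the only place where critical transversality (equivalently, the left noetherian hypothesis) enters essentially.

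Finally I would carry out the computation over $\sh R$. Write $d=\dim X$. For $\sh M\in\rgr\sh R$ one has the standard identification $H^i(\rProj R,\pi M)\cong\varinjlim_k\underline\Ext^i_{\rGr\sh R}(\sh R_{\ge k},\sh M)$ for $i\ge1$, where $\sh M$ corresponds to $M$ under $\rqgr R\simeq\rqgr\sh R$. For $k\gg0$ we have $\sh R_{\ge k}=\sh I\sh B_{\ge k}$, sitting inside the right $\sh R$-module $\sh B_{\ge k}$ (coherent over $\sh R$ since $Z$ has finite intersection with forward orbits) with quotient $\sh B_{\ge k}/\sh R_{\ge k}=\bigoplus_{n\ge k}(\struct_Z\otimes\Lsh_n)_{\sigma^n}$. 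The strategy is to combine this short exact sequence with the length-$\le d$ locally free $\struct_X$-resolution of $\sh I$ from the previous paragraph: twisting that resolution by the invertible sheaves $\Lsh_n$ and inducing up along $\struct_X\hookrightarrow\sh R$ produces finite-length resolutions of $\sh R_{\ge k}$, and of the $Z$-supported quotient, by $\sh R$-modules of the form $\sh F_{\sigma^k}\otimes\sh R$ with $\sh F$ locally free, which are projective in $\rGr\sh R$; since $\underline\Hom_{\rGr\sh R}(\sh F_{\sigma^k}\otimes\sh R,\sh M)$ is assembled from $H^0$'s on $X$ only (no higher cohomology), one concludes that $\underline\Ext^i_{\rGr\sh R}(\sh R_{\ge k},\sh M)=0$ for $i$ above an explicit bound in $d$, uniformly in $k$. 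Passing to the direct limit gives $H^i(\rProj R,\pi M)=0$ for $i\gg0$ and all $M\in\rgr R$; the mirror-image computation over $\sh R\lqgr$ handles the left cohomological dimension.

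I expect the last step to be the main obstacle. The modules $\sh R_{\ge k}$ are not generated in a single degree over $\sh R$ — the multiplication $\sh R_k\otimes\sh R_m^{\sigma^k}\to\sh R_{k+m}$ lands in the proper subsheaf $\sh I\sh I^{\sigma^k}\otimes\Lsh_{k+m}$ — so turning the finite $\struct_X$-module resolution of $\sh I$ into a genuine finite-length projective resolution over the \emph{noncommutative} bimodule algebra $\sh R$ requires careful bookkeeping of the interaction between the $\struct_X$-resolutions of the graded pieces of $\sh R$ (controlled by $\hd_X(\sh I)\le d$) and the finite cohomological dimension of $\sh B$; getting the bound uniform in $k$ and simultaneously handling the $Z$-supported quotient is the delicate point. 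A secondary point is that the first-paragraph deduction that ``$R$ noetherian $\Rightarrow$ critical transversality'' leans on the $\Tor$-analogue of Lemma~\ref{lem-whatsExt}; should this require more than the cited results literally provide, it suffices to use only homological transversality of $Z$ to the singular stratification (which is all Lemma~\ref{lem-Mel} needs), obtaining the left-hand equivalence $R\lqgr\simeq\sh R\lqgr$ by the argument symmetric to the one establishing $\rqgr R\simeq\rqgr\sh R$.
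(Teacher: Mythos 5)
Your reduction to the homological input is right: noetherianity forces $Z$ to be homologically transverse to the singular stratification (via the contrapositive of Proposition~\ref{prop-not-leftnoeth}, or your $\Tor$ argument), Hochster's Lemma~\ref{lem-Mel} then gives $\hd_X(\struct_Z)\le\dim X$, and this is indeed the single serious input to the theorem. But the last stage of your plan --- computing $\cohdim(\rProj R)$ by building finite-length resolutions of $\sh R_{\ge k}$ over $\sh R$ --- is not salvageable as written, and you have correctly sensed this yourself. Two concrete problems: (i) the modules $\sh F_{\sigma^k}\otimes\sh R$ with $\sh F$ locally free are \emph{not} projective in $\rGr\sh R$. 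By adjunction $\Hom_{\rGr\sh R}(\sh F_{\sigma^k}\otimes\sh R,\sh M)\cong\Hom_{\struct_X}(\sh F,\sh M_k)$, and exactness of this functor would require $\sh F$ to be projective as a quasicoherent sheaf; on a positive-dimensional projective variety the only projective quasicoherent sheaf is $0$ (already $\Gamma(X,-)$ is not exact). (ii) Even setting that aside, $\sh R_{\ge k}$ will not admit a finite-length resolution by induced modules: $\sh R$ is exactly the sort of ring whose global dimension is infinite, and the ``careful bookkeeping'' you defer to is precisely the point at which the argument collapses. Getting a uniform-in-$k$ vanishing bound for $\underline\Ext^i_{\rGr\sh R}(\sh R_{\ge k},\sh M)$ by resolving over $\sh R$ is not possible.

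The missing idea is Proposition~\ref{prop-R-triple} (Rogalski): $\rProj R=(\rQgr R,\pi R)\cong(\rQgr B,\pi I)$ and $R\lProj\cong B\lProj$. This moves the $\Ext$ computations off $R$ (which has bad homological behavior) and onto $B$, whose $\rQgr$ is $\struct_X\lMod$ with $\pi I\mapsto\sh I$. Lemma~\ref{lem-infinite-dim} then finishes the right side with a single Grothendieck spectral sequence $H^p(X,\shExt^q_X(\sh I,\sh G))\Rightarrow\Ext^{p+q}_X(\sh I,\sh G)$: once $\hd_X(\sh I)=N<\infty$, all terms vanish for $p+q>N+\dim X$, giving $\cohdim(\rProj R)\le N+\dim X$. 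The left side is even easier: from $R\lProj\cong B\lProj$ one reads off $\cohdim(R\lProj)=\cohdim(B\lProj)=\dim X$ immediately, with no mirror-image computation required. So the correct route is not to work in $\rqgr\sh R$ at all, but to use the pair-isomorphism to land in $\struct_X\lMod$ and only then apply the finiteness of $\hd_X(\sh I)$ that you had already extracted.
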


Thus, while we have seen that  the $\chi$ conditions  and the strong noetherian property are quite asymmetrical for geometric idealizers, cohomological dimension appears to behave better, at least in the (two-sided) noetherian case.  This, unsurprisingly, breaks down for non-noetherian rings, and we
  give an example of a right, but not left, noetherian ring with infinite right cohomological dimension.  Amusingly, this ring has finite left cohomological dimension.
 
To begin, we review Rogalski's results on the cohomological dimension of idealizers.  

\begin{proposition}\label{prop-R-triple}
{\em (\cite[Lemma~3.2]{R-idealizer})}
Let $B$ be a noetherian  connected graded finitely $\NN$-graded $\kk$-algebra, and let $I$ be  a graded right ideal of $B$ such that $R/I$ is infinite-dimensional.  
Assume that $B_R$ is finitely generated and $R/I$ is finite-dimensional.  Then there are isomorphisms of pairs
\beq \label{isom1}
R \lProj = (R \lQgr, \pi R) \cong (B \lQgr, \pi B) = B \lProj
\eeq
and
\beq \label{isom2}
\rProj R = (\rQgr R, \pi R) \cong (\rQgr B, \pi I).
\eeq
 \qed
\end{proposition}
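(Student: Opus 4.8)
The plan is to follow the standard philosophy for comparing $\rProj$ of two graded rings that ``agree up to torsion''; this is Rogalski's argument in \cite[Lemma~3.2]{R-idealizer}, and I would organize it as follows. First I would reduce to the case $R = \kk + I$, since replacing $R$ by a ring differing from it in only finitely many degrees changes neither $\rProj R$ nor $R\lProj$ (finite-dimensional modules are torsion), and by hypothesis $R/I$ is finite-dimensional. Then $R_+ = I$, and two facts carry the load: (a) $I$ is a right ideal of $B$, so $R_+ B = IB \subseteq I \subseteq R$; and (b) $\dim_\kk(R/I) < \infty$. From (a), the left $R$-module $B/R$ is annihilated by $R_+$, hence is a direct sum of shifts of $\kk$ and so lies in $R\lTors$; from (b), $R/I$ lies in both $\rTors R$ and $R\lTors$. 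I would also note that $B_R$ finitely generated forces $B/BR_+$ to be finite-dimensional (it is a finitely generated right $R$-module killed by $R_+$), which is what makes $B\otimes_R(-)$ preserve torsion.

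For \eqref{isom1} I would show that restriction of scalars along $R\hookrightarrow B$ preserves the torsion subcategories and induces an equivalence $B\lQgr \xrightarrow{\ \sim\ } R\lQgr$, with quasi-inverse induced by $B\otimes_R(-)$. The verification is the usual one: for a finitely generated ${}_R M$ the unit $M\to (B\otimes_R M)|_R$, and for ${}_B N$ the counit $B\otimes_R N|_R \to N$, must be isomorphisms modulo torsion, and $\mathrm{res}$ must be full and faithful after applying $\pi$. The relevant cokernels are subquotients of $(B/R)\otimes_R M$, which is killed by $R_+$ on the left by (a) and hence torsion; the kernels are $\Tor^R_1$-terms coming from $0\to R\to B\to B/R\to 0$. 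Since $\mathrm{res}$ carries ${}_B B$ to ${}_R B$ and $\pi({}_R B)=\pi({}_R R)$ by torsion-ness of $B/R$, the equivalence matches $\pi B \leftrightarrow \pi R$; all functors being graded, they commute with the shift, giving the isomorphism of pairs $R\lProj\cong B\lProj$. (In the setting of this paper one could instead invoke Theorem~\ref{thm-VdBSerre} to pass to the bimodule algebra $\sh{R}$, but the ring-theoretic route is cleaner and avoids invoking ampleness.)

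For \eqref{isom2} the same circle of ideas applies, but one now tracks the module $I$ rather than $B$: $I$ is a right ideal of both $B$ and $R$, restriction carries $I_B$ to $I_R$, and $\pi(I_R)=\pi(R_R)$ in $\rQgr R$ by (b), so the induced equivalence $\rQgr B \simeq \rQgr R$ sends $\pi I_B \leftrightarrow \pi R_R$ — which is exactly \eqref{isom2}. I expect this right-hand case to be the main obstacle: on the right $B/R$ is \emph{not} annihilated by $R_+$ and need not be torsion, so one cannot use restriction and $(-)\otimes_R B$ as a naive quasi-inverse pair. Instead one works with the right ideal $I$ directly (for instance with $(-)\otimes_R I$ and its adjoint $\Hom_B(I,-)$, using $R\otimes_R I = I$), and the delicate point is to show that the comparison kernels — the $\Tor^R_1$-terms, which unlike the cokernels are not annihilated on the nose by $R_+$ — are torsion; this is where one must combine the noetherianity of $B$ with the finite generation of $B_R$, typically via an induction over a finite filtration of the module in question. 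Once these torsion-ness statements are established, the remainder is formal bookkeeping inside the Serre quotient categories, together with the observation that all the functors involved are graded and hence commute with the shift.
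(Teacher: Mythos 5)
The paper itself offers no proof of this statement: it is quoted verbatim from \cite[Lemma~3.2]{R-idealizer} with a \textqed, so there is no in-house argument to compare yours against. What can be assessed is the internal correctness of your reconstruction.

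Your treatment of \eqref{isom1} is sound and is the standard argument: restriction along $R\hookrightarrow B$ together with $B\otimes_R(-)$ gives a quasi-inverse pair on the quotient categories because ${}_RB/R$ is killed by $R_+$ and is therefore torsion, $B/BI\cong B\otimes_R(R/I)$ is a finitely generated $R/I$-module and hence finite-dimensional (so extension sends torsion to torsion), and the unit and counit have kernels and cokernels that are subquotients of $(B/R)\otimes_R(-)$ or of $\Tor^R_1(B/R,-)$, all of which are annihilated by $R_+$ on the left. The issue is with \eqref{isom2}. The functor $(-)\otimes_R I$ that you nominate as the comparison functor does \emph{not} send $\rtors R$ to $\rtors B$: already $R/I\otimes_R I\cong I/I^2$, and $I/I^2$ is a finitely generated right $B/I$-module with $B/I$ infinite-dimensional, so $I/I^2$ is typically infinite-dimensional (e.g.\ $B=\kk[x,y]$, $I=(x)$, $R=\kk+I$ gives $I/I^2\cong\kk[y]$). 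Consequently $(-)\otimes_R I$ does not descend to a functor $\rQgr R\to\rQgr B$ in the naive way, and the adjoint pair you propose cannot be the engine of the equivalence as written. The correct approach (and, I believe, Rogalski's) works in the other direction: one shows that $\pi I$ is a noetherian object of $\rQgr B$ satisfying the hypotheses of \cite[Theorem~4.5]{AZ1994}, identifies $R$ with the ring $\bigoplus_n\Hom_{\rQgr B}(\pi I,\pi I[n])$ in large degree using the idealizer property and the finiteness of $R/I$ and $B/BI$, and then invokes Artin--Zhang to conclude $\rqgr R\simeq\rqgr B$ with $\pi R\leftrightarrow\pi I$. You flag the $\Tor^R_1$ issue honestly, but the deeper problem is that the comparison functor you chose does not preserve torsion in the first place, so the argument cannot be rescued merely by controlling kernels.

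Two smaller points: the reduction ``$R=\kk+I$'' requires knowing $R_0=\kk$, which follows from $R/I$ being finite-dimensional together with $B$ connected graded, so it is fine, but the parenthetical suggestion that one could ``instead invoke Theorem~\ref{thm-VdBSerre}'' is off target here, since that theorem compares a bimodule algebra with its section ring, whereas this proposition is a purely ring-theoretic statement about an arbitrary noetherian connected graded $B$ and its idealizer, with no ampleness or sheaf-theoretic input.
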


Because of \eqref{isom1}, it is clear that $\cohdim(R \lProj) = \cohdim(B \lProj) = \dim X$, and this was observed by Rogalski.  We thus focus on calculating $\cohdim(\rProj R)$.

\begin{lemma}\label{lem-infinite-dim}
Assume Assumption-Notation~\ref{assnotnot2}.    
  Then $\cohdim(\rProj R)$ is infinite if and only if $\hd_X(\struct_Z)$ is infinite.
\end{lemma}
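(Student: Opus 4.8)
The plan is to use the isomorphism of pairs \eqref{isom2}, namely $\rProj R \cong (\rQgr B, \pi I)$, from Proposition~\ref{prop-R-triple} (applicable because $R$ is right noetherian, so by Proposition~\ref{prop-SRN} $B_R$ is finitely generated, $R/I$ is finite-dimensional, and $B/I$ is infinite-dimensional since $\Lsh$ is $\sigma$-ample). Under the equivalence $\rqgr B \simeq \struct_X\lmod$ of Example~\ref{eg-twist}, the object $\pi I$ corresponds to $\widetilde{I} = \sh I = \sh I_Z$, and for any $\sh{M}\in\rQgr R$ corresponding to a sheaf $\sh F$ on $X$, the cohomology $H^i(\rProj R,\sh M)=\Ext^i_{\rQgr B}(\pi I,\sh M)$ becomes $\underline{\Ext}^i_{\rQgr B}(\pi I,\sh M)$ computed degreewise. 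First I would record, as in the proof of Lemma~\ref{lem-whatsExt}, that in degree $n\gg 0$ one has
\[
\underline{\Ext}^i_{\rQgr B}(\pi I,\pi M)_n \cong \Ext^i_X(\sh I,\ \widetilde{M}\otimes\Lsh_n)^{\sigma^{-n}} \cong \Ext^i_X(\sh I^{\sigma^{-n}},\ \widetilde M\otimes\Lsh_n),
\]
so that the right cohomological dimension of $R$ is governed by the sheaf $\Ext$-groups $\Ext^i_X(\sh I^{\sigma^{-n}},-)$, equivalently by $\Ext^i_X(\struct_{\sigma^n Z},-)$ after the obvious shifts in the first argument. The point will be that $\pi I$ is the generator whose projective (= locally free) dimension controls $\cohdim(\rProj R)$, so that $\cohdim(\rProj R)$ is finite exactly when $\sh I$ — equivalently $\struct_Z$, since $\sh I$ and $\struct_Z$ differ by $\struct_X$ in a short exact sequence — has finite homological dimension $\hd_X$.

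The argument would then split into the two implications. For ``$\hd_X(\struct_Z)<\infty \Rightarrow \cohdim(\rProj R)<\infty$'': if $\hd_X(\struct_Z)=m$, then $\hd_X(\sh I)\le m$, hence $\shExt^q_X(\sh I,-)=0$ for $q>m$; feeding this into the Grothendieck spectral sequence $H^p(X,\shExt^q_X(\sh I,\sh F))\Rightarrow\Ext^{p+q}_X(\sh I,\sh F)$ and using $\hom\nolimits^p$-vanishing for $p>\dim X$ on the projective variety $X$, one gets $\Ext^i_X(\sh I,\sh F)=0$ for $i>m+\dim X$, for every coherent $\sh F$; translating back through \eqref{isom2} and the $\rqgr$-equivalence, $H^i(\rProj R,\sh M)=0$ for all $\sh M\in\rqgr R$ and $i>m+\dim X$, so $\cohdim(\rProj R)\le m+\dim X<\infty$. (One must be mildly careful that cohomological dimension is computed with all of $\rQgr R$, not just $\rqgr R$; but $\rqgr R$ generates $\rQgr R$ under filtered colimits, $\Ext^i(\pi I,-)$ commutes with such colimits since $\pi I$ is a noetherian object, so the bound on noetherian objects suffices.)

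For the converse ``$\hd_X(\struct_Z)=\infty\Rightarrow\cohdim(\rProj R)=\infty$'': by the characterization $\hd_X(\struct_Z)=\sup\{j\ :\ \shTor^X_j(\struct_Z,\kk_x)\neq 0\ \text{for some closed }x\}$ used in Lemma~\ref{lem-Mel}, equivalently $\hd_X(\struct_Z)=\sup_{x}\pd_{\struct_{X,x}}(\struct_{Z,x})$, infinite homological dimension forces, for each $N$, a closed point $x$ and some $j>N$ with $\shExt^j_X(\struct_Z,\kk_x)\neq 0$ (dualizing the Tor statement at the local ring, using that a finitely generated module of infinite projective dimension over a Noetherian local ring has nonzero $\Ext^j$ into the residue field for arbitrarily large $j$). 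Then one takes $\sh F$ to be (a twist of) $\kk_x$, or more robustly uses the skyscraper-type sheaves $\struct_{\sigma^n\{x\}}$; since $\Lsh$ is $\sigma$-ample, choosing $n\gg0$ makes the relevant $\Ext^j_X(\struct_Z,\,\struct_{\sigma^n\{x\}}\otimes\Lsh_n)\cong H^0(X,\shExt^j_X(\struct_Z,\kk_{x})^{\sigma^n}\otimes\Lsh_n)$ (the spectral sequence degenerates because $\shExt^j_X(\struct_Z,\kk_x)$ is $0$-dimensional) nonzero, exactly as in the proof of Proposition~\ref{prop-chi}(2). Translating back, $H^j(\rProj R,\sh M)\neq 0$ for some $\sh M$ and arbitrarily large $j$, so $\cohdim(\rProj R)=\infty$.

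The main obstacle I anticipate is the bookkeeping at the two ends: (i) correctly matching $\pi I$ with $\sh I_Z$ and keeping track of the $\sigma$-twists and shifts so that the sheaf-$\Ext$ computation of $\underline{\Ext}^\bullet_{\rQgr B}(\pi I,-)$ is the \emph{right} one, and (ii) the local-algebra step converting ``infinite $\hd_X$'' into ``nonzero $\shExt^j_X(\struct_Z,\kk_x)$ for unbounded $j$ at a fixed-or-varying point,'' and then arranging a \emph{single} sheaf (or family indexed by $n$, handled via $\sigma$-ampleness) that witnesses nonvanishing of $\Ext^j_X$ in arbitrarily high degree. Both are of the same flavor as arguments already carried out in Lemmas~\ref{lem-whatsExt}, \ref{lem-Mel} and Proposition~\ref{prop-chi}, so I would cite those freely and keep the new content to the comparison $\cohdim(\rProj R)=\hd_X(\sh I_Z)+O(1)$ and the two finiteness/infiniteness deductions.
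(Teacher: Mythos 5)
Your proof is correct and follows the same two-step strategy as the paper: pass through Proposition~\ref{prop-R-triple}\eqref{isom2} to reduce to $(\struct_X\lMod,\sh I)$, then use the Grothendieck spectral sequence \eqref{ss-Gro} to get the bound $\cohdim(\rProj R)\le \hd_X(\sh I)+\dim X$ in one direction and an ampleness/twisting argument for non-vanishing of $\Ext^k_X(\sh I,-)$ in the other. The differences are cosmetic. In the identification step you re-derive a degreewise $\underline\Ext$ formula with $\sigma$-twists, which is unnecessary overhead: once $(\rQgr B,\pi I)\cong(\struct_X\lMod,\sh I)$, the ungraded $\Ext^i_{\rQgr B}(\pi I,-)$ that appears in the definition of $\cohdim$ is directly $\Ext^i_X(\sh I,-)$, and the $\sigma$-twists you introduce ultimately cancel (as you note, $\hd_X(\sh I^{\sigma^{-n}})=\hd_X(\sh I)$). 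In the infiniteness direction you specialize to skyscraper sheaves $\kk_x$ and invoke the local characterization of projective dimension via $\Ext$ against the residue field; the paper instead takes an arbitrary $\sh G$ with $\shExt^k_X(\sh I,\sh G)\neq 0$ and twists by $\struct(n)$ for an auxiliary very ample $\struct(1)$, citing \cite[III.6.9]{Ha}. Both are fine; the paper's is slightly shorter since it avoids the local-algebra step. Your parenthetical about passing from coherent sheaves to all of $\rQgr R$ via filtered colimits is a genuine (if standard) point that the paper's proof glosses over, and is correct since $\sh I$ is coherent and $X$ is noetherian.
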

\begin{proof}
Let $I = \Gamma_*(\sh{I}) \subseteq B$.  
Since $(\rQgr B,  \pi I ) \cong (\struct_X \lMod, \sh{I})$, by  \eqref{isom2} $\cohdim (\rProj R)$ is infinite if and only if 
for any $k \geq 0$, there is some quasi-coherent $\sh{F}$ such that $\Ext^k_X(\mathcal{I}, \sh{F}) \neq 0$.

Suppose that $\hd_X(\struct_Z)$ and therefore $\hd_X(\mathcal{I})$ are infinite.  Thus for any $k > 0$, there is some $\mathcal{G}$ such that $\shExt^k_X (\mathcal{I}, \mathcal{G}) \neq 0$.  But let $\struct(1)$ be any very ample invertible sheaf on $X$; by \cite[III.6.9]{Ha} we may choose $n$ so that
$\Ext^k_X(\mathcal{I}, \mathcal{G}(n)) = H^0(X, \shExt^k_X (\mathcal{I}, \mathcal{G}) \otimes \struct(n)) \neq 0$.  Thus  
$\cohdim(\rProj {R}) \geq k$ and since $k$ was arbitrary, $\cohdim(\rProj {R})$ is infinite.

Now suppose that $\hd_X(\mathcal{I})$ is finite, say equal to $N$, and let $\mathcal{G}$ be an arbitrary coherent sheaf.   We apply  \eqref{ss-Gro} to obtain a spectral sequence 
\beq\label{LAURA}
 H^p (X, \shExt^q_X ( \mathcal{I}, \mathcal{G})) \Rightarrow \Ext^{p+q}_X (\mathcal{I}, \mathcal{G}).
\eeq
The left-hand side has nonzero terms only for $0 \leq p \leq \dim X$ and $0 \leq q \leq N$.  Thus if $p+q$ is large (in particular $p+q > N + \dim X$), then all the groups on the left-hand side are 0, and so \eqref{LAURA} collapses to 0.  Thus $\cohdim(\rProj R) \leq N  + \dim X$.
\end{proof}

  \begin{proof}[Proof of Theorem~\ref{thm-cohdim}]
If $R(X, \Lsh, \sigma, Z)$ is left noetherian, then by Proposition~\ref{prop-not-leftnoeth}, we have that $\{ \sigma^n Z\}_{n \geq 0}$ is homologically transverse to all $\sigma$-invariant subvarieties of $X$, and in particular, to the singular stratification of $X$.  Thus  by Lemma~\ref{lem-Mel}, $\hd_X (\struct_Z)$ is finite.  By Lemma~\ref{lem-infinite-dim}, $\cohdim (\rProj R)$ is finite.
\end{proof}

We now give the promised example of a right noetherian ring with infinite right cohomological dimension.

\begin{example}\label{eg-cohdim}
Assume that $\chrr \kk = 0$.  Let $Y$ be the cuspidal cubic and let $X = Y \times \PP^1$.  Let $\tau: \PP^1 \to \PP^1$ be the  automorphism $\tau([x:y]) = [x+y:y]$, and let $\sigma = 1 \times \tau \in \Aut_{\kk} X$.  Let $P$ be the singular point of $Y$ and let $Z = P \times [0:1] \in X$.  Let $\Lsh$ be any ample invertible sheaf on $X$, and let $R = R(X, \Lsh, \sigma, Z)$.   Since the numerical action of $\sigma$ is trivial, by \cite[Theorem~1.2]{Keeler2000} $\Lsh$ is $\sigma$-ample.

Now $Z$ is certainly of infinite order under $\sigma$, and   $R$ is right noetherian by Proposition~\ref{prop-SRN}.  On the other hand, $Z$ is contained in the singular locus of $X$, and so Proposition~\ref{prop-not-leftnoeth}(1) and Lemma~\ref{lem-contain-Tor} imply that  $R$ is not left noetherian.   Since $X$ is not regular at $Z$, we have that $\hd_X (\struct_Z)$ is infinite.   Lemma~\ref{lem-infinite-dim} implies that $\cohdim (\rProj R) = \infty$.  

We note that Proposition~\ref{prop-R-triple} implies that the left cohomological dimension of $R$ is 2.
\end{example}

{\bf Remark:}  Suppose that $R = R(X, \Lsh, \sigma,Z)$ is a left noetherian idealizer.  Together, Lemma~\ref{lem-infinite-dim} and Lemma~\ref{lem-Mel} imply that the right cohomological dimension of $R$ is bounded by $2 \dim X-1$.  We conjecture that in fact the left cohomological dimension of $R$ is precisely $\dim X$.  It is easy to see that $\cohdim(\rProj R) \geq \dim X$.

\section{Conclusion}\label{IDEALIZER-SUM}
Here we collect our results on geometric idealizers, and prove Theorem~\ref{thm-idealizermain} and its promised generalization.    Throughout, we make the following assumptions.
\begin{assumptions} \label{ass-final}
Let $X$ be a projective variety, let $\sigma$ be an automorphism of $X$, and  let $\Lsh$ be a $\sigma$-ample invertible sheaf on $X$.  Let $Z$ be a closed subscheme of $X$ such that for any irreducible component  $Y$ of $Z$, 
\[ \sigma^n(Y^{\red}) \not\subseteq Z\]
for $n \gg 0$.

Given this data, we let
\[ R = R(X, \Lsh, \sigma, Z).\]
Let $\sh{I} = \sh{I}_Z$ be the ideal sheaf of $Z$ on $X$.
\end{assumptions}

We note that since by Theorem~\ref{thm-rtnoeth} any noetherian right idealizer is up to a finite extension an idealizer at a scheme whose defining data satisfies Assumptions~\ref{ass-final}, these assumptions  are not unduly restrictive.

We now summarize our results.  

\begin{theorem}\label{thm-idealizersum}
Assume Assumptions~\ref{ass-final}.

{\rm(1)}  $R$ is right noetherian if and only if for any $x \in X$, the set $\{ n \geq 0 \st \sigma^n(x) \in Z \}$ is finite.

$(2)$ If $R$ is right noetherian, then $R$ is strongly right noetherian.

$(3)$ $R$ fails left $\chi_1$.  

$(4)$ If $\{ \sigma^n(Z)\}_{n \geq 0}$ is critically transverse, then $\{ (\sh{I} \Lsh_n)_{\sigma^n} \}_{n \geq 0}$ is a left and right ample sequence of bimodules, and $R$ is left noetherian.

$(5)$ $R$ is strongly left noetherian if and only if $ Z $ has pure codimension 1 and $\{ \sigma^n Z \}_{n \geq 0}$ is critically transverse.

$(6)$ Let $d = \codim Z$.  If $\{ \sigma^n Z\}_{n \leq 0}$ is critically transverse and either $d = \dim X$ or $X$ and $Z$ are both smooth, then $R$ satisfies right $\chi_{d-1}$.  If $R$ is noetherian, then $R$ fails right $\chi_d$.

$(7)$ If $R$ is noetherian, then $R$ has finite left and right cohomological dimension.

$(8)$  If $Z$ does not have pure codimension 1, then $R \otimes_{\kk} R$ is not left noetherian.
\end{theorem}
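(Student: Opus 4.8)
The plan is to read off Theorem~\ref{thm-idealizersum} from the results of the earlier sections: under Assumptions~\ref{ass-final} we are in the standing setting of those sections, so each of the eight conclusions is a statement already proved, and the proof consists of a reduction to the standing hypotheses followed by eight citations.

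First I would record the reduction. By Lemma~\ref{lem-niceconds}, the hypothesis that $\sigma^n(Y^{\red}) \not\subseteq Z$ for $n \gg 0$ for every irreducible component $Y$ of $Z$ forces $(\sh{I}:\sh{I}^{\sigma^n}) = \sh{I}$, i.e.\ $\sh{R}_n = \sh{I}\Lsh_n$, for $n \gg 0$; together with the $\sigma$-ampleness of $\Lsh$ this is exactly Assumption-Notation~\ref{assnotnot2}, hence also Assumption-Notation~\ref{assnot-FOO}. (Note in passing that every component of $Z$ then has infinite order under $\sigma$, since a periodic component $Y$ would satisfy $\sigma^n(Y^{\red}) = Y^{\red} \subseteq Z$ for infinitely many $n$.) By Lemma~\ref{lem-transport}, $R = H^0(X, \sh{R})$, so all the ring-theoretic consequences derived from the bimodule algebra $\sh{R}$ are available. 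Now each part follows by citation: (1) and (2) are the equivalences $(1)\iff(2)$ and $(2)\Rightarrow(3)$ of Proposition~\ref{prop-SRN}; (3) is Proposition~\ref{prop-left-chi}; for (4), right ampleness of $\{(\sh{I}\Lsh_n)_{\sigma^n}\}$ is Lemma~\ref{lem-rtample}, left ampleness is Proposition~\ref{prop-left-ample}, and left noetherianity of $R$ is Proposition~\ref{prop-leftnoeth}; (5) is Corollary~\ref{cor-SLN}; (7) is Theorem~\ref{thm-cohdim}; and (8) is Proposition~\ref{prop-tensor}.

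Part (6) is the only one requiring an argument beyond a bare citation. For the right $\chi_{d-1}$ assertion: if $d = \dim X$ then $Z$ is $0$-dimensional, so Proposition~\ref{prop-chi}(1)(b) applies; if instead $X$ and $Z$ are both smooth, then $X$ is nonsingular and $Z$, being regular, is Gorenstein, so Proposition~\ref{prop-chi}(1)(a) applies — in both cases the required hypothesis is exactly that $\{\sigma^n Z\}_{n \leq 0}$ be critically transverse. For the failure of right $\chi_d$, observe that $R$ noetherian implies $R$ left noetherian, so Proposition~\ref{prop-chi}(2) gives the conclusion directly. I do not expect any genuine obstacle: the only care needed is bookkeeping — matching each cited statement's precise hypotheses (the index set $n \geq 0$ versus $n \leq 0$ in the various critical-transversality assumptions, the translation of ``$X$ and $Z$ smooth'' into the nonsingular/Gorenstein hypothesis of Proposition~\ref{prop-chi}, and the verification that Assumptions~\ref{ass-final} subsumes both Assumption-Notation~\ref{assnot-FOO} and Assumption-Notation~\ref{assnotnot2}) against what is actually supplied.
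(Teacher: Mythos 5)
Your proposal is correct and matches the paper's own proof, which is exactly the same citation-based assembly: (1)–(2) from Proposition~\ref{prop-SRN}, (3) from Proposition~\ref{prop-left-chi}, (4) from Lemma~\ref{lem-rtample}, Proposition~\ref{prop-left-ample} and Proposition~\ref{prop-leftnoeth}, (5) from Corollary~\ref{cor-SLN}, (6) from Proposition~\ref{prop-chi}, (7) from Theorem~\ref{thm-cohdim}, and (8) from Proposition~\ref{prop-tensor}. Your explicit verification that Assumptions~\ref{ass-final} subsume Assumption-Notation~\ref{assnotnot2} via Lemma~\ref{lem-niceconds}, and your unpacking of the two subcases of (6), are details the paper leaves implicit but are correctly handled.
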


 We note that Theorem~\ref{thm-idealizermain} is a special case of Theorem~\ref{thm-idealizersum}.
\begin{proof}
(1) and (2)  are Proposition~\ref{prop-SRN}.  (3) is Proposition~\ref{prop-left-chi}.  (4) is Lemma~\ref{lem-rtample}, Proposition~\ref{prop-left-ample} and Proposition~\ref{prop-leftnoeth}.  (5) is Corollary~\ref{cor-SLN}.  
(6) is a special case of Proposition~\ref{prop-chi}, and (7) is Theorem~\ref{thm-cohdim}.  (8) is Proposition~\ref{prop-tensor}.  
\end{proof}

\bibliographystyle{amsalpha}       

\bibliography{biblio}  

\end{document}